\newcommand{\C}{\mathbb{C}}\newcommand{\id}{\mathbbm{1}}\newcommand{\N}{\mathbb{N}}
\newcommand{\Q}{\mathbb{Q}}\newcommand{\Z}{\mathbb{Z}}\newcommand{\F}{\mathbb{F}}
\newcommand{\map}[3]{ #1 : #2 \longrightarrow #3 }
\newcommand{\mapl}[5]{ #1 : #2 \longrightarrow #3 : #4 \longmapsto #5 }
\newcommand{\tcn}[1]{ \operatorname{Cong}(#1) }
\newcommand{\discr}[1]{ \operatorname{Discr}_\ast(#1) }
\newcommand{\prodant}[1]{ \operatorname{Prod}(#1) }
\newtheorem{theorem}{Theorem}[subsection]
\newtheorem{proposition}[theorem]{Proposition}
\newtheorem{corollary}[theorem]{Corollary}
\newtheorem*{corollary*}{Corollary}
\newtheorem{lemma}[theorem]{Lemma}
\newtheorem*{theorem*}{Theorem}
\newtheorem*{proposition*}{Proposition}
\newtheorem*{lemma*}{Lemma}
\newtheorem*{problem*}{Problem}
\newtheorem{metaproblem}[theorem]{Meta-Problem}
\newtheorem*{observation*}{Observation}
\theoremstyle{definition} \newtheorem{remark}[theorem]{Remark}
\theoremstyle{definition} \newtheorem{example}[theorem]{Example}
\theoremstyle{definition}\newtheorem{definition}[theorem]{Definition}
\theoremstyle{definition}
\title{The structure of finite groups with \\a f.p.f. automorphism satisfying an identity\footnote{MSC2010: $20D45$ (automorphisms of groups), $20D15$ (nilpotent groups), $17B70$ (graded Lie algebras).} \thanks{This work was supported by the Austrian Science Fund (FWF) grants: $J-3371-N25$ (\emph{``Representations and gradings of solvable Lie algebras''}) and $P 30842-N35$ (\emph{``Infinitesimal Lie rings: gradings and obstructions''}).}}
\author{Wolfgang Alexander Moens
}
\affil{\textsc{Faculty of Mathematics, University of Vienna, Oskar-Morgenstern-Platz 1, 1090 Vienna, Austria}\\ \emph{E-Mail:} Wolfgang.Moens@univie.ac.at\\ \emph{Tel:}  +43-1-4277-50468}
\date{\today}
\begin{document}
	
	\maketitle

\abstract{
	We generalize the positive solution of the Frobenius conjecture and refinements thereof by studying the structure of groups that admit a fix-point-free automorphism satisfying an identity. \\
	
	We show, in particular, that for every polynomial $r(t) = a_0 + a_1 \cdot t + \cdots + a_d \cdot t^d \in \Z[t]$ that is irreducible over $\Q$, there exist (explicit) invariants $a,b,c \in \N$ with the following property. Consider a finite group with a fix-point-free automorphism $\map{\alpha}{G}{G}$ and suppose that for all $x \in G$ we have the equality $x^{a_0} \cdot \alpha(x^{a_1}) \cdot \alpha^2(x^{a_2})\cdots \alpha^d(x^{a_d}) = 1_G.$ Then $G$ is solvable and of the form $A \cdot (B \rtimes (C \times D))$, where $A$ is an $a$-group, $B$ is a $b$-group, $C$ is a nilpotent $c$-group, and $D$ is a nilpotent group of class at most $d^{2^d}$. 	Here, a group $H$ is said to be an $a$-group (resp. $b$-group or $c$-group) if the order of every $h \in H$ divides some natural power of $a$ (resp. $b$ or $c$).
}

	\tableofcontents

\section{Introduction} \label{SectionIntro}

\subsection{Motivation} \label{SubsectionMotivation}

In this text, we study the structure of finite groups with a fix-point-free automorphism satisfying an ``identity.'' Before clarifying what we mean by ``identity,'' we recall three classic results on finite groups with a fix-point-free automorphism.

\begin{theorem}[Rowley \cite{Rowley}] \label{TheoremA} If a finite group $G$ admits a fix-point-free automorphism, then $G$ is solvable. \end{theorem}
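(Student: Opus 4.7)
The plan is to proceed by induction on $|G|$. Assume $G$ is a minimal counterexample: a non-solvable finite group admitting a fix-point-free automorphism $\alpha$. The overall strategy is to reduce $G$ to a nonabelian finite simple group and then invoke the classification of finite simple groups (CFSG) to derive a contradiction.

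The first reduction uses the map $\map{\phi}{G}{G}$ defined by $\phi(g) := g^{-1} \cdot \alpha(g)$. Since $\alpha$ is fix-point-free, $\phi$ is injective (an equality $\phi(g) = \phi(h)$ would force $h g^{-1} \in \Fix(\alpha) = \{1_G\}$), hence a bijection. For any $\alpha$-invariant normal subgroup $N \trianglelefteq G$, the restriction $\alpha|_N$ is fix-point-free on $N$, so the same argument yields $\phi(N) = N$, i.e.\ $\phi^{-1}(N) = N$. Therefore $g^{-1} \alpha(g) \in N$ forces $g \in N$, which is precisely the statement that the induced automorphism $\bar{\alpha}$ on $G/N$ is fix-point-free. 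By minimality of $|G|$, both $N$ and $G/N$ would then be solvable, contradicting the non-solvability of $G$. Consequently $G$ has no proper nontrivial $\alpha$-invariant normal subgroup.

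Next, $\alpha$ permutes the minimal normal subgroups of $G$, and the product of any one $\alpha$-orbit is $\alpha$-invariant and normal, so by the previous step it equals $G$. Hence $G \cong S_1 \times \cdots \times S_n$, where the $S_i$ are pairwise isomorphic to a single characteristically simple group $S$, and $\alpha$ acts as a transitive (thus cyclic) permutation of the factors. If $S$ were abelian then $G$ would be solvable, so $S$ must be a nonabelian finite simple group. A short direct computation shows that if $\beta := \alpha^n|_{S_1}$ had any fixed point $s \neq 1_S$, then an $\alpha$-fixed tuple of $G$ could be built by propagating $s$ around the $n$-cycle of factors; hence $\beta \in \operatorname{Aut}(S)$ is itself fix-point-free, and the whole theorem reduces to the statement that \emph{no nonabelian finite simple group admits a fix-point-free automorphism.}

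The main obstacle is this final reduction. It is where the argument becomes genuinely deep and where the CFSG enters: for each family of nonabelian finite simple groups -- alternating, classical Lie type, exceptional Lie type, sporadic -- one has to analyse $\operatorname{Aut}(S)$ and exhibit a nontrivial fixed point for every automorphism. The groups of Lie type are the most delicate, since inner-diagonal, field, and graph automorphisms (and their products) must all be handled, typically via Lang--Steinberg-type fixed-point theorems for the ambient algebraic group over $\overline{\F_p}$; the alternating groups can be dispatched by inspecting the small outer automorphism groups directly, and the sporadic case is a finite check in each of the $26$ groups. I expect the Lie-type analysis, and in particular the uniform treatment of twisted forms, to be the principal technical hurdle.
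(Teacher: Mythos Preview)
The paper does not prove Theorem~\ref{TheoremA}; it is quoted as a background result with a reference to Rowley's paper \cite{Rowley}, and is later used as a black box in the proof of Theorem~\ref{MainCorollaryWithClassification}. So there is no ``paper's own proof'' to compare against beyond the remark that the result ``was finally confirmed by means of the classification of the finite simple groups'' and that Rowley gives ``a particularly short proof.''

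Your outline is the standard reduction and is essentially what Rowley does: pass to a minimal counterexample, show that fix-point-freeness descends to $\alpha$-invariant quotients via the bijection $g \mapsto g^{-1}\alpha(g)$, deduce that $G$ is a direct power of a nonabelian simple group with $\alpha$ cycling the factors, and then invoke CFSG to rule out a fix-point-free automorphism of a simple group. One small point deserves a sentence of justification: you pass from ``$S$ is characteristically simple and nonabelian'' directly to ``$S$ is a nonabelian finite simple group.'' This is true here, but not just because $S$ is characteristically simple; the reason is that in the direct product $G = S_1 \times \cdots \times S_n$ each $S_j$ centralises $S_1$, so conjugation by $G$ fixes every simple direct factor of $S_1$, whence any such factor is normal in $G$ and must equal $S_1$ by minimality. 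With that filled in, the argument is correct, and your honest flagging of the CFSG step as the real content matches the paper's own acknowledgement.
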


We recall that an automorphism is called \emph{fix-point-free} (or \emph{regular}) if it fixes only the trivial element of the group. Theorem \ref{TheoremA} has a long history, going back to at least Gorenstein---Herstein \cite{GorensteinHerstein}, and it was finally confirmed by means of the classification of the finite simple groups. We refer to Rowley's paper for a particularly short proof. 
By considering a special case, we can hope to obtain a stronger conclusion.

\begin{theorem}[J. Thompson \cite{ThompsonFixPointFreeAutomorphisms}] \label{TheoremB} If a finite group $G$ admits a fix-point-free automorphism of prime order, then $G$ is nilpotent. \end{theorem}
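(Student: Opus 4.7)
The plan is to combine Theorem~\ref{TheoremA} with an induction on $|G|$. Since $\alpha$ is fix-point-free on $G$, a centre-of-Sylow argument in the semidirect product $G \rtimes \langle \alpha \rangle$ (any Sylow $p$-subgroup containing $\alpha$ would meet $G$ in a nontrivial $\alpha$-fixed element) shows that $p$ must be coprime to $|G|$. Theorem~\ref{TheoremA} gives that $G$ is solvable, and I would pick a minimal $\alpha$-invariant normal subgroup $N$ of $G$; solvability then forces $N$ to be an elementary abelian $q$-group for some prime $q \neq p$.

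A standard coprime-action lifting argument (any $\alpha$-fixed coset of $N$ contains an $\alpha$-fixed element, by averaging over $\langle\alpha\rangle$) shows that $\alpha$ acts fix-point-freely on the quotient $G/N$ as well, so by the inductive hypothesis $G/N$ is nilpotent. Consequently $G$ is an extension of a nilpotent group by an abelian one, and it suffices to prove that $G$ coincides with its own Fitting subgroup, i.e.\ that the action of $G/N$ on $N$ is trivial.

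The main obstacle is this final step, and it is exactly where the prime-order hypothesis on $\alpha$ is essential. I would study the $\F_q[G \rtimes \langle \alpha \rangle]$-module $N \otimes_{\F_q} \overline{\F_q}$, exploiting that $\alpha$ and $G/N$ act on $N$ with commuting actions. Fix-point-freeness of $\alpha$ forces the $\langle\alpha\rangle$-weights on $N \otimes \overline{\F_q}$ to be nontrivial $p$-th roots of unity (so the sum of these eigenvalues is $-1$, not $0$ times a trivial weight), and a character/trace argument on the nilpotent group $G/N$ --- combined with the self-centralizing property of the Fitting subgroup in a solvable group --- is meant to force $G/N$ to act trivially on $N$, so that $G$ equals its Fitting subgroup and is thus nilpotent. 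This module-theoretic step is where Thompson's 1959 proof required delicate character-theoretic input; a modern alternative is to pass to the associated Lie ring of $G$ and invoke Higman's bound on the nilpotency class of a Lie ring with a regular automorphism of prime order, but either route still depends on the structural reduction carried out in the previous steps.
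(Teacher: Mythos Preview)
Your overall shape---reduce to the solvable case, then finish by induction---matches the classical strategy, but there are two substantive issues.

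\textbf{Route to solvability.} You invoke Theorem~\ref{TheoremA} (Rowley), which rests on the classification of finite simple groups. The paper, following Thompson's original argument, deliberately avoids this: in a minimal counter-example one picks an odd prime $p$ dividing $|G|$, finds an $\alpha$-invariant Sylow $p$-subgroup $P$, and applies Thompson's normal $p$-complement theorem. Either some non-trivial $\alpha$-invariant $H \trianglelefteq P$ is normal in $G$ (so $G/H$ is a proper $\alpha$-invariant section, nilpotent by minimality, and $G$ is solvable), or every such normalizer is proper and hence nilpotent, so the $p$-complement theorem yields $G \cong K \rtimes P$ with $K$ a proper $\alpha$-invariant section, again giving solvability. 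Either way one lands in the solvable case without the classification. Your shortcut through Theorem~\ref{TheoremA} is logically valid but historically and structurally misses the point of Thompson's contribution.

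\textbf{The gap in the solvable case.} Your final paragraph does not actually prove anything. The assertion that ``$\alpha$ and $G/N$ act on $N$ with commuting actions'' is false: for $g \in G$ and $n \in N$ one has $\alpha(g n g^{-1}) = \alpha(g)\,\alpha(n)\,\alpha(g)^{-1}$, so $\alpha$ \emph{normalizes} the conjugation action rather than commuting with it. The subsequent ``character/trace argument'' is left entirely unspecified, and there is no known short argument of this type that forces $N$ to be central. Your alternative---passing to an associated Lie ring and applying the Higman--Kreknin--Kostrikin theorem that a Lie ring with a fix-point-free automorphism of prime order is nilpotent---is in fact the correct and standard way to handle the solvable case (and is exactly what the paper generalizes in Section~\ref{SectionLieRingsAux}), but you present it only as an afterthought and do not carry it out. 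As written, the proposal establishes the reduction but leaves the heart of the theorem unproven.
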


Such automorphisms naturally appear in the study of groups acting simply-transitively on finite sets, and theorem \ref{TheoremB} gives a positive answer to (what is generally known as) the Frobenius conjecture. Thompson's proof used the celebrated $p$-complement theorem \cite{ThompsonNormalComplements} and an earlier result of Witt and Higman, but it did not require the classification of the finite simple groups. 
A follow-up result is:

\begin{theorem}[Higman \cite{HigmanGroupsAndRings}; Kreknin---Kostrikin \cite{KrekninAutomorphismFinitePeriod,KrekninKostrikinRegularAutomorphisms}] \label{TheoremC} If a nilpotent group $G$ admits a fix-point-free automorphism of prime order $p$, then its class satisfies $\operatorname{c}(G) \leq (p-1)^{2^{(p-1)}}$. \end{theorem}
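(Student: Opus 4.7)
The plan is to translate the group-theoretic claim into a combinatorial question about graded Lie rings via the associated graded construction, and then to apply the Kreknin--Kostrikin bound in that Lie-theoretic setting. First, I would pass from $G$ to its associated graded Lie ring $L := \bigoplus_{i \geq 1} \gamma_i(G) / \gamma_{i+1}(G)$, with Lie bracket induced by the group commutator. Then $\operatorname{c}(L) = \operatorname{c}(G)$, so it suffices to bound the class of $L$. The automorphism $\alpha$ induces an automorphism $\bar\alpha$ of $L$, and a classical fixed-point lemma---applicable because no nontrivial $p$-group can carry a fix-point-free automorphism of order $p$, forcing $\gcd(p,|G|) = 1$---implies that $\bar\alpha$ is fix-point-free on each graded piece $\gamma_i / \gamma_{i+1}$. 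In particular, the additive group of $L$ has order coprime to $p$.

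Next, I would extend scalars to a ring $\mathcal{O}$ in which $p$ is invertible and which contains a primitive $p$-th root of unity $\zeta_p$; the previous coprimeness ensures that this extension preserves the nilpotency class. On $L \otimes \mathcal{O}$, the operator $\bar\alpha$ becomes diagonalisable with eigenvalues in $\{\zeta_p, \ldots, \zeta_p^{p-1}\}$, since fix-point-freeness excludes the eigenvalue $1$. The eigenspace decomposition is then a $\Z/p\Z$-grading $L = L_1 \oplus \cdots \oplus L_{p-1}$ satisfying $[L_i, L_j] \subseteq L_{i+j}$ (indices read modulo $p$) and $L_0 = 0$. At this point the problem has become entirely Lie-theoretic.

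The heart of the argument is then the Kreknin--Kostrikin theorem: any $\Z/p\Z$-graded Lie ring with trivial zero-component is nilpotent of class at most $(p-1)^{2^{p-1}}$. This is established in two stages: Kreknin's solvability bound---derived length at most $2^{p-1} - 1$---via an explicit identity in the free Lie algebra whose iterated brackets must collapse once their total degree reaches $0 \bmod p$ and therefore land in $L_0 = 0$; and then an induction on the derived length, exploiting that each derived section inherits a compatible $\Z/p\Z$-grading with trivial zero-component, to convert the solvability bound into the explicit nilpotency bound.

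The principal obstacle is the combinatorial kernel of Kreknin's argument: producing, in the free $\Z/p\Z$-graded Lie algebra with $L_0 = 0$, an explicit Lie word that witnesses solvability, followed by the delicate bookkeeping in the derived-length induction that yields the doubly exponential bound $(p-1)^{2^{p-1}}$. The group-to-Lie-ring reduction and the eigenspace decomposition are essentially formal; it is the Lie-theoretic combinatorics that is intricate, and the resulting bound is notably loose.
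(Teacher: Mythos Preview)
Your outline is the classical Higman--Kreknin--Kostrikin argument and is correct. Note, however, that the paper does not give its own proof of Theorem~\ref{TheoremC}: it is quoted as a known result in the motivation. What the paper does prove is the generalisation in Theorem~\ref{MainTheoremB} (via Theorems~\ref{IntroTheoremLie} and~\ref{TheoremLieAFold}), and that argument follows exactly the template you describe: pass to an associated Lie ring, obtain a grading supported on the root set, invoke a Kreknin-type solvability bound (the paper cites Shalev's extension), and then run the derived-length induction of Kreknin--Kostrikin to extract the nilpotency bound. Your $\Z/p\Z$-grading with $L_0=0$ is the special case where the support is $\{\zeta_p,\ldots,\zeta_p^{p-1}\}$, an arithmetically-free subset of $(\overline{\Q}^\times,\cdot)$ in the paper's language.

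One technical remark. You write $\gcd(p,|G|)=1$, which tacitly assumes $G$ finite, whereas the statement allows $G$ merely nilpotent. For infinite $G$ the step ``$G$ has no $p$-torsion, hence $\bigoplus_i \gamma_i/\gamma_{i+1}$ has no $p$-torsion'' is not automatic: lower-central quotients of torsion-free nilpotent groups can acquire torsion. This is why the paper's proof of Theorem~\ref{MainTheoremB} builds the Lie ring from the \emph{isolators} $\Gamma_i^\ast(G)$ of the lower central series rather than from the $\gamma_i(G)$ themselves; the isolator quotients inherit the absence of bad torsion while still recording the nilpotency class of $G$. With that adjustment your reduction goes through in the infinite case as well. (Alternatively, once one knows $\Phi_p(\bar\alpha)=0$ on the Lie ring---which follows from $\Psi_p(\alpha)=1_G$, cf.\ Remark~\ref{RmkFiniteOrderToCyclo}---the exclusion of the eigenvalue $1$ is automatic since $\Phi_p(1)=p$ is invertible after localisation, so fix-point-freeness of $\bar\alpha$ need not be argued separately.)
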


In \cite{HigmanGroupsAndRings}, Higman proved that there exists some upper bound for $\operatorname{c}(G)$ that depends only on $p$. Kreknin and Kostrikin later proved in \cite{KrekninAutomorphismFinitePeriod,KrekninKostrikinRegularAutomorphisms} an upper bound of $(p-1)^{2^{(p-1)}}$. But it is conjectured that the minimal upper bound $h(p)$ on $\operatorname{c}(G)$ satisfies $h(p) = \lceil (p^2-1) / 4 \rceil$. This problem has been referred to as the Higman conjecture, and it still open for primes $p \geq 11$. 
We next make some elementary observations. 

\begin{remark} \label{RmkFiniteOrderToCyclo}
	We suppose, as in the above theorems, that the group $G$ is finite or nilpotent, and that $\map{\alpha}{G}{G}$ is a fix-point-free automorphism of finite order $n$. Then the transformation $1+\alpha + \alpha^2 + \cdots + \alpha^{n-1}$ of $G$, defined by $x \mapsto x \cdot \alpha(x) \cdot \alpha^2(x) \cdots \alpha^{n-1}(x),$ vanishes identically. If, moreover, $n$ is a prime, then the group $G$ will have no $n$-torsion. 
\end{remark}

In view of these impressive results, we propose the following family of problems.

\begin{metaproblem} \label{MetaProblem} We are given a polynomial $r(t) := a_0 + a_1 \cdot t + \cdots + a_d \cdot t^d $ with integer coefficients, and we are told that some finite group $G$ admits a \emph{fix-point-free} automorphism $\map{\alpha}{G}{G}$ such that the map $G \longrightarrow G$, defined by $$ x \mapsto x^{a_0} \cdot \alpha( x^{a_1} ) \cdot \alpha^2(x^{a_2}) \cdots \alpha^d(x^{a_d}),$$ vanishes identically. Prove that the group $G$ has a ``good structure'' modulo ``bad torsion.'' \end{metaproblem}

The main objective of this text is to refine theorem \ref{TheoremA} and to generalize theorems \ref{TheoremB} and \ref{TheoremC} in the spirit of Meta-Problem \ref{MetaProblem}. We will do this in theorem \ref{MainCorollaryWithClassification}, theorem \ref{MainTheoremA}, and theorem \ref{MainTheoremB} respectively. This will then allow us to generalize other classic results of Hughes---Thompson, Kegel and E. Khukhro on the structure of finite groups with a $p$-splitting automorphism. We will do this in corollary \ref{TheoremTHHUTHKEG} and corollary \ref{TheoremHKKK}. \\

In order to state our results in full generality, we will have to introduce some notation and terminology. But we can already formulate a corollary that captures the spirit of our main results.

\begin{corollary}[Irreducible case] \label{CorollIrreducibleConstants}
	Let $r(t) = a_0 + a_1 \cdot t + \cdots + a_d \cdot t^d \in \Z[t]$ be a polynomial that is \emph{irreducible} over $\Q$. Then there exist constants $a,b,c \in \N$ with the following property. Consider a finite group $G$ with a fix-point-free automorphism $\map{\alpha}{G}{G}$ and suppose that for all $x \in G$ we have the equality $$x^{a_0} \cdot \alpha(x^{a_1}) \cdot \alpha^2(x^{a_2})\cdots \alpha^d(x^{a_d}) = 1_G.$$ Then $G$ is solvable and of the form $A \cdot (B \rtimes (C \times D))$, where $A$ is an $a$-group, $B$ is a $b$-group, $C$ is a nilpotent $c$-group, and $D$ is a nilpotent group of class at most $d^{2^d}$. 
	In particular: 
	\begin{itemize}
		\item If $G$ has no $(a \cdot b)$-torsion, then $G$ is nilpotent.
		\item If $G$ has no $(a \cdot b \cdot c)$-torsion, then $G$ is nilpotent of class at most $d^{2^d}$.
	\end{itemize}
\end{corollary}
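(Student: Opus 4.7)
The plan is to derive this corollary from the three main structural theorems of the paper (Theorems~\ref{MainCorollaryWithClassification}, \ref{MainTheoremA}, \ref{MainTheoremB}) by specializing them to an \emph{irreducible} polynomial. The hypothesis is to be read as the assertion that the operator $r(\alpha) := a_0 + a_1 \cdot \alpha + \cdots + a_d \cdot \alpha^d$ annihilates every element of $G$, so that $\alpha$ satisfies a polynomial identity generalizing the cyclotomic identity $1 + \alpha + \cdots + \alpha^{p-1} = 0$ underlying Theorems~\ref{TheoremB} and \ref{TheoremC}. Because $r$ is irreducible, the ring $\Z[t] / (r(t))$ is an order in the number field $K := \Q[t] / (r(t))$, and this gives access to clean module-theoretic tools after localization away from a finite set of primes.

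First, I would apply the solvability refinement extending Rowley's Theorem~\ref{TheoremA} to produce a decomposition $G = A \cdot G'$, where $A$ is supported on a finite set of \emph{bad primes} for $r$ — primes at which one cannot pass cleanly from $G$ to an associated Lie ring on which $\alpha - 1$ is bijective. A natural candidate for $a$ is the product of primes dividing $r(1) = \sum_i a_i$ (where $\alpha$ would degenerate to the identity modulo $p$, conflicting with fixed-point-freeness) together with the leading coefficient $a_d$. On $G'$, the generalization of Thompson's theorem yields a semidirect decomposition $B \rtimes (C \times D)$ once a further set of primes is absorbed into the $b$-torsion $B$ (with $b$ dividing, for example, the discriminant of $r$ or the index $[\mathcal{O}_K : \Z[t] / (r(t))]$); and the generalization of Higman--Kreknin--Kostrikin then gives the class bound $d^{2^d}$ on $D$, which is the precise analogue of $(p-1)^{2^{(p-1)}}$ with $\deg(r) = d$ playing the role of $p - 1$. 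The $c$-torsion absorbs a last set of primes at which the quantitative Lie-ring estimates deteriorate. The two bulleted consequences then follow immediately: killing the $(ab)$-torsion forces $A = B = 1$ and leaves $G = C \times D$ nilpotent, and killing also the $c$-torsion leaves $G = D$ of class at most $d^{2^d}$.

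The main obstacle, and the real content of the three referenced theorems, is to show that on the good-prime quotient the Lie ring associated to $G$ (via Lazard's or Mal'cev's correspondence, depending on the nilpotency class at hand) inherits a $\Z[t] / (r(t))$-module structure turning $\alpha - 1$ into a bijection, and then to push the classical graded-Lie-ring analysis through with Kreknin-type identities. Irreducibility of $r$ is precisely what keeps the bookkeeping clean: $\Z[t] / (r(t))$ becomes a Noetherian domain, the eigenspaces of $\alpha$ on any rational representation organise into a single Galois orbit of roots of $r$, and one therefore gets away with just three invariants $a$, $b$, $c$. In the reducible case treated by the full main theorems, analogous invariants would need to be tracked irreducible factor by irreducible factor.
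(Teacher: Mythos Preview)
Your high-level plan---pass to an $a'$-Hall subgroup, then invoke Theorems \ref{MainCorollaryWithClassification}, \ref{MainTheoremA}, \ref{MainTheoremB}---matches the paper's. But there is a genuine gap: those theorems require a \emph{good} identity, and \emph{irreducible does not imply good}. For instance $\Phi_8(t)=t^4+1$ is irreducible over $\Q$ but bad, since $t^4+1=s(t^4)$ with $s(t)=t+1\in\Z[t]\setminus\Z$; more generally any irreducible $r(t)\in\Z[t^u]$ with $u\geq 2$ is bad by Proposition~\ref{PropositionCongNonZero} (so $\tcn{r(t)}=0$), and then Theorem~\ref{MainCorollaryWithClassification} gives no information whatsoever. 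Your proposal applies the main theorems directly to $r(t)$ and never addresses this.

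The paper's fix is short but essential. Take $u$ maximal with $r(t)\in\Z[t^u]$ and write $r(t)=s(t^u)$. Irreducibility of $r$ forces $s$ irreducible, and maximality of $u$ (together with $r(0)r(1)\neq 0$) forces $s$ to be good. The monotone identity $r(\alpha)=1_G$ is literally the same equation as $s(\beta)=1_G$ for $\beta:=\alpha^u$. One then sets $a:=r(1)=s(1)$, passes to an $\alpha$-invariant $a'$-Hall subgroup $H$ (via Lemma~\ref{LemmaRegularPrelims}), and observes that $\beta$ is fix-point-free on $H$: if $\beta(x)=x$ then $x^{s(1)}=1_H$, but $H$ has no $r(1)$-torsion. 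Now Theorem~\ref{MainCorollaryWithClassification} applies to $(H,\beta,s(t))$, yielding $H=B\rtimes(C\times D)$ with $b:=\tcn{s(t)}$ and $c:=\discr{s(t)}\cdot\prodant{s(t)}$. The class bound on $D$ is $\deg(s)^{2^{\deg(s)}}\leq d^{2^d}$. Your speculative constants (index $[\mathcal O_K:\Z[t]/(r(t))]$, Galois-orbit bookkeeping, Lazard correspondence) play no role; the paper's invariants are purely the combinatorial ones from Section~\ref{SectionInvars}.
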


Here, we recall terminology from \cite{Baer}. Let $n \in \Z$. A group $H$ is said to be an \emph{$n$-group} if the order of every element of $H$ divides some natural power of $n$. We say that a group $H$ \emph{has $n$-torsion} if some $h \in H \setminus \{1_H\}$ satisfies $h^n = 1_H$. Otherwise, we say that $H$ has no $n$-torsion. This naturally generalizes the notion of $p$-groups and $p$-torsion (for $p$ a prime).

\subsection{Main results}

\subsubsection{Invariants of good identities}

\begin{definition}[Identities]
	We consider a group $(G,\cdot)$, together with an endomorphism $\map{\gamma}{G}{G}$, and a polynomial $r(t) := a_0 + a_1 \cdot t + \cdots + a_d \cdot t^d \in \Z[t]$. We say that $r(t)$ is a \emph{monotone identity} of $\gamma$ if and only if the map $\mapl{r(\gamma)}{G}{G}{x}{x^{r(\gamma)} := x^{a_0} \cdot \gamma(x^{a_1}) \cdots \gamma^d(x^{a_d})}$	vanishes identically. In this case, we will simply write $r(\gamma) = 1_G$. More generally, we say that $r(t)$ is an \emph{identity} of $\gamma$ if and only if there exists a decomposition $r(t) = r_1(t) + \cdots + r_k(t)$ of $r(t)$ into polynomials $r_1(t),\ldots, r_k(t) \in \Z[t]$, such that the map $\mapl{r_1(\gamma) \cdots r_k(\gamma)}{G}{G}{x}{x^{r_1(\gamma)} \cdots x^{r_k(\gamma)}}$  vanishes identically. We will abbreviate this to $r_1(\gamma) \cdots r_k(\gamma) = 1_G$.
\end{definition}

\begin{definition}[Good polynomials]
	A polynomial $r(t) \in \Z[t]$ is said to be \emph{bad} if and only if  there exists some $u \in \N$ and some $s(t) \in \Z[t] \setminus \Z$ such that $s(t^{u+1}) \mid r(0) \cdot r(1) \cdot r(t)$ in the ring $\Z[t]$. Otherwise, we say that $r(t)$ is \emph{good}.
\end{definition}

For each polynomial $r(t) \in \Z[t]$, we will eventually introduce integer invariants $ \tcn{r(t)}$, $\discr{r(t)}$, and $\prodant{r(t)}$. We postpone their precise definitions to definition \ref{DefCongnr} and \ref{DefDiscProd}. For now, it suffices to know that good polynomials have non-zero invariants:

\begin{proposition} \label{PropositionGoodPolynomialsVSInvariants}
	Consider a good polynomial $r(t) \in \Z[t]$. Then $ r(1) \cdot \tcn{r(t)} \cdot \discr{r(t)} \cdot \prodant{r(t)} \neq 0.$
\end{proposition}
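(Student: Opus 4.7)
The plan is to prove the contrapositive: if any of the four integer quantities $r(1)$, $\tcn{r(t)}$, $\discr{r(t)}$, $\prodant{r(t)}$ vanishes, then $r(t)$ is bad in the sense of the preceding definition. I would organise the argument factor-by-factor.

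First I would dispose of the factor $r(1)$, along with the trivial degenerate cases $r(0) = 0$ and $r(t) = 0$. If any of these vanish, then the product $r(0) \cdot r(1) \cdot r(t) \in \Z[t]$ equals $0$, because $\Z[t]$ is an integral domain. But every polynomial of $\Z[t]$ divides $0$, and in particular any non-constant $s(t) \in \Z[t] \setminus \Z$ does. Hence choosing, for instance, $u := 0$ and $s(t) := t$ gives a witness $s(t^{u+1}) = t \mid 0 = r(0) \cdot r(1) \cdot r(t)$, so $r(t)$ is bad. This already forces a good polynomial to satisfy $r(0) \cdot r(1) \neq 0$ and $r(t) \neq 0$.

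For the remaining three factors I would use the explicit definitions of $\tcn{r(t)}$, $\discr{r(t)}$, and $\prodant{r(t)}$ (to be given in Definitions \ref{DefCongnr} and \ref{DefDiscProd}). In each case, I expect the vanishing of the invariant to encode, essentially by construction, the existence of a non-constant polynomial $s(t) \in \Z[t] \setminus \Z$ and a natural number $u$ for which $s(t^{u+1})$ divides $r(t)$ up to the correction factor $r(0) \cdot r(1)$ in $\Z[t]$. Concretely, I would expect the argument to take the following shape:
\begin{itemize}
\item $\tcn{r(t)} = 0$ ought to mean that a certain system of polynomial congruences admits a non-trivial solution modulo $t^{u+1} - 1$ (or similar), from which one extracts $u \in \N$ and a non-constant $s(t)$ with $s(t^{u+1}) \mid r(t)$ over a suitable quotient; pulling back to $\Z[t]$ introduces precisely the factors $r(0)$ and $r(1)$ as content corrections.
\item $\discr{r(t)} = 0$ ought to indicate that $r(t)$ and $r(\zeta \cdot t)$ share a root for some root of unity $\zeta$ of order $u+1$, so that the minimal polynomial of that root, viewed as a polynomial in $t^{u+1}$, supplies the required $s(t^{u+1})$ after clearing denominators.
\item $\prodant{r(t)} = 0$ should, in the same spirit, come from a non-trivial factor of $r(t)$ which lies in the subring $\Z[t^{u+1}]$, up to the adjustments by $r(0)$ and $r(1)$.
\end{itemize}

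The bulk of the content, and the main obstacle, lies in these three constructions: translating the vanishing of each invariant into an actual divisibility relation in $\Z[t]$ rather than in $\Q[t]$. The extra factors $r(0)$ and $r(1)$ appearing in the definition of badness are, as far as I can see, placed there precisely to absorb the Gauss-content and leading-coefficient obstructions encountered when descending from $\Q[t]$ to $\Z[t]$; once the invariants have been defined, verifying each implication is a direct computation, and the proposition then follows by contrapositive.
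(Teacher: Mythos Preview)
Your treatment of the factor $r(1)$ (and implicitly $r(0)$ and $r(t)=0$) is correct and matches the paper. Your expectation for $\tcn{r(t)}$ is also essentially on target: Proposition~\ref{PropositionCongNonZero} gives precisely the characterisation you anticipate, namely that $\tcn{r(t)}=0$ forces either $r(0)=0$ or the existence of $u\in\N$ and $s(t)\in\Z[t]\setminus\Z$ with $s(t^{u+1})\mid r(t)$ in $\Z[t]$; either alternative makes $r(t)$ bad.

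However, your conjectured mechanism for $\discr{r(t)}$ and $\prodant{r(t)}$ is off. These two invariants are not subtle witnesses of badness that vanish under some root-of-unity coincidence; they are defined (Definition~\ref{DefDiscProd}) so as to be \emph{automatically} non-zero integers whenever $r(t)\neq 0$. Concretely, $\discr{r(t)}$ is a power of the leading coefficient times $(m-1)!$ times a product $\prod_{i\neq j}(\lambda_i-\lambda_j)^m$ over the \emph{distinct} roots, so every factor is non-zero by construction; and $\prodant{r(t)}$ is a power of the leading coefficient times a product $\prod r(\lambda_i\lambda_j)$ restricted to those pairs with $r(\lambda_i\lambda_j)\neq 0$, again non-zero by construction. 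Lemmas~\ref{LemmaIntegrDiscrast} and~\ref{LemmaResultantProdant} confirm that these quantities lie in $\Z\setminus\{0\}$. So there is no contrapositive argument to run for these two factors: the proposition reduces entirely to $r(1)\neq 0$ (trivial from goodness) and $\tcn{r(t)}\neq 0$ (Proposition~\ref{PropositionCongNonZero}), plus the observation that a good polynomial is not the zero polynomial. Your speculation about shared roots with $r(\zeta t)$ and factors in $\Z[t^{u+1}]$ is not needed and does not reflect what the invariants actually measure.
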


\subsubsection{Structure} 

\begin{theorem} \label{MainTheoremA}
	Let $G$ be a finite group admitting a fix-point-free automorphism  with a good identity $r(t)$. Then either $G$ has $\tcn{r(t)}$-torsion or $G$ is nilpotent.
\end{theorem}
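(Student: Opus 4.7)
The plan is to induct on $|G|$ and to argue the contrapositive: assuming $G$ has no $\tcn{r(t)}$-torsion, I show $G$ is nilpotent. Solvability of $G$ is granted by Rowley's Theorem~\ref{TheoremA}, so Sylow and Hall theory for solvable groups are available throughout. The identity $r(\alpha)=1_G$ and the fix-point-free hypothesis both restrict to every $\alpha$-invariant subgroup of $G$, and for a finite group the condition ``no $\tcn{r(t)}$-torsion'' amounts to $\gcd(|G|,\tcn{r(t)})=1$, which is inherited by subgroups and quotients. The induction hypothesis therefore forces every proper $\alpha$-invariant subgroup of $G$ to be nilpotent.

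The first non-formal step is to pass the hypotheses to proper $\alpha$-invariant quotients $G/N$. The identity descends automatically to $r(\bar\alpha)=1_{G/N}$, but $\bar\alpha$ need not a priori be fix-point-free. Suppose $\bar g\in G/N$ is fixed by $\bar\alpha$, with lift $g\in G$; an easy induction on $i$ gives $\alpha^i(g)\equiv g \pmod{N}$ for all $i\geq 0$, and reducing $r(\alpha)(g)=1_G$ modulo $N$ (noting that the factors $\bar\alpha^i(\bar g)=\bar g$ commute) yields $\bar g^{r(1)}=1_{G/N}$. Hence $\bar g$ has order dividing $r(1)$. Since $\tcn{r(t)}$ (Definition~\ref{DefCongnr}) is constructed so that $r(1)$ divides $\tcn{r(t)}$, and since Proposition~\ref{PropositionGoodPolynomialsVSInvariants} guarantees $\tcn{r(t)}\neq 0$, the absence of $\tcn{r(t)}$-torsion in $G/N$ forces $\bar g=1_{G/N}$. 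Thus $\bar\alpha$ is also fix-point-free, and by induction $G/N$ is nilpotent.

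It remains to deduce nilpotence of $G$ itself. I pick a minimal $\alpha$-invariant normal subgroup $V\trianglelefteq G$; by solvability, $V$ is elementary abelian of some prime exponent $p$. Since $G/V$ is nilpotent by the previous paragraph, $G$ is nilpotent if and only if $V\leq Z(G)$, i.e., $C_G(V)=G$. Writing $V$ additively, the multiplicative identity collapses to $r(\alpha)\cdot V=0$ and the fix-point-free condition becomes invertibility of $\alpha-1$ on $V$, so that $V$ is an $\F_p[\alpha]$-module annihilated by $r(t)$ on which $\alpha-1$ acts invertibly. If $C_G(V)\neq G$, then $H:=G/C_G(V)$ is a non-trivial nilpotent group (as a quotient of $G/V$) acting faithfully on $V$, and minimality of $V$ combined with compatibility of the $H$- and $\alpha$-actions forces this action to be irreducible for the semidirect product $H\rtimes\langle\alpha\rangle$.

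The main obstacle, and the heart of the proof, is the ensuing module-theoretic claim: under the goodness hypothesis on $r(t)$, such a configuration $\bigl(V,\alpha,H\bigr)$ can exist only when a prime divisor of $|H|$ or of $|V|$ divides $\tcn{r(t)}$. This is precisely the design role of the congruence number from Definition~\ref{DefCongnr}; goodness of $r(t)$ is what guarantees via Proposition~\ref{PropositionGoodPolynomialsVSInvariants} that the arithmetic invariants entering this divisibility are non-zero and hence yield a substantive constraint. Once this claim is established, the torsion hypothesis on $G$ is contradicted, so we must have $V\leq Z(G)$, and the induction closes.
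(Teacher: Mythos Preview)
Your outline is broadly on the right track, but it contains one wrong claim and one genuine gap.

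\textbf{The claim that $r(1)\mid\tcn{r(t)}$ is false.} Take $r(t)=\Phi_p(t)=1+t+\cdots+t^{p-1}$ for a prime $p$: this is good, $r(1)=p$, yet $\tcn{\Phi_p(t)}=1$ by Proposition~\ref{PropCongCycloComputed}. So your argument that $\bar\alpha$ is fix-point-free on $G/N$ does not work. Fortunately the conclusion is true for a standard, unrelated reason: the map $\tau\colon x\mapsto x^{-1}\alpha(x)$ is a bijection of $G$ (Lemma~\ref{LemmaRegularPrelims}(i)), so it restricts to a bijection of the $\alpha$-invariant normal subgroup $N$; if $g^{-1}\alpha(g)\in N$ then it equals $n^{-1}\alpha(n)$ for some $n\in N$, whence $\alpha(gn^{-1})=gn^{-1}$ and $g\in N$. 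This is exactly Lemma~\ref{LemmaRegularPrelims}(ii), and the paper simply invokes it.

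\textbf{The module-theoretic claim is the whole theorem, and you have not proved it.} Saying that the configuration $(V,\alpha,H)$ forces a prime of $|H|\cdot|V|$ to divide $\tcn{r(t)}$ because ``this is the design role of the congruence number'' is not an argument. The actual mechanism is concrete and must be carried out: since $H$ is an abelian $q$-group (with $q\neq p$) acting faithfully on $V\cong\F_p^k$, the $H$-action is simultaneously diagonalisable over $\overline{\F}_p$, and because $\alpha$ normalises but does not centralise $H$, the induced operator $f$ permutes the $H$-character spaces nontrivially. If $V_\chi$ is a character space with $f(V_\chi)\neq V_\chi$ and $u\geq 2$ is the least integer with $f^u(V_\chi)=V_\chi$, then for $v\in V_\chi$ the identity $r(f)v=0$ decomposes along the $u$-periodic decomposition $r(t)=\sum_{j}r_{u,j}(t)$ into components in the distinct spaces $V_\chi,f(V_\chi),\ldots,f^{u-1}(V_\chi)$, forcing $r_{u,j}(f)v=0$ for each $j$. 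Since $p\nmid\tcn{r(t)}$, there exist $s_j(t)\in\Z[t]$ with $\sum_j s_j(t)r_{u,j}(t)\equiv 1\pmod p$, and evaluating at $f$ gives $v=0$. This is the content of the paper's Claims~8--10, and without it there is no proof.

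A minor point of comparison: the paper deliberately avoids Rowley's Theorem~\ref{TheoremA} here, first proving the solvable case directly and then reducing the general case to it via Thompson's normal $p$-complement argument; your use of Rowley is correct but makes the result depend on the classification of finite simple groups, which the paper's proof does not.
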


We will prove the theorem by extending classic techniques of Higman \cite{HigmanGroupsAndRings,HigmanLieMethods} and J. Thompson \cite{ThompsonFixPointFreeAutomorphisms} in the context of the Frobenius conjecture.

\begin{theorem} \label{MainTheoremB}
	Let $G$ be a nilpotent group admitting an endomorphism with a good identity $r(t)$, say of degree $d$. Then either $G$ has $(\discr{r(t)} \cdot \prodant{r(t)})$-torsion or $G$ is nilpotent of class $\operatorname{c}(G) \leq {d}^{2^{d}}$.
\end{theorem}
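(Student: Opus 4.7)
The plan is to translate Theorem \ref{MainTheoremB} into a statement about graded Lie rings and then invoke a Higman--Kreknin--Kostrikin-type bound. Since $\operatorname{c}(G)$ equals the nilpotency class of the associated graded Lie ring
\[ L \; := \; \operatorname{gr}(G) \; = \; \bigoplus_{i \geq 1} \gamma_i(G)/\gamma_{i+1}(G), \]
it is enough to bound the latter. The lower central series is stable under $\gamma$, so $\gamma$ descends to a graded Lie-ring endomorphism $\bar\gamma$ of $L$. Because each graded piece is abelian, the (possibly non-monotone) identity $r_1(\gamma)\cdots r_k(\gamma) = 1_G$ of $\gamma$ collapses to the purely additive identity $r(\bar\gamma) = 0$ on $L$.

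Next I would extend scalars so as to diagonalize $\bar\gamma$. By hypothesis $L$ has no $(\discr{r(t)} \cdot \prodant{r(t)})$-torsion, and by Proposition \ref{PropositionGoodPolynomialsVSInvariants} these invariants are nonzero, so one may tensor with the ring $S$ obtained from the ring of integers of the splitting field of $r(t)$ after inverting $\discr{r(t)} \cdot \prodant{r(t)}$. Because $S$ is a free $\Z[1/N]$-module of finite rank (with $N := \discr{r(t)} \cdot \prodant{r(t)}$), the canonical map $L \hookrightarrow L \otimes_{\Z} S$ is injective, so it suffices to prove the class bound in $L \otimes S$. Over $S$ the polynomial factors as $r(t) = \prod_{i=1}^{d} (t - \lambda_i)$ with distinct roots, and invertibility of the discriminant makes $\bar\gamma$ diagonalizable on $L \otimes S$. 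One thus obtains an eigenspace decomposition
\[ L \otimes S \; = \; \bigoplus_{i=1}^{d} L_{\lambda_i}, \qquad [L_{\lambda_i}, L_{\lambda_j}] \; \subseteq \; L_{\lambda_i \lambda_j}, \]
with the convention $L_\mu = 0$ whenever $\mu \notin \{\lambda_1, \ldots, \lambda_d\}$. Goodness of $r(t)$ forces $r(1) \neq 0$, so $1$ is not among the $\lambda_i$ and the identity-indexed component of this grading vanishes.

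The final step is to invoke the following abstract fact, whose classical prototype is the Higman--Kreknin--Kostrikin theorem: any Lie ring graded by a finite multiplicative subset of cardinality $d$ of a commutative ring, with vanishing identity component, is nilpotent of class at most $d^{2^d}$. Applied to $L \otimes S$ and pulled back along the injection $L \hookrightarrow L \otimes S$, this settles the theorem.

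The main obstacle is this last step. The Higman--Kreknin--Kostrikin argument exploits a $\Z/p$-grading whose index set is a cyclic \emph{group}, so that every product of grading labels is again a label; in our setting the labels $\{\lambda_1, \ldots, \lambda_d\}$ form only a finite subset of the multiplicative semigroup of $S$, and products of roots need not be roots. Kreknin's iterative commutator identities must therefore be re-derived in this set-graded context, and the many Vandermonde-type inversions used to project a commutator onto the eigenspaces of $\bar\gamma$ introduce denominators. Controlling these denominators is precisely the role of the invariant $\prodant{r(t)}$, which is why its invertibility must be assumed alongside that of $\discr{r(t)}$.
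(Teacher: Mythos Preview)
Your overall architecture---pass to an associated Lie ring, extend scalars to split $r(t)$, obtain an eigen\-space grading, and invoke a Kreknin--Kostrikin-type bound---is exactly the paper's route via Theorem~\ref{IntroTheoremLie}, Theorem~\ref{TheoremEmbedding}, and Theorem~\ref{TheoremLieAFold}. But three of your steps, as written, do not go through.

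\medskip
\textbf{(1) Torsion in the lower-central quotients.} You assert that ``by hypothesis $L$ has no $(\discr{r(t)}\cdot\prodant{r(t)})$-torsion''. The hypothesis is on $G$, not on $L=\bigoplus\gamma_i(G)/\gamma_{i+1}(G)$, and it does not transfer: a torsion-free nilpotent group can have torsion in its lower-central quotients (e.g.\ $G=\langle a,b,c\mid [a,b]=c^{3},\ c\text{ central}\rangle$ is torsion-free but $G/\gamma_2(G)\cong\Z^2\times\Z/3$). The paper avoids this by using the \emph{isolator} series $\Gamma_n^\ast(G)$ instead, whose successive quotients carry no spurious torsion; this is what makes the embedding $L\hookrightarrow L\otimes S$ legitimate.

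\medskip
\textbf{(2) The roots of $r(t)$ need not be distinct.} You write that over $S$ the polynomial factors ``with distinct roots'' and that invertibility of the discriminant makes $\bar\gamma$ diagonalizable. Good polynomials may well have repeated roots---for instance $r(t)=(t-2)^2$ is good---and $\discr{r(t)}$ is built from the discriminant of the \emph{square-free part} together with a factorial $(m-1)!$, not from $\operatorname{Discr}(r(t))$ itself. So $\bar\gamma$ is in general not diagonalizable; one only has generalized eigenspaces. The paper's Theorem~\ref{TheoremEmbedding} handles this: it works modulo the $(\discr{r(t)}\cdot\prodant{r(t)})$-torsion ideal, uses the binomial commutator formula (Lemma~\ref{LemmaBinomialCommutatorFormula}) to show that the generalized eigenspaces $E_\lambda$ still satisfy $[E_\lambda,E_\mu]\subseteq E_{\lambda\mu}$, and uses a partial-fraction argument to show that $\discr{r(t)}\cdot L$ lands inside $\sum_\lambda E_\lambda$.

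\medskip
\textbf{(3) ``Vanishing identity component'' is not enough.} The abstract fact you invoke---nilpotency of class $\le d^{2^d}$ for any Lie ring graded by a $d$-element set with $1\notin X$---is not the correct hypothesis. What Theorem~\ref{TheoremLieAFold} actually requires is that the support $X$ be \emph{arithmetically-free}: no progression $\lambda,\lambda\mu,\ldots,\lambda\mu^{|X|}$ with $\lambda,\mu\in X$ lies entirely in $X$. This is strictly stronger than $1\notin X$ (take $X=\{-1,i,-i\}\subset\C^\times$), and it is exactly what drives the ``escape'' mechanism of Proposition~\ref{Escape}. The bridge you are missing is Corollary~\ref{CorollaryGoodImpliesAF}: goodness of $r(t)$ forces its root set to be arithmetically-free in $(\overline{\Q}^\times,\cdot)$. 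Once you insert that, and fix (1) and (2), your sketch coincides with the paper's proof.
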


We will prove the theorem with various techniques from the theory of Lie rings. We will prove, in particular, the following auxiliary result.

\begin{theorem} \label{IntroTheoremLie} Consider a Lie ring $L$, together with an endomorphism $\map{\gamma}{L}{L}$ of the Lie ring, and a good polynomial $r(t) \in \Z[t]$ satisfying $r(\gamma) = 0_L$. Suppose $(L,+)$ has no $(\discr{r(t)} \cdot \prodant{r(t)})$-torsion. Then $G$ is nilpotent of class $\operatorname{c}(G) \leq d^{2^d}$, where $d = \deg(r(t))$. \end{theorem}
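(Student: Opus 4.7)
The plan is to reduce the statement to a Kreknin--Kostrikin type bound for graded Lie rings by passing to an extension where $\gamma$ diagonalizes. Let $N := \discr{r(t)} \cdot \prodant{r(t)}$. Since $L$ has no $N$-torsion, the canonical map $L \to L \otimes_\Z \Z[1/N]$ is injective. Let $K$ be a splitting field of $r(t)$ over $\Q$, and write $r(t) = a_d \prod_{i=1}^d (t-\lambda_i)$; the roots are pairwise distinct because $\discr{r(t)} \neq 0$ (Proposition \ref{PropositionGoodPolynomialsVSInvariants}). Enlarge $\Z[1/N]$ to a ring $R \subset K$ containing the $\lambda_i$ and the inverses of the Vandermonde differences $\lambda_i - \lambda_j$, and set $L' := L \otimes_\Z R$. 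Then $L \hookrightarrow L'$ and $\gamma$ extends to a Lie ring endomorphism of $L'$ still satisfying $r(\gamma) = 0_{L'}$.

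The Lagrange projectors $\pi_i := \prod_{j \neq i} (\gamma - \lambda_j\id)/(\lambda_i - \lambda_j)$ are now well defined on $L'$ and decompose it into eigenspaces $L' = L'_{\lambda_1} \oplus \cdots \oplus L'_{\lambda_d}$, where $L'_{\lambda_i} = \ker(\gamma - \lambda_i\id)$. Because $\gamma$ is a Lie ring endomorphism, $\gamma([x,y]) = [\gamma(x),\gamma(y)]$, so for eigenvectors one gets $[L'_{\lambda_i}, L'_{\lambda_j}] \subseteq \ker(\gamma - \lambda_i \lambda_j \id)$; and this kernel is trivial unless $\lambda_i\lambda_j \in \{\lambda_1, \ldots, \lambda_d\}$, since the minimal polynomial of $\gamma$ divides $r(t)$. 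In particular, as $r(1) \neq 0$ (again by Proposition \ref{PropositionGoodPolynomialsVSInvariants}), the value $1$ is not among the $\lambda_i$, so the ``identity component'' of this grading is absent.

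This puts $L'$ in exactly the setting of a generalized Kreknin--Kostrikin theorem: a Lie ring equipped with a grading indexed by $d$ multiplicative labels (a subset of $R^\times$), in which the identity label does not appear, and in which brackets that would project outside the set of labels vanish. One concludes that $L'$ is nilpotent of class at most $d^{2^d}$, matching the classical bound $(p-1)^{2^{(p-1)}}$ that corresponds to $r(t) = 1 + t + \cdots + t^{p-1}$. Nilpotency of $L$ with the same class bound then follows from the embedding $L \hookrightarrow L'$.

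The main obstacle is the final step: establishing the Kreknin--Kostrikin bound for a grading by a \emph{multiplicative} set of $d$ algebraic numbers (rather than a cyclic group of prime order) with missing identity component. The Kreknin half -- that a suitable iterated commutator expresses every element as a sum of commutators whose indices multiply to $1$ and hence vanish -- is essentially formal and goes through as soon as the identity component is trivial. The Higman--Kostrikin bootstrap from solvability to nilpotency with the explicit bound $d^{2^d}$ then requires controlling the various multiplicative relations among the $\lambda_i$'s; it is precisely here that the role of the invariant $\prodant{r(t)}$ enters, ensuring (via the ``goodness'' of $r$) that no ``accidental'' products among eigenvalues create obstructions, and justifying that the abelian-semigroup Kreknin machinery applies verbatim.
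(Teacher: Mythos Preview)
Your overall strategy matches the paper's: extend scalars, decompose into eigenspaces for $\gamma$, obtain a multiplicative grading supported on the root set $X$, and then invoke a Kreknin--Kostrikin type theorem for such gradings. Two points, however, need correction.

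First, a technical error: your claim that the roots are pairwise distinct because $\discr{r(t)} \neq 0$ is false. By definition, $\discr{r(t)}$ is built from the \emph{distinct} roots $\lambda_1,\ldots,\lambda_l$ (with the factor $(m-1)!$ recording the maximal multiplicity $m$), so it is automatically nonzero for any nonconstant $r(t)$; good polynomials may well have repeated roots. The paper handles this by working with generalized eigenspaces $E_\lambda$ modulo a torsion ideal (Theorem~\ref{TheoremEmbedding}), and this is also where the partial-fraction identity justifying $\discr{r(t)}$ (Claim~4 there) genuinely enters. Your Lagrange-interpolation projectors only work in the semisimple case.

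Second, and more importantly, the gap you flag at the end is real and your sketch does not close it. Having $1\notin X$ yields solvability (Shalev's extension of Kreknin), but it does \emph{not} by itself give the nilpotency bound $d^{2^d}$: the paper's argument (Proposition~\ref{Escape} and its consequences) needs the stronger combinatorial fact that $X$ is \emph{arithmetically-free} in $(\overline{\Q}^\times,\cdot)$, i.e.\ contains no progression $\lambda,\lambda\mu,\ldots,\lambda\mu^{|X|}$ with $\lambda,\mu\in X$. This follows from goodness of $r(t)$ (Corollary~\ref{CorollaryGoodImpliesAF}), not from $\prodant{r(t)}$. You have also swapped the roles of the invariants: $\prodant{r(t)}$ is what makes each nonzero $r(\lambda_i\lambda_j)$ a unit in your ring $R$, which is exactly what you need (and implicitly use) to conclude $\ker(\gamma-\lambda_i\lambda_j\,\id)=0$ when $\lambda_i\lambda_j\notin X$; it plays no further role once the grading is established.
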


By combining theorems \ref{MainTheoremA} and \ref{MainTheoremB}, we immediately obtain:

\begin{corollary}[Bad torsion or bounded nilpotency] \label{MainCorollaryNoClassification} \label{CorollCompactMainThm}
	Let $G$ be a finite group admitting a fix-point-free autorphism with a good identity $r(t)$, say of degree $d$. Then either $G$ has $\tcn{r(t)}$-torsion or $G$ is of the form $G \cong C \times D,$ where $C$ is a nilpotent $(\discr{r(t)} \cdot \prodant{r(t)})$-group and $D$ is a nilpotent group of class at most $d^{2^{d}}$.
\end{corollary}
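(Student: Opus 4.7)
The plan is straightforward: the corollary repackages Theorems~\ref{MainTheoremA} and~\ref{MainTheoremB} via the Hall decomposition of a finite nilpotent group, so the strategy is simply to chain the two theorems together.

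First, I would apply Theorem~\ref{MainTheoremA} to the pair $(G,\alpha)$. If $G$ has $\tcn{r(t)}$-torsion then the first alternative of the corollary is already realized and we are done; otherwise Theorem~\ref{MainTheoremA} upgrades the hypothesis to ``$G$ is nilpotent'' and we continue.

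Second, I would set $m := \discr{r(t)} \cdot \prodant{r(t)}$, which is nonzero by Proposition~\ref{PropositionGoodPolynomialsVSInvariants}, and write $\pi := \pi(m)$ for the set of primes dividing $m$. Since $G$ is now a finite nilpotent group, it splits as the internal direct product of its Hall $\pi$- and $\pi'$-subgroups, giving the decomposition $G = C \times D$ with $C$ a nilpotent $m$-group and $D$ a nilpotent group having no $m$-torsion. Both $C$ and $D$ are characteristic in $G$, so $\alpha$ preserves each factor; the restriction $\alpha_D := \alpha|_D$ is still fix-point-free and still satisfies the identity $r(t)$, because the condition $r(\alpha) = 1_G$ is pointwise and hence trivially inherited by $\alpha$-stable subgroups. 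Then I would apply Theorem~\ref{MainTheoremB} to the pair $(D,\alpha_D)$: since $D$ is nilpotent, admits an endomorphism with good identity $r(t)$, and has no $m$-torsion, the theorem forces $\operatorname{c}(D) \leq d^{2^d}$, as required.

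I do not foresee a real obstacle here: the mathematical content lives entirely inside Theorems~\ref{MainTheoremA} and~\ref{MainTheoremB}, and all remaining ingredients (non-vanishing of the invariants via Proposition~\ref{PropositionGoodPolynomialsVSInvariants}, the Hall decomposition, characteristicness of Hall subgroups in a nilpotent group, and hereditability of the identity along restrictions) are standard. The one place I would pause to double-check is that $r(t)$ remains an identity on the $\alpha$-stable subgroup $D$ in the sense of the paper's definition, but this is immediate: every $d \in D$ already lies in $G$, and the product $d^{a_0} \cdot \alpha(d^{a_1}) \cdots \alpha^d(d^{a_d})$ evaluated in $D$ agrees with the same product evaluated in $G$ and is therefore trivial.
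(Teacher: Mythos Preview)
Your proposal is correct and matches the paper's own argument, which simply states that the corollary follows immediately by combining Theorems~\ref{MainTheoremA} and~\ref{MainTheoremB}. One minor remark: in your final check you only verify inheritance of a \emph{monotone} identity, whereas the paper's notion of identity allows a decomposition $r(t)=r_1(t)+\cdots+r_k(t)$ with $r_1(\alpha)\cdots r_k(\alpha)=1_G$; but the same argument applies verbatim since $D$ is $\alpha$-invariant, so this is not a gap.
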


None of the above results depends on the classification of the finite simple groups. But, by using the classification, we can obtain a stronger result.

\begin{theorem}[Bounded nilpotency modulo bad torsion] \label{MainCorollaryWithClassification} Let $G$ be a finite group admitting a fix-point-free automorphism with a good identity $r(t)$. Then $G$ is solvable and of the form $G \cong B \rtimes (C \times D),$ where $B$ is a solvable $\tcn{r(t)}$-group, $C$ is a nilpotent $(\discr{r(t)} \cdot \prodant{r(t)})$-group, and $D$ is a nilpotent group of class at most $d^{2^d}$. \end{theorem}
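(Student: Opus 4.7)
The plan is to combine Rowley's theorem (which contributes the classification-dependent fact that $G$ is solvable) with the classification-free Corollary \ref{MainCorollaryNoClassification} applied to a well-chosen quotient. Specifically, the goal is to peel off, as a normal $\alpha$-invariant subgroup $B$, all of the ``bad torsion,'' identify the quotient with $C \times D$ via Corollary \ref{MainCorollaryNoClassification}, and split the resulting short exact sequence via Schur--Zassenhaus.

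First I would apply Theorem \ref{TheoremA} to obtain that $G$ is solvable; this is the only step in the whole argument that uses the classification. Let $\pi$ denote the finite set of rational primes dividing $\tcn{r(t)}$, which is nonzero by Proposition \ref{PropositionGoodPolynomialsVSInvariants}. The next step is to construct an $\alpha$-invariant \emph{normal} Hall $\pi$-subgroup $B$ of $G$. Since $G$ is solvable and admits a fix-point-free automorphism of finite order, standard Hall--Higman coprime-action arguments supply an $\alpha$-invariant Sylow basis, hence an $\alpha$-invariant Hall $\pi$-subgroup and an $\alpha$-invariant Hall $\pi'$-subgroup $H$. To upgrade the $\pi$-Hall subgroup to a \emph{normal} one, I would proceed by induction on $|G|$: pick a minimal $\alpha$-invariant normal subgroup $M \trianglelefteq G$, which by solvability is an elementary abelian $p$-group. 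If $p \in \pi$, apply the inductive hypothesis to $G/M$ and pull back the resulting normal $\pi$-subgroup together with $M$; if $p \notin \pi$, apply the inductive hypothesis to $G/M$ and lift the resulting normal $\pi$-subgroup past $M$ by Schur--Zassenhaus.

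Once $B \trianglelefteq G$ has been produced, so that $\bar G := G/B$ has no $\tcn{r(t)}$-torsion, Corollary \ref{MainCorollaryNoClassification} applied to $\bar G$ (endowed with the induced fix-point-free automorphism and the identity $r(t)$) yields $\bar G \cong C \times D$ with $C$ a nilpotent $(\discr{r(t)} \cdot \prodant{r(t)})$-group and $D$ a nilpotent group of class at most $d^{2^d}$. Since $B$ is a normal Hall subgroup of $G$, Schur--Zassenhaus produces a complement isomorphic to $\bar G$, giving the required decomposition $G \cong B \rtimes (C \times D)$.

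The hard part will be propagating the two standing hypotheses---fix-point-freeness of $\alpha$ and the identity $r(t)$---through the inductive reduction to $G/M$, since in general the image of a fix-point-free automorphism on a quotient need not be fix-point-free. This is where the good-polynomial structure of $r(t)$, and in particular the divisibility control exerted by $\tcn{r(t)}$ over the orders of elements involved, should play a decisive role, likely via an analysis of how $r(\alpha)$ operates along the successive $\alpha$-invariant normal subgroups furnished by solvability; the preceding Theorem \ref{MainTheoremA} is precisely the ingredient that converts ``no $\tcn{r(t)}$-torsion on a quotient'' into nilpotence of that quotient, and should be the source of the correct bookkeeping.
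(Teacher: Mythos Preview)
Your overall architecture matches the paper's: apply Rowley, produce a normal Hall $\pi$-subgroup $B$ for $\pi$ the prime divisors of $\tcn{r(t)}$, apply Corollary~\ref{MainCorollaryNoClassification} to the $\pi'$-part, and split by Schur--Zassenhaus. The paper does exactly this, invoking Theorem~\ref{CorollaryDecompositionKN} to obtain $B$ as a \emph{characteristic} $\tcn{r(t)}$-subgroup with nilpotent $\tcn{r(t)}'$-quotient, then choosing an $\alpha$-invariant complement via Lemma~\ref{LemmaRegularPrelims}(iv).

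However, your inductive construction of $B$ has a genuine gap in the case $p\notin\pi$. If $\bar B\trianglelefteq G/M$ is a normal Hall $\pi$-subgroup, its preimage $N\trianglelefteq G$ satisfies $N=M\rtimes B_0$ by Schur--Zassenhaus, but there is no reason for $B_0$ to be normal in $G$: that would force $B_0$ to centralize $M$, which nothing in your argument ensures. (Abstractly, think of $S_3$ with $M=A_3$ and $\pi=\{2\}$.) A solvable group need not have a normal Hall $\pi$-subgroup for an arbitrary $\pi$, so the existence of $B$ is not a formality; it is precisely the content of Theorem~\ref{CorollaryDecompositionKN}, whose proof (Claims~1--10) uses the periodic decomposition $r(t)=\sum_j r_{u,j}(t)$ and the definition of $\tcn{r(t)}$ in an essential way at the minimal-counterexample stage. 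Your closing paragraph gestures at Theorem~\ref{MainTheoremA}, but that theorem only tells you that a $\pi'$-section is nilpotent; it does not by itself produce a \emph{normal} $\pi$-Hall subgroup, and you still have to invoke Theorem~\ref{CorollaryDecompositionKN}.

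One further remark: your worry that fix-point-freeness might not pass to $\alpha$-invariant quotients is misplaced in the finite setting. Lemma~\ref{LemmaRegularPrelims}(ii) gives this for any $\alpha$-invariant section, so propagating the hypotheses through the reduction is routine; the hard part is exactly the normality of $B$, not the behaviour of $\alpha$.
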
 

With a little more work, we can extend the theorem to identities that are irreducible over $\Q$: see corollary \ref{CorollIrreducibleConstants}. Since not all irreducible polynomials are good, one could hope to further extend theorem \ref{MainCorollaryWithClassification} \emph{all} polynomials. But this is not possible. We will comment on this in the closing remarks.

\subsubsection{Existence} 

Meta-Problem \ref{MetaProblem} raises two obvious questions: which fix-point-free automorphisms of a group with ``good structure'' have a (monic) identity, and conversely, which (monic) polynomials are an identity of a fix-point-free automorphism of a group with ``good structure''? It is easy to answer these questions if we interpret ``good structure'' to mean ``finitely-generated and nilpotent.'' In fact, a straightforward generalization of the theorem of Cayley---Hamilton shows:

\begin{proposition}[Existence of $r(t)$] \label{PropIntroExistenceIdentities}
	Consider a finitely-generated, nilpotent group $G$. Then every endomorphism $\map{\gamma}{G}{G}$ has a monic identity $r(t)$ of degree  $ \operatorname{deg}(r(t)) \leq \operatorname{Hirsch}(G) + |\operatorname{Tors}(G)|$.
\end{proposition}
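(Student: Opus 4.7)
My plan is to build the identity $r(t)$ by multiplying monic polynomials produced, via a Cayley--Hamilton argument for finitely generated modules, on each abelian quotient of a $\gamma$-invariant central series of $G$. Concretely, I would concatenate the lower central series of the characteristic subgroup $T := \operatorname{Tors}(G)$ with the preimages (under $G \to G/T$) of the lower central series of the torsion-free finitely generated nilpotent group $G/T$. The resulting combined series is $\gamma$-invariant because any endomorphism preserves each term of a lower central series (endomorphisms respect commutator brackets). Its successive quotients $A_1, \ldots, A_N$ are all finitely generated abelian: those from the $T$-part are finite, and those from the $G/T$-part are isomorphic to $\Z^{r_i}$, by the classical fact that a torsion-free finitely generated nilpotent group has torsion-free lower central factors.

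On each $A_i$, the induced endomorphism $\bar\gamma_i$ satisfies a monic polynomial $p_i(t) \in \Z[t]$ (i.e., $p_i(\bar\gamma_i) = 0$) whose degree equals the minimum number of generators $d(A_i)$: pick generators $m_1, \ldots, m_{d(A_i)}$, write $\bar\gamma_i(m_j) = \sum_k c_{jk} m_k$, and multiply the equation $(\bar\gamma_i I - C)\vec m = \vec 0$ inside the commutative ring $\Z[\bar\gamma_i]$ by the classical adjugate. Set $r(t) := p_1(t) p_2(t) \cdots p_N(t)$. A straightforward induction on $k$ shows that for every $x \in G$, the nested word $\bigl( \cdots \bigl( x^{p_1(\gamma)} \bigr)^{p_2(\gamma)} \cdots \bigr)^{p_k(\gamma)}$ lies in the $k$-th term of the combined series, and so equals $1_G$ at $k = N$. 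Expanding this nested word as a concatenation of signed monomials $\gamma^\ell(x)^{\pm c}$ exhibits a decomposition $r(t) = r_1(t) + \cdots + r_m(t)$ for which $r_1(\gamma) \cdots r_m(\gamma)$ evaluates to the trivial word, which is precisely the paper's definition of $r(t)$ being an identity of $\gamma$. The degree bound follows from $\deg r = \sum_i d(A_i) \leq \operatorname{Hirsch}(G) + \log_2 |T|$, using $d(\Z^r) = r$, $d(A) \leq \log_2 |A|$ for a finite abelian $A$, and the crude estimate $\log_2 |T| \leq |T|$; monicity of $r$ is inherited from the $p_i$.

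The main obstacle I expect is the bookkeeping for the word-expansion step: the non-abelianity of $G$ prevents collapsing the nested word into a single monotone expression $x^{r(\gamma)}$, so one must use the definition's flexibility that allows arbitrary sum decompositions $r(t) = \sum_\ell r_\ell(t)$ with the $r_\ell(\gamma)$ applied to $x$ and then multiplied in a prescribed order. The key identity $\gamma^i(x^{q(\gamma)}) = \prod_j \gamma^{i+j}(x^{q_j})$ together with the literal expansion of $(g_1 \cdots g_s)^{p_i}$ as $|p_i|$ concatenated (possibly inverted) copies shows, inductively on the depth of nesting, that each layer of substitution contributes exactly the polynomial factor $p_i(t) \cdot q(t)$ to the aggregate coefficient sum, yielding the desired decomposition of $r(t)$.
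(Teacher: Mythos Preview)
Your overall strategy matches the paper's: build a $\gamma$-invariant subnormal series through $T = \operatorname{Tors}(G)$, run Cayley--Hamilton on each abelian factor, and multiply the resulting monic polynomials; the paper packages your word-expansion step as a general lemma on composition of polynomial maps together with a lemma on composing identities along an invariant series. There is, however, a genuine gap in the degree bound on the $G/T$-part.

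The assertion that ``a torsion-free finitely generated nilpotent group has torsion-free lower central factors'' is false. Take $Q = \langle a, b, c \mid [a,b] = c^{2},\ [a,c] = [b,c] = 1 \rangle$, realised as the subgroup of the rational Heisenberg group generated by the standard integer generators together with a square root of the central commutator. Then $Q$ is torsion-free, but $\Gamma_{2}(Q) = \langle c^{2} \rangle$ and $Q/\Gamma_{2}(Q) \cong \Z^{2} \times \Z/2\Z$. On this factor your module Cayley--Hamilton polynomial has degree $d(A_i) = 3$, not the free rank $2$, so $\sum_i d(A_i)$ over the $G/T$-part can strictly exceed $\operatorname{Hirsch}(G)$ and your inequality $\deg r \leq \operatorname{Hirsch}(G) + \log_{2}|T|$ breaks. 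The paper avoids this by replacing each lower-central term $\Gamma_i(Q)$ with its isolator $\Gamma_i^{\ast}(Q)$ in $Q$: the series $(\Gamma_i^{\ast}(Q))_i$ is still fully invariant (hence $\gamma$-invariant) and its successive factors are genuinely free-abelian of ranks summing to $\operatorname{Hirsch}(Q) = \operatorname{Hirsch}(G)$. With that single correction your argument goes through and coincides with the paper's.
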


Here, $\operatorname{Hirsch}(G)$ is the Hirsch-length of $G$ and $\operatorname{Tors}(G)$ is the torsion-subgroup of $G$. Lie-theoretic techniques of Higman and the classic Mal$'$cev-correspondence give a quantitative answer to the second question:

\begin{proposition}[Existence of $G$ and $\gamma$] \label{PropLongProgLieAlgGroup} Consider a monic polynomial $r(t) \in \Z[t] \setminus \{ 1 \}$. Let $\lambda$ and $\mu$ be roots of $r(t)$ in $\overline{\Q}$, and let $k$ be any natural number satisfying $r(\lambda) = r(\lambda \cdot \mu) = r(\lambda \cdot \mu^2) = \cdots = r(\lambda \cdot \mu^{k-1}) = 0.$ 
	\begin{enumerate}[(i.)]
		\item Then there is a finitely-generated, torsion-free, $k$-step nilpotent group $G$ admitting an endomorphism $\map{\gamma}{G}{G}$ that has $r(t)^k$ as an identity.
		\item For every prime $p$, there is a finite, $k$-step nilpotent $p$-group $G$ admitting an endomorphism $\map{\gamma}{G}{G}$ that has $r(t)^k$ as an identity. If $r(0) \cdot r(1) \not \equiv 0  \mod p$, then such a $\gamma$ must be a fix-point-free automorphism.
	\end{enumerate}
\end{proposition}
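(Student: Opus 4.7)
The plan is to reduce both parts to a Lie-theoretic existence statement and then invoke a suitable variant of the Mal$'$cev correspondence to pass to a group. The key observation is that if $L$ is a $k$-step nilpotent Lie ring with endomorphism $\gamma$ satisfying the stronger identity $r(\gamma) = 0_L$, then the corresponding endomorphism $\tilde\gamma$ of the integrated group $G$ will satisfy $r(\tilde\gamma)^k = 1_G$; the extra exponent $k$ is the price paid for passing through the Baker--Campbell--Hausdorff formula.

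\textbf{Construction of the Lie ring.} I would take $O := \Z[\lambda, \mu] \subseteq \overline{\Z}$ and build the free $O$-module $L := O \cdot x \oplus O \cdot y_0 \oplus \cdots \oplus O \cdot y_{k-1}$ of $O$-rank $k+1$, with Lie bracket determined by $[x, y_i] := y_{i+1}$ for $0 \leq i \leq k-2$, by $[x, y_{k-1}] := 0$, and by $[y_i, y_j] := 0$ for all $i, j$. A direct inspection of the lower central series shows $L^k = O \cdot y_{k-1} \neq 0$ and $L^{k+1} = 0$, so $L$ is nilpotent of class exactly $k$; viewed as a $\Z$-module it is free of finite rank $(k+1) \cdot \operatorname{rank}_{\Z}(O)$, hence torsion-free. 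I then define the $\Z$-linear endomorphism $\gamma$ by $\gamma(x) := \mu \cdot x$ and $\gamma(y_i) := (\lambda \mu^i) \cdot y_i$; compatibility with the bracket follows from $\gamma([x, y_i]) = \lambda \mu^{i+1} \cdot y_{i+1} = [\gamma(x), \gamma(y_i)]$, and since every basis element is a $\gamma$-eigenvector whose eigenvalue lies in $\{\mu, \lambda, \lambda \mu, \ldots, \lambda \mu^{k-1}\}$, each of which is a root of $r(t)$ by hypothesis, we obtain $r(\gamma) = 0_L$.

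\textbf{Integration to a group.} For part (i.), the Mal$'$cev correspondence integrates $(L, \gamma)$ to a finitely-generated, torsion-free, $k$-step nilpotent group $(G, \tilde\gamma)$ realised inside the BCH-completion $L \otimes \Q$. For part (ii.), I would repeat the construction modulo $p^N$ for a sufficiently large $N$: the bracket constants and the matrix of $\gamma$ descend to the finite $p$-Lie ring $L / p^N L$, whose Lazard or $\Z_p$-Mal$'$cev integration produces a finite, $k$-step nilpotent $p$-group. The eigenvalues of $\gamma$ are all roots of $r(t)$, from which a short computation shows that $\det(\gamma)$ and $\det(\gamma - \operatorname{id}_L)$, viewed as $\Z$-linear endomorphisms, divide powers of the integers $r(0)$ and $r(1)$ respectively. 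Hence the assumption $p \nmid r(0) \cdot r(1)$ ensures that both $\gamma$ and $\gamma - \operatorname{id}$ remain invertible on $L / p^N L$, which translates into $\tilde\gamma$ being a fix-point-free automorphism of the resulting finite $p$-group.

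\textbf{Lifting the identity through BCH.} The main obstacle is promoting the Lie-theoretic identity $r(\gamma) = 0_L$ to the group identity $r(\tilde\gamma)^k = 1_G$, which I would handle by induction on the lower central series. For $g = \exp(v)$ with $v \in L^j$, the BCH formula gives
\[ r(\tilde\gamma)(g) = \exp\!\left( r(\gamma) v + T(v) \right), \]
where $T(v)$ is a Lie polynomial of degree $\geq 2$ in the elements $\gamma^i v$. Each of these lies in $L^j$, and every $m$-fold bracket of elements of $L^j$ lies in $L^{mj}$ (by repeated application of $[L^a, L^b] \subseteq L^{a+b}$), so $T(v) \in L^{2j}$. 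The linear term $r(\gamma) v$ vanishes by the Lie identity, hence a single application of $r(\tilde\gamma)$ moves $\exp(L^j)$ into $\exp(L^{2j})$; iterating $n$ times lands us in $\exp(L^{2^n})$. Since $2^k \geq k+1$ for every $k \geq 1$, the choice $n = k$ forces the image into $\exp(L^{k+1}) = \{1_G\}$, yielding the desired identity $r(\tilde\gamma)^k = 1_G$.
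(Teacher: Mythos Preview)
Your Lie-ring construction is pleasantly explicit and different from the paper's: instead of quotienting a free $k$-step nilpotent Lie algebra on $2d$ generators by the ideal generated by $r(\alpha)(F)$, you build a hand-made $(k+1)$-dimensional $O$-Lie ring on which $\gamma$ is diagonal with eigenvalues exactly the prescribed progression. The BCH argument that a single application of the monotone map $r(\tilde\gamma)$ sends $\exp(L^{j})$ into $\exp(L^{2j})$ is also a clean alternative to the paper's use of the ``identity on each lower-central factor $\Rightarrow$ product identity'' lemma, and in fact gives the slightly stronger bound $r(t)^{\lceil\log_2(k+1)\rceil}$. For part (i.) you are somewhat cavalier about extracting a finitely-generated $\tilde\gamma$-invariant lattice from the divisible group $\exp(L\otimes\Q)$; the subgroup generated by $\exp(L)$ does the job, but this deserves a sentence (the paper invokes a theorem of Der\'e for the analogous step).

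The genuine gap is in part (ii.). The Lazard correspondence between finite $p$-Lie rings and finite $p$-groups requires the nilpotency class to be strictly less than $p$; when $k\geq p$ there is no such functor, and ``$\Z_p$-Mal$'$cev integration'' does not rescue the situation for finite quotients of large class. So your route from $L/p^{N}L$ to a group breaks down precisely in the interesting range. The paper avoids this entirely: once the torsion-free group $N$ of part (i.) is in hand, Gruenberg's theorem says $N$ is residually a finite $p$-group for \emph{every} prime $p$, so one simply takes a characteristic subgroup $S$ of $p$-power index with $N/S$ still of class $k$, and the identity and the fix-point-free property descend to the quotient (the latter via the one-line observation that $\gamma(x)=x$ forces $x^{r(1)^k}=1$, so $x=1$ when $p\nmid r(1)$). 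You could repair your argument the same way, deriving (ii.) from (i.) rather than rebuilding modulo $p^{N}$.
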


Note that we can always make the minimal choice $k := 1$. And, since natural powers $r(t)^k$ of good polynomials $r(t)$ are again good, we may conclude that \emph{every good polynomial of positive degree is an identity of some fix-point-free automorphism of some finite, nilpotent group of non-trivial class and of almost-arbitrary torsion}.

\subsection{Outline}

In {section \ref{SectionExistenceOfIdentities}}, we prove the existence of identities and endomorphisms in the context of nilpotent groups. In section \ref{SectionInvars}, we define the invariants $\tcn{r(t)}, \discr{r(t)},\prodant{r(t)}$ and we study their basic properties. We then illustrate these results in section \ref{SectionExamplesForLater} by means of examples. Next, we prove our auxiliary results about Lie rings in section \ref{SectionLieRingsAux}. In section \ref{SectionStructureForGroups}, we prove our main results and corollaries. We then specialize our results to linear, cyclotomic, and Anosov polynomials in section \ref{SectionExamples}. We conclude with some remarks.

\section{Existence} \label{SectionExistenceOfIdentities}

\subsection{Preliminaries}

We begin with a simple observation.

\begin{lemma}[Composition of polynomial maps] \label{LemmaComposition} Let $m,k_1,\ldots,k_m \in \N$ and let $$u_{(1,1)}(t),\ldots,u_{(1,k_1)}(t),\ldots,u_{(m,1)}(t),\ldots,u_{(m,k_m)}(t)$$ be polynomials with integer coefficients. Then there exists an $n \in \N$ and polynomials $s_1(t),\ldots,s_n(t) \in \Z[t]$ such that $$\prod_{1 \leq j \leq m} \sum_{1 \leq i \leq k_j} u_{(j,i)}(t) = s_1(t) + \cdots + s_n(t)$$ and such that for all group endomorphisms $\map{\gamma}{G}{G}$, we have the equality of maps \begin{equation} \label{EqDecoMap}  (u_{(m,1)}(\gamma) \cdots u_{(m,k_m)}(\gamma)) \circ \cdots \circ (u_{(1,1)}(\gamma) \cdots u_{(1,k_1)}(\gamma)) = s_1(\gamma) \cdots s_n(\gamma) .\end{equation} In particular: if \eqref{EqDecoMap} vanishes identically, then $ \prod_j \sum_i u_{(j,i)}(t)$ is an identity of $\gamma$. \end{lemma}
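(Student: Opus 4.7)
The plan is to induct on $m$, using a ``counting polynomial'' as bookkeeping. Concretely, I would interpret every expression $(s_1(\gamma) \cdots s_N(\gamma))(x)$ as a formal word in the letters $\gamma^i(x)^{\pm 1}$ (expanding each factor $\gamma^\ell(x)^{c_\ell}$ into $|c_\ell|$ copies of $\gamma^\ell(x)^{\pm 1}$ of the appropriate sign), and attach to such a word its \emph{counting polynomial}, obtained by replacing every letter $\gamma^i(x)^{\epsilon}$ with $\epsilon \cdot t^i$ and summing in $\Z[t]$. Under this correspondence, polynomials $s_k(t) = \epsilon_k \cdot t^{i_k}$ that are monomials of coefficient $\pm 1$ correspond bijectively to the letters of the word, and the condition $\sum_k s_k(t) = r(t)$ records precisely that the word has counting polynomial $r(t)$.

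The base case $m = 1$ is immediate with $n := k_1$ and $s_i := u_{(1,i)}$. For the inductive step I would first record a few elementary facts: concatenation of two words adds their counting polynomials; inversion of a word negates its counting polynomial; applying $\gamma^k$ letter-by-letter multiplies the counting polynomial by $t^k$; and consequently, if $y$ is a word in the letters $\gamma^i(x)^{\pm 1}$ with counting polynomial $P(t)$, then $u(\gamma)(y)$ expands into a word with counting polynomial $u(t) \cdot P(t)$. By the inductive hypothesis there exist $p_1, \ldots, p_N \in \Z[t]$ whose sum is $\prod_{j < m} \sum_i u_{(j,i)}(t)$ and for which the composition of the first $m-1$ layers equals $p_1(\gamma) \cdots p_N(\gamma)$. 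Applying the final layer $u_{(m,1)}(\gamma) \cdots u_{(m,k_m)}(\gamma)$ to $y := p_1(\gamma)(x) \cdots p_N(\gamma)(x)$ then produces a word $y'$ whose counting polynomial is $\bigl( \sum_i u_{(m,i)}(t) \bigr) \cdot \bigl( \sum_\ell p_\ell(t) \bigr) = \prod_{j = 1}^m \sum_i u_{(j,i)}(t)$. Reading $y'$ letter by letter then yields the desired monomials $s_1, \ldots, s_n$, and the construction depends only on the input polynomials, so the decomposition is universal in $\gamma$ and $G$.

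The main conceptual obstacle is resisting the false temptation to collapse the entire composition into a single map $r(\gamma)$ with $r(t) = \prod_j \sum_i u_{(j,i)}(t)$: non-commutativity of $G$ forbids this, since $(ab)^c \neq a^c b^c$ already implies $u(\gamma)(ab) \neq u(\gamma)(a) \cdot u(\gamma)(b)$ in general. The counting polynomial lives in the abelian world $\Z[t]$ and records exactly what survives abelianization, while the actual group element is tracked faithfully as an ordered word; the lemma's mild phrasing — a decomposition into a \emph{product} of polynomial maps rather than a single polynomial map — is exactly what makes this word-level bookkeeping sufficient.
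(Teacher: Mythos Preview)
Your proposal is correct and follows exactly the approach the paper indicates: the paper states only that ``the proof is a straight-forward induction on $m \in \N$, and we leave it to the reader,'' and your argument is precisely that induction, carried out with a clean bookkeeping device (the counting polynomial) that tracks the abelianized content while the ordered word tracks the actual group element. Your final decomposition into $\pm 1$-monomials is a perfectly valid choice of the $s_k$, and your remark that the construction depends only on the input polynomials (hence is universal in $G$ and $\gamma$) is exactly the point needed for the ``In particular'' clause.
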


The proof is a straight-forward induction on $m \in \N$, and we leave it to the reader. As an immediate consequence, we obtain:

\begin{proposition}[Ideal of identities] \label{PropIdentities} 
	Let $G$ be a group and let $\map{\gamma}{G}{G}$ be an endomorphism of $G$. Then the identities of $\gamma$ form an ideal of $\Z[t]$. \end{proposition}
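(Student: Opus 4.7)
The plan is to verify the three ideal axioms for the set $I \subseteq \Z[t]$ of identities of $\gamma$: (i) the zero polynomial lies in $I$, (ii) $I$ is closed under addition, and (iii) $p(t) \cdot r(t) \in I$ for every $p(t) \in \Z[t]$ and every $r(t) \in I$. Axiom (i) is immediate from the trivial decomposition $0 = 0$, since $x^{0} = 1_G$ for every $x \in G$.

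Axiom (ii) I would get by concatenating decompositions. If $r(t) = r_1(t) + \cdots + r_k(t)$ and $s(t) = s_1(t) + \cdots + s_m(t)$ witness that $r, s \in I$, then the concatenated decomposition $r(t) + s(t) = r_1(t) + \cdots + r_k(t) + s_1(t) + \cdots + s_m(t)$ is associated to the map $x \mapsto (x^{r_1(\gamma)} \cdots x^{r_k(\gamma)}) \cdot (x^{s_1(\gamma)} \cdots x^{s_m(\gamma)}) = 1_G \cdot 1_G = 1_G$, so $r+s \in I$.

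For axiom (iii), I would apply lemma \ref{LemmaComposition} with two layers: an inner layer $u_{(1,1)}(t), \ldots, u_{(1,k)}(t) := r_1(t), \ldots, r_k(t)$ summing to $r(t)$, and an outer layer $u_{(2,1)}(t), \ldots, u_{(2,e+1)}(t) := p_0, p_1 t, \ldots, p_e t^e$ summing to $p(t) = p_0 + p_1 t + \cdots + p_e t^e$. The lemma then furnishes a decomposition $p(t) \cdot r(t) = s_1(t) + \cdots + s_n(t)$ for which the associated map $s_1(\gamma) \cdots s_n(\gamma)$ equals the composition of the outer-layer map after the inner-layer map. Since the inner map $r_1(\gamma) \cdots r_k(\gamma)$ is identically $1_G$ by hypothesis, and every polynomial map in $\gamma$ sends $1_G$ to $1_G$, the composition vanishes identically, so $p(t) \cdot r(t) \in I$.

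There is no real mathematical obstacle here; the only point requiring some care is that lemma \ref{LemmaComposition} composes maps in the order inner-first-then-outer, so one must place the witnessing decomposition of $r$ in the inner layer and the coefficient-monomial decomposition of $p$ in the outer layer. With this bookkeeping in place, all three axioms follow directly from the lemma and the observation that $1_G$ is preserved by every polynomial-in-$\gamma$ operation.
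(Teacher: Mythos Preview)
Your proof is correct and is precisely the argument the paper has in mind when it says the proposition is an ``immediate consequence'' of Lemma~\ref{LemmaComposition}: closure under addition comes from concatenating witnessing decompositions, and absorption under multiplication by $\Z[t]$ comes from applying the lemma with the witnessing decomposition of $r(t)$ in the inner layer. Your bookkeeping remark about the order of layers is exactly right, and note that closure under negation (needed for $I$ to be an additive subgroup) follows from your axiom (iii) with $p(t)=-1$, so nothing is missing.
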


\subsection{Proof of proposition \ref{PropIntroExistenceIdentities}} \label{SubSecConstrIdentity}
\begin{lemma} \label{LemmaComposingIdentities}
	Consider a group $G$ with an endomorphism $\map{\gamma}{G}{G}$. Suppose that $G$ admits a subnormal series $G = G_1 \trianglerighteq G_2 \trianglerighteq \cdots \trianglerighteq G_l \trianglerighteq G_{l+1} = \{ 1_G \}$ of $\gamma$-invariant subgroups and identities $r_1(t),\ldots, r_l(t) \in \Z[t]$ of the induced endomorphisms $\map{\gamma_{G_i / G_{i+1}}}{G_i / G_{i+1}}{G_i / G_{i+1}}$ on the factors $G_i / G_{i+1}$. Then $r_1(t) \cdots r_l(t)$ is an identity of $\gamma$.
\end{lemma}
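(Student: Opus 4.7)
My plan is to extract the decompositions witnessing each $r_i(t)$ as an identity on the factor $G_i/G_{i+1}$, iterate through the subnormal series to produce a composition of maps that vanishes identically on $G$, and then invoke Lemma \ref{LemmaComposition} to repackage that composition as a single identity of $\gamma$. To set things up, for each $i \in \{1,\ldots,l\}$ I would use the hypothesis that $r_i(t)$ is an identity of the induced endomorphism $\gamma_{G_i/G_{i+1}}$ to choose, by definition, a decomposition $r_i(t) = r_{i,1}(t) + \cdots + r_{i,k_i}(t)$ with $r_{i,j}(t) \in \Z[t]$ such that the associated composite map $r_{i,1}(\gamma_{G_i/G_{i+1}}) \cdots r_{i,k_i}(\gamma_{G_i/G_{i+1}})$ vanishes on $G_i/G_{i+1}$. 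Lifting back to $G_i$ and using that both $G_i$ and $G_{i+1}$ are $\gamma$-invariant, this says precisely that the map $\Phi_i := r_{i,1}(\gamma) \cdots r_{i,k_i}(\gamma)$ sends $G_i$ into $G_{i+1}$.

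With those $\Phi_i$ in hand, the next step is to iterate: starting from an arbitrary $x \in G = G_1$, one sees that $\Phi_1(x) \in G_2$, then $\Phi_2(\Phi_1(x)) \in G_3$, and so on, so that $\Phi_l \circ \cdots \circ \Phi_1$ sends $G$ into $G_{l+1} = \{1_G\}$ and thus vanishes identically on $G$. Implicit in each step is that $\Phi_i$ makes sense on $G_i$ because $\gamma$ restricts to an endomorphism of $G_i$, again by $\gamma$-invariance.

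Finally, I would apply Lemma \ref{LemmaComposition} with $u_{(i,j)}(t) := r_{i,j}(t)$. It produces polynomials $s_1(t),\ldots,s_n(t) \in \Z[t]$ with $s_1(t) + \cdots + s_n(t) = \prod_{i=1}^{l} \sum_j r_{i,j}(t) = r_1(t) \cdots r_l(t)$ together with an equality of maps $\Phi_l \circ \cdots \circ \Phi_1 = s_1(\gamma) \cdots s_n(\gamma)$. The previous paragraph ensures the right-hand side vanishes identically on $G$, so this exhibits exactly the decomposition that witnesses $r_1(t) \cdots r_l(t)$ as an identity of $\gamma$. I do not anticipate a serious obstacle; the only point that deserves care is the transition from ``$r_i(t)$ is an identity on the quotient'' to ``$\Phi_i$ maps $G_i$ into $G_{i+1}$,'' which relies on the $\gamma$-invariance of every subgroup in the series so that each $r_{i,j}(\gamma)$ commutes with the projection $G_i \twoheadrightarrow G_i/G_{i+1}$.
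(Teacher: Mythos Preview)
Your proposal is correct and follows essentially the same approach as the paper: choose decompositions witnessing each $r_i(t)$ as an identity on $G_i/G_{i+1}$, lift to maps $\Phi_i$ sending $G_i$ into $G_{i+1}$, compose to obtain a vanishing map on $G$, and then apply Lemma~\ref{LemmaComposition}. The only point you single out for care---that $\gamma$-invariance is needed to pass from the quotient identity to $\Phi_i(G_i)\subseteq G_{i+1}$---is exactly the step the paper leaves implicit.
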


\begin{proof}
	For each $r_j(t)$, there exists a $k_j \in \N$ and polynomials $u_{(j,1)}(t),\ldots,u_{(j,k_j)} \in \Z[t]$ such that $\sum_i u_{(j,i)} = r_j(t)$ and such that $u_{j,1}(\gamma_{G_j / G_{j+1}}) \cdots u_{j,k_j}(\gamma_{G_j / G_{j+1}}) = 1_{G_j / G_{j+1}}$. So the map $u_{(j,1)}(\gamma) \cdots u_{(j,k_j)}(\gamma)$ sends $G_j$ into $G_{j+1}$. The composition of these maps therefore vanishes on all of $G$. 	Lemma~\ref{LemmaComposition} now implies that $\prod_j \sum_i u_{(j,i)}(t) = r_1(t) \cdots r_l(t)$ is an identity of $\gamma$.
\end{proof}

\begin{proposition}[Cayley---Hamilton] \label{TheoremCH}
	Consider a group $G$ with an endomorphism $\map{\gamma}{G}{G}$. If $G$ admits a subnormal series $G = G_1 \trianglerighteq G_2 \trianglerighteq \cdots \trianglerighteq G_l \trianglerighteq G_{l+1} = \{ 1_G \}$ of $\gamma$-invariant subgroups such that every factor $G_i / G_{i+1}$ is free-abelian of finite rank or elementary-abelian of finite rank, then the endomorphism $\gamma$ has a monic identity $\chi(t)$ of degree $\operatorname{d}(G_1 / G_{2}) + \cdots + \operatorname{d}(G_l / G_{l+1})$.
\end{proposition}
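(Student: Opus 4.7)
The plan is to prove this by reducing to the classical Cayley--Hamilton theorem on each abelian factor of the subnormal series and then composing the resulting identities via Lemma~\ref{LemmaComposingIdentities}. Concretely, for each $i \in \{1,\ldots,l\}$ I will produce a \emph{monic} polynomial $\chi_i(t) \in \Z[t]$ of degree $d_i := \operatorname{d}(G_i / G_{i+1})$ that is an identity (in fact, a monotone identity) of the induced endomorphism $\gamma_{G_i / G_{i+1}}$ on the abelian factor. Once these are in hand, Lemma~\ref{LemmaComposingIdentities} immediately tells us that the product $\chi(t) := \chi_1(t) \cdots \chi_l(t)$ is an identity of $\gamma$, and a product of monic polynomials is monic of degree $d_1 + \cdots + d_l$, as required.

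To construct $\chi_i(t)$, split into two cases according to the type of the factor $A := G_i / G_{i+1}$. If $A$ is free-abelian of rank $d_i$, then choosing a $\Z$-basis identifies $A$ with $\Z^{d_i}$ and expresses the induced endomorphism $\gamma_{A}$ as an integer matrix $M_i \in \operatorname{Mat}_{d_i}(\Z)$. Its characteristic polynomial $\chi_i(t) := \det(t \cdot I - M_i) \in \Z[t]$ is monic of degree $d_i$, and the classical Cayley--Hamilton theorem gives $\chi_i(M_i) = 0$, which in multiplicative notation says exactly that $x^{\chi_i(\gamma_{A})} = 1_A$ for every $x \in A$. If instead $A$ is elementary-abelian of rank $d_i$, then $A$ is an $\F_p$-vector space of dimension $d_i$ for some prime $p$, and $\gamma_{A}$ is an $\F_p$-linear endomorphism. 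Its characteristic polynomial over $\F_p$ is monic of degree $d_i$ and annihilates $\gamma_{A}$; lifting each coefficient to its unique representative in $\{0,1,\ldots,p-1\}$ (and keeping the leading $1$) yields a monic polynomial $\chi_i(t) \in \Z[t]$ of degree $d_i$ such that $\chi_i(\gamma_{A})(x) \equiv 0 \pmod p$ in $\F_p^{d_i}$ for all $x \in A$, i.e., $\chi_i(t)$ is a monotone identity of $\gamma_{A}$.

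With the $\chi_i(t)$ constructed, the proof finishes by invoking Lemma~\ref{LemmaComposingIdentities}: the subnormal series is $\gamma$-invariant, each $\chi_i(t)$ is an identity of the induced map on $G_i / G_{i+1}$, and therefore $\chi(t) = \chi_1(t) \cdots \chi_l(t)$ is an identity of $\gamma$ itself. Since each $\chi_i(t)$ is monic, so is their product, and the degrees add to $\sum_{i=1}^l d_i = \operatorname{d}(G_1 / G_2) + \cdots + \operatorname{d}(G_l / G_{l+1})$.

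I do not anticipate a real obstacle here; the statement is precisely a ``groupified'' assembly of Cayley--Hamilton along a composition series. The one point that requires a small amount of care is the elementary-abelian case, where the characteristic polynomial naturally lives in $\F_p[t]$ rather than $\Z[t]$, so one must verify that lifting its coefficients to $\Z$ preserves both monicity and the annihilation property; this is immediate because the monotone-identity condition only depends on the coefficients modulo the exponent of~$A$.
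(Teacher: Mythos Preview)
Your proposal is correct and follows essentially the same approach as the paper: construct the characteristic polynomial $\chi_i(t)$ on each factor (lifting from $\F_p[t]$ to $\Z[t]$ in the elementary-abelian case), invoke classical Cayley--Hamilton on each abelian quotient, and then apply Lemma~\ref{LemmaComposingIdentities} to conclude that the product $\chi(t)=\chi_1(t)\cdots\chi_l(t)$ is a monic identity of the required degree. The only cosmetic difference is that the paper does not insist on a specific set of lifts for the coefficients, merely that some monic lift of the same degree exists.
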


\begin{proof}
	If $G_i / G_{i+1}$ is free-abelian, then we may compute the characteristic polynomial $\chi_i(t) = \det(t \cdot \id_{G_i / G_{i+1}} - \gamma_{G_i / G_{i+1}}) \in \Z[t]$ of the induced endomorphism $\gamma_{G_i / G_{i+1}}$. Else, the factor $G_i / G_{i+1}$ is elementary-abelian, say isomorphic to $(\Z_p^k,+)$, so that we may compute the characteristic polynomial $\overline{\chi}_i(t) \in \F_p[t]$ of $\gamma_{G_i / G_{i+1}}$ over the field $\F_p$. There then exists a monic polynomial $\chi_i(t) \in \Z[t]$ of the same degree $k$ as $\overline{\chi}_i(t)$ such that $\chi_i(t) \operatorname{mod} p = \overline{\chi}_i(t)$. According to the (classic) theorem of Cayley---Hamilton and lemma \ref{LemmaComposingIdentities}, the product $\chi(t) := \chi_1(t) \cdot \chi_2(t) \cdots \chi_{l}(t) \in \Z[t],$ is an identity of $\gamma$, and $\chi(t)$ clearly has degree $\operatorname{d}(G_1 / G_2) + \cdots + \operatorname{d}(G_l / G_{l+1})$. Since each $\chi_i(t)$ is monic, so is $\chi(t)$.
\end{proof}

If all the factors $G_i / G_{i+1}$ in proposition \ref{TheoremCH} are free-abelian of finite rank, then the polynomial $\chi(t)$ is uniquely determined by this construction, so that we may refer to $\chi(t)$ as the \emph{characteristic polynomial of the endomorphism with respect to the series} $(G_i)_i$.

\begin{corollary}
	\label{CorFinSolv}
	Consider a finite, solvable group $G$. Then every endomorphism $\map{\gamma}{G}{G}$ of $S$ admits a monic identity $r(t)$ of degree $\operatorname{deg}(r(t)) \leq |G|$.
\end{corollary}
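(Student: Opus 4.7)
The plan is to reduce to Proposition \ref{TheoremCH} by constructing a $\gamma$-invariant subnormal series in $G$ all of whose factors are elementary abelian of finite rank. I start with the derived series $G = G^{(0)} \trianglerighteq G^{(1)} \trianglerighteq \cdots \trianglerighteq G^{(l)} = \{1_G\}$, which terminates because $G$ is solvable. Each derived subgroup $G^{(i)}$ is characteristic in $G$, hence in particular $\gamma$-invariant, and each factor $A_i := G^{(i)}/G^{(i+1)}$ is a finite abelian group carrying an induced endomorphism $\overline{\gamma}_i$.

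Next I refine each $A_i$ so that every factor becomes elementary abelian while all subgroups remain $\overline{\gamma}_i$-invariant; the trick is to only insert subgroups that are characteristic (and therefore invariant under every endomorphism of the ambient factor). First, the Sylow $p$-subgroups $A_{i,p}$ are characteristic direct summands of $A_i$, so ordering the primes $p_1,\ldots,p_s$ dividing $|A_i|$ yields a $\overline{\gamma}_i$-invariant chain
\[
A_i \;=\; A_{i,p_1}\oplus\cdots\oplus A_{i,p_s} \;\supseteq\; A_{i,p_2}\oplus\cdots\oplus A_{i,p_s} \;\supseteq\;\cdots\;\supseteq\; A_{i,p_s} \;\supseteq\; \{0\}.
\]
Next, inside each finite abelian $p$-group $A_{i,p}$, the descending chain $A_{i,p} \supseteq pA_{i,p} \supseteq p^2 A_{i,p} \supseteq \cdots \supseteq \{0\}$ consists of characteristic subgroups, and the successive factors $p^j A_{i,p}/p^{j+1}A_{i,p}$ are elementary abelian $p$-groups of finite rank. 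Pulling these refinements back to $G$ via the projections $G^{(i)} \twoheadrightarrow A_i$ produces the required $\gamma$-invariant subnormal series in $G$ with elementary abelian factors.

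Proposition \ref{TheoremCH} now applies and yields a monic identity $r(t) \in \Z[t]$ of degree $d$ equal to the total $\F_p$-dimension of all the elementary abelian factors. Since the product of the orders of these factors is $|G|$, we obtain $2^d \leq \prod p^{k} = |G|$, and hence $d \leq \log_2 |G| \leq |G|$, as required. The only genuinely careful point in the argument is ensuring $\gamma$-invariance at each stage of the refinement; this is what forces us to work exclusively with characteristic subgroups (Sylow subgroups of abelian groups and their $p$-power multiples), rather than with arbitrary composition series.
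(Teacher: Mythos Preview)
Your proof is correct and follows essentially the same route as the paper: build a $\gamma$-invariant subnormal series with elementary-abelian factors and invoke Proposition~\ref{TheoremCH}. The paper simply asserts the existence of a \emph{fully-invariant} such series in one sentence, whereas you spell out its construction via the derived series, Sylow decompositions, and the $p$-power filtrations; and you extract the sharper bound $\deg(r(t)) \leq \log_2 |G|$ where the paper is content with $|G|$.

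One terminological wrinkle: you write that characteristic subgroups are ``therefore invariant under every endomorphism,'' which is not true in general (characteristic means invariant under automorphisms only). It does not hurt you here, because the particular subgroups you use --- Sylow summands of a finite abelian group and the verbal subgroups $p^j A$ --- are in fact fully invariant, so every endomorphism preserves them. Just say \emph{fully invariant} rather than \emph{characteristic} and the argument is clean.
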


\begin{proof}
	Since $G$ is finite and solvable, it admits a fully-invariant series $(G_i)_i$ with elementary-abelian factors, so that we may apply proposition \ref{TheoremCH}. It then suffices to observe that $\sum_i \operatorname{d}(G_i/G_{i+1}) \leq |G|$.
\end{proof}

\begin{proof}(Proposition \ref{PropIntroExistenceIdentities})
	Let us consider the (well-defined) torsion subgroup $T := \operatorname{Tors}(G)$ of $G$, the fully-invariant series $G \geq T \geq \{ 1_G \}$ of $G$, and the induced endomorphisms on the factors. The subgroup $T$ is finite and nilpotent so that, it has a fully-invariant series with elementary-abelian factors. So proposition \ref{TheoremCH} provides a monic identity $\chi_T(t)$ of degree at most $|T|$ for the naturally induced automorphism $\map{\gamma_{T}}{T}{T}$. Let us now consider the induced endomorphism $\map{\gamma_{G/T}}{Q}{Q}$ on the factor $Q := G/T$. This $Q$ is finitely-generated, torsion-free, and nilpotent. Moreover, its Hirsch-length coincides with that of $G$. For each term $\Gamma_i(Q)$ of the lower central series of $Q$, we consider the (fully-invariant) isolator subgroup $Q_i := \Gamma_i^\ast(Q)$ in $Q$. Then $(Q_i)_i$ satisfies the conditions of proposition \ref{TheoremCH}, so that we obtain a monic identity $\chi_{G/T}(t)$ of $\gamma_{G/T}$ of degree equal to the Hirsch-length of $Q$. Lemma \ref{LemmaComposingIdentities} now shows that the product $r(t) := \chi_{G/T}(t) \cdot \chi_T(t)$ is a monic identity of $\gamma$ with the correct degree. Similarly, lemma \ref{LemmaComposingIdentities} shows that $\gamma$ admits the (not necessarily monic) identity $s(t) := |T| \cdot \chi_{G/T}(t)$ satisfying $\operatorname{deg}(s(t)) \leq \operatorname{Hirsch}(G)$.
\end{proof}

We conclude with a remark that will be useful later on.

\begin{remark} \label{RemarkChar0} Assume that the $\gamma$ of theorem \ref{TheoremCH} is an automorphism. If $\chi(0) = \pm 1$, then every $\gamma$-invariant subgroup $M$ of $G$ is also $\langle \gamma \rangle$-invariant, so that the induced map $\map{\gamma_M}{M}{M}$ is an automorphism of $G$. \end{remark}

\begin{proof}
	It suffices to show that $\gamma^{-1}(M) \subseteq M$. One can use induction on $l \in \N$ to show that for every $x \in M$, we have $(\chi_l(\gamma) \circ \cdots \circ \chi_1(\gamma))(x) \in x^{\chi(0)} \cdot \gamma(M).$ Then theorem \ref{TheoremCH} gives $1_G \in x^{\chi(0)} \cdot \gamma(M)$. So, if $\chi(0) = \pm 1$, then $\gamma^{-1}(x) \in M$.
\end{proof}

\subsection{Proof of proposition \ref{PropLongProgLieAlgGroup}} \label{SubSectionConstructionAuto}

We begin by proving the corresponding statement for Lie algebras.

\begin{proposition} \label{PropConstrEndoTorsionFreeLie} Consider a monic polynomial $r(t) \in \Z[t] \setminus \{ 1 \}$ with $r(0) \neq 0$. Let $\lambda$ and $\mu$ be roots of $r(t)$ in $\overline{\Q}$, and let $k$ be any natural number satisfying $r(\lambda \cdot \mu) = \cdots = r(\lambda \cdot \mu^{k-1}) = 0.$ Then there is a finitely-generated, $k$-step nilpotent Lie algebra $L$ over the rational numbers with an automorphism $\map{\overline{\alpha}}{L}{L}$ such that $r(\overline{\alpha}) = 0_L$. \end{proposition}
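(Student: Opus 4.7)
The plan is to construct $L$ first over the finite extension $F := \Q(\lambda,\mu)$ and then to obtain the desired $\Q$-Lie algebra by restriction of scalars. The key idea is that when a Lie algebra endomorphism has a $\lambda$-weight vector $x$ and a $\mu$-weight vector $y$, then $\operatorname{ad}(y)^i(x)$ is a $\lambda\mu^i$-weight vector; so by taking the span of such iterated brackets and truncating at length $k$, the endomorphism will have spectrum $\{\mu,\lambda,\lambda\mu,\ldots,\lambda\mu^{k-1}\}$, which by hypothesis lies in the zero set of $r(t)$.

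Concretely, over $F$ I will define $L_F$ to be the $(k+1)$-dimensional $F$-vector space with basis $y,x_0,x_1,\ldots,x_{k-1}$ and with all brackets vanishing except $[x_i,y] = x_{i+1}$ for $0 \leq i \leq k-2$. A routine check (or an appeal to the fact that this is a quotient of the free $2$-step-metabelian Lie algebra generated by $x_0$ and $y$) shows that the Jacobi identity holds and that $L_F$ is nilpotent of class exactly $k$. I then define the $F$-linear map $\map{\alpha}{L_F}{L_F}$ by $\alpha(y) := \mu \cdot y$ and $\alpha(x_i) := \lambda\mu^i \cdot x_i$. Checking compatibility with the defining brackets reduces to the trivial computation $[\alpha(x_i),\alpha(y)] = \lambda\mu^{i+1}\cdot [x_i,y] = \alpha([x_i,y])$, so $\alpha$ is a Lie-algebra endomorphism. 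Its eigenvalues are the roots $\mu,\lambda,\lambda\mu,\ldots,\lambda\mu^{k-1}$ of $r(t)$, so its minimal polynomial divides $r(t)$, giving $r(\alpha) = 0_{L_F}$; and since $r(0) \neq 0$ forces every eigenvalue to be nonzero, $\alpha$ is invertible, hence an automorphism of $L_F$.

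To descend to $\Q$, I will apply restriction of scalars: view $L_F$ as a $\Q$-vector space $L$ of dimension $[F:\Q]\cdot (k+1)$. The bracket of $L_F$ is $F$-bilinear, so in particular $\Q$-bilinear, making $L$ a $\Q$-Lie algebra; the lower central series does not depend on whether we compute it over $F$ or over $\Q$, so $L$ remains $k$-step nilpotent and finitely generated (a $\Q$-basis of $F$ acting on $\{y,x_0,\ldots,x_{k-1}\}$ yields a finite generating set). The map $\alpha$ is $F$-linear, hence $\Q$-linear, and the identities $r(\alpha) = 0$ and $\alpha \in \operatorname{Aut}(L)$ persist verbatim. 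Setting $\overline{\alpha} := \alpha$ gives the required data.

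I do not foresee a genuine obstacle; the only subtlety is the combinatorial choice of $L_F$. The temptation to take the free $k$-step nilpotent algebra on a $\lambda$-weight generator and a $\mu$-weight generator fails because the $j$-th graded piece then carries all products $\lambda^a\mu^b$ with $a+b=j$ as eigenvalues, and the hypothesis on $r(t)$ does not control these. The construction above circumvents this by imposing the metabelian relations $[x_i,x_j]=0$, which strip away all weights except those of the form $\lambda\mu^i$ that the hypothesis is tailored to handle.
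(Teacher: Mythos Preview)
Your proof is correct and takes a genuinely different route from the paper's.

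The paper works entirely over $\Q$: it lets the companion matrix $C$ of $r(t)$ (or $C\oplus C$) act on the generators of the free $k$-step nilpotent Lie algebra, extends to an automorphism $\alpha$, and then \emph{forces} the identity by passing to the quotient $L = F/I$ with $I$ the ideal generated by $(r(\alpha))(F)$. The hard part is then to show that this quotient still has class $\geq k$; this is done by a somewhat delicate grading of $F$ by $\operatorname{Mat}_{2,d}(\Z)$ and an explicit computation of one homogeneous component $I(a)$. Your approach sidesteps this entirely: by working over $F=\Q(\lambda,\mu)$ you can diagonalise from the outset, and the small metabelian algebra on $y,x_0,\ldots,x_{k-1}$ with $[x_i,y]=x_{i+1}$ visibly has class exactly $k$ and eigenvalues contained in the root set of $r(t)$; restriction of scalars then brings you back to $\Q$ for free. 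This is more elementary and yields a smaller Lie algebra (dimension $(k+1)\cdot[F:\Q]$ rather than a quotient of a free nilpotent algebra on $2d$ generators). The paper's construction, on the other hand, produces an automorphism whose matrix on the abelianisation is the rational companion matrix, which dovetails slightly more directly with the subsequent Mal$'$cev-correspondence and lattice argument; your $\overline{\alpha}$ has the same property (its characteristic polynomial still lies in $\Z[t]$ since $\lambda,\mu$ are algebraic integers), but this requires a short additional remark if one wants to feed your construction into the rest of the section.
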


\begin{proof}
	According to proposition \ref{PropIdentities}, the identities of an endomorphism form an ideal of $\Z[t]$. So we may assume that $r(t)$ is square-free and that $\operatorname{Discr}{r(t)} \neq 0$. If $k = 1$, then we simply consider the companion operator $\map{\gamma}{\Q^{\deg(r(t))}}{\Q^{\deg(r(t))}}$ of $r(t)$ on the abelian group $(\Q^{\deg(r(t))},+)$. It is well-known that this endomorphism $\gamma$ satisfies $r(\gamma) = 0_{\Q^{\deg(r(t))}}$. \newline
	
	So we may further assume that $k \geq 2$. We let $F$ be the free $k$-step nilpotent Lie algebra (over the rational numbers) on the generators $x_{1,1},\ldots,x_{1,d},x_{2,1},\ldots,x_{2,d}$. Let $C \in \operatorname{GL}_d(\Q)$ be the companion operator of $r(t)$ and let $A$ be the direct sum $C \oplus C \in \operatorname{GL}_{2 d}(\Q) \cap  \operatorname{Mat}_{2d,2d}(\Z)$. This $A$  defines a linear transformation of the $\Q$-span of the generators of $F$ (in the obvious way) and $A$ extends (in a unique way) to an automorphism $\map{\alpha}{F}{F}$ of the Lie algebra $F$. 
	We now consider the ideal $I$ of $F$ that is generated by the subset $(r(\alpha))(F)$ of $F$. This ideal is $\langle \alpha \rangle$-invariant, so that we may consider the quotient Lie algebra $L := F / I$ with the induced automorphism $\map{\overline{\alpha}}{L}{L}$. By construction, we have $r(\overline{\alpha}) = 0_L.$ \newline 
	
	In order to prove that $c(L) \geq k$, we may assume that $L$ has coefficients in the complex numbers. Indeed, the larger Lie algebra $L^{\C} := L \otimes_{\Q} \C$ over the complex numbers satisfies $c(L) = c(L^{\C})$ and it naturally admits the automorphism $\overline{\alpha}^{\C} := \overline{\alpha} \otimes \id$ satisfying $r(\overline{\alpha}^{\C}) = 0_{L^{\C}}$. \newline
	
	Let $V$ be the (complex) span of the generators $x_{1,1},\ldots,x_{2,d}$. Since $r(t)$ has no repeated roots, the operator $C \in \operatorname{GL}_{d}(\Q)$ can be diagonalised over $\C$. So we may choose an ordered eigenbasis $(y_{1,1},\ldots,y_{1,d},\ldots,y_{2,d})$ of $V$ and scalars $\lambda_1,\ldots,\lambda_d,\mu_1,\ldots,\mu_d \in \C$ such that, for each $i \in \{ 1,2\}$ and $j \in \{ 1,\ldots,d \}$, we have $\alpha(y_{1,j}) = \lambda_j \cdot y_{1,j} \text{ and } \alpha(y_{2,j}) = \mu_j \cdot y_{2,j}.$ It is clear that $\{ \lambda_1 , \ldots , \lambda_d \} = \{ \mu_1,\ldots,\mu_d \}$ is the set of roots of $r(t)$. After permuting these basis vectors, may further assume that 
	\begin{equation} r(\mu_2) = r(\lambda_1) = r(\lambda_1 \cdot \mu_2) = \cdots = r(\lambda_1 \cdot \mu_2^{k-1}) = 0 \label{EqRootsAF} . \end{equation}
	Let us define a partial order on the elements of $\operatorname{Mat}_{2,d}(\Z)$. For $a = (a_{i,j})_{i,j},b = (b_{i,j})_{i,j} \in \operatorname{Mat}_{2,d}(\Z)$ we write $a \leq b$ if and only if $a_{1,1} \leq b_{1,1} , \ldots, a_{2,d} \leq b_{2,d}$. For each element $a \in \operatorname{Mat}_{2,d}(\Z)$ satisfying $0 \leq a$, we define the family $\mathcal{B}(a)$ of all left-normed Lie monomials in the eigenvectors $y_{1,1},\ldots,y_{2,d}$ such that each $y_{i,j}$ appears with multiplicity exactly $a_{i,j}$. For the remaining $a$, we define $\mathcal{B}(a) := \emptyset$. If $F(a) = \langle \mathcal{B}(a) \rangle$ denotes the (complex) splan of $\mathcal{B}(a)$, then we naturally obtain the grading \begin{equation}
	F = \bigoplus_{a \in \operatorname{Mat}_{2,d}(\Z)} F(a)
	\label{GradingFree} \end{equation}
	of the Lie algebra $F$ by the grading group $(\operatorname{Mat}_{2,d}(\Z),+)$. \newline
	
	In order to understand the structure of the ideal $I$, we introduce some notation. For left-normed monomials $[v_1,\ldots,v_i]$ and $[w_1,\ldots,w_j]$, we define the expression $$[[v_1,\ldots,v_i];[w_1,\ldots,w_j]] := [v_1,\ldots,v_i,w_1,\ldots,w_j].$$ For each $a \in \operatorname{Mat}_{2,d}(\Z)$, we define the $\C$-span
	\begin{equation} I(a) = \sum_{\substack{0 \leq b , c \in \operatorname{Mat}_{2,d}(\Z)\\c < b + c = a}} r(\Lambda_b) \cdot \langle [ v ; w] \mid v \in \mathcal{B}(b) , w \in \mathcal{B}(c) \rangle, \label{EqHomogeneousComponent} \end{equation} 
	where $\Lambda_b := \left( \prod_{1 \leq j \leq d}  \lambda_j^{b_{1,j}} \right) \cdot \left( \prod_{1 \leq j \leq d}  \mu_j^{b_{2,j}} \right) \in \C.$
	By construction, we have $I = \sum_{a \in \operatorname{Mat}_{2,d}(\Z)} I(a)$. Since we also have the inclusions $I(a) \subseteq F(a)$, we derive from \eqref{GradingFree} the direct sum decomposition $
	I = \bigoplus_{a \in \operatorname{Mat}_{2,d}(\Z)} I(a).
	$ 	Since $I(a) \subseteq F(a)$, we conclude that the Lie algebra $L$ is also graded:
	$
	L = \bigoplus_{a \in \operatorname{Mat}_{2,d}(\Z)} L(a),
	$
	with homogeneous components $L(a) := F(a) / I(a)$. Since $$\Gamma_k(L) = \bigoplus_{\substack{0 \leq a \in \operatorname{Mat}_{2,d}(\Z)\\\sum_{i,j} a_{i,j} = k}} L(a),$$ we need only show that there exists an $a \in \operatorname{Mat}_{2,d}(\Z)$ with $\sum_{i,j} a_{i,j} = k$ and $I(a) \subsetneq F(a)$. We claim that $$
	a := \left(
	\begin{array}{ccccc}
	1 & 0 & 0 & \cdots & 0 \\
	0 & k-1 & 0 & \cdots & 0 
	\end{array}
	\right)
	\in \operatorname{Mat}_{2,d}(\Z)
	$$
	is such an element. If we define the monomials 
	\begin{eqnarray*}
		v_1 &:=& [y_{1,1},y_{2,2},\ldots,y_{2,2}] \\
		v_2 &:=& [y_{2,2},y_{1,1},y_{2,2},\ldots,y_{2,2}] \\
		&\vdots & \\
		v_{k} &:=& [y_{2,2},\ldots,y_{2,2},y_{1,1}],
	\end{eqnarray*}
	of length $k$, then $\mathcal{B}(a) = \{ v_1 , v_2 ,\ldots, v_k \}$.  
	Now \eqref{EqHomogeneousComponent} implies that
	\begin{eqnarray*} I(a) & = & \langle r(\lambda_1) \cdot v_1 , r(\lambda_1 \cdot \mu_2) \cdot v_1 , \ldots , r(\lambda_1 \cdot \mu_2^{k-1}) \cdot v_{1} , \\
		& &  r(\mu_2) \cdot v_2 , r(\lambda_1 \cdot \mu_2) \cdot v_2 , \ldots , r(\lambda_1 \cdot \lambda_2^{k-1}) \cdot v_{2} , \\
		& &
		r(\mu_2) \cdot v_3 , r(\mu_2^2) \cdot v_3 , \ldots, r(\lambda_1 \cdot \mu_2^{k-1}) \cdot v_3 , \\
		& & \ldots , \\
		& & 
		r(\mu_2) \cdot v_k , r(\mu_2^2) \cdot v_k , \ldots, r(\lambda_1 \cdot \mu_2^{k-1}) \cdot v_k \rangle . \end{eqnarray*}
	Since the anti-symmetry of the Lie bracket implies $v_3 = v_4 = \cdots = v_k = 0$, we may use \eqref{EqRootsAF} in order to conclude that $ I(a) = \{ 0\} \subsetneq \langle v_1 , v_2 , \ldots, v_k \rangle = F(a) .$ This finishes the proof.
\end{proof}

\begin{remark} \label{RemarkRefinedConstruction}
	If $\lambda \neq \mu$, then the above proof can be simplified by replacing the free $k$-step nilpotent Lie algebra on $2d$ generators with the free $k$-step nilpotent Lie algebra on $d$ generators, and by replacing the operator $C :=  A \oplus A$ with the operator $C := A$. In this case, the resulting Lie algebra will have all the correct properties, but it will have a strictly smaller dimension. The details are straightforward and we omit them. Cf. example \ref{ExGold-1}. 
\end{remark}

\begin{remark}
	This proof was inspired, in part, by Higman's construction in \cite{HigmanGroupsAndRings} of fix-point-free automorphisms of prime order on groups of prescribed class. But it is also closely related to the so-called Auslander---Scheuneman relations for the construction of semi-simple Anosov automorphisms (cf. Payne's construction in \cite{PayneAnosov}).
\end{remark}

We now consider the Mal$'$cev-correspondence:

\begin{proposition} \label{PropLiftMalcev}
	Consider a finite-dimensional, nilpotent Lie algebra $L$ over the rational numbers, together with an automorphism $\map{\gamma}{L}{L}.$ Suppose that for every lower central factor $\Gamma_i(L) / \Gamma_{i+1}(L)$, we are given a monic polynomial $r_i(t) \in \Z[t]$ such that the induced automorphism $\map{\gamma_i}{\Gamma_i(L) / \Gamma_{i+1}(L)}{\Gamma_i(L) / \Gamma_{i+1}(L)}$ satisfies $r_i(\gamma_i) = 0_{\Gamma_i(L) / \Gamma_{i+1}(L)}$. $(i.)$ Then $s(t) := r_1(t) \cdots r_{c(L)}(t)$ is a monic identity of the automorphism $\map{\operatorname{exp}(\gamma)}{\operatorname{exp}(L)}{\operatorname{exp}(L)}.$ $(ii.)$ Then the characteristic polynomial $\chi(t)$ of $\gamma$ divides a natural power of $s(t)$ and $\chi(t)$ has integer coefficients.
\end{proposition}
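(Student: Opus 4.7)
The overall plan is that both parts of the proposition can be reduced to statements about the action of $\gamma$ (or $\exp(\gamma)$) on the abelian factors of the lower central series. Part~(i) is a direct application of Lemma~\ref{LemmaComposingIdentities} via the Mal$'$cev correspondence, while part~(ii) follows by combining Cayley--Hamilton on each factor with a Gauss-type integrality argument.

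For part~(i), the plan is to use the standard fact that the Mal$'$cev correspondence $L \rightsquigarrow \exp(L)$ carries the lower central series of the Lie algebra to the lower central series of the associated group, i.e.\ $\exp(\Gamma_i(L)) = \Gamma_i(\exp(L))$, and that the logarithm restricted to each factor is a group isomorphism of abelian groups $\Gamma_i(\exp(L))/\Gamma_{i+1}(\exp(L)) \cong (\Gamma_i(L)/\Gamma_{i+1}(L),+)$. Because the functor $\exp$ is natural, the automorphism $\exp(\gamma)$ preserves each $\Gamma_i(\exp(L))$, and the induced map on the $i$-th quotient agrees, under this isomorphism, with the map $\gamma_i$ induced on $\Gamma_i(L)/\Gamma_{i+1}(L)$. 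In particular $r_i$ is an identity of that induced group automorphism. Now apply Lemma~\ref{LemmaComposingIdentities} to the $\exp(\gamma)$-invariant normal series $\exp(L) = \Gamma_1(\exp(L)) \trianglerighteq \cdots \trianglerighteq \Gamma_{c(L)+1}(\exp(L)) = \{1\}$ with the polynomials $r_1(t),\ldots,r_{c(L)}(t)$; this yields that $s(t) = r_1(t)\cdots r_{c(L)}(t)$ is an identity of $\exp(\gamma)$. Monicity of $s(t)$ is immediate from the monicity of each $r_i(t)$.

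For part~(ii), the plan is to work directly with $\gamma$ as a linear endomorphism of the finite-dimensional $\Q$-vector space $L$. Because each $\Gamma_i(L)$ is $\gamma$-invariant, the characteristic polynomial of $\gamma$ factors as $\chi(t) = \prod_{i=1}^{c(L)} \chi_i(t)$ in $\Q[t]$, where $\chi_i(t)$ is the characteristic polynomial of $\gamma_i$ on $\Gamma_i(L)/\Gamma_{i+1}(L)$. By Cayley--Hamilton, $\chi_i(t)$ divides a power of the minimal polynomial of $\gamma_i$, which in turn divides $r_i(t)$ in $\Q[t]$; hence $\chi_i(t) \mid r_i(t)^{N_i}$ for some $N_i \in \N$, and choosing $N := \max_i N_i$ gives $\chi(t) \mid s(t)^N$ in $\Q[t]$. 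For the integrality, since $r_i(t) \in \Z[t]$ is monic and $\chi_i(t)$ is a monic $\Q[t]$-divisor of the monic integer polynomial $r_i(t)^{N_i}$, Gauss's lemma forces $\chi_i(t) \in \Z[t]$; multiplying gives $\chi(t) \in \Z[t]$.

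The only mild obstacle is the identification, in part~(i), of the map induced by $\exp(\gamma)$ on $\Gamma_i(\exp(L))/\Gamma_{i+1}(\exp(L))$ with the map induced by $\gamma$ on $\Gamma_i(L)/\Gamma_{i+1}(L)$; once one recalls that on each abelian lower-central factor the Mal$'$cev exponential restricts to the identity modulo the linear structure (so that the Baker--Campbell--Hausdorff corrections live in $\Gamma_{i+1}$), this becomes formal and the rest of the argument is purely mechanical.
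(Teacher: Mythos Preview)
Your proposal is correct and follows essentially the same route as the paper: for (i) you identify the lower central factors of $\exp(L)$ with those of $L$ via the Mal$'$cev correspondence and then invoke Lemma~\ref{LemmaComposingIdentities}, and for (ii) you factor $\chi(t)$ along the $\gamma$-invariant lower central series, bound each $\chi_i$ by a power of $r_i$, and use Gauss's lemma for integrality---exactly as in the paper's proof.
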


\begin{proof}
	$(i.)$: Let us abbreviate $G := \operatorname{exp}(L)$ and $\beta := \operatorname{exp}(\gamma)$. We recall that the Baker---Campbell---Hausdorff formula defines the group operation on $G$. This formula implies, in particular,  that the induced automorphisms $\map{\gamma_i}{\Gamma_i(L) / \Gamma_{i+1}(L)}{\Gamma_i(L) / \Gamma_{i+1}(L)}$ and $\map{\beta_i}{\Gamma_i(G) / \Gamma_{i+1}(G)}{\Gamma_i(G) / \Gamma_{i+1}(G)}$ on the lower central factors coincide. So $r_i(t)$ is an identity of $\beta_{\Gamma_i(G) / \Gamma_{i+1}(G)}$. We may now apply lemma \ref{LemmaComposingIdentities}. 	$(ii.)$: Since $r_i(\gamma_i) = 0_{\Gamma_i(L) / \Gamma_{i+1}(L)}$, the characteristic polynomial $\chi_i(t) = \det(t \cdot \id_{\Gamma_i(G) / \Gamma_{i+1}(G)} - \gamma_i) \in \Q[t]$ of $\gamma_i$ divides a natural power of $r_i(t)$. Since $r_i(t)$ is monic with integer coefficients, Gauss' lemma tells us that $\chi_i(t)$ has integer coefficients as well. Since each term $\Gamma_i(L)$ of the lower central series of $L$ is invariant under $\gamma$, the characteristic polynomial $\chi(t)$ of $\gamma$ is just the product $\chi_1(t) \cdots \chi_{c(L)}(t)$. This suffices to prove the second claim.
\end{proof}

\begin{proposition} \label{TheoremConstructionAutomorphism}	Consider a monic polynomial $r(t) \in \Z[t] \setminus \{ 1\}$. Let $\lambda$ and $\mu$ be roots of $r(t)$ in $\overline{\Q}$, and let $k$ be any natural number satisfying $r(\lambda \cdot \mu) = \cdots = r(\lambda \cdot \mu^{k-1}) = 0.$ Then $r(t)^k$ is an identity of an endomorphism $\map{\beta}{N}{N}$ of a finitely-generated, torsion-free, $k$-step nilpotent group $N$. \end{proposition}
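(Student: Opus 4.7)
The plan is to combine Proposition~\ref{PropConstrEndoTorsionFreeLie} with the Mal$'$cev correspondence and Proposition~\ref{PropLiftMalcev}, passing from the Lie algebra version of the statement to the group version. First I would dispose of the degenerate case $r(0) = 0$: for any finitely-generated, torsion-free, $k$-step nilpotent group $N$ and the trivial endomorphism $\beta(x) := 1_N$, one computes $r(\beta)(x) = x^{r(0)} \cdot \beta(\ldots) \cdots \beta^d(\ldots) = x^{r(0)} = 1_N$, so $r(t)$ itself (and a fortiori $r(t)^k$) is an identity.

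Assume henceforth $r(0) \neq 0$, so that Proposition~\ref{PropConstrEndoTorsionFreeLie} applies and yields a finitely-generated, $k$-step nilpotent Lie algebra $L$ over $\Q$ together with an automorphism $\overline{\alpha} : L \to L$ satisfying $r(\overline{\alpha}) = 0_L$. Crucially, the construction in the proof of that proposition is \emph{integral}: the free $k$-step nilpotent Lie algebra $F$ has a natural $\Z$-form $F_\Z$, the operator $A = C \oplus C$ has integer entries (it is a direct sum of companion matrices), and the defining ideal $I$ is generated by $r(\alpha)$ applied to generators lying in $F_\Z$. Consequently, the image $L_\Z$ of $F_\Z$ in $L$ is a finitely-generated $\Z$-submodule of $L$ that spans $L$ over $\Q$, is closed under the Lie bracket, and is $\overline{\alpha}$-invariant.

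Next, I would invoke the Mal$'$cev correspondence to turn $L_\Z$ into a group. Because $L$ is $k$-step nilpotent, the Baker---Campbell---Hausdorff formula is a polynomial with coefficients whose denominators only involve primes bounded in terms of $k$. After replacing $L_\Z$ by $m \cdot L_\Z$ for a sufficiently divisible integer $m$, the scaled lattice is closed under $\cdot_{BCH}$ and still $\overline{\alpha}$-invariant. The resulting group $N := (m \cdot L_\Z, \cdot_{BCH})$ is finitely-generated (it is spanned as a group by a Mal$'$cev basis obtained from a $\Z$-basis of $m \cdot L_\Z$), torsion-free (as an additive subgroup of a rational vector space), and $k$-step nilpotent (since $c(L) = k$ by Proposition~\ref{PropConstrEndoTorsionFreeLie}). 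The restriction $\beta := \overline{\alpha}|_N$ is then a group endomorphism of $N$.

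Finally, to obtain the identity $r(t)^k$ of $\beta$, I would apply Proposition~\ref{PropLiftMalcev}$(i)$. Since $r(\overline{\alpha}) = 0_L$, the induced automorphism $\overline{\alpha}_i$ on each lower central factor $\Gamma_i(L) / \Gamma_{i+1}(L)$ satisfies $r(\overline{\alpha}_i) = 0$, so we may take $r_i(t) := r(t)$ for every $i \in \{1, \ldots, c(L)\} = \{1, \ldots, k\}$. Proposition~\ref{PropLiftMalcev}$(i)$ then asserts that $r(t)^k$ is a monic identity of $\exp(\overline{\alpha})$ on $\exp(L)$, hence of its restriction $\beta$ to the subgroup $N \subseteq \exp(L)$, completing the proof. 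The only real technical step is verifying integrality of the lattice and then clearing BCH-denominators by scaling; both are standard once $k$ is fixed, and I expect this to be the main (but routine) obstacle.
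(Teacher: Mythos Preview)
Your proposal is correct and follows the same overall strategy as the paper: handle $r(0)=0$ trivially, apply Proposition~\ref{PropConstrEndoTorsionFreeLie} to get $(L,\overline{\alpha})$, pass to $\exp(L)$ via Mal$'$cev, use Proposition~\ref{PropLiftMalcev} with $r_i(t)=r(t)$ to obtain the identity $r(t)^k$, and then restrict to a finitely-generated, full, $\overline{\alpha}$-invariant lattice $N\subseteq\exp(L)$.

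The only genuine difference lies in how the lattice $N$ is produced. The paper does not build it by hand: it observes via Proposition~\ref{PropLiftMalcev}$(ii)$ that the characteristic polynomial $\chi(t)$ of $\overline{\alpha}$ has integer coefficients, and then invokes theorem~6.1 of \cite{Dere}, which guarantees the existence of a $\beta$-invariant full subgroup $N$ of $\exp(L)$ under exactly that hypothesis. Your route is more self-contained: you exploit that the companion matrix $A$ and the free nilpotent Lie ring $F_{\Z}$ are integral, take the image $L_{\Z}$ of $F_{\Z}$ in $L$, and scale by a BCH-denominator $m$ so that $m\cdot L_{\Z}$ is closed under the group law. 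This works (the degree-$j$ BCH terms pick up a factor $m^{j}$, so $m$ divisible by all denominators suffices for $j\geq 2$; fullness of $m\cdot L_{\Z}$ in $L$ then forces $c(N)=c(L)=k$; and polycyclicity, hence finite generation, follows from filtering $N$ by $N\cap\Gamma_i(L)$). The paper's citation is cleaner and avoids these verifications, while your argument avoids an external dependence; either is acceptable here.
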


\begin{proof} We may assume that $r(0) \neq 0$, since otherwise we may simply consider a finitely-generated, free nilpotent group $F$ of class $k$ and the endomorphism $\mapl{\gamma}{F}{F}{x}{1_F}$. 	So we may apply proposition \ref{PropConstrEndoTorsionFreeLie} in order to find a finitely-generated, $k$-step nilpotent Lie algebra $L$ over the rationals and automorphism $\map{\overline{\alpha}}{L}{L}$ satisfying $r(\overline{\alpha}) = 0_L$. Let us consider the torsion-free, $k$-step nilpotent, divisible group $G := \operatorname{exp}(L)$ corresponding with $L$, together with the automorphism $\beta := \operatorname{exp}(\overline{\alpha})$ 	of $G$ corresponding with $\overline{\alpha}$. According to proposition \ref{PropLiftMalcev}, the polynomial $r(t)^k$ is a monic identity of $\beta$. We see, in particular, that if $N$ is any $\beta$-invariant, full subgroup of $G$, then $r(t)^k$ is an identity of the restriction $\map{\beta_N}{N}{N}$. Now, since the characteristic polynomial $\chi(t)$ of $\overline{\alpha}$ has integer coefficients (cf. proposition \ref{PropLiftMalcev}), we may apply theorem $6.1$ of \cite{Dere} in order to obtain the desired subgroup $N$ of $G$. This finishes the proof. \end{proof}

\begin{proof}(Proposition \ref{PropLongProgLieAlgGroup}) $(i.)$ We simply apply proposition \ref{TheoremConstructionAutomorphism}. $(ii.)$ As before, we may assume that $r(0) \neq 0$. We first construct the finitely-generated, torsion-free, $k$-step nilpotent group $N$ and automorphism $\map{\beta}{N}{N}$ of proposition \ref{TheoremConstructionAutomorphism}. A well-known result of Gruenberg then tells us that the group $N$ is residually-(a finite $p$-group). Since $N$ is finitely-generated, there exists a characteristic subgroup $S$ of $p$-power index in $N$ such that $P := N/S$ is a finite $p$-group of class $k$. Let $\map{\gamma}{P}{P}$ be the induced automorphism. Since $r(t)^k$ is an identity of $\beta$, it is clear that $r(t)^k$ is also an identity of $\gamma$. Finally, suppose that $p$ does not divide $r(0) \cdot r(1)$. If $\gamma(x) = 1_P$ resp. $\gamma(x) = x$, then $x^{r(0)^k} = 1_P$ resp. $x^{r(1)^k} = 1_P$, and therefore $x = 1_P$. We conclude that $\gamma$ is a fix-point-free automorphism of $P$.
\end{proof}

\section{The invariants} \label{SectionInvars}

\subsection{Preliminaries}

\begin{lemma} \label{LemmaGCDWithPowers}
	Let $a(t),b(t) \in \Q[t]$ and $u \in \N$. Define $A(t) := a(t^u)$ and $B(t) := b(t^u)$. Define $h(t) := \gcd_{\Q[t]}(a(t),b(t))$ and $H(t) := \gcd_{\Q[t]}(A(t),B(t))$. Then $h(t^u) = H(t)$.
\end{lemma}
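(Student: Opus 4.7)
The statement is a clean fact about commutation of $\gcd$ and the substitution $t \mapsto t^u$ in a Euclidean domain, and I would prove it by the two standard divisibilities, using that $\Q[t]$ is a PID and that both sides are naturally taken monic. Throughout, I adopt the convention that $\gcd$'s in $\Q[t]$ are chosen to be monic (so $h(t)$ and $H(t)$ are uniquely determined), and I note that if $h(t)$ is monic of degree $n$, then $h(t^u)$ is monic of degree $nu$. It therefore suffices to establish the two divisibilities $h(t^u) \mid H(t)$ and $H(t) \mid h(t^u)$ in $\Q[t]$.

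\textbf{First divisibility.} Since $h(t) \mid a(t)$ and $h(t) \mid b(t)$ in $\Q[t]$, there exist $\alpha(t), \beta(t) \in \Q[t]$ with $a(t) = h(t)\alpha(t)$ and $b(t) = h(t)\beta(t)$. Substituting $t \mapsto t^u$ (which is a ring homomorphism of $\Q[t]$) gives $A(t) = h(t^u)\alpha(t^u)$ and $B(t) = h(t^u)\beta(t^u)$, so $h(t^u)$ is a common divisor of $A(t)$ and $B(t)$ and therefore divides their $\gcd$, namely $H(t)$.

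\textbf{Second divisibility.} By B\'ezout in $\Q[t]$, there exist $p(t), q(t) \in \Q[t]$ with $h(t) = p(t)a(t) + q(t)b(t)$. Applying the substitution $t \mapsto t^u$ to this identity yields
\begin{equation*}
h(t^u) \;=\; p(t^u)\,A(t) + q(t^u)\,B(t).
\end{equation*}
Since $H(t)$ divides both $A(t)$ and $B(t)$, it divides the right-hand side, hence $H(t) \mid h(t^u)$.

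\textbf{Conclusion.} The two divisibilities, together with the fact that both $h(t^u)$ and $H(t)$ are monic, force $h(t^u) = H(t)$. The argument is essentially routine; the only (very minor) point to keep straight is the monic convention, without which the statement would only hold up to a nonzero rational scalar. No substantive obstacle is expected.
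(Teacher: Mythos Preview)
Your proof is correct and essentially identical to the paper's own argument: both establish $h(t^u)\mid H(t)$ by substituting $t\mapsto t^u$ into the factorizations $a=hf$, $b=hg$, and both establish $H(t)\mid h(t^u)$ by substituting into a B\'ezout identity for $h$, concluding equality via the monic convention.
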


\begin{proof}
	There exists some $f(t),g(t) \in \Q[t]$ such that $a(t) = h(t) \cdot f(t)$ and $b(t) = h(t) \cdot g(t)$. By substituting $t \mapsto t^u$, we obtain $A(t) = h(t^u) \cdot f(t^u)$ and $B(t) = h(t^u) \cdot g(t^u)$. So $h(t^u) | H(t)$ in $\Q[t]$. According to Bezout's theorem, there exist $v(t),w(t) \in \Q[t]$ such that $h(t) = v(t) \cdot a(t) + w(t) \cdot b(t)$. By substitution, we obtain $h(t^u) = v(t^u) \cdot A(t) + w(t^u) \cdot B(t)$. So $H(t) | h(t^u)$ in $\Q[t]$. Since $h(t^u)$ and $H(t)$ are monic, we conclude $h(t^u) = H(t)$.
\end{proof}

\begin{lemma}[Gauss]
	Let $a(t) \in \Z[t]$ be primitive, $b(t) \in \Z[t]$. If $a(t) | b(t)$ in $\Q[t]$, then also $a(t) | b(t) $ in $\Z[t]$.
\end{lemma}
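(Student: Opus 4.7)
The plan is to reduce to the classical content-multiplicativity form of Gauss's lemma. Suppose $b(t) = a(t) \cdot q(t)$ with $q(t) \in \Q[t]$; the goal is to show $q(t) \in \Z[t]$.

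First I would clear denominators. Let $n$ be the smallest positive integer such that $n \cdot q(t) \in \Z[t]$, and write $q_0(t) := n \cdot q(t) \in \Z[t]$. By minimality of $n$, the polynomial $q_0(t)$ is primitive (any prime divisor of the content of $q_0(t)$ could be cancelled against $n$, contradicting minimality). Multiplying the original divisibility relation by $n$ yields the identity $n \cdot b(t) = a(t) \cdot q_0(t)$ in $\Z[t]$.

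Next I would apply the content form of Gauss's lemma: the product of two primitive polynomials in $\Z[t]$ is primitive, and more generally $\operatorname{cont}(f \cdot g) = \operatorname{cont}(f) \cdot \operatorname{cont}(g)$. Applied to the right-hand side, since $a(t)$ is primitive by hypothesis and $q_0(t)$ is primitive by construction, the content of $a(t) \cdot q_0(t)$ equals $1$. The content of the left-hand side is $n \cdot \operatorname{cont}(b(t))$. Equating the two gives $n \cdot \operatorname{cont}(b(t)) = 1$ in $\N$, which forces $n = 1$. Hence $q(t) = q_0(t) \in \Z[t]$, as desired.

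The only nontrivial input is the content-multiplicativity statement itself, which the author presumably regards as the standard ``Gauss's lemma'' and will either cite or establish by the usual argument (reduction mod a prime $p$ dividing the content of the product, using that $\F_p[t]$ is an integral domain). There is no real obstacle here; the main point is simply to set up the primitive factorization $q(t) = \frac{1}{n} q_0(t)$ correctly so that the content computation is valid.
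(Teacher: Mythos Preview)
Your argument is correct and is the standard derivation from content multiplicativity. Note, however, that the paper does not actually supply a proof of this lemma: it is simply stated as the classical Gauss lemma and used as a black box, so there is no ``paper's own proof'' to compare against. Your write-up is exactly the kind of justification one would expect if a proof were required.
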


\subsection{The invariant $\tcn{r(t)}$}

\begin{definition}[$\tcn{r(t)}$]  \label{DefCongnr}
	Consider a polynomial $r(t) := a_0 + a_1 \cdot t + \cdots + a_d \cdot t^d \in \Z[t]$. For all integers $u > j \geq 0$, we define the partial sum $r_{u,j}(t) := \sum_{i \equiv j \operatorname{mod} u} a_i \cdot t^i,$ so that we obtain the periodic decomposition $r(t) = r_{u,0}(t) + \cdots + r_{u,u-1}(t)$ of $r(t)$. \label{DefPCN}
	We define the \emph{periodic congruence number} $\tcn{r(t)}$ of $r(t)$ to be the (unique) non-negative generator of the (principal) $\Z$-ideal
	\begin{equation*} \Z \cap \bigcap_{1 < u \leq \deg(r(t)) + 1} ( r_{u,0}(t) \cdot \Z[t] + \cdots + r_{u,u-1}(t) \cdot \Z[t]) \label{DefCong} .\end{equation*} 
\end{definition}

\begin{proposition} \label{PropositionCongNonZero}
	Let $r(t) \in \Z[t]$. Then the following are equivalent:
	\begin{enumerate}[(i.)]
		\item $\tcn{r(t)} = 0$.
		\item $r(0) = 0$ or there exists some $u \in \N$ and some $s(t) \in \Z[t] \setminus \Z$ such that $s(t^{u+1}) | r(t)$ in the ring $\Z[t]$.
	\end{enumerate}
\end{proposition}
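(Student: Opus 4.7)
My plan is to reformulate condition $(i)$ in terms of greatest common divisors in $\Q[t]$, and then to connect it with the divisibility in $(ii)$ via the periodic decomposition of $r(t)$ itself. First, I would observe that $\tcn{r(t)} = 0$ is equivalent to the existence of a single $v \in \{2, \ldots, \deg(r(t))+1\}$ such that the $\Z[t]$-ideal $J_v := r_{v,0}(t)\,\Z[t] + \cdots + r_{v,v-1}(t)\,\Z[t]$ contains no nonzero integer, simply because an intersection of principal $\Z$-ideals $(n_v)$ vanishes only when some $n_v$ itself vanishes. A Bezout argument in $\Q[t]$, combined with the observation that no nonzero integer can be divisible in $\Q[t]$ by a non-constant polynomial, then shows that $J_v \cap \Z = \{0\}$ is equivalent to $h_v(t) := \gcd_{\Q[t]}(r_{v,0}(t), \ldots, r_{v,v-1}(t))$ being non-constant. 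So $(i)$ becomes: there exists $v$ in the indicated range such that $h_v(t)$ is non-constant.

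For $(ii) \Rightarrow (i)$, if $r(0) = 0$ the variable $t$ is a common divisor of every $r_{v,j}(t)$ (take any $v \geq 2$), so $h_v$ is non-constant. If instead $s(t^{v}) \mid r(t)$ in $\Z[t]$ for some non-constant $s \in \Z[t]$ and $v = u+1 \geq 2$, writing $r(t) = s(t^v) q(t)$ and decomposing $q(t) = \sum_{j=0}^{v-1} q_{v,j}(t)$ according to residues modulo $v$, I would observe that multiplication by $s(t^v)$ preserves residues, so $r_{v,j}(t) = s(t^v)\,q_{v,j}(t)$. Hence $s(t^v) \mid h_v(t)$, and $h_v(t)$ is non-constant; since $v \leq \deg(s(t^v)) \leq \deg(r(t))$, this $v$ lies in the required range.

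For the converse $(i) \Rightarrow (ii)$, assume $r(0) \neq 0$; I will construct the required $s(t)$. Choose $v$ with $h_v(t)$ non-constant, and factor $r_{v,j}(t) = t^j\,q_{v,j}(t^v)$, where $q_{v,j}(s) = \sum_{i \geq 0} a_{iv+j}\,s^{i}$. Since $r_{v,0}(0) = a_0 \neq 0$, I get $\gcd_{\Q[t]}(h_v(t), t) = 1$, so $h_v(t) \mid q_{v,j}(t^v)$ in $\Q[t]$ for every $j$. Applying Lemma~\ref{LemmaGCDWithPowers},
\[
h_v(t) \;\Bigm|\; \gcd_{\Q[t]}\!\bigl(q_{v,0}(t^v), \ldots, q_{v,v-1}(t^v)\bigr) = g(t^v),
\]
where $g(s) := \gcd_{\Q[s]}(q_{v,0}(s), \ldots, q_{v,v-1}(s))$ is therefore non-constant. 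Clearing denominators, write $g(s) = \tfrac{1}{N}\,\tilde{g}(s)$ with $\tilde{g}(s) \in \Z[s]$ primitive; the substitution $s \mapsto t^v$ preserves primitivity, as the nonzero coefficients of $\tilde{g}(t^v)$ are exactly the coefficients of $\tilde{g}$. Summing $g(t^v) \mid r_{v,j}(t)$ over $j$ yields $g(t^v) \mid r(t)$, hence $\tilde{g}(t^v) \mid r(t)$, in $\Q[t]$; by Gauss's lemma this divisibility descends to $\Z[t]$. Taking $s(t) := \tilde{g}(t)$ and $u := v-1$ then yields $(ii)$.

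The main obstacle is the direction $(i) \Rightarrow (ii)$: once a non-constant common $\Q[t]$-factor of the $r_{v,j}$ is located, one must recognise it as a polynomial in $t^v$ and then descend to an integer-coefficient divisor of $r(t)$ of the same $s(t^v)$-shape. Lemma~\ref{LemmaGCDWithPowers} handles the first task and Gauss's lemma the second, while the coprimality of $h_v(t)$ and $t$ (guaranteed by $r(0) \neq 0$) is what lets me strip off the leading $t^j$ in the identity $r_{v,j}(t) = t^j\,q_{v,j}(t^v)$.
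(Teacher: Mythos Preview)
Your proof is correct and follows essentially the same route as the paper's: reduce $\tcn{r(t)}=0$ to the existence of some modulus $v$ for which $h_v=\gcd_{\Q[t]}(r_{v,0},\ldots,r_{v,v-1})$ is non-constant, strip the factor $t^j$ from $r_{v,j}(t)=t^j q_{v,j}(t^v)$ using $\gcd(h_v,t)=1$ (from $r(0)\neq 0$), invoke Lemma~\ref{LemmaGCDWithPowers} to recognise the common divisor as a polynomial in $t^v$, and descend to $\Z[t]$ via Gauss. The only cosmetic difference is that the paper shows $h_v$ itself equals $g(t^v)$ and uses it directly, whereas you only establish $h_v\mid g(t^v)$ and work with $g$; this costs nothing.
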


\begin{proof}
	$(ii.) \implies (i.)$ Suppose first that $s(t^{u+1})|r(t)$. Then there exists a $v(t) \in \Z[t]$ such that $r(t) = s(t^{u+1}) \cdot v(t)$. Let $v(t) = \sum_j v_{u+1,j}(t)$ be the corresponding decomposition of $v(t)$. Then $r(t) = v_{u+1,0}(t) \cdot s(t^{u+1}) + v_{u+1,1}(t) \cdot s(t^{u+1}) + \cdots + v_{u+1,u}(t) \cdot s(t^{u+1})$ is the partial decomposition of $r(t)$. So $s(t^{u+1})$ divides each $r_{u+1,j}(t)$ and therefore their polynomial combination $\tcn{r(t)}$. Since $\deg(s(t^{u+1})) > 0$ and $\deg(\tcn{r(t)}) = 0$, we conclude that $\tcn{r(t)} = 0$. $(i.) \implies (ii)$ Suppose next that $\tcn{r(t)} = 0$ and $r(0) \neq 0$. Then there is a $u \in \N$ such that $h(t) := \gcd_{\Q[t]}(r_{u+1,0}(t) , \ldots , r_{u+1,u}(t))$ has degree $>0$. Since $r(0) \neq 0$, we see that $t$ does not divide $h(t)$. So $h(t) = \gcd_{\Q[t]}(r_{u+1,0}(t) \cdot t^{-0}, r_{u+1,1}(t) \cdot t^{-1}\ldots , r_{u+1,u}(t) \cdot t^{-u})$. But each of the $r_{u+1,i}(t) \cdot t^{-i}$ is in $\Z[t^{u+1}]$. Lemma \ref{LemmaGCDWithPowers} now implies that $h(t) \in \Q[t^{u+1}] \setminus \Q$, say $h(t) = S(t^{u+1})$ for some $S(t) \in \Q[t] \setminus \Q$. We can therefore write $S(t) = s(t) / m$, where $m \in \N$ and $s(t) \in \Z[t]$ is primitive. Then $s(t^{u+1}) = m \cdot h(t)$ divides each $r_{u+1,j}(t)$, and therefore their sum $r_{u+1,0}(t) + \cdots + r_{u+1,u}(t) = r(t)$ in $\Q[t]$. Gauss' lemma now implies that also $s(t^{u+1}) | r(t)$ in $\Z[t]$.
\end{proof}

\begin{corollary}
	Let $r(t)$ be a good polynomial. Then $\tcn{r(t)} \neq 0$.
\end{corollary}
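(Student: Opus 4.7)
The plan is to argue the contrapositive: assume $\tcn{r(t)} = 0$ and deduce that $r(t)$ is bad. All the genuine work has already been absorbed into Proposition \ref{PropositionCongNonZero}, which characterises $\tcn{r(t)} = 0$ as the disjunction ``$r(0) = 0$'' or ``there exist $u \in \N$ and $s(t) \in \Z[t] \setminus \Z$ with $s(t^{u+1}) \mid r(t)$ in $\Z[t]$.'' I would simply split on this disjunction and verify the defining property of badness in each case.

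In the second case, the witness for badness is essentially handed to us: if $s(t^{u+1})$ divides $r(t)$ in $\Z[t]$, then it also divides the larger product $r(0) \cdot r(1) \cdot r(t)$, so that $r(t)$ is bad by definition. In the first case, $r(0) = 0$ forces $r(0) \cdot r(1) \cdot r(t)$ to be the zero polynomial of $\Z[t]$, which is divisible by every polynomial whatsoever; choosing for instance $s(t) := t$ and $u := 1$ gives $s(t^{u+1}) = t^2 \mid 0$, so again $r(t)$ is bad.

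No real obstacle is anticipated: the corollary is a one-line unpacking of Proposition \ref{PropositionCongNonZero}. The only minor subtlety is the cosmetic asymmetry between the definition of badness (which talks about the normalised product $r(0) \cdot r(1) \cdot r(t)$) and the clean criterion of the proposition (which talks about $r(t)$ itself), and this is precisely what the extra factor of $r(0)$ in the definition is designed to absorb.
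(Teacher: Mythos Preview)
Your proposal is correct and matches the paper's intended reasoning: the paper states this as an immediate corollary of Proposition~\ref{PropositionCongNonZero} without further proof, and your contrapositive argument is exactly the obvious unpacking of that proposition against the definition of badness.
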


\subsection{The invariants $\discr{r(t)}$ and $\prodant{r(t)}$} \label{SubSecInvarsDiscrProd}

\begin{definition}[$\discr{r(t)}$ and $\prodant{r(t)}$] \label{DefDiscProd}
	Consider a polynomial $r(t) \in \Z[t]$. If $r(t)$ is constant, we define $\discr{r(t)} := r(t)$ and $\prodant{r(t)} := 1$. Else, we let $a$ be the leading coefficient of $r(t)$, we let $\lambda_1,\ldots,\lambda_l$ be the distinct roots of $r(t)$ with corresponding multiplicities $m_1,\ldots,m_l$, and we set $m := \max \{ m_1,\ldots,m_l\}$. We then define 
	\begin{eqnarray*}
		\discr{r(t)} := a^{1+2d^2} \cdot (m-1)! \cdot \prod_{\substack{1 \leq i , j \leq l\\i \neq j}} (\lambda_i - \lambda_j)^m \label{DefDisc}
	\end{eqnarray*}
	and
	\begin{eqnarray*}
		\prodant{r(t)} := a^{2d^3} \cdot \prod_{\substack{1 \leq i , j \leq l\\r(\lambda_i \cdot \lambda_j) \neq 0}} r(\lambda_i \cdot \lambda_j) = a^{2d^3} \cdot \prod_{\substack{1 \leq i , j , k \leq l\\r(\lambda_i \cdot \lambda_j) \neq 0}} a \cdot (\lambda_i \cdot \lambda_j - \lambda_k)^{m_k}
		. \label{DefProd}
	\end{eqnarray*}
\end{definition}

Let us now verify that these invariants are non-zero integers (for $r(t) \in \Z[t] \setminus \{0\}$) and let us show how they can be computed algorithmically.

\paragraph{$\discr{r(t)}$.} When considering $\discr{r(t)}$, we may assume that $r(t)$ has \emph{degree at least $2$}, since otherwise the computation is quite straight-forward. We may then use the standard algorithms to compute the square-free factorisation $r(t) = u_1(t)^1 \cdot u_2(t)^2 \cdots u_n(t)^n$ of $r(t)$ in $\Z[t]$, with the convention that $n$ be minimal. Then $u(t) := u_1(t) \cdots u_n(t)$ is a greatest square-free factor $u(t) := u_1(t) \cdots u_n(t)$ of $r(t)$ in $\Z[t]$, and it is unique up to its sign. Then $m = n$, $l = \deg(u(t))$, and the leading coefficient $\bar{a}$ of $u(t)$ divides $a$ in $\Z$. So the polynomial $v(t) := (a/\bar{a}) \cdot u(t) = a \cdot (t - \lambda_1) \cdots (t - \lambda_l)$ has integer coefficients. Let $\operatorname{Syl}(v(t),v'(t))$ be the Sylvester matrix of $v(t)$ and its formal derivative $v'(t)$. 

\begin{lemma} \label{LemmaIntegrDiscrast} We have \begin{eqnarray}
	\operatorname{Discr}_\ast(r(t)) &=& a^{1+2d^2 - 2 m(l-1)-m} \cdot (m-1)! \cdot \left( \det \left( \operatorname{Syl}(v(t),v'(t)) \right) \right)^m
	\label{FormulaExplicitDiscrSylvester}
	\end{eqnarray}
	and $\discr{r(t)} \in \Z \setminus\{ 0 \} .$ 
\end{lemma}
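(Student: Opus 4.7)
The plan is to reduce the claim to the classical Sylvester--resultant expression for the discriminant of the square-free polynomial $v(t) \in \Z[t]$, and then to verify that the exponent of $a$ in \eqref{FormulaExplicitDiscrSylvester} is non-negative so that integrality is immediate.

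First I would recall two classical identities for the polynomial $v(t)=a \prod_{i=1}^{l}(t-\lambda_i) \in \Z[t]$ of degree $l$ with leading coefficient $a$, namely
\[
\operatorname{Disc}(v(t)) \;=\; a^{2l-2} \prod_{1\le i<j\le l}(\lambda_i-\lambda_j)^2
\quad\text{and}\quad
\det\!\operatorname{Syl}(v(t),v'(t)) \;=\; (-1)^{l(l-1)/2}\, a\cdot \operatorname{Disc}(v(t)).
\]
Combining them with the elementary identity $\prod_{i\neq j}(\lambda_i-\lambda_j)=(-1)^{l(l-1)/2}\prod_{i<j}(\lambda_i-\lambda_j)^2$, and raising everything to the $m$-th power so that all the $(-1)^{l(l-1)/2}$ factors cancel, gives
\[
\prod_{i\neq j}(\lambda_i-\lambda_j)^m \;=\; a^{-m(2l-1)}\cdot \bigl(\det\!\operatorname{Syl}(v(t),v'(t))\bigr)^m.
\]
Substituting this into the definition of $\discr{r(t)}$ and simplifying $1+2d^2-m(2l-1)=1+2d^2-2m(l-1)-m$ yields the formula \eqref{FormulaExplicitDiscrSylvester}.

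Next I would establish integrality. Since $v(t)\in\Z[t]$, the determinant $\det \operatorname{Syl}(v(t),v'(t))$ is an integer, and clearly so are $(m-1)!$ and (non-negative) powers of $a$. Hence I only need to check that the exponent $1+2d^2-2m(l-1)-m$ is a non-negative integer. From $d=\sum_{i=1}^{l} m_i$ I get both $d\ge m$ (because $m=\max_i m_i$) and $d\ge l$ (because each $m_i\ge 1$), so $d^2\ge m l$. Consequently
\[
1+2d^2-2m(l-1)-m \;=\; 1+(2d^2-2ml)+m \;\ge\; 1+0+m \;\ge\; 2,
\]
which settles integrality.

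Finally, non-vanishing is essentially by construction: $a\ne 0$ as $r$ has positive degree, $(m-1)!\ne 0$, and the $\lambda_i$ are by definition pairwise distinct so that each factor $\lambda_i-\lambda_j$ with $i\neq j$ in the original definition is non-zero. Equivalently, $\det \operatorname{Syl}(v(t),v'(t))\ne 0$ because $v(t)$ is square-free. Thus $\discr{r(t)} \in \Z \setminus \{0\}$. The only real subtlety is bookkeeping the signs $(-1)^{l(l-1)/2}$ when converting between $\prod_{i<j}(\lambda_i-\lambda_j)^2$ and $\prod_{i\neq j}(\lambda_i-\lambda_j)$, but taking the $m$-th power absorbs them, and no genuine obstacle remains.
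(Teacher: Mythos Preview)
Your proof is correct and follows essentially the same approach as the paper: both reduce $\discr{r(t)}$ to the classical discriminant/Sylvester resultant of the square-free polynomial $v(t)$, observe that the sign factors $(-1)^{l(l-1)/2}$ match up, and verify non-negativity of the exponent of $a$ via $m,l \leq d$. The only cosmetic difference is that the paper first expresses $\discr{r(t)}$ in terms of $(\operatorname{Discr}(v(t)))^m$ to conclude integrality and then passes to the Sylvester determinant, whereas you combine the two classical identities in one step and check the exponent afterwards.
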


\begin{proof}
	By using the formula $\discr{v(t)} = (-1)^{l(l-1)} \cdot a^{2l-2} \cdot \prod_{1 \leq i \neq j \leq l} (\lambda_i - \lambda_j)$, we obtain 
	$
	\discr{r(t)} / ( (m-1)! \cdot (\operatorname{Discr}(v(t)))^m ) = (-1)^{m l (l-1)/2} \cdot a^{1+d^2-2m(l-1)}.
	$
	Since $m,l \leq d$, we have $0 \leq 1+2d^2 - 2m(l-1)$ and therefore $a^{1+2d^2-2m(l-1)} \in \Z \setminus \{ 0 \}$. Since $v(t) \in \Z[t]$, we also have $\operatorname{Discr}(v(t)) \in \Z \setminus\{ 0 \}$. So we may indeed conclude that $\discr{r(t)} \in \Z \setminus \{ 0 \}$. Finally, by using the formula $
	\operatorname{Discr}(v(t)) = (-1)^{l(l-1)/2} \cdot a^{-1} \cdot \det \left( \operatorname{Syl}(v(t),v'(t)) \right) $	we obtain \eqref{FormulaExplicitDiscrSylvester}. 
\end{proof}

\begin{remark}
	Formula \eqref{FormulaExplicitDiscrSylvester} allows us to compute $\discr{r(t)}$ \emph{without having to extract roots}. Indeed: the invariants $m$, $v(t)$, and $l$ are given by the algorithm for the square-free factorisation of $r(t)$.
\end{remark}

\paragraph{$\prodant{r(t)}$.} Let us now find a similar formula for $\prodant{r(t)}$. Let $C$ be the companion matrix of $(1/a) \cdot v(t)$ and let us consider its Kronecker square $C \otimes C$. The corresponding characteristic polynomial is given by $\chi_{C \otimes C}(t) = \prod_{1 \leq i , j \leq l}(t - \lambda_i \cdot \lambda_j).$ We may next use the Euclidean algorithm in order to compute a greatest factor $w(t)$ of $\chi_{C \otimes C}(t)$ (in the ring $\Q[t]$) that is co-prime to $r(t)$. Since such a factor $w(t)$ is determined only up to a (non-zero) rational number, we may choose the unique $w(t)$ with leading coefficient $a^{2\deg(w(t))}$. We then have the factorisation $$w(t) = \prod_{\substack{1 \leq i , j \leq l\\r(\lambda_i \cdot \lambda_j) \neq 0}} a^2 \cdot (t - \lambda_i \cdot \lambda_j).$$

\begin{lemma} \label{LemmaResultantProdant} We have 
	\begin{eqnarray}
	\prodant{r(t)} &=& a^{2(d^2-\deg(w(t)))d} \cdot \det \left( \operatorname{Syl}(r(t),w(t)) \right)
	\label{FormulaForProdExplicit}
	\end{eqnarray}
	and $\prodant{r(t)} \in \Z \setminus \{ 0 \}.$
\end{lemma}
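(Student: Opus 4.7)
The plan is to reduce everything to the standard root-formula for the resultant and then identify the answer with $\prodant{r(t)}$.

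First, I write $r(t) = a\prod_{k=1}^l (t-\lambda_k)^{m_k}$, and let $N := \deg(w(t))$, so that $w(t) = a^{2N}\prod_{(i,j):\,r(\lambda_i\lambda_j)\neq 0}(t-\lambda_i\lambda_j)$. The identity $\det(\operatorname{Syl}(r,w)) = \operatorname{Res}(r,w) = a^{N}\prod_{k=1}^l w(\lambda_k)^{m_k}$ (the leading-coefficient-times-product-over-roots formula for resultants) reduces the computation to evaluating each $w(\lambda_k)$. Substituting and pulling out the $a^{2N}$-factor from each $w(\lambda_k)$ gives an overall factor of $a^{2Nd}$ (since $\sum_k m_k = d$). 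For the remaining product I use
$$\prod_{k=1}^l (\lambda_k - \lambda_i\lambda_j)^{m_k} \;=\; (-1)^{d}\cdot \frac{r(\lambda_i\lambda_j)}{a},$$
which (after collecting signs and powers of $a$) yields $\det(\operatorname{Syl}(r,w)) = \pm a^{2Nd}\prod_{(i,j):\,r(\lambda_i\lambda_j)\neq 0} r(\lambda_i\lambda_j)$. Multiplying both sides by $a^{2d^{3}-2Nd} = a^{2(d^{2}-\deg w)d}$ and comparing with the definition of $\prodant{r(t)}$ produces the claimed formula \eqref{FormulaForProdExplicit}.

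The main obstacle is to ensure that the right-hand side of \eqref{FormulaForProdExplicit} actually makes sense as an integer, i.e.\ that $w(t) \in \Z[t]$, so that $\operatorname{Syl}(r(t),w(t))$ is an integer matrix. By construction $w(t) \in \Q[t]$, so it suffices to show its coefficients are algebraic integers. Since each $\lambda_k$ is a root of $v(t) \in \Z[t]$ with leading coefficient $a$, the product $a\lambda_k$ is an algebraic integer, hence so is $a^{2}\lambda_i\lambda_j$ for every pair $(i,j)$. The subset $\{(i,j) : r(\lambda_i\lambda_j)\neq 0\}$ is stable under $\operatorname{Gal}(\overline{\Q}/\Q)$, so the substitution $s := a^{2}t$ turns $w(t)$ into $\prod (s - a^{2}\lambda_i\lambda_j) \in \Z[s]\subseteq \Z[t]$, by Galois-invariance combined with integrality of the $a^{2}\lambda_i\lambda_j$. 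Consequently $w(t)\in\Z[t]$ and $\det\operatorname{Syl}(r(t),w(t))\in\Z$.

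Finally, to conclude $\prodant{r(t)}\in\Z\setminus\{0\}$: the exponent $2(d^{2}-\deg w)d$ is non-negative because $\deg w \le l^{2} \le d^{2}$, so $a^{2(d^{2}-\deg w)d}\in\Z$; non-vanishing of the determinant follows from $\gcd_{\Q[t]}(r(t),w(t)) = 1$ built into the construction of $w(t)$, giving $\operatorname{Res}(r(t),w(t))\neq 0$; and $a \neq 0$ since $r(t) \in \Z[t]$ is taken non-constant (the constant case having been dispatched in the definition). This establishes both parts of the lemma; the sign ambiguity from $(-1)^{Nd}$ may be absorbed into the formula or handled by noting it is irrelevant to the integrality and non-vanishing statements, which are the only uses we make of \eqref{FormulaForProdExplicit}.
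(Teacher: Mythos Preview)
Your argument is correct and follows the same overall strategy as the paper: compute $\operatorname{Res}(r,w)$ via the root formula, match it against the definition of $\prodant{r(t)}$, and verify separately that $w(t)\in\Z[t]$ so that the Sylvester determinant is an integer.

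The one genuine difference is in how you establish $w(t)\in\Z[t]$. The paper argues more concretely: it first shows that the full product $\prod_{i,j}(t-a^{2}\lambda_i\lambda_j)$ lies in $\Z[t]$ by the fundamental theorem of symmetric polynomials (writing each coefficient as an integer polynomial in the $a\cdot e_k^{[l]}(\lambda_1,\ldots,\lambda_l)$, which are integers since $v(t)\in\Z[t]$), and then invokes Gauss' lemma to split this monic integer polynomial into its two monic rational factors $\bar r(t)$ and $\bar{\bar r}(t)$, forcing both into $\Z[t]$. Your route---observe that each $a\lambda_k$ is an algebraic integer, hence so is each $a^{2}\lambda_i\lambda_j$, and combine this with Galois-invariance of the index set $\{(i,j):r(\lambda_i\lambda_j)\neq 0\}$---is slicker and bypasses Gauss entirely, at the cost of invoking the ring of algebraic integers. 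Both are standard; yours is shorter, the paper's is more self-contained.

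Your treatment of the sign $(-1)^{Nd}$ is also appropriate: the paper's displayed formula suppresses it as well, and as you note, only integrality and non-vanishing are ever used downstream.
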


\begin{proof}
	We define the auxiliary, monic polynomials
	\begin{equation} \bar{r}(t) := \prod_{\substack{ 1 \leq i , j \leq l \\ r(\lambda_i \cdot \lambda_j) \neq 0 }} (t - a^2 \cdot \lambda_i \cdot \lambda_j)  \text{ and } \bar{\bar{r}}(t) := \prod_{\substack{ 1 \leq i , j \leq l \\ r(\lambda_i \cdot \lambda_j) = 0 }} (t - a^2 \cdot  \lambda_i \cdot \lambda_j) \nonumber.\end{equation} These polynomials have rational coefficients since their roots are permuted by every automorphism $\sigma \in \operatorname{Gal}(\Q(\lambda_1,\ldots,\lambda_l) : \Q)$. In order to prove that $\bar{r}(t) \in \Z[t]$, we next consider the auxiliary polynomial $s(t;t_1,\ldots,t_l) := \prod_{1 \leq i , j \leq l} (t - t_i \cdot t_j) $ in the variable $t$ with coefficients in the domain  $\Z[t_1,\ldots,t_l]$. According to Viet\`a's formula, we have $$s(t;t_1,\ldots,t_l) = \sum_{0 \leq k \leq l^2} (-1)^k \cdot e_{k}^{[l^2]} ( t_1 \cdot t_1 , t_1 \cdot t_2 , \cdots , t_l \cdot t_l) \cdot t^{l^2-k} ,$$ where each $e_{j}^{[i]}(t_1,\ldots,t_i) := \sum_{1 \leq n_1 < \cdots < n_j \leq i} t_{n_1} t_{n_2} \cdots t_{n_j}$ is the elementary symmetric polynomial in the variables $t_1,\ldots,t_i$. We note that each coefficient $e_k^{[l^2]}(t_1^2,\ldots,t_l^2)$ in this expression is a also symmetric polynomial \emph{in the variables $t_1,\ldots,t_l$} with coefficients in $\Z$. So, according to the fundamental theorem for symmetric functions, there exist polynomials $P_1(t_1,\ldots,t_l) , \ldots , P_{l^2}(t_1,\ldots,t_l) \in \Z[t_1,\ldots,t_{l}]$ such that $$s(t;t_1,\ldots,t_l) = \sum_{0 \leq k \leq l^2} P_k(e_{1}^{[l]}(t_1,\ldots,t_l),\ldots,e_{l}^{[l]}(t_1,\ldots,t_l))  \cdot t^{l^2-k} .$$ By evaluating $t_i \mapsto a \cdot \lambda_i$, we obtain
	\begin{eqnarray*}
		s(t;a \cdot \lambda_1, \ldots , a \cdot \lambda_l) &=& \sum_{0 \leq k \leq l^2} P_k(a \cdot e_{1}^{[l]}(\lambda_1,\ldots,\lambda_l),\ldots,a^l \cdot e_{l}^{[l]}(\lambda_1,\ldots,\lambda_l))  \cdot t^{l^2-k} . \label{AuxFormProd}
	\end{eqnarray*}
	Since $u(t) = \sum (-1)^k \cdot {a} \cdot e_{k}^{[l]}(\lambda_1,\ldots,\lambda_l) \cdot t^{l-k}$ and $P_1(t_1,\ldots,t_l), \ldots, P_{l^2}(t_1,\ldots,t_l) $ all have integer coefficients, we see that the monic polynomial $s(t;a \cdot \lambda_1, \ldots , a \cdot \lambda_l)$ also has integer coefficients. Since $s(t;a \cdot \lambda_1,\ldots,a \cdot \lambda_l)$ is a product $s(t;a \cdot \lambda_1,\ldots,a \cdot \lambda_l) = \bar{r}(t) \cdot \bar{\bar{r}}(t)$ of two monic polynomials with rational coefficients, we may use Gauss' lemma to conclude that also $\bar{r}(t) , \bar{\bar{r}}(t) \in \Z[t]$. We see, in particular, that $w(t) = \bar{r}(a^2 \cdot t) \in \Z[t]$, so that $\operatorname{Res}(r(t),w(t)) \in \Z$. We now observe that
	\begin{eqnarray*}
		\prodant{r(t)} &:=& a^{2d^3} \cdot \prod_{\substack{1 \leq i , j \leq l \\ r(\lambda_i \cdot \lambda_j) \neq 0}} r(\lambda_i \cdot \lambda_j) \nonumber \\
		&=& a^{2(d^2-\deg(w(t)))d} \cdot \operatorname{Res}(r(t),w(t)) \label{FormulaForProd}
	\end{eqnarray*}
	is a non-zero integer. By using the formula 
	$ \operatorname{Res}(r(t),w(t)) 
	= \det \left( \operatorname{Syl}(r(t),w(t)) \right) $, 
	we finally obtain formula \eqref{FormulaForProd}.
\end{proof}

\begin{remark}
	Formula \eqref{FormulaForProd} allows us to compute $\prodant{r(t)}$ \emph{without having to extract roots}. Indeed: we only need the algorithm for the square-free factorisation of $r(t)$ in $\Z[t]$ and the Euclidean algorithm in the ring $\Q[t]$.
\end{remark}

\subsection{Arithmetically-free sets of roots}

\begin{definition}[Arithmetically-free subsets \cite{MoensAF}]
	Let $(A,\cdot)$ be an abelian group and let $X$ be a finite subset of $A$. We say that $X$ is arithmetically-free subset of $A$ if and only if $X$ contains \emph{no} arithmetic progressions of the form $\lambda , \lambda \cdot \mu , \lambda \cdot \mu^2 , \ldots , \lambda \cdot \mu^{|X|}$ with $\lambda,\mu \in X$.
\end{definition}

\begin{proposition}[Arithmetically-free sets of roots] \label{PropositionArithmeticallyFreeSetsOfRoots}
	Consider a polynomial $r(t) \in \Z[t]$ satisfying $r(0) \neq 0$. Let $X$ be the set of roots in $\overline{\Q}^\times$. Then the following are equivalent.
	\begin{enumerate}[(i.)]
		\item $X$ is \emph{not} an arithmetically-free subset of $(\overline{\Q}^\times,\cdot)$.
		\item There is some $u \in \N$ and some $s(t) \in \Z[t] \setminus \Z$ such that $\Phi_u(t)$ and $s(t^{u})$ divide $r(t)$ in $\Z[t]$.
	\end{enumerate}
\end{proposition}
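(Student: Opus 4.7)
The plan is to prove the two implications separately, using elementary Galois theory together with Gauss's lemma to descend divisibilities from $\overline{\Q}[t]$ to $\Z[t]$. The hypothesis $r(0) \ne 0$ will be used to ensure that no root is zero and, in the converse direction, that any $s(t)$ appearing via $s(t^u) \mid r(t)$ satisfies $s(0) \ne 0$.

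For the direction (ii)~$\Rightarrow$~(i), I would assume that $\Phi_u(t)$ and $s(t^u)$ both divide $r(t)$, with $s$ non-constant, and exhibit a forbidden progression. Any primitive $u$-th root of unity $\mu$ satisfies $\Phi_u(\mu) = 0$, so $\mu \in X$. Since $s(0) \ne 0$, every root $\beta$ of $s$ is non-zero, so I can choose a $u$-th root $\lambda$ of $\beta$ in $\overline{\Q}^{\times}$; then $s(\lambda^u) = s(\beta) = 0$ gives $\lambda \in X$, and the same identity shows that every element of $\lambda, \lambda\mu, \lambda\mu^2, \ldots, \lambda\mu^{|X|}$ lies in $X$. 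This is the forbidden progression.

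For the converse (i)~$\Rightarrow$~(ii), I would first extract a root of unity by pigeonhole. Pick $\lambda, \mu \in X$ with $\lambda\mu^k \in X$ for $k = 0,1,\ldots,|X|$. Since these are $|X|+1$ elements in the $|X|$-element set $X$, there exist $0 \le i < j \le |X|$ with $\mu^{j-i} = 1$, so $\mu$ has finite multiplicative order $u$ with $1 \le u \le |X|$. The Galois conjugates of $\mu$ over $\Q$ are again primitive $u$-th roots of unity lying in $X$ (as conjugates of roots of $r \in \Z[t]$), so all primitive $u$-th roots of unity are roots of $r$; as $\Phi_u(t)$ is monic and hence primitive, Gauss's lemma yields $\Phi_u(t) \mid r(t)$ in $\Z[t]$. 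To produce the second factor, I would set $\beta := \lambda^u$ and note that the $u$ elements $\lambda\mu^k$ with $0 \le k < u$ are exactly the $u$-th roots of $\beta$, all lying in $X$, so $t^u - \beta \mid r(t)$ in $\overline{\Q}[t]$. Applying each $\sigma \in \operatorname{Gal}(\overline{\Q}/\Q)$ gives $t^u - \sigma(\beta) \mid r(t)$ for every conjugate of $\beta$, and since these polynomials are pairwise coprime in $\overline{\Q}[t]$, their product
$s_0(t^u) := \prod_\sigma (t^u - \sigma(\beta))$
divides $r(t)$ in $\Q[t]$, where $s_0(t) := \prod_\sigma (t - \sigma(\beta)) \in \Q[t]$ is the minimal polynomial of $\beta$ over $\Q$. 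Writing $s_0 = \alpha \cdot s$ with $\alpha \in \Q^{\times}$ and $s \in \Z[t]$ primitive of the same degree, $s(t^u)$ is again primitive (its nonzero coefficients coincide with those of $s$), so Gauss's lemma upgrades $s(t^u) \mid r(t)$ from $\Q[t]$ to $\Z[t]$; and $\deg(s) = \deg(s_0) \ge 1$ ensures $s \in \Z[t] \setminus \Z$.

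The main obstacle I anticipate is the descent step: turning the single divisibility $t^u - \beta \mid r(t)$ in $\overline{\Q}[t]$ into the integral divisibility $s(t^u) \mid r(t)$ in $\Z[t]$ requires Galois symmetrisation followed by Gauss's lemma, and one must also keep careful track of primitivity so that both applications of Gauss's lemma go through cleanly. Everything else reduces to the pigeonhole principle and basic properties of cyclotomic polynomials.
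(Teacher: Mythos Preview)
Your proof is correct. The direction $(ii)\Rightarrow(i)$ matches the paper's argument essentially verbatim. For $(i)\Rightarrow(ii)$, however, you take a genuinely different route. The paper works with the periodic decomposition $r(t)=r_{u,0}(t)+\cdots+r_{u,u-1}(t)$: evaluating along the progression $\lambda,\lambda\mu,\ldots$ gives a Vandermonde system that forces each $r_{u,j}(\lambda)=0$, and then Lemma~\ref{LemmaGCDWithPowers} identifies the common divisor as a polynomial in $t^u$. Your argument instead observes directly that $t^u-\lambda^u\mid r(t)$ in $\overline{\Q}[t]$, symmetrises over the Galois orbit of $\beta=\lambda^u$ to descend to $\Q[t]$, and finishes with Gauss. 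Your approach is more self-contained---it needs neither the periodic decomposition nor Lemma~\ref{LemmaGCDWithPowers}---while the paper's approach is consonant with the machinery (the $r_{u,j}$ and reduced resultants) used elsewhere in the text, in particular in the analysis of $\tcn{r(t)}$. One minor remark: your $s_0$ is in fact already monic, but since $r(t)$ need not be monic its roots need not be algebraic integers, so you are right not to assume $s_0\in\Z[t]$ outright and to pass to a primitive associate before invoking Gauss.
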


\begin{proof}
	$(ii.) \implies (ii.)$ Suppose that the second claim holds. Let $\lambda$ be any root of $s(t^u)$ and let $\mu$ be any primitive $u$'th root of unity. Then $r(\mu) = 0$ and $r(\lambda) = r(\lambda \cdot \mu) = r(\lambda \cdot \mu^2) = \cdots$ is an arithmetic progression of roots in $X$ with $\mu \in X$. $(i.) \implies (ii.)$ Now suppose that $X$ is not an arithmetically-free subset of $\overline{\Q}^\times$. Then there exist $\lambda,\mu \in X$ such that $\lambda , \ldots , \lambda \cdot \mu^{|X|} \in X$. So $\mu$ has finite order, say $u \geq 1$, and $\Phi_u(t) | r(t)$ in $\Z[t]$. We now consider the partial decomposition $r(t) = r_{u,0}(t) + \cdots + r_{u,u-1}(t)$ of $r(t)$ and evaluate in the terms of the progression. We obtain the linear Vandermonde system
	\[
	\left(
	\begin{array}{cccc}
	1  & 1  &  \cdots & 1 \\
	1  & \mu  & \cdots & \mu^{u-1} \\
	\vdots  & \vdots  & \ddots & \vdots \\
	1  & \mu^{u-1} & \cdots & (\mu^{u-1})^{u-1}
	\end{array}
	\right)
	\cdot 
	\left(
	\begin{array}{c}
	r_{u,0}(\lambda)\\
	r_{u,1}(\lambda)\\
	\vdots\\
	r_{u,u-1}(\lambda)
	\end{array}
	\right)
	=
	\left(
	\begin{array}{c}
	0\\
	0\\
	\vdots\\
	0
	\end{array}
	\right).
	\]
	Since the order of $\mu$ is $u$, the determinant of this system is non-zero. So the partial sums $r_{u,j}(\lambda)$ all vanish. Since $\lambda \neq 0$, we see that $\lambda$ is a common root of $r_{u,0}(t) \cdot t^{-0} , r_{u,1}(t) \cdot t^{-1} , \ldots , r_{u,u-1}(t) \cdot t^{-(u-1)}$ and their greatest common divisor, say $h(t)$. According to lemma \ref{LemmaGCDWithPowers}, there is an $S(t) \in \Q[t] \setminus \Q$ such that $h(t) = S(t^u)$. Write $S(t)$ as $s(t) / m $, for some $m \in \N$ and some primitive $s(t) \in \Z[t]$. Gauss' lemma then implies that $s(t^u)$ divides $r(t)$ in $\Z[t]$.
\end{proof}

\begin{corollary} \label{CorollaryGoodImpliesAF}
	Let $r(t) \in \Z[t]$ be a good polynomial. Then its roots form an arithmetically-free subset $X$ of $(\overline{\Q}^\times,\cdot)$.
\end{corollary}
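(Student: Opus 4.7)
The plan is to reduce to Proposition \ref{PropositionArithmeticallyFreeSetsOfRoots}. My first step would be to verify that any good polynomial $r(t)$ automatically satisfies $r(0) \neq 0$. Indeed, if $r(0) = 0$, then the product $r(0) \cdot r(1) \cdot r(t)$ is the zero element of $\Z[t]$, which is divisible by every $s(t^{u+1})$ with $s(t) \in \Z[t] \setminus \Z$ and $u \in \N$. This would immediately make $r(t)$ bad, contradicting our hypothesis. With $r(0) \neq 0$ secured, the hypothesis of Proposition \ref{PropositionArithmeticallyFreeSetsOfRoots} is met, so it suffices to verify the negation of condition $(ii.)$ of that proposition: no pair $(u, s(t))$ with $u \in \N$ and $s(t) \in \Z[t] \setminus \Z$ simultaneously satisfies $\Phi_u(t) \mid r(t)$ and $s(t^u) \mid r(t)$ in $\Z[t]$.

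To establish this, I would argue by contradiction. Suppose such a pair $(u, s(t))$ exists. The argument splits according to the value of $u$. If $u \geq 2$, then $u' := u - 1$ lies in $\N$, and setting $s'(t) := s(t)$ we obtain $s'(t^{u'+1}) = s(t^u) \mid r(t) \mid r(0) \cdot r(1) \cdot r(t)$ in $\Z[t]$. This directly witnesses that $r(t)$ is bad, a contradiction. If instead $u = 1$, then $\Phi_1(t) = t - 1$ divides $r(t)$, forcing $r(1) = 0$. Then $r(0) \cdot r(1) \cdot r(t) = 0$ in $\Z[t]$, so any choice of non-constant $s'(t)$ and any $u' \in \N$ yields $s'(t^{u'+1}) \mid 0$, once again contradicting the goodness of $r(t)$. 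In either case the contradiction shows that no such pair $(u, s(t))$ exists, whence the set of roots of $r(t)$ in $\overline{\Q}^\times$ is arithmetically-free.

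I do not foresee any serious obstacle: essentially all the work is already done in Proposition \ref{PropositionArithmeticallyFreeSetsOfRoots}, and the corollary is a matter of matching the two divisibility conditions against the definition of a good polynomial. The only mildly delicate points are the index shift $u \mapsto u-1$, which requires $u \geq 2$ and therefore a separate treatment of the degenerate case $u = 1$ (where the root of unity is trivial and $r(1) = 0$ makes the relevant product vanish), and the preliminary observation that goodness forces $r(0) \neq 0$ so that Proposition \ref{PropositionArithmeticallyFreeSetsOfRoots} can be applied at all.
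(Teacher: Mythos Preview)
Your proposal is correct and follows essentially the same route as the paper: both argue by contradiction via Proposition~\ref{PropositionArithmeticallyFreeSetsOfRoots}, first noting that goodness forces $r(0)\neq 0$, then eliminating $u\geq 2$ because $s(t^u)\mid r(t)$ would witness badness, and finally eliminating $u=1$ because $\Phi_1(t)\mid r(t)$ would force $r(1)=0$. The only cosmetic difference is that the paper phrases the last step as ``$r(1)\neq 0$ implies $\Phi_u(1)\neq 0$ implies $u\geq 2$,'' whereas you run the contrapositive.
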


\begin{proof}
	Suppose that $X$ is \emph{not} arithmetically-free. Since $r(t)$ is good, we have $r(0) \neq 0$. So proposition \ref{PropositionArithmeticallyFreeSetsOfRoots} gives a $u \in \N$ and $s(t) \in \Z[t] \setminus \Z$ such that $\Phi_u(t) , s(t^u) | r(t)$. Since $r(t)$ is good we know that $u < 2$. Since $r(t)$ is good, we have $r(1) \neq 0$ and therefore $\Phi_u(1) \neq 0$. So $u \geq 2$. This contradiction finishes the proof.
\end{proof}

\section{Examples} \label{SectionExamplesForLater}

\subsection{$r(t) = t^3-2t-1$} 

We now illustrate the methods of the previous sections by means of single, concrete example. We begin by illustrating the construction of \ref{SubSecConstrIdentity}.

\begin{example} \label{HeisenbergIdentity} We consider the discrete Heisenberg group $H$, defined as the subgroup:
	$$ H := \left\{ \left(
	\begin{array}{ccc}
	1 & x & z \\ 
	0 & 1 & y \\ 
	0 & 0 & 1 \\ 
	\end{array}
	\right)
	|
	x,y,z \in \Z
	\right\} \subseteq \operatorname{GL}_3(\Z).
	$$ Let $\map{\gamma}{H}{H}$ be the automorphism that is given by
	$$ \left(
	\begin{array}{ccc}
	1 & x & z \\ 
	0 & 1 & y \\ 
	0 & 0 & 1 \\ 
	\end{array}
	\right)
	\mapsto
	\left(
	\begin{array}{ccc}
	1 & y & x \cdot y + \frac{y \cdot (y-1)}{2} - z \\ 
	0 & 1 & x + y \\ 
	0 & 0 & 1 \\ 
	\end{array}
	\right),
	$$
	and let us use the series $H \geq [H,H] \geq \{ \id_3 \}$ with factors $H / [H,H] \cong \Z^2$ and $[H,H] / \{ \id_3 \} \cong \Z$. A straight-forward computation gives us the characteristic polynomials $\chi_1(t) = -1 -t+t^2$ and $\chi_2(t) = 1+t$, so that the characteristic polynomial of $\gamma$ with respect to the series is given by $\chi(t) := (-1-t+t^2) \cdot (1 + t) $. By substitution, as in the proof of Proposition \ref{TheoremCH}, we obtain $\forall v \in H : \phantom{} \gamma^{3}(v) \cdot \gamma^{2}(v^{-1}) \cdot \gamma(v^{-1}) \cdot \gamma^{2}(v) \cdot \gamma(v^{-1}) \cdot v^{-1} = \id_3, $ so that the inverse automorphism $\map{\gamma^{-1}}{H}{H}$ is given by the formula $ \gamma^{-1}(v) = \gamma^{2}(v) \cdot \gamma^{1}(v^{-1}) \cdot (v^{-1}) \cdot \gamma(v) \cdot v^{-1} ,$ and therefore by
	$$ \left(
	\begin{array}{ccc}
	1 & x & z \\ 
	0 & 1 & y \\ 
	0 & 0 & 1 \\ 
	\end{array}
	\right)
	\mapsto
	\left(
	\begin{array}{ccc}
	1 & -x + y & x \cdot y - \frac{x \cdot (1+x)}{2} - z \\ 
	0 & 1 & x \\ 
	0 & 0 & 1 \\ 
	\end{array}
	\right).
	$$
\end{example}

We emphasize that the map $\mapl{-1 - 2 \gamma + \gamma^3}{H}{H}{v}{v^{-1} \cdot \gamma(v^{-1}) \cdot \gamma^3(v)}$ does not vanish. So $r(t)$ is an identity of $\gamma$, but it is not a monotone identity of $\gamma$. We now use (a minor variation on) the construction of \ref{SubSectionConstructionAuto} to go in the other direction: 

\begin{example} \label{ExGold-1} Let $r(t) := (t^2-t-1) \cdot (t+1)$ be the polynomial of example \ref{HeisenbergIdentity}. Let us construct a fix-point-free automorphism $\map{\beta}{N}{N}$ on a finitely-generated, torsion-free, two-step nilpotent group $N$ such that $r(t)$ is the characteristic polynomial of $\beta$. \newline
	
	The roots of $r(t)$ are $ -1 , \frac{1 - \sqrt{5}}{2} $, and $\frac{1 + \sqrt{5}}{2} $, and the product of the latter two roots is the first root (cf. remark \ref{RemarkRefinedConstruction}). So we consider the free two-step nilpotent Lie algebra $F = \Q \cdot x_1 + \Q \cdot x_2 + \Q \cdot [x_1,x_2]$ on the generators $x_1$ and $x_2$. Then the companion matrix $$
	A := 
	\left(
	\begin{array}{cc}
	0&1 \\
	1&1 
	\end{array}
	\right) \in \operatorname{GL}_4(\Z)
	$$ of $(t^2-t-1)$ defines a linear transformation of the $\Q$-span of the generators: $\alpha(x_1) := x_2$ and $\alpha(x_2) := x_1 + x_2$. This map extends (in a unique way) to an automorphism $\map{\alpha}{F}{F}$ of the Lie algebra $F$: $\alpha([x_1,x_2]) = [\alpha(x_1),\alpha(x_2)] = - [x_1,x_2]$. 	
	We let $I$ be the ideal of $F$ that is generated by the subset $(\alpha^2 - \alpha - \id_F)(\Q \cdot x_1 + \Q \cdot x_2) + (\alpha + \id_F)(\Q \cdot [x_1,x_2])$ of $F$.  Then the induced automorphism $\map{\overline{\alpha}}{L}{L}$ on the quotient Lie algebra $L := F / I$ satisfies $r(\overline{\alpha}) = 0_L$. In fact: $I = \{ 0_F\}$ and, with respect to the ordered basis $(\overline{x}_1,\overline{x}_2,\frac{\overline{[x_1,x_2]}}{2})$, the automorphism of $L$ is given by the matrix 
	$$
	\left(
	\begin{array}{cc|c}
	0&1&0 \\
	1&1&0 \\ \hline
	0&0&-1 
	\end{array}
	\right) .
	$$ 
	We may now use the Baker---Campbell---Hausdorff formula to define the group operation $\ast$ on $L$. For rational numbers $c_1,c_2,c_{12}$ and $C_{1},C_2,C_{12}$, we define 	$(c_1 \cdot \overline{x}_1 + c_2 \cdot \overline{x}_2 + c_{12} \cdot \frac{\overline{[x_1,x_2]}}{2}) \ast (C_1 \cdot \overline{x}_1 + C_2 \cdot \overline{x}_2 + C_{12} \cdot \frac{\overline{[x_1,x_2]}}{2}) $ to be $ (c_1 + C_1) \cdot \overline{x}_1 + (c_2 + C_2) \cdot \overline{x}_2 + (c_{12} + C_{12} + (c_1 \cdot C_2 - c_2 \cdot C_1) ) \cdot \frac{\overline{[x_1,x_2]}}{2} .$ One can then verify that $ N := \Z \cdot \overline{x}_1 + \Z \cdot \overline{x}_2 + \Z \cdot \frac{\overline{[x_1,x_2]}}{2} $ is an $\overline{\alpha}$-invariant subgroup of $(L,\ast)$ of class two and Hirsch-length $3$. The restriction $\map{\beta}{N}{N}$ of $\overline{\alpha}$ to $N$ is an endomorphism of $N$ and $r(t)$ is the characteristic polynomial of $\beta$ (with respect to the series of the isolators of the lower central series of $N$). Since also $r(0) = -1$, we may use remark \ref{RemarkChar0} in order to conclude that $\beta$ is, in fact, an automorphism of $N$. Since $r(1) = -2$ and since $N$ is torsion-free, we know that all fix-points of $\beta$ are trivial, so that $\beta$ is fix-point-free. In fact, the group $N$ is a twisted Heisenberg group:
	$$ N \cong \left\{ \left(
	\begin{array}{ccc}
	1 & x & \frac{z}{2} \\ 
	0 & 1 & y \\ 
	0 & 0 & 1 \\ 
	\end{array}
	\right)
	|
	x,y,z \in \Z
	\right\} \subseteq \operatorname{GL}_3(\Q).
	$$ And, under this identification, the automorphism $\map{\beta}{N}{N}$ is given by:
	$$ \left(
	\begin{array}{ccc}
	1 & x & \frac{z}{2} \\ 
	0 & 1 & y \\ 
	0 & 0 & 1 \\ 
	\end{array}
	\right)
	\mapsto
	\left(
	\begin{array}{ccc}
	1 & y & \frac{2 \cdot x \cdot y + y^2 - z}{2} \\ 
	0 & 1 & x + y \\ 
	0 & 0 & 1 \\ 
	\end{array}
	\right).
	$$
	This group $N$ is not the discrete Heisenberg group $H$ of example \ref{HeisenbergIdentity}.  But $H$ is a normal subgroup of index $2$ in $N$.
\end{example}

\begin{example}
	The polynomial $r(t) := t^3-2t-1$ is good and its roots form an arithmetically-free subset of $(\overline{\Q}^\times,\cdot)$.
\end{example}

\begin{proof}
	Clearly, $r(0) \cdot r(1) = 2$. We next observe that $r(t) $ factorizes as $ (t+1) \cdot (t^2-t-1)$. So $(t)$ is good and we may apply proposition \ref{PropositionArithmeticallyFreeSetsOfRoots}.
\end{proof}

We conclude, in particular, that the invariants $r(1), \tcn{r(t)}, \discr{r(t)} ,$ and $\prodant{r(t)}$ are non-zero. Let us now compute these invariants. 

\begin{example} \label{ExampleGoldenDP}
	For $r(t) := t^3-2t-1$, we have $r(1) \cdot \tcn{r(t)} \cdot \discr{r(t)} \cdot \prodant{r(t)} = (-2) \cdot (2) \cdot (-5) \cdot (-2^7 \cdot 5).$
\end{example}

\begin{proof}
	$(i.)$ It is clear that $r(1) = -2$. $(ii.)$ In order to compute $\tcn{r(t)}$, we use the Euclidean algorithm. We see that $ 1 = - r_{2,0}(t) + 0 \cdot r_{u,1}(t)$ and $2 = (-t^2) \cdot r_{3,0}(t) + (-2) \cdot r_{3,1}(t) + 0 \cdot  r_{3,2}(t) $. So $\tcn{r(t)} | 2$. Suppose that $\tcn{r(t)} = 1$. Then there exist $a(t),b(t),c(t) \in \Z[t]$ with $a(t) \cdot r_{3,0}(t) + b(t) \cdot r_{3,1}(t) + c(t) \cdot r_{3,2}(t) = 1$. By evaluating in $1$ and reducing modulo $2$, we obtain the contradiction $0 = 1$. $(iii.)$ In order to compute the invariants $\discr{r(t)}$ and $\prodant{r(t)}$, we assume the notation of \ref{SubSecInvarsDiscrProd}. The roots of this polynomial are simple and given by $\lambda_1 := -1$, $\lambda_2 := \frac{1-\sqrt{5}}{2}$ and $\lambda_3 := \frac{1+\sqrt{5}}{2}$. By definition, we therefore have $\discr{r(t)} := - (\lambda_1-\lambda_2)^2 \cdot (\lambda_2-\lambda_3)^2 \cdot (\lambda_3 - \lambda_1)^2 = -5 .$ We next note that $r(\lambda_i \cdot \lambda_j) = 0$ if and only if $\{i,j\} = \{2,3\}$. So definition \ref{DefDiscProd} gives us $$\prodant{r(t)} := \prod_{\substack{1 \leq i , j \leq 3 \\ \{i,j\} \neq \{2,3\}}} r(\lambda_i \cdot \lambda_j) = - 2^7 \cdot 5.  $$ Let us now come to the same conclusions without using the polynomial's roots. Since $r(t)$ is monic, we do not have to keep track of leading coefficients. The square-free factorisation of the monic polynomial $r(t)$ is given by $r(t) = u_1(t)^1$, so that $r(t) = u(t) = v(t)$ and $m = 1$. By using formula  \eqref{FormulaExplicitDiscrSylvester}, we obtain $\discr{r(t)} = -5$. Now let $C$ be the companion matrix of $r(t)$.	We then have $\chi_{C \otimes C}(t) = -(-1 + t) (1 + t)^2 (1 - 3 t + t^2) (-1 + t + t^2)^2$ and $w(t) = (t^2+t-1)^2 (t^2-3t+1)(t-1).$ Formula \eqref{FormulaForProdExplicit} allows us to conclude once more that $\prodant{r(t)} = -2^7 \cdot 5$.
\end{proof}

\subsection{$\Phi_n(t)$ and $\Psi_n(t)$} \label{SubsecExsPhiPsi}

We now compute the invariants of the cyclotomic polynomials $\Phi_n(t)$ and the polynomials $\Psi_n(t) = (t^n-1)/(t-1)$.

\begin{definition}
	For $r(t) \in \Z[t]$ and $u \in \N$, we define $\operatorname{RRes}_u(r(t))$ to be the (unique) non-negative generator of the (principal) $\Z$-ideal $\Z \cap ( r_{u,0}(t) \cdot \Z[t] + \cdots + r_{u,u-1}(t) \cdot \Z[t] )$. Then clearly $\tcn{r(t)}$ is the least common multiple of $\operatorname{RRes}_2(r(t)), \ldots ,\operatorname{RRes}_{u+1}(r(t))$. 
\end{definition}

We will need two elementary properties of reduced resultants.

\begin{lemma}[Division] \label{LemmaRResDivision} Consider integer polynomials $a(t),b(t),c(t) \in \Z[t]$ with $a(t) \cdot b(t) = c(t)$. For every integer $u \geq 2$, we have $ \operatorname{RRes}_u(a(t)) | \operatorname{RRes}_u(c(t)) .$ \end{lemma}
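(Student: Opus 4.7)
My plan is to view the decomposition $r(t) = r_{u,0}(t) + \cdots + r_{u,u-1}(t)$ as coming from the $\Z/u\Z$-grading of the monoid algebra $\Z[t]$ induced by $\deg \bmod u$. The homogeneous component of degree $j$ of a polynomial $r(t)$ is exactly the partial sum $r_{u,j}(t)$, and the multiplication in $\Z[t]$ respects this grading. Everything else will be a direct consequence of this observation.

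First I would introduce the ideals
\[
I_a := a_{u,0}(t) \cdot \Z[t] + \cdots + a_{u,u-1}(t) \cdot \Z[t], \qquad I_c := c_{u,0}(t) \cdot \Z[t] + \cdots + c_{u,u-1}(t) \cdot \Z[t],
\]
of $\Z[t]$. By definition of $\operatorname{RRes}_u$, we have $I_a \cap \Z = \operatorname{RRes}_u(a(t)) \cdot \Z$ and $I_c \cap \Z = \operatorname{RRes}_u(c(t)) \cdot \Z$. So it suffices to prove the inclusion $I_c \subseteq I_a$, which after intersecting with $\Z$ yields $\operatorname{RRes}_u(a(t)) \mid \operatorname{RRes}_u(c(t))$.

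Next, I would establish the key identity
\[
c_{u,j}(t) \;=\; \sum_{\substack{0 \leq i,k < u \\ i+k \equiv j \bmod u}} a_{u,i}(t) \cdot b_{u,k}(t).
\]
This is a straightforward computation: write $a(t) = \sum_m \alpha_m t^m$ and $b(t) = \sum_n \beta_n t^n$, group the product $a(t) \cdot b(t)$ by the residue of $m+n$ modulo $u$, and recognise each resulting block as the corresponding product $a_{u,i}(t) \cdot b_{u,k}(t)$.

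From this identity, each summand $a_{u,i}(t) \cdot b_{u,k}(t)$ lies in $a_{u,i}(t) \cdot \Z[t] \subseteq I_a$, and hence $c_{u,j}(t) \in I_a$ for every $j \in \{0, \ldots, u-1\}$. This gives $I_c \subseteq I_a$, and intersecting with $\Z$ finishes the proof.

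There is no real obstacle: the proof is essentially the observation that the map $\Z[t] \to \Z[t]/(t^u-1) \cdot \text{(regrading)}$ refines into a $\Z/u\Z$-grading compatible with multiplication. The only thing worth being careful about is the indexing in the key identity, to make sure the cross-terms $a_{u,i}(t) \cdot b_{u,k}(t)$ with $i + k \geq u$ are correctly assigned to the residue class $j = (i+k) \bmod u$.
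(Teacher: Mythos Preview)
Your proof is correct and takes essentially the same approach as the paper: both establish the convolution identity $c_{u,j}(t) = \sum_{i+k \equiv j \bmod u} a_{u,i}(t) \cdot b_{u,k}(t)$ and use it to show that each $c_{u,j}(t)$ lies in the ideal $I_a$ generated by the partial sums of $a(t)$. Your phrasing in terms of the ideal inclusion $I_c \subseteq I_a$ and the $\Z/u\Z$-grading is slightly more conceptual, but the argument is the same.
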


\begin{proof}  We first note that for every natural $i \geq 0$, we have \begin{equation} c_{u,i}(t) = \sum_{j + k \equiv i \operatorname{mod} u} a_{u,j}(t) \cdot b_{u,k}(t) \label{FormulaConvolution} .\end{equation} By definition, there exist polynomials $C_{0}(t),\ldots,C_{u-1}(t) \in \Z[t]$ that give us the equality $\operatorname{RRes}_u(c(t)) = \sum_{0 \leq i < u} C_{i}(t) \cdot c_{u,i}(t).$ Using \eqref{FormulaConvolution}, we obtain:
	$ \operatorname{RRes}_u(c(t)) = \sum_{0 \leq i < u} C_{i}(t) \cdot \left( \sum_{ j + k \equiv i \operatorname{mod} u} a_{u,j}(t) \cdot b_{u,k}(t) \right) . 
	$
	So $\operatorname{RRes}_u(c(t)) \in (a_{u,0}(t) \cdot \Z[t] + \cdots + a_{u,u-1}(t) \cdot {\Z[t]}) \cap \Z$. 
\end{proof}

\begin{lemma}[Composition] \label{LemmaRResComposition} Consider $r(t) \in \Z[t]$. Consider natural numbers $u \geq 2$, and $n,m$. If $m \cdot n \equiv 1 \operatorname{mod} u$, then $ \operatorname{RRes}_u(r(t^m)) | \operatorname{RRes}_u(r(t)) .$\end{lemma}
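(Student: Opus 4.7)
The plan is to reduce the divisibility claim to a direct substitution $t \mapsto t^m$ in a representation of $\operatorname{RRes}_u(r(t))$ as a $\Z[t]$-combination of the partial sums $r_{u,0}(t),\ldots,r_{u,u-1}(t)$. The crucial arithmetic input is that, since $mn \equiv 1 \bmod u$, the integer $m$ is a unit modulo $u$, so multiplication by $m$ permutes the residues $\{0,1,\ldots,u-1\}$.

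First I would identify the partial sums of $r(t^m)$ in terms of those of $r(t)$. Directly from the definition,
\[
(r(t^m))_{u,j}(t) \;=\; \sum_{mi \equiv j \bmod u} a_i\, t^{mi}.
\]
Using $mn \equiv 1 \bmod u$, the congruence $mi \equiv j$ is equivalent to $i \equiv nj$ modulo $u$, and so
\[
(r(t^m))_{u,j}(t) \;=\; r_{u,\,nj \bmod u}(t^m),
\]
or equivalently $r_{u,j}(t^m) = (r(t^m))_{u,\,mj \bmod u}(t)$.

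Second, by definition of $\operatorname{RRes}_u$, there exist $P_0(t),\ldots,P_{u-1}(t) \in \Z[t]$ with
\[
\operatorname{RRes}_u(r(t)) \;=\; \sum_{j=0}^{u-1} P_j(t)\, r_{u,j}(t).
\]
Substituting $t \mapsto t^m$ (the left side is constant, so it is unchanged) and applying the identity from the previous paragraph yields
\[
\operatorname{RRes}_u(r(t)) \;=\; \sum_{j=0}^{u-1} P_j(t^m)\, (r(t^m))_{u,\,mj \bmod u}(t).
\]
Since $m$ is a unit modulo $u$, the index $mj \bmod u$ ranges over all of $\{0,\ldots,u-1\}$, so the right-hand side lies in the ideal $(r(t^m))_{u,0}(t)\,\Z[t] + \cdots + (r(t^m))_{u,u-1}(t)\,\Z[t]$. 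Intersecting with $\Z$ gives $\operatorname{RRes}_u(r(t)) \in \operatorname{RRes}_u(r(t^m)) \cdot \Z$, which is the desired divisibility.

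I do not expect any serious obstacle here: the only subtle point is bookkeeping with the bijection $j \mapsto mj \bmod u$ on residues, which is where the hypothesis $mn \equiv 1 \bmod u$ is used. Everything else is purely formal manipulation of the definition of $\operatorname{RRes}_u$.
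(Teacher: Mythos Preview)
Your argument is correct and essentially identical to the paper's proof: both compute $(r(t^m))_{u,j}(t) = r_{u,\,nj \bmod u}(t^m)$, take a $\Z[t]$-representation of $\operatorname{RRes}_u(r(t))$ in terms of the $r_{u,j}(t)$, and substitute $t \mapsto t^m$ to land in the ideal generated by the partial sums of $r(t^m)$. The only cosmetic difference is that you explicitly invoke the bijection $j \mapsto mj \bmod u$, whereas the paper is content with the inclusion of each $r_{u,j}(t^m)$ in the ideal; either way suffices.
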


\begin{proof} Let the polynomial be given by $r(t) = \sum_{0 \leq j \leq d} a_j \cdot t^j$ and define $s(t) := r(t^m)$. Then for all $0 \leq i < u$, we have $$ s_{u,i}(t) = \sum_{\substack{0 \leq j \leq d\\m \cdot j \equiv i \operatorname{mod} u}} a_j \cdot (t^m)^j = \sum_{\substack{0 \leq j \leq d\\j \equiv n \cdot i \operatorname{mod} u}} a_j \cdot (t^m)^j = r_{u,n \cdot i \operatorname{mod} u}(t^m).$$ By definition, there exist $c_{u,0}(t) , \ldots, c_{u,u-1}(t) \in \Z[t]$ such that $\operatorname{RRes}_u(r(t)) = \sum_j c_{u,j}(t) \cdot r_{u,j}(t)$. By substituting $t \mapsto t^m$, we obtain $ \operatorname{RRes}_u(r(t)) = \sum_j c_{u,j}(t^m) \cdot r_{u,j}(t^m) \in s_{u,0}(t) \cdot \Z[t] + \cdots + s_{u,u-1}(t) \cdot \Z[t]. $ We conclude that $\operatorname{RRes}_u(r(t)) \in (s_{u,0}(t) \cdot \Z[t] + \cdots + s_{u,u-1}(t) \cdot \Z[t]) \cap \Z$. \end{proof}

Before computing $\tcn{\Phi_n(t)}$, we record some elementary properties of the cyclotomic polynomials. 

\begin{lemma}	\label{LemFormulaCyclo} Let $n > 1$ be a natural number and let $m$ be its radical. Then $\deg(\Phi_n(t)) = \phi(n)$, where $\phi$ is the Euler totient-function. Then \begin{equation} \Phi_n(t) = \Phi_m(t^{n/m}) \label{FormulaRadical}. \end{equation} Let $p$ be an odd prime that does not divide $m$. Then \begin{equation} \Phi_{p \cdot m}(t) \cdot \Phi_m(t) = \Phi_m(t^{p}) \label{FormulaDivision} . \end{equation} If $m$ is odd, then \begin{equation} \Phi_{2 \cdot m}(t) = \Phi_m(-t) . \label{FormulaEven}\end{equation} If $n = m$, then $\Phi_n(1) = m$. If $n \neq m$, then $\Phi_n(1) = 1$. \end{lemma}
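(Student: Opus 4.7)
The plan is to deduce all five claims from two classical inputs: the defining product $\Phi_n(t) = \prod_{\zeta}(t-\zeta)$ taken over primitive $n$-th roots $\zeta \in \overline{\Q}^\times$, and the factorization $t^n - 1 = \prod_{d \mid n}\Phi_d(t)$. All polynomials appearing in the statement are monic, so each of \eqref{FormulaRadical}, \eqref{FormulaDivision}, and \eqref{FormulaEven} will follow once I match degrees (via multiplicativity of Euler's $\phi$) and identify the root-sets in $\overline{\Q}^\times$.

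First I would record that $\deg \Phi_n(t) = \phi(n)$ by a direct count of primitive $n$-th roots of unity. For \eqref{FormulaRadical}, I would check that an element $\zeta \in \overline{\Q}^\times$ is a root of $\Phi_m(t^{n/m})$ precisely when $\zeta^{n/m}$ has order exactly $m$; using that $m$ is the \emph{radical} of $n$, so every prime dividing $n/m$ already divides $m$, a short order-computation shows that this is equivalent to $\zeta$ itself having order exactly $n$. The degrees agree because $\phi(n) = (n/m) \phi(m)$ under the same assumption on $m$, so the two monic polynomials must coincide.

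For \eqref{FormulaDivision}, the cleanest route is through the well-known identity $\Phi_d(t^p) = \Phi_{pd}(t) \cdot \Phi_d(t)$, valid whenever $p$ is a prime with $p \nmid d$. This is established by comparing the two factorizations $t^{pd}-1 = \prod_{e \mid pd}\Phi_e(t) = \prod_{e \mid d}\Phi_e(t^p)$, pairing each divisor $e$ of $d$ (which is coprime to $p$) with the divisors $e$ and $pe$ of $pd$, and then proceeding by induction on the number of divisors of $d$. Specialising to $d = m$ yields \eqref{FormulaDivision}. For \eqref{FormulaEven}, when $m$ is odd, the map $\zeta \mapsto -\zeta$ is a bijection from primitive $m$-th roots onto primitive $(2m)$-th roots, since $\operatorname{ord}(-\zeta) = \operatorname{lcm}(2,\operatorname{ord}(\zeta)) = 2m$; together with $\phi(2m) = \phi(m)$ for odd $m$, this gives agreement of both degree and root-set.

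Finally, for $\Phi_n(1)$, I would divide the identity $t^n - 1 = (t-1)\prod_{1 < d \mid n}\Phi_d(t)$ by $t-1$ and send $t \to 1$, obtaining $n = \prod_{1 < d \mid n}\Phi_d(1)$. The base case $\Phi_p(1) = p$ is immediate from $\Phi_p(t) = 1 + t + \cdots + t^{p-1}$, and an induction on the divisor-structure of $n$ then pins down each $\Phi_n(1)$. The main subtlety I expect is in \eqref{FormulaRadical}, where the hypothesis that $m$ is the \emph{radical} (not merely an arbitrary divisor) must really enter, so that the order-bookkeeping $\operatorname{ord}(\zeta^{n/m}) = m \Leftrightarrow \operatorname{ord}(\zeta) = n$ actually goes through.
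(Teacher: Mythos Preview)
Your proposal is correct and follows the standard textbook route. There is nothing to compare it against, however: the paper states this lemma as a list of ``elementary properties of the cyclotomic polynomials'' recorded for later use and supplies no proof whatsoever. Your outline would establish each of the displayed formulas cleanly; the only small omission is that for \eqref{FormulaEven} you should note that $\phi(m)$ is even (since $m>1$ is odd), so that $\Phi_m(-t)$ is indeed monic and not merely has the right root-set.

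One remark worth making: carrying out your induction for $\Phi_n(1)$ actually exposes a misstatement in the lemma. From $n = \prod_{1<d\mid n}\Phi_d(1)$ one obtains $\Phi_n(1)=p$ when $n$ is a prime power $p^k$ and $\Phi_n(1)=1$ otherwise; in particular $\Phi_6(1)=1$, not $6$, even though $6$ equals its own radical. The intended dichotomy in the final sentence is ``$n$ is a prime power'' versus ``$n$ is not a prime power,'' not ``$n=m$'' versus ``$n\neq m$.'' The paper's later use of this fact (in the proof of Proposition~\ref{PropFiniteCycloNilpotent}, where for square-free non-prime $n$ it asserts $\Phi_n(1)=1$) is consistent with the corrected version, not with the lemma as written.
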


\begin{lemma} \label{PropositionRResCycloSQFree} For every square-free natural number $n$ and natural number $u \geq 2$, we have $\operatorname{RRes}_u(\Phi_n(t)) = 1$. \end{lemma}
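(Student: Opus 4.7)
The plan is to proceed by strong induction on the square-free integer $n \geq 1$, splitting on the size of $u$ relative to $\deg(\Phi_n(t)) = \phi(n)$. If $u > \phi(n)$, then each exponent $k$ of $\Phi_n(t)$ is the unique representative of its residue class mod $u$, so every partial sum $r_{u,j}(t)$ reduces to a single monomial. In particular, $r_{u,0}(t) = \Phi_n(0) \in \{\pm 1\}$ (the value being $-1$ for $n=1$ and $+1$ for $n \geq 2$, the latter since the primitive $n$-th roots of unity pair off as $\zeta,\zeta^{-1}$). Hence the ideal generated by the partial sums contains $\pm 1$, and $\operatorname{RRes}_u(\Phi_n(t)) = 1$. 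This also handles the base case $n = 1$, where $\phi(1) = 1 < u$.

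Now suppose $n > 1$ and $u \leq \phi(n)$. The key claim is that there exists an odd prime $p$ dividing $n$ but not $u$. Otherwise, every odd prime divisor of $n$ would divide $u$; since $n$ is square-free, this forces $u \geq n$ when $n$ is odd, and $u \geq n/2$ when $n$ is even. A direct comparison with $\phi(n) = \prod_{q \mid n}(q-1)$ (where $q$ runs over the prime divisors of $n$) shows $u > \phi(n)$ in either case, contradicting the hypothesis.

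Having selected such a $p$, set $m := n/p$, so $m$ is square-free, $p \nmid m$, and $m < n$. Formula \eqref{FormulaDivision} gives $\Phi_n(t) \cdot \Phi_m(t) = \Phi_m(t^p)$ in $\Z[t]$, and Lemma \ref{LemmaRResDivision} yields $\operatorname{RRes}_u(\Phi_n(t)) \mid \operatorname{RRes}_u(\Phi_m(t^p))$. Since $\gcd(p,u) = 1$, there exists $q$ with $pq \equiv 1 \pmod{u}$, and Lemma \ref{LemmaRResComposition} applied to $\Phi_m(t)$ (with the lemma's $m,n$ taken to be our $p,q$) gives $\operatorname{RRes}_u(\Phi_m(t^p)) \mid \operatorname{RRes}_u(\Phi_m(t))$. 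The inductive hypothesis provides $\operatorname{RRes}_u(\Phi_m(t)) = 1$, and chaining the divisibilities forces $\operatorname{RRes}_u(\Phi_n(t)) = 1$. The only non-routine step is the existence of the odd prime $p$; the rest is a direct concatenation of the two lemmas on reduced resultants with the cyclotomic identity \eqref{FormulaDivision}.
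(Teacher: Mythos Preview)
Your proof is correct and follows essentially the same strategy as the paper: strong induction on $n$, reducing to a smaller square-free integer via the identity $\Phi_{pm}(t)\cdot\Phi_m(t)=\Phi_m(t^p)$ together with Lemmas~\ref{LemmaRResDivision} and~\ref{LemmaRResComposition}.

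The packaging differs slightly. The paper treats primes as a separate base case (with an explicit Bezout identity $1=r_{u,0}(t)-t\cdot r_{u,u-1}(t)$ when $u<n$), then inducts over odd square-free $n$, and finally reduces even $n$ to odd $n$ via $\Phi_{2m}(t)=\Phi_m(-t)$. You instead take $n=1$ as the base case and run a single uniform induction, the point being your observation that whenever $u\le\phi(n)$ one can always find an \emph{odd} prime $p\mid n$ with $p\nmid u$ (so formula~\eqref{FormulaDivision} applies directly even when $n$ is even). This buys a cleaner case analysis at the cost of the short counting argument comparing $u$ with $n$ or $n/2$; the paper's split avoids that count but needs the extra reduction via~\eqref{FormulaEven}. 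Both routes rely on the same two lemmas about reduced resultants and the same cyclotomic factorisation.
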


\begin{proof} Define $r(t):= \Phi_n(t)$. Since $\operatorname{RRes}_2(-1+t) = 1$, we may assume that $n > 1$. \newline
	
	\emph{Case: $n$ is a prime}. Suppose first that $u \geq \phi(n)+ 1 = (n-1) + 1 = n$. Then $r_{u,0}(t) = 1$, so that $1 = 1 \cdot r_{u,0}(t) + 0 \cdot r_{u,1}(t) + \cdots + 0 \cdot r_{u,u-1}(t)$, and therefore $\operatorname{RRes}_u(r(t)) = 1$. Next, we suppose that $2 \leq u < n$. Then $n-1 \not \equiv u-1 \mod u$, so that $1 = r_{u,0}(t) + 0 \cdot r_{u,1}(t) + \cdots + 0 \cdot r_{u,u-2}(t) - t \cdot r_{u,u-1}(t).$ This also implies that $\operatorname{RRes}_u(r(t)) = 1$. \newline
	
	\emph{Case: $n$ is square-free and odd}. Let us proceed by induction on the number $l$ of distinct prime factors of $n$. The base of the induction, $l = 1$, is given by the previous paragraph. So we suppose that $l > 1$. Let $n = p_1 \cdots p_l$ be the decomposition of $n$ into (distinct, odd) primes. We may, as before,  suppose that $u$ is an integer satisfying $2 \leq u \leq \phi(n) + 1 = (p_1-1) \cdots (p_l-1) +1$. Then we note that there is at least one $i \in \{ 1,\ldots,l\}$ such that $p_i$ does \emph{not} divide $u$. Formula \eqref{FormulaDivision} tells us that $\Phi_{n}(t) | \Phi_{n/{p_i}}(t^{p_i})$. Lemma \ref{LemmaRResDivision}, lemma \ref{LemmaRResComposition}, and the induction hypothesis then imply that 	$ \operatorname{RRes}_u(\Phi_{n}(t))  |  \operatorname{RRes}_u(\Phi_{n/{p_i}}(t^{p_i}))  
	|  \operatorname{RRes}_u(\Phi_{n/{p_i}}(t)) = 1.	$ \\
	
	\emph{Case: $n$ is square-free and even}. Formula \eqref{FormulaEven} gives us the equality $\Phi_{n}(t) = \Phi_{n/2}(-t)$. The odd case then tells us that $\operatorname{RRes}_u(\Phi_n(t)) = \operatorname{RRes}_u(\Phi_{n/2}(-t)) = \operatorname{RRes}_u(\Phi_{n/2}(t)) = 1.$ This finishes the proof. \end{proof}

\begin{proposition}[$\tcn{\Phi_n(t)}$ and $\tcn{\Psi_n(t)}$] \label{PropCongCycloComputed} Let $n \in\N$. Then we have:	
	\begin{equation*}
	\tcn{\Phi_n(t)} =
	\begin{cases}
	1 & \text{if } n \text{ is square-free},\\
	0 & \text{if } n \text{ is not square-free}.
	\end{cases}
	\end{equation*}
	Moreover: $\tcn{\Psi_n(t)} = 0$ if and only if $n$ is composite.
\end{proposition}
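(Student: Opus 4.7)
The plan is to reduce both statements to the characterization of $\tcn{r(t)} = 0$ provided by Proposition \ref{PropositionCongNonZero}, using Lemma \ref{PropositionRResCycloSQFree} to handle the positive (non-zero) cases. Throughout I will use the factorisations of cyclotomic type from Lemma \ref{LemFormulaCyclo}, plus the elementary identity $\Psi_n(t) = \prod_{d \mid n , d > 1} \Phi_d(t)$.

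For $\tcn{\Phi_n(t)}$ I would argue as follows. If $n$ is square-free, then Lemma \ref{PropositionRResCycloSQFree} gives $\operatorname{RRes}_u(\Phi_n(t)) = 1$ for every $u \geq 2$. Since the definition of $\tcn{r(t)}$ makes it the non-negative generator of the intersection over $2 \leq u \leq \deg(r(t)) + 1$ of the ideals corresponding to the $\operatorname{RRes}_u$, i.e.\ their least common multiple, we conclude $\tcn{\Phi_n(t)} = 1$. If instead $n$ is not square-free, let $m$ be its radical; then $n/m \geq 2$ and formula \eqref{FormulaRadical} yields the decomposition $\Phi_n(t) = \Phi_m(t^{n/m}) = s(t^{u+1})$ with $s(t) := \Phi_m(t) \in \Z[t] \setminus \Z$ and $u := n/m - 1 \geq 1$. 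This is exactly condition (ii.) of Proposition \ref{PropositionCongNonZero} (note $\Phi_n(0) \neq 0$), so $\tcn{\Phi_n(t)} = 0$.

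For $\tcn{\Psi_n(t)}$ I would split on $n$. If $n$ is prime, then $\Psi_n(t) = \Phi_n(t)$ and the first part gives $\tcn{\Psi_n(t)} = 1 \neq 0$; the degenerate case $n = 1$ gives $\Psi_1(t) = 1$, whose $\tcn$ is immediately $1$ since the intersection in the definition is empty. If $n$ is composite, I consider two sub-cases. When $n$ is \emph{not} square-free, already $\Phi_n(t) \mid \Psi_n(t)$, and by the non-square-free analysis above $\Phi_n(t)$ equals some $s(t^{u+1})$ with $s \in \Z[t] \setminus \Z$ and $u \geq 1$; transitivity of divisibility plus Proposition \ref{PropositionCongNonZero} then gives $\tcn{\Psi_n(t)} = 0$. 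When $n$ is square-free and composite, write $n = p \cdot k$ with $p$ prime, $k > 1$ and $\gcd(p,k) = 1$. The identity $t^n - 1 = (t^k - 1)(1 + t^k + \cdots + t^{(p-1)k})$ divided by $t-1$ gives the key factorisation $\Psi_n(t) = \Psi_k(t) \cdot \Psi_p(t^k)$, so $\Psi_p(t^k) \mid \Psi_n(t)$; taking $s(t) := \Psi_p(t)$ (non-constant since $p \geq 2$) and $u := k - 1 \geq 1$, Proposition \ref{PropositionCongNonZero} delivers $\tcn{\Psi_n(t)} = 0$.

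The bulk of the work is packaged into Lemma \ref{PropositionRResCycloSQFree} and Proposition \ref{PropositionCongNonZero}, so the only genuinely new input is spotting the factorisation $\Psi_n(t) = \Psi_k(t) \cdot \Psi_p(t^k)$ that exhibits a non-trivial substitution factor in the square-free composite case; this is the only place where the proof is not a pure bookkeeping exercise, and it is the step I would expect to be the main (minor) obstacle.
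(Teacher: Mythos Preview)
Your proof is correct and follows essentially the same route as the paper: Lemma \ref{PropositionRResCycloSQFree} for square-free $\Phi_n$, the substitution $\Phi_n(t)=\Phi_m(t^{n/m})$ for non-square-free $\Phi_n$, and an explicit factor of the form $s(t^v)$ inside $\Psi_n$ for composite $n$. Your case split for composite $\Psi_n$ is unnecessary, however: the identity $\Psi_n(t)=\Psi_k(t)\cdot\Psi_p(t^k)$ holds for \emph{any} factorisation $n=pk$ with $p,k>1$ (coprimality is irrelevant), so a single invocation of Proposition \ref{PropositionCongNonZero} handles all composite $n$ at once --- which is, in effect, exactly the paper's argument.
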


\begin{proof} $(i)$ Let $m$ be the radical of $n$. If $n$ is not square-free, then $\Phi_n(t) \in \Z[t^{n/m}]$, so that $\operatorname{RRes}_{n/m}(\Phi_n(t)) = 0$, and therefore $\tcn{\Phi_n(t)} = \operatorname{lcm}_{u>1} \operatorname{RRes}_{u}(\Phi_n(t)) = 0$. Else, $n$ is square-free, and we may apply proposition \ref{PropositionRResCycloSQFree} to conclude that $\tcn{\Phi_n(t)} = \operatorname{lcm}_{u>1} \operatorname{RRes}_{u}(\Phi_n(t)) = 1$. $(ii.)$ If $n$ has a proper, non-trivial divisor $u$, then we have  $\Psi_{n/u}(t) + \Psi_{n/u}(t) \cdot t + \cdots + \Psi_{n/u}(t) \cdot t^{n/u - 1} = \Psi_n(t)$, so that $\operatorname{RRes}_{n/u}(\Psi_n(t)) = 0$. If $n$ has no proper, non-trivial divisor, then $n= 1$ or a prime, in which case $\Psi_n(t) = \Phi_n(t)$. So we are in the previous case. \end{proof}

\begin{proposition}[$\discr{\Phi_n(t)} \cdot \prodant{\Phi_n(t)}$] \label{ExampleCycloDiscrProd} Consider a natural number $n > 1$ and the corresponding cyclotomic polynomial $\Phi_n(t)$. Then $\discr{\Phi_n(t)} \cdot \prodant{\Phi_n(t)}$ divides a natural power of $n$. \end{proposition}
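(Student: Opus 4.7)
The plan is to show that every rational prime $q$ with $q\nmid n$ divides neither $\discr{\Phi_n(t)}$ nor $\prodant{\Phi_n(t)}$. Since both invariants are integers (by lemmas~\ref{LemmaIntegrDiscrast} and~\ref{LemmaResultantProdant}), this forces their product to be supported on the prime divisors of $n$, and hence to divide some natural power of $n$. Everything then reduces to a straightforward modular computation based on the factorisation $t^n - 1 = \prod_{d \mid n} \Phi_d(t)$.

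\textbf{Simplifying the invariants.} Since $\Phi_n(t)$ is monic with leading coefficient $a = 1$ and has the $\phi(n)$ simple primitive $n$-th roots of unity $\lambda_1,\ldots,\lambda_{\phi(n)}$ as its roots in $\overline{\Q}$, the maximum multiplicity in definition~\ref{DefDiscProd} is $m = 1$, so that
\[
\discr{\Phi_n(t)} = \prod_{i \neq j} (\lambda_i - \lambda_j) \qquad \text{and} \qquad \prodant{\Phi_n(t)} = \prod_{\Phi_n(\lambda_i \lambda_j) \neq 0} \Phi_n(\lambda_i \lambda_j).
\]
Each product $\lambda_i \lambda_j$ is again an $n$-th root of unity, and the condition $\Phi_n(\lambda_i \lambda_j) \neq 0$ is equivalent to $\lambda_i \lambda_j$ having order $d \mid n$ with $d < n$.

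\textbf{The modular argument.} Fix a rational prime $q$ with $q \nmid n$ and a prime ideal $\Lq$ of $\Z[\zeta_n]$ above $q$. The factorisation $t^n - 1 = \prod_{d \mid n} \Phi_d(t)$ in $\Z[t]$ reduces to a factorisation of a separable polynomial in $\F_q[t]$ (its formal derivative $n \cdot t^{n-1}$ is coprime to $t^n - 1$ since $q \nmid n$), so the factors $\Phi_d(t) \bmod q$ are pairwise coprime and each splits into distinct linear factors in $\overline{\F_q}[t]$. In particular, the $\phi(n)$ residues $\bar\lambda_i$ in $\Z[\zeta_n]/\Lq \hookrightarrow \overline{\F_q}$ are pairwise distinct (they are precisely the primitive $n$-th roots of unity of the residue field), so $\lambda_i - \lambda_j \notin \Lq$ for every $i \neq j$, giving $q \nmid \discr{\Phi_n(t)}$. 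Analogously, for each contributing pair $(i,j)$ the product $\bar\lambda_i \bar\lambda_j$ is a primitive $d$-th root of unity with $d \mid n$ and $d < n$; coprimality of $\Phi_n \bmod q$ and $\Phi_d \bmod q$ then ensures $\Phi_n(\bar\lambda_i \bar\lambda_j) \neq 0$, so $\Phi_n(\lambda_i \lambda_j) \notin \Lq$ and therefore $q \nmid \prodant{\Phi_n(t)}$.

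The only mild subtlety is that individual factors of both products live in $\Z[\zeta_n]$ rather than $\Z$; but since the total products are Galois-invariant and therefore rational integers, non-vanishing of each factor at a single prime $\Lq$ above $q$ already implies non-vanishing of the product modulo $q$. I expect no real obstacle beyond this bookkeeping.
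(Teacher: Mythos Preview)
Your argument is correct. The reduction modulo a prime $\Lq$ above $q\nmid n$ is exactly the right tool: separability of $t^n-1$ over $\F_q$ gives both that the $\bar\lambda_i$ are distinct (handling $\discr{\Phi_n}$) and that $\Phi_n\bmod q$ shares no root with any $\Phi_d\bmod q$ for $d\mid n$, $d<n$ (handling $\prodant{\Phi_n}$). The only point worth spelling out a bit more is that the reduction map on $n$-th roots of unity is an isomorphism of cyclic groups, so the multiplicative order of $\bar\lambda_i\bar\lambda_j$ in the residue field equals that of $\lambda_i\lambda_j$ in characteristic zero; you implicitly use this to conclude that $\bar\lambda_i\bar\lambda_j$ is a root of the ``correct'' $\Phi_d\bmod q$.

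Your route is genuinely different from the paper's. The paper treats the two invariants separately and appeals to outside results: for $\discr{\Phi_n}$ it quotes the explicit formula for the discriminant of the cyclotomic field $K_n$, and for $\prodant{\Phi_n}$ it rewrites the invariant as a product of resultants $\operatorname{Res}(\Phi_n,\Phi_m)$ for $m\mid n$, $m<n$, and then cites the explicit evaluations of these resultants due to Lehmer, Apostol, and Dresden. Your argument is more uniform and entirely self-contained: a single separability observation dispatches both invariants at once, with no need for external formulas. The paper's approach, on the other hand, would give sharper information if one wanted the exact prime factorisation of $\discr{\Phi_n}\cdot\prodant{\Phi_n}$ rather than just its support.
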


\begin{proof} If $n = 2$, then $\Phi_2(t) = 1 + t$, so that we are in the linear case again. So we assume that $n > 2$. Since the cyclotomic field $K_n$ corresponding with $\Phi_n(t)$ is monogenic, we know that $\operatorname{Discr}({\Phi_n(t)})$ coincides with the field discriminant 
	$ \Delta_{K_n} = (-1)^{\varphi(n)/2} \cdot n^{\varphi(n)} / {  \prod_{\substack{p \in \mathbb{P}\\ p | n}  } } p^{\varphi(n) / \varphi(p)}$ of $K_n$. We see, in particular, that $\discr{\Phi_n(t)}$ divides $n^{n}$. Now set $r(t) := \Phi_n(t)$. We use the notation of \ref{SubSecInvarsDiscrProd}. By construction, for each root $\lambda$ of the monic polynomial $\overline{r}(t)$, there exists a natural $m$ (properly) dividing $n$, such that $\lambda$ is an $m$'th root of unity. Since also $\overline{r}(t) \in \Z[t]$, there exist non-negative integers $a_m$ such that $\overline{r}(t) = \prod_{\substack{m | n\\m \neq n}} \Phi_m(t)^{a_m}. $ So $\prodant{r(t)} =  \operatorname{Res}( r(t) , \overline{r}(t)) = \prod_{\substack{m | n\\m \neq n}} \operatorname{Res}(\Phi_n(t),\Phi_m(t))^{a_m}. $ The factors in this expression were computed explicitly by E. Lehmer \cite{LehmerCyclo}, Apostol \cite{ApostolCyclo}, Dresden \cite{DresdenCyclo}, and several others \cite{ChengMcKayWang}. We see, in particular, that also $\prodant{\Phi_n(t)}$ divides a natural power of $n$. This finishes the proof.
\end{proof}

We have, for example: $\discr{\Phi_6(t)} \cdot \prodant{\Phi_6(t)} = (3) \cdot (2^2)$ and $\discr{\Phi_{15}(t)} \cdot \prodant{\Phi_{15}(t)} = (3^4 \cdot 5^6) \cdot (3^{24} \cdot 5^8)$.

\begin{proposition}[$\discr{\Psi_n(t)} \cdot \prodant{\Psi_n(t)}$] \label{ExSplitPolyDP} Consider a natural number $n > 1$ and the corresponding split polynomial $\Psi_n(t) = 1 +  t + \cdots + t^{n-1}$. Then
	$\discr{\Psi_n(t)} \cdot \prodant{\Psi_n(t)} = \left( n^{n-2} \right) \cdot \left( n^{n-1} \right) .$ \end{proposition}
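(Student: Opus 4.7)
The plan is to exploit the fact that the roots of $\Psi_n(t) = (t^n-1)/(t-1)$ are simply the $n-1$ non-trivial $n$-th roots of unity. Writing $\lambda_k = \zeta^k$ for $k = 1,\ldots,n-1$ with $\zeta$ a primitive $n$-th root of unity, one sees that $\Psi_n(t)$ is monic of degree $d = l = n-1$, with leading coefficient $a = 1$ and all multiplicities equal to $1$. Both invariants therefore simplify dramatically: Definition \ref{DefDiscProd} reduces to
$$\discr{\Psi_n(t)} = \prod_{i \neq j}(\lambda_i - \lambda_j), \qquad \prodant{\Psi_n(t)} = \prod_{\substack{1 \leq i,j \leq n-1\\\Psi_n(\lambda_i \lambda_j) \neq 0}}\Psi_n(\lambda_i \lambda_j).$$

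For the discriminant I would avoid computing any pairwise differences directly. Instead I would apply the factorization $t^n - 1 = (t-1)\Psi_n(t)$ and the classical multiplicativity rule $\operatorname{Discr}(fg) = \operatorname{Discr}(f)\operatorname{Discr}(g)\operatorname{Res}(f,g)^2$. Combined with $\operatorname{Discr}(t-1) = 1$ and $\operatorname{Res}(t-1,\Psi_n(t)) = \Psi_n(1) = n$, this yields $\operatorname{Discr}(\Psi_n(t)) = \operatorname{Discr}(t^n-1)/n^2$. The latter discriminant is $\pm n^n$ (derivable from $\operatorname{Res}(t^n-1,nt^{n-1}) = n^n \prod_k \lambda_k^{n-1}$ together with $\prod_k \lambda_k = (-1)^{n-1}$). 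Finally, translating from $\operatorname{Discr}(\Psi_n(t))$ to the paper's variant $\prod_{i \neq j}(\lambda_i - \lambda_j)$ introduces a further sign $(-1)^{\binom{n-1}{2}}$. Collecting all sign exponents gives $(n-1)(n-2)/2 + n(n-1)/2 + (n-1) = n(n-1)$, which is automatically even, so $\discr{\Psi_n(t)} = n^{n-2}$.

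For $\prodant{\Psi_n(t)}$ the decisive observation is that the products of roots behave very nicely: $\lambda_i \lambda_j = \zeta^{i+j}$ is \emph{always} an $n$-th root of unity, hence a root of $\Psi_n(t)$ precisely when $\zeta^{i+j} \neq 1$, i.e.\ when $n \nmid (i+j)$. The pairs contributing to $\prodant{\Psi_n(t)}$ are therefore exactly those with $n \mid (i+j)$, namely $(i,n-i)$ for $i = 1,\ldots,n-1$; each gives $\lambda_i \lambda_j = 1$ and contributes the factor $\Psi_n(1) = n$. This yields $\prodant{\Psi_n(t)} = n^{n-1}$, and multiplying with the previous step produces the claimed equality.

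The only mildly delicate point in the whole argument is the sign bookkeeping in the discriminant step; once the three contributions to the $(-1)$-exponent are seen to telescope to $n(n-1)$, positivity is automatic, and everything else is a routine manipulation with roots of unity.
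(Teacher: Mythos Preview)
Your argument is correct. For $\prodant{\Psi_n(t)}$ you do essentially what the paper does: identify the contributing pairs as $(i,n-i)$ and evaluate $\Psi_n(1)=n$ exactly $n-1$ times (the paper phrases this as $\overline{r}(t)=(t-1)^{n-1}$ and computes $\operatorname{Res}(\Psi_n,(t-1)^{n-1})$, which is the same thing).

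For $\discr{\Psi_n(t)}$, however, you take a genuinely different route. The paper reduces $\operatorname{Res}(\Psi_n,\Psi_n')$ to $\operatorname{Res}(t^n-1,\Psi_n')$ via multiplicativity in the first slot, and then evaluates the latter by row/column operations leading to the circulant determinant $\det(\operatorname{Circ}(0,1,\ldots,n-1))$. You instead use multiplicativity of the discriminant itself for the factorization $t^n-1=(t-1)\Psi_n(t)$, compute $\operatorname{Discr}(t^n-1)=\pm n^n$ directly from its derivative, and then track the three sign contributions. Your approach is arguably more elementary, since it avoids the circulant-determinant identity entirely; the price you pay is the sign bookkeeping, which you handle correctly (the exponent $(n-1)(n-2)/2 + n(n-1)/2 + (n-1) = n(n-1)$ is indeed always even). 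One small remark: in your line ``$\operatorname{Res}(t^n-1,nt^{n-1})=n^n\prod_k\lambda_k^{n-1}$ together with $\prod_k\lambda_k=(-1)^{n-1}$'' the product should run over \emph{all} $n$-th roots of unity (the roots of $t^n-1$), not just the $\lambda_k$ you defined earlier as roots of $\Psi_n$; the value $(-1)^{n-1}$ is correct either way, but the notation is slightly overloaded.
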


\begin{proof} We may assume that $n > 2$. According to formula  \eqref{FormulaExplicitDiscrSylvester}, we have:	
	\begin{eqnarray*}
		\discr{\Psi_n(t)} &=& \det \left( \operatorname{Syl}(\Psi_n(t),\Psi_n'(t)) \right) \\
		&=& \operatorname{Res}(\Psi_n(t),\Psi_n'(t)) \\
		&=& \operatorname{Res}(t-1,\Psi_n'(t))^{-1} \cdot \operatorname{Res}(t^n-1,\Psi_n'(t)) \\
		&=& (-1)^n \cdot \binom{n}{2}^{-1} \cdot \det \left( \operatorname{Syl} (t^n-1 , \Psi_n'(t)) \right).
	\end{eqnarray*}
	By performing row and column operations on the matrix $\operatorname{Syl}(t^n-1,\Psi_n'(t))$, we obtain the circulant determinant $  \det \left( \operatorname{Syl}(t^n-1,\Psi_n'(t)) \right) = - \det \left( \operatorname{Circ}(0,1,2,\ldots,n-1) \right) = (-1)^n \cdot \binom{n}{2} \cdot n^{n-2}$. So $\discr{(t)} = n^{n-2}$. Now set $r(t) := \Phi_n(t)$. We use the notation of \ref{SubSecInvarsDiscrProd} again. Let $\omega_n$ be a primitive $n$'th root of unity. We then have $ \overline{r}(t) = \prod_{\substack{0 < i , j < n\\i+j \equiv 0 \mod n}} (t-\omega_n^{i+j}) = (t-1)^{n-1},$ and therefore $\prodant{r(t)} = \operatorname{Res}(r(t),\overline{r}(t)) = \Psi_n(1)^{n-1} = n^{n-1}.$ \end{proof}

\section{Structure theorems for Lie rings}
\label{SectionLieRings}\label{SectionLieRingsAux}

\subsection{Constructing an embedding of Lie rings}

\begin{lemma}[Binomial commutator formula] \label{LemmaBinomialCommutatorFormula} Consider a Lie ring $L$ with coefficients in a commutative ring $R$. Let $\map{\gamma}{L}{L}$ be a Lie endomorphism of $L$, let $\lambda,\mu \in R$ be coefficients, and let $v,w \in L$. For all $m \in \N$, we then have 
	$$ ({\gamma} - (\lambda \cdot \mu) \cdot \id_{L})^m([v,w]) =  \sum_{0 \leq i \leq m} \binom{m}{i} \cdot [\lambda^{m-i} \cdot ({\gamma} - \lambda \cdot \id_L)^i(v),{\gamma}^i \circ ({\gamma} - \mu \cdot \id_L)^{m-i} (w)] .$$ If, for some $m_\lambda,m_\mu \in \N$, we have $(\gamma - \lambda \cdot \id_L)^{m_\lambda}(v) = 0_L = (\gamma - \mu \cdot \id_L)^{m_\mu}(w),$ then we also have $(\gamma - \lambda \cdot \mu \cdot \id_L)^{m_\lambda + m_\mu }([v,w]) = 0_L.$
\end{lemma}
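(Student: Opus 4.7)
The plan is to prove the main binomial-commutator identity by induction on $m \in \N$, and then deduce the vanishing statement as a direct consequence. The formula is the natural Lie-theoretic analogue of the classical binomial expansion, and the same combinatorial bookkeeping underlies both proofs.

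For the base case $m=1$, I would simply expand $\gamma([v,w]) = [\gamma v, \gamma w]$ using $\gamma v = \lambda \cdot v + (\gamma - \lambda \cdot \id_L)(v)$ (but \emph{not} splitting $\gamma w$), which gives
$$ (\gamma - \lambda \mu \cdot \id_L)([v,w]) = [\lambda \cdot v, (\gamma - \mu \cdot \id_L)(w)] + [(\gamma - \lambda \cdot \id_L)(v), \gamma(w)], $$
matching the right-hand side of the claimed formula at $m=1$. The trivial case $m=0$ is just $[v,w]=[v,w]$.

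For the inductive step, I would assume the formula at level $m$ and apply $(\gamma - \lambda \mu \cdot \id_L)$ to each summand
$$ T_i := \binom{m}{i} \cdot [\lambda^{m-i} \cdot (\gamma - \lambda \cdot \id_L)^i(v),\ \gamma^i \circ (\gamma - \mu \cdot \id_L)^{m-i}(w)], $$
invoking the $m=1$ identity applied to the pair $\bigl((\gamma - \lambda \cdot \id_L)^i(v),\ \gamma^i \circ (\gamma - \mu \cdot \id_L)^{m-i}(w)\bigr)$. The crucial observation here is that $\gamma$ commutes with $\gamma - \mu \cdot \id_L$, so the factor $\gamma^i$ on the right may be pulled past $\gamma - \mu \cdot \id_L$ and merged into $\gamma^{i+1}$. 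This yields two sums in $i$, both weighted by $\lambda^{m+1-i}$ and $\binom{m}{i}$ respectively $\binom{m}{i-1}$ after re-indexing $i \mapsto i+1$ in one of them. Pascal's identity $\binom{m}{i} + \binom{m}{i-1} = \binom{m+1}{i}$ then collapses them into the formula at level $m+1$.

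For the vanishing statement I would apply the main formula with $m := m_\lambda + m_\mu$. In each summand, either $i \geq m_\lambda$, in which case $(\gamma - \lambda \cdot \id_L)^i(v) = 0_L$ and the entire bracket vanishes, or $i < m_\lambda$, in which case $m - i > m_\mu$ and $(\gamma - \mu \cdot \id_L)^{m-i}(w) = 0_L$. Either way every summand is zero. The main obstacle, modest as it is, lies in the careful index bookkeeping of the inductive step --- in particular the interaction between the factor $\gamma^i$ and the operator $\gamma - \mu \cdot \id_L$, which is the only place where the commutativity $[\gamma, \gamma - \mu \cdot \id_L] = 0$ is essential.
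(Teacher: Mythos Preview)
Your proof is correct and follows exactly the approach sketched in the paper: induction on $m$ using Pascal's identity $\binom{m}{i}+\binom{m}{i-1}=\binom{m+1}{i}$, followed by the specialization $m=m_\lambda+m_\mu$ for the vanishing statement. The paper's own proof is a two-line sketch, and you have supplied precisely the bookkeeping it omits.
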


\begin{proof}
	One can prove the first formula by a simple induction on $m$ and Pascal's binomial identity $\binom{n}{i} = \binom{n-1}{i-1} + \binom{n-i}{i}$. By specializing to $m = m_\lambda + m_\mu$, we also obtain the second formula.
\end{proof}

For an integer $h$ and a Lie ring $M$ with coefficients in a ring $R$, we define the $h$-torsion ideal $T_h(M)$ of $M$ by $ T_h(M) := \{ v \in M | \exists n \in \N : h^n \cdot v = 0_M \} .$ It is clear that such a set $T_h(M)$ is a Lie ideal of $M$ that is invariant under $R$-multiplications and $M$-endomorphisms.

\begin{theorem}\label{TheoremEmbedding}
	Consider a Lie ring $L$, together with an endomorophism $\map{\gamma}{L}{L}$, and a polynomial $r(t) \in \Z[t]$ such that $r(\gamma) = 0_L$. Suppose that $L$ has no $(\discr{r(t)} \cdot \prodant{r(t)})$-torsion. Then the Lie ring $\discr{r(t)} \cdot L$ can be embedded into a Lie ring $\overline{K}$ that is graded by the semi-group $(\overline{\Q},\cdot)$ and supported by the root set $X$ of $r(t)$ in $\overline{\Q}$.
\end{theorem}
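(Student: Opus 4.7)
The plan is to extend scalars from $\Z$ to a subring of $\overline{\Q}$ in which the roots of $r(t)$ are available and in which both $\discr{r(t)}$ and $\prodant{r(t)}$ are units, perform the generalized eigenspace decomposition of $\gamma$ there, and verify that the resulting decomposition is a $(\overline{\Q},\cdot)$-grading supported on $X$.

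Concretely, write $D := \discr{r(t)}$ and $P := \prodant{r(t)}$, let $F \subseteq \overline{\Q}$ be the splitting field of $r(t)$ over $\Q$, let $\mathcal{O}_F$ be its ring of integers, and put $R := \mathcal{O}_F[1/(D \cdot P)]$. Then $R$ is $\Z$-flat (it is torsion-free over the PID $\Z$) and, as $\mathcal{O}_F$ is a finitely-generated free $\Z$-module, $R$ is in fact faithfully flat over $\Z[1/(DP)]$; the rational primes inverted in $R$ are exactly those dividing $DP$. Form the Lie ring $\overline{K} := L \otimes_\Z R$ with extended endomorphism $\overline{\gamma} := \gamma \otimes \id_R$. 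The hypothesis that $L$ has no $(DP)$-torsion, together with faithful flatness, yields that the canonical map $L \to \overline{K}$ is injective, and in particular its restriction to the Lie subring $\discr{r(t)} \cdot L$ is the embedding claimed in the statement. The leading coefficient $a$ of $r(t)$ divides $D$ (by definition of $\discr{r(t)}$), so $a$ is a unit in $R$ and every root $\lambda_i$ lies in $\mathcal{O}_F[1/a] \subseteq R$; hence we have the factorization $r(t) = a \prod_{i=1}^l (t - \lambda_i)^{m_i}$ in $R[t]$. The corresponding Lagrange-type partial-fraction projectors are polynomials in $\overline{\gamma}$ whose denominators are products of $a$ and of differences $(\lambda_i - \lambda_j)$, each of which divides a power of $D$ by definition of $\discr{r(t)}$, and is therefore a unit in $R$. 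By CRT in $R[t]/(r(t))$ this yields the generalized eigenspace decomposition
\[
\overline{K} = \bigoplus_{i=1}^{l} \overline{K}_{\lambda_i}, \qquad \overline{K}_{\lambda_i} := \ker\bigl((\overline{\gamma} - \lambda_i \cdot \id)^{m_i}\bigr).
\]
Extend this to a candidate grading by the full semi-group $(\overline{\Q},\cdot)$ by setting $\overline{K}_\alpha := 0_{\overline{K}}$ for $\alpha \in \overline{\Q} \setminus X$.

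The remaining step is to verify $[\overline{K}_\lambda, \overline{K}_\mu] \subseteq \overline{K}_{\lambda\mu}$ for all $\lambda, \mu \in X$. Fix $v \in \overline{K}_\lambda$ and $w \in \overline{K}_\mu$: Lemma~\ref{LemmaBinomialCommutatorFormula} immediately gives $(\overline{\gamma} - \lambda\mu \cdot \id)^{m_\lambda + m_\mu}([v,w]) = 0$. Decomposing $[v,w] = \sum_{\nu \in X}[v,w]_\nu$ along the eigenspace decomposition, it suffices to show $[v,w]_\nu = 0$ for every $\nu \in X$ with $\nu \neq \lambda\mu$. On $\overline{K}_\nu$ the operator $\overline{\gamma} - \lambda\mu \cdot \id$ acts as $(\nu - \lambda\mu) \cdot \id + N$ with $N$ nilpotent, so it is invertible as soon as $\nu - \lambda\mu$ is a unit in $R$. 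If $\lambda\mu \in X$, then $\nu - \lambda\mu$ is a difference of two distinct roots of $r(t)$ and therefore divides $D$; if $\lambda\mu \notin X$, then the product $\prod_{\nu \in X}(\nu - \lambda\mu)^{m_\nu}$ equals $(-1)^{\deg(r(t))} \cdot r(\lambda\mu)/a$, and $r(\lambda\mu)$ is one of the factors defining $\prodant{r(t)}$, so the whole product is a unit in $R$, which forces each factor $\nu - \lambda\mu$ to be a unit in the integral domain $R$. Either way $[v,w]_\nu = 0$ for $\nu \neq \lambda\mu$, which delivers $[v,w] \in \overline{K}_{\lambda\mu}$ (vacuously $0$ if $\lambda\mu \notin X$).

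The main obstacle throughout is the careful bookkeeping of denominators: $\discr{r(t)}$ is precisely what must be inverted to diagonalize (in the generalized sense) $\overline{\gamma}$ and obtain the eigenspace decomposition, while $\prodant{r(t)}$ is precisely what must be inverted to force $[\overline{K}_\lambda,\overline{K}_\mu] = 0$ when $\lambda\mu$ is not itself a root of $r(t)$. That these two integer invariants suffice is what makes the quantitative hypothesis of no $(\discr{r(t)} \cdot \prodant{r(t)})$-torsion exactly the right one.
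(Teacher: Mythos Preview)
Your argument is correct, and it is a genuinely different (and in several respects cleaner) route than the paper's.

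The paper does \emph{not} invert $D\cdot P$. It tensors only with the subring $R=\Z[\lambda_1,\ldots,\lambda_l]$, then carries a $(D\cdot P)$-torsion ideal $T$ throughout: it defines ``generalised eigenspaces'' $E_\lambda$ only modulo $T$, proves in several claims that $K=\sum_\lambda E_\lambda$ is a subring and that $K/T$ is graded, and finally---this is the key step you bypass---uses an explicit partial-fraction identity to write $\discr{r(t)}$ as an $R$-combination of the polynomials $r(t)/(t-\lambda_i)^{m_i-j}$, which is what forces $1\otimes(\delta\cdot L)\subseteq K$. The $(m-1)!$ in the definition of $\discr{r(t)}$ is there precisely to clear the $1/j!$ denominators coming from derivatives in that partial-fraction expansion, and the power $a^{1+2d^2}$ is there to make the other coefficients integral.

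By localising at $D\cdot P$ from the outset you make the torsion ideal vanish, so CRT in $R[t]/(r(t))$ gives you the eigenspace decomposition directly, and flatness (free-ness, really, via an integral basis of $\mathcal{O}_F$ containing $1$) gives the embedding of all of $L$, not merely of $\delta\cdot L$. Your unit argument for the grading condition is also tighter than the paper's: in the case $\lambda\mu\notin X$ the paper argues via $r(\widetilde\gamma)=0$ and the Taylor-type identity $r(t)-r(\lambda\mu)=s(t)(t-\lambda\mu)$, whereas you observe once and for all that $r(\lambda\mu)\mid\prodant{r(t)}$ forces every factor $\nu-\lambda\mu$ to be a unit in the domain $R$. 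What the paper's approach buys in exchange is an explanation of where the specific shape of $\discr{r(t)}$ (including the $(m-1)!$) actually originates, and it avoids appealing to $\mathcal{O}_F$ or to flatness; your approach buys brevity, a sharper conclusion ($L$ itself embeds), and makes it transparent that the two invariants play exactly the roles you describe in your final paragraph.
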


Let us use the abbreviations $\delta := \discr{r(t)}$ and $\pi := \prodant{r(t)}$. We may assume that $r(t)$ is not a constant polynomial, since otherwise we trivially have $\delta \cdot L = 0_L$, and there is nothing to prove. Let $\lambda_1,\ldots,\lambda_l$ be the distinct roots with respective multiplicities $m_1,\ldots,m_l$ and let $a$ be the leading coefficient, so that $$r(t) = a \cdot \prod_{1 \leq i \leq l}(t - \lambda_i)^{m_i}.$$ Let $R := \Z[\lambda_1,\ldots,\lambda_l]$ be the ring generated by the roots and let $\F$ be the field of fractions of $R$. We next introduce a new Lie ring $\widetilde{L} := R \otimes_{\Z} L$ with coefficients in $R$. This new Lie ring $\widetilde{L}$ naturally admits the Lie ring endomorphism $\mapl{\widetilde{\gamma}}{\widetilde{L}}{\widetilde{L}}{\sum_j a_j \otimes v_j}{\sum_j a_j \otimes (\gamma(v_j))}$ and this map inherits the property $r(\widetilde{\gamma}) = 0_{\widetilde{L}}$. We further define the ideal $T := T_{a \cdot \delta \cdot \pi}(\widetilde{L}) = T_{\delta \cdot \pi}(\widetilde{L})$ of $\widetilde{L}$. And for each $\lambda \in R$, we define the $R$-submodule of $\widetilde{L}$: \begin{eqnarray*}
	E_{\lambda} &:=& \{ v \in \widetilde{L} \vert \exists n \in \N : (\widetilde{\gamma} - \lambda \cdot \id_{\widetilde{L}})^n v \in T  \} \\
	&=& \{ v \in \widetilde{L} \vert \exists n_1,n_2 \in \N : (\delta \cdot \pi)^{n_1} \cdot (\widetilde{\gamma} - \lambda \cdot \id_{\widetilde{L}})^{n_2} v = 0_{\widetilde{L}}  \}.
\end{eqnarray*}

\textbf{Claim $1$:} \emph{For all $\lambda,\mu \in R$, we have}
\begin{enumerate}
	\item[a.]   $T \subseteq \bigcap_{\nu \in R} E_\nu$.
	\item[b.] $[E_{\lambda} , E_{\mu}] \subseteq E_{\lambda \cdot \mu}.$
	\item[c.] \emph{If $r(\lambda) = 0 = r(\mu)$ and $r(\lambda \cdot \mu) \neq 0$, then $[E_\lambda , E_\mu]  \subseteq T$.}
\end{enumerate}

\begin{proof} \let\qed\relax $(a.)$ This property holds trivially.  \end{proof}

\begin{proof} \let\qed\relax	$(b.)$ Select arbitrary $v \in E_{\lambda}$ and $w \in E_{\mu}$. By definition, there exist $n_1,n_2,n_3,n_4 \in \N$ such that $ ({\delta} \cdot {\pi})^{n_1} \cdot (\widetilde{\gamma} - \lambda \cdot \id_{\widetilde{L}})^{n_2} v = 0_{\widetilde{L}} = ({\delta} \cdot {\pi})^{n_3} (\widehat{\gamma} - \mu \cdot \id_{\widetilde{L}})^{n_4} w .$ By using the commutator formula of lemma \ref{LemmaBinomialCommutatorFormula}, we get: $ ({\delta} \cdot {\pi})^{n_1 + n_3} \cdot (\widetilde{\gamma} - \lambda \cdot \mu \cdot \id_{\widetilde{L}})^{n_2 + n_4} [v,w] = 0_{\widetilde{L}}.$ \end{proof}

\begin{proof} \let\qed\relax
	$(c.)$ In view of the above, it suffices to show that $E_{\lambda \cdot \mu} \subseteq T$. Let us abbreviate $\nu := \lambda \cdot \mu$. Let $r(t)$ be given by $r(t) := \sum_{0 \leq i \leq d} a_i \cdot t^i \in \Z[t]$. Then $r(t)-r(\nu) = s(t) \cdot (t-\nu) ,$ where $s(t) = \sum_{0 \leq i \leq d} a_i \cdot  ( \sum_{j+k = i} t^j \cdot \nu^k) \in R[t]$. 	Select an arbitrary $v \in E_{\nu}$. By definition, there exist $n_1,n_2 \in \N$ such that $(a \cdot {\delta} \cdot {\pi})^{n_1} \cdot (\widetilde{\gamma} - \nu \cdot \id_{\widetilde{L}})^{n_2} v = 0_{\widetilde{L}}$. Then also \begin{equation}
	({\delta} \cdot {\pi})^{n_1} \cdot (r(\widetilde{\gamma}) - r(\nu) \cdot \id_{\widetilde{L}})^{n_2} v = s(\widetilde{\gamma})^{n_2} \left( ({\delta} \cdot {\pi})^{n_1} \cdot (\widetilde{\gamma} - \nu \cdot \id_{\widetilde{L}})^{n_2} v \right) = 0_{\widetilde{L}}. \label{Taylor1}
	\end{equation}
	But, since $r(\widetilde{\gamma})$ vanishes on $\widetilde{L}$, we also have: \begin{equation}
	(r(\widetilde{\gamma}) - r(\nu) \cdot \id_{\widetilde{L}})^{n_1} v = \sum_{j+{n_2} = {n_1}} \binom{{n_1}}{j,{n_2}} \cdot r(\widetilde{\gamma})^j \left(  (-r(\nu))^{n_2} \cdot v \right) = (-r(\nu))^{n_1} \cdot v . \label{Taylor2}
	\end{equation} 
	By combining \eqref{Taylor1} and \eqref{Taylor2}, we obtain $({\delta} \cdot {\pi})^{n_2} \cdot r(\nu)^{n_1} \cdot v = 0_{\widetilde{L}}$. Since $r(\lambda) = 0 = r(\mu)$ and $r(\lambda \cdot \mu) \neq 0$, there exists a ${n_3} \in \N$ such that $({\delta} \cdot {\pi})^{{n_2}+{n_3}} \cdot v = 0_{\widetilde{L}}$. So $v \in T$. 
\end{proof}

\textbf{Claim $2$:} \emph{The $R$-submodule $K := \sum_{ \lambda \in X } E_\lambda$ is a Lie subring of $\widetilde{L}$.}

\begin{proof} \let\qed\relax
	According to the Jacobi-identity and the bi-linearity of the Lie bracket, the Lie $R$-subalgebra of $\widetilde{L}$ generated by the $R$-submodule $K$ of $\widetilde{L}$ is the $R$-span of left-normed words $w$ of the form $w := [v_1,\ldots,v_n],$ where each $v_i $ is contained in some $ K_{\mu_i}$. So it suffices to show that for such a word $w$, we have $w \in K$. Let us do this. We may suppose that $r(\mu_1) = \cdots = r(\mu_n) = 0$, since otherwise $w$ is contained in the ideal $T$ and therefore in $K$. If $\mu_1 , \mu_1 \cdot \mu_2 , \ldots , \mu_1 \cdots \mu_n := \lambda$ are all roots of $r(t)$, then we need only apply claim $1.b$ $(n-1)$-times in order to conclude that $w \in K_\lambda$ and therefore $w \in K$. Else, there exists an index $n_0 \in \{ 1,\ldots ,n-1\}$, such that $\mu_1 \cdots \mu_{n_0} := \mu$ is a root, but $\mu \cdot \mu_{n_0+1}$ is not. Define $u := [v_1,\ldots,v_{n_0}]$. By applying claim $1.b$ $(n_0-1)$-times, we see that $u \in K_\mu$. By applying claim $1.c$, we see that $[u,v_{n_0+1}] \in T$. So also $w = [[u,v_{n_0+1}],v_{n_0+2},\ldots,v_n] \in T \subseteq K$.
\end{proof}

We now note that $T = \{ v \in K | \exists n \in \N : (\delta \cdot \pi)^{n_1} \cdot v = 0_K \}$. So we consider the quotient $$\overline{K} := K/T = (\sum_{\lambda \in X} E_\lambda)/T = \sum_{ \lambda \in X } (E_\lambda / (E_\lambda \cap T)) = \sum_{ \lambda \in X } (E_\lambda / T).$$ For each $\lambda \in \overline{\Q}$, we define the $R$-submodule $\overline{K}_\lambda $ of $\overline{K}$ by: 
$$
\overline{K}_{\lambda} := \begin{cases}
E_\lambda / T \text{ if } \lambda \in X, \\ 
T / T \text{ if } \lambda \not \in X.
\end{cases}
$$

\textbf{Claim $3$:} $\overline{K} = \bigoplus_{\lambda \in \overline{\Q}}\overline{K}_\lambda$ is a grading of the Lie ring $\overline{K}$ by $(\overline{\Q}^\times,\cdot)$ and its support is contained in $X$.

\begin{proof} \let\qed\relax
	In view of claims $1$ and $2$, we need only show that the decomposition $\overline{K} = \sum_{\lambda \in \overline{\Q}} \overline{K}_{\lambda}$ of $\overline{K}$ into $R$-submodules is direct. We select arbitrary $v_{\lambda_1} \in {K}_{\lambda_1} ,\ldots,v_{\lambda_l} \in {K}_{\lambda_l} $ such that $v_{\lambda_1} + \cdots + v_{\lambda_l} \in T$ and we then need to show that $v_{\lambda_1} , \ldots , v_{\lambda_l} \in T$. So we select an arbitrary $i \in \{ 1,\ldots,l\}$ and will show that $v_{\lambda_i} \in T$. 	
	By definition, there exist ${n_1},{n_2} \in \N$ such that for all $j \in \{ 1,\ldots,l \}$, we have:  \begin{equation}
	({\delta} \cdot {\pi})^{{n_2}} \cdot (\widetilde{\gamma} - \lambda_j \cdot \id_K)^{n_1} v_{\lambda_j} = 0_K \label{EqVanish1}
	\end{equation} Define the auxiliary polynomial $s(t) = \prod_{j \neq i}(t-\lambda_j)^{{n_1}} \in R[t]$. Then the theory of resultants tells us that there exist polynomials $g(t),h(t) \in R[t]$ such that \begin{equation} g(t) \cdot s(t) + h(t) \cdot (t - \lambda_i)^{{n_1}} = \operatorname{Res}(s(t),(t-\lambda_i)^{{n_1}}) = s(\lambda_i)^{{n_1}}. \label{EqResultantWeakGrading} \end{equation} Since $T$ is invariant under $\widetilde{\gamma}$ and multiplication by elements of $R$, we see that $({\delta} \cdot {\pi})^{{n_2}} \cdot s(\widetilde{\gamma}) (v_{\lambda_i}) = ({\delta} \cdot {\pi})^{{n_2}} \cdot s(\widetilde{\gamma}) (v_{\lambda_1} + \cdots + v_{\lambda_l}) \in T.$ So, by definition, there exists a ${n_3} \in \N$ such that \begin{equation}
	({\delta} \cdot {\pi})^{{n_2}+{n_3}} \cdot s(\widetilde{\gamma}) (v_{\lambda_i}) = 0_K \label{EqVanish2}.
	\end{equation} By first evaluating \eqref{EqResultantWeakGrading} in $\widetilde{\gamma}$ and then in $v_{\lambda_i}$, and by substituting \eqref{EqVanish1} and \eqref{EqVanish2}, we obtain:
	\begin{eqnarray*}
		(\delta \cdot \pi)^{{n_2}+{n_3}} \cdot s(\lambda_i)^{n_1} v_{\lambda_i} &=& (\delta \cdot \pi)^{{n_2}+{n_3}} \cdot g(\widetilde{\gamma}) \left( s(\widetilde{\gamma} ) v_{\lambda_i} \right)   \\
		& & \phantom{ooo} + (\delta \cdot \pi)^{{n_2}+{n_3}} \cdot h(\widetilde{\gamma}) \left( (\widetilde{\gamma}-\lambda_i \cdot \id_{K})^{n_1} v_{\lambda_i} \right) \\
		&=& 0_K.
	\end{eqnarray*}
	Since the factor $s(\lambda_i)$ divides ${\delta}$ in the ring $R$, there exists some ${n_4} \in \N$ such that also $({\delta} \cdot {\pi})^{{n_2}+{n_3}+{n_2} \cdot {n_4}} \cdot v_{\lambda_i} = 0_K$. We may therefore conclude that $v_{\lambda_i} \in T$.		
\end{proof}

\textbf{Claim $4$:} \emph{We have the inclusion $1 \otimes ({\delta} \cdot L) \subseteq K$.}

\begin{proof}  \let\qed\relax
	For each $i \in \{ 1,\ldots,l \}$ and each $j \in \{ 0 , \ldots, m_i-1\}$, we define the polynomial $P_{i,j}(t) := r(t) / (t - \lambda_i)^{m_i-j} $ with coefficients in the ring $R := \Z[\lambda_1,\ldots,\lambda_l]$. Let us first show that, for each polynomial $P_{i,j}(t)$, there exists a coefficient $\theta_{i,j} \in R$ such that \begin{eqnarray}
	\operatorname{Discr}_\ast(r(t)) = \sum_{1 \leq i \leq l} \sum_{0 \leq j \leq m_i -1} \theta_{i,j} \cdot P_{i,j}(t) . \label{DiscrEquation}\end{eqnarray}	
	The partial fraction decomposition of $a / r(t)$ is given by $$\frac{a}{r(t)} = \sum_{1 \leq i \leq l} \sum_{0 \leq j \leq m_i-1} \frac{1}{j!} \cdot \left( \left( \frac{a}{P_{i,0}(t)} \right)^{(j)}(\lambda_i) \right) \cdot (t - \lambda_i)^{j-m_i}.$$ For each $i \in  \{1, \ldots, l\}$, we define the auxiliary polynomial $s_i(t) := \prod_{\substack{1 \leq j \leq l\\j\neq i}} (t-\lambda_j) \in R[t]$. Then the $j$'th derivative of $a/(P_{i,0}(t))$ is clearly of the form $b_{i,j}(t) / (s_i(t))^{2m}$, for some explicitly computable $b_{i,j}(t) \in R[t]$. We see, in particular, that $$a = \sum_{1 \leq i \leq l} \sum_{0 \leq j \leq m_i-1} \frac{b_{i,j}(\lambda_i)}{j! \cdot (s_i(\lambda_i))^{2m}} \cdot P_{i,j}(t).$$ 
	After multiplying both sides of this equality by  $\operatorname{Discr}_\ast(r(t)) / a$, we see that $$\theta_{i,j} := \frac{(m-1)!}{j!} \cdot \left( (-1)^{m(l-1)} \cdot a^{2d^2} \cdot \prod_{\substack{1 \leq k , n \leq l\\i \neq k \neq n \neq i}} (\lambda_k - \lambda_n)^m \right) \cdot b_{i,j}(\lambda_i) $$ is a solution to \eqref{DiscrEquation} in the ring of coefficients $R$, where $m := \max \{ m_1 , \ldots , m_l \}$. \\
	
	We now select an arbitrary $v \in L$. Corresponding with the $(i,j)$'th term of \ref{DiscrEquation}, we define $v_{i,j} := P_{i,j}(\widetilde{\gamma}) (1 \otimes v) \in \widetilde{L}$. Then we observe that $(\widetilde{\gamma} - \lambda_i \cdot \id_K)^{m_i-j} v_{i,j} = r(\widetilde{\gamma}) (1 \otimes v) = 0_K$, so that $v_{i,j} \in K_{\lambda_i}$. By evaluating the expression \eqref{DiscrEquation} in $\widetilde{\gamma}$ and then in $1 \otimes v$, we see that $1 \otimes ({\delta} \cdot v) = \sum_{1 \leq i \leq l} \sum_{0 \leq j \leq m_i-1} \theta_{i,j} \cdot v_{i,j} \in \sum_{1 \leq i \leq l} \sum_{0 \leq j \leq m_i-1} R \cdot K_{\lambda_i} \subseteq K.$ So we may indeed conclude that $1 \otimes ({\delta} \cdot L) \subseteq K$.
\end{proof}

\textbf{Claim $5$:} The Lie ring $\delta \cdot L$ embeds into the Lie ring $\overline{K} = K/T$. 

\begin{proof}
	We consider the embedding $\mapl{E}{\delta \cdot L}{K}{\delta \cdot v}{1 \otimes (\delta \cdot v)}$ and the projection $\mapl{P}{K}{K/T}{w}{w \operatorname{mod} T}$. Since $L$ has no $(\delta \cdot \pi)$-torsion, neither has $\delta \cdot L$. So the composition $\map{P \circ E}{\delta \cdot L}{K/T}$ is the required embedding of Lie rings.
\end{proof}

\subsection{Bounded nilpotency of graded Lie rings}

\begin{theorem} \label{TheoremLieAFold} Let $\mathbb{F}$ be a field. If a Lie ring ${K}$ is graded by $(\mathbb{F}^\times,\cdot)$ with finite, arithmetically-free support $X$, then ${K}$ is nilpotent of class at most $ {|X|}^{2^{|X|}} .$
\end{theorem}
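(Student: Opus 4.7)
The plan is to derive a homogeneous Engel condition from the arithmetic-freeness of $X$ and then promote it to a bound on the nilpotency class of $K$ via a Kreknin--Kostrikin-style induction.

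\textbf{Step 1 (homogeneous Engel condition).} For each homogeneous $w \in K_\mu$ with $\mu \in X$, the adjoint $\ad(w)$ shifts degree by $\mu$, so $\ad(w)^k(K_\lambda) \subseteq K_{\lambda\mu^k}$. Arithmetic-freeness prevents the $|X|$ products $\lambda\mu, \lambda\mu^2, \ldots, \lambda\mu^{|X|}$ from all lying in $X$, so some $K_{\lambda\mu^k}$ with $1 \leq k \leq |X|$ is trivial; all subsequent iterates of $\ad(w)$ then vanish as well. Hence $\ad(w)^{|X|} = 0$ on every homogeneous component, and therefore on all of $K$.

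\textbf{Step 2 (AF is inherited by subsets).} I would establish an auxiliary lemma: every subset $Y$ of an arithmetically-free set $X$ is itself arithmetically-free. Indeed, a length-$(|Y|+1)$ progression $\nu\rho^0, \nu\rho^1, \ldots, \nu\rho^{|Y|}$ in $Y$ with $\nu,\rho \in Y$ forces $\rho^d = 1$ for some $d \leq |Y|$ by pigeonhole on $Y$, making the progression periodic with values in $Y \subseteq X$; extending it to length $|X|+1$ yields a forbidden progression inside the arithmetically-free set $X$, a contradiction.

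\textbf{Step 3 (induction on $n := |X|$).} The base case $n = 1$ is immediate: arithmetic-freeness forces $1 \notin X$, so $X = \{\lambda\}$ with $\lambda \neq 1$ and $\lambda^2 \notin X$, whence $[K,K] \subseteq K_{\lambda^2} = 0$ and $\operatorname{c}(K) \leq 1 = 1^{2^1}$. For the inductive step with $n \geq 2$, I would fix some $\mu_0 \in X$ and form the graded ideal $I \trianglelefteq K$ generated by $K_{\mu_0}$, with graded quotient $\bar K := K/I$ supported in $X \setminus \{\mu_0\}$. By Step~2 this new support is arithmetically-free of size $\leq n-1$, so the induction hypothesis yields $\operatorname{c}(\bar K) \leq (n-1)^{2^{n-1}}$. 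For the ideal $I$, the key input is that $I$ is generated \emph{as a Lie ideal} by the single homogeneous component $K_{\mu_0}$ on which $\ad$ is $n$-nilpotent (Step~1); a Kreknin--Kostrikin-style combinatorial analysis of left-normed commutators along the grading should yield a bound of the form $\operatorname{c}(I) \leq n \cdot (n-1)^{2^{n-1}}$. Combining via $\operatorname{c}(K) \leq \operatorname{c}(I) + \operatorname{c}(\bar K)$ and the elementary inequality $(n+1)(n-1)^{2^{n-1}} \leq n^{2^n}$ (valid for $n \geq 2$) completes the induction.

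\textbf{Main obstacle.} The crux is the inductive bound on $\operatorname{c}(I)$: one has to reproduce the Kreknin--Kostrikin recursion $c_n \leq n \cdot c_{n-1}^2$ (which unfolds to the doubly-exponential $n^{2^n}$) in the present $(\mathbb{F}^\times,\cdot)$-graded setting, rather than the classical cyclic $\mathbb{Z}/p\mathbb{Z}$-graded setting where the Kreknin derivation operator handles it cleanly. One therefore needs to identify the appropriate analogue of the Kreknin operator compatible with the flexible grading (perhaps indexed by the subsemigroup $\langle X \rangle \subseteq \mathbb{F}^\times$) and verify that the inductive bookkeeping on left-normed commutators in $I$ carries through. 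A smaller technicality is handling the degenerate sub-case where the ideal $I$ exhausts $K$: here a different choice of $\mu_0$ or a separate direct argument (e.g.\ using that all of $K$ is generated by one homogeneous Engel component) is required.
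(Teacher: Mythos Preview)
Your Step~1 is a correct and pleasant observation, and Step~2 is fine as well. The trouble is entirely in Step~3, where there are two genuine gaps.

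First, the inequality $\operatorname{c}(K) \leq \operatorname{c}(I) + \operatorname{c}(\bar K)$ is false for arbitrary ideals $I$: already the two-dimensional Lie algebra with basis $x,y$ and $[x,y]=y$ has an abelian ideal $I=\langle y\rangle$ with abelian quotient, yet is not nilpotent. What you actually need is a bound on the smallest $m$ with $[I,\underbrace{K,\ldots,K}_{m}]=0$ (the ``central depth'' of $I$ in $K$), not on $\operatorname{c}(I)$ as a Lie ring in its own right. Your Step~1 Engel condition concerns $\ad(w)$ for a \emph{single} homogeneous $w$ and does not convert into such a bound on iterated brackets with varying homogeneous factors.

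Second, even your proposed bound on $\operatorname{c}(I)$ is left to ``a Kreknin--Kostrikin-style combinatorial analysis,'' but note that $I$ is a graded subring with support contained in the \emph{same} set $X$ of size $n$, so the induction hypothesis on $|X|$ tells you nothing about $I$ directly. You would need a separate argument exploiting that $I$ is generated as an ideal by one homogeneous component, and that is precisely the crux your sketch does not supply.

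The paper's route is different in structure. It does not induct on $|X|$. Instead it first invokes Shalev's generalisation of Kreknin's theorem (using only $1\notin X$) to get solvability of derived length at most $2^{|X|}$. It then proves a combinatorial ``escape'' lemma from arithmetic-freeness: for any $a,b_1,\ldots,b_{|X|}\in X$ some permuted partial product $a\cdot b_{\sigma(1)}\cdots b_{\sigma(k)}$ falls outside $X$. Via the Jacobi identity this yields $[I,\underbrace{L,\ldots,L}_{|X|}]\subseteq[I,[L,L]]$ for every homogeneous ideal $I$; iterating gives $\Gamma_{1+(x^s-1)/(x-1)}(L)\subseteq\Delta_s(L)$, and plugging in $s=2^{|X|}$ finishes. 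So the paper converts the nilpotency-class bound into a derived-length bound, which is where the Kreknin-type input actually enters.
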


We now fix a finite, arithmetically-free subset $X$ of the multiplicative group $(\mathbb{F}^\times,\cdot)$ of a field $\mathbb{F}$. We want to show that every Lie ring $L$ that is graded by $\mathbb{F}^\times$ and supported by $X$ is nilpotent of class at most $|X|^{2^{|X|}}$. 
If $|X| = 0$, then $L = 0$, and we indeed have $\operatorname{c}(L) = 0 \leq 0^{2^0}$. If $|X| = 1$, then $[L,L] \subseteq L_{G \setminus X} = \{ 0_L \}$, so that we have $\operatorname{c}(L) = 1 \leq 1^{2^1}$. We may therefore assume from now on that  $x := |X| \geq 2$. 
We begin by proving a useful property of this arithmetically-free set $X$.

\begin{lemma}[Growth or progression] \label{GroPro} Consider an abelian group $(F,\cdot)$. For every $k \in \N$, and elements $b_1,\ldots,b_{k}$ of $F \setminus \{ 1_F \}$, we can define the set $S := S(b_1,\ldots,b_{k}) := \bigcup_{0 \leq i \leq k}\{ b_{\sigma(1)}  \cdots b_{\sigma(i)} | \sigma \in \operatorname{Sym}(k) \}$ of partial products. Then: $|S| \geq k+1$, or
	$S$ contains a progression $g , g \cdot c , g \cdot c^2, \ldots, g \cdot c^{k}$ with $c \in \{b_1,\ldots,b_k\}$.
\end{lemma}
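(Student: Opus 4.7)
The plan is to proceed by induction on $k$, with trivial base case $k = 1$ (where $S = \{1_F, b_1\}$ has size $2 = k+1$ since $b_1 \neq 1_F$).

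For the inductive step, I would write $S' := S(b_1, \ldots, b_{k-1})$ and first record the decomposition $S = S' \cup b_k \cdot S'$, obtained by sorting each partial product according to whether the factor $b_k$ appears. In particular $S' \subseteq S$. Applying the inductive hypothesis to $b_1, \ldots, b_{k-1}$ then yields two cases, which I would treat separately.

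In the first case, $|S'| \geq k$. I would assume $|S| \leq k$ (otherwise there is nothing to prove), so that $k \leq |S'| \leq |S| \leq k$ forces $S = S'$ and hence $b_k \cdot S' \subseteq S'$. Since $b_k$ acts injectively on the finite set $S'$, this inclusion must be an equality. Then $1_F \in S'$ (the empty product) gives $b_k = b_k \cdot 1_F \in b_k \cdot S' = S'$, and iterating yields $b_k^n \in S'$ for every $n \geq 0$. The sequence $1_F, b_k, b_k^2, \ldots, b_k^k$ is therefore a progression of length $k+1$ contained in $S$, with ratio $c := b_k \in \{b_1, \ldots, b_k\}$.

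In the second case, $|S'| \leq k - 1$ and $S'$ contains a progression $g, gc, \ldots, gc^{k-1}$ of length $k$ with $c \in \{b_1, \ldots, b_{k-1}\}$. Since these $k$ terms lie in a set of size at most $k - 1$, two of the powers $c^0, \ldots, c^{k-1}$ must coincide, which forces $\operatorname{ord}(c) \leq k - 1$. Then $c^k = c^{k \bmod \operatorname{ord}(c)}$ is one of $c^0, \ldots, c^{k-1}$, so $g c^k \in S' \subseteq S$, and the progression extends to length $k+1$ with ratio $c \in \{b_1, \ldots, b_k\}$. The only subtle step is the first case: once $S = S'$ is forced, the single fact $1_F \in S'$ is enough to propagate every forward power of $b_k$ into $S'$ via the stabilization $b_k \cdot S' = S'$; after that, both cases reduce to routine bookkeeping of cardinalities and orders.
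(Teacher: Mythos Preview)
Your proof is correct and follows the same inductive strategy as the paper: both use the decomposition $S = S' \cup b_k \cdot S'$ with $S' = S(b_1,\ldots,b_{k-1})$, and both extract the progression $1_F, b_k, \ldots, b_k^k$ from the stabilization $b_k \cdot S' = S'$ when $|S|$ fails to grow. The organizational difference is that the paper argues by contrapositive (assuming no long progression in $S$ and asserting, without justification, that the shorter progression then also fails for $S'$), whereas you case-split directly on the inductive hypothesis; your Case~2 pigeonhole argument (forcing $\operatorname{ord}(c)\le k-1$ when $|S'|\le k-1$, and hence $gc^k\in S'$) makes explicit exactly the step the paper leaves to the reader.
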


\begin{proof}
	Let us prove the lemma by induction on $k \in \N$. If $k = 1$, then the statement is clearly true. So we may suppose that $k > 1$. We suppose that the second conclusion fails for $S$ and we will prove that $|S| \geq k+1$. Define $S' := S(b_1,\ldots,b_{k-1})$ and $S := S(b_1,\ldots,b_k)$. Then $S = S' \cup (S' \cdot b_k)$ and the second conclusion also fails for $S'$. The induction hypothesis now tells us that $|S'| \geq k$. If $S = S' \cdot b_k$, then $S = S' \cdot b_k = S' \cdot b_k^2 = \cdots = S' \cdot b_k^k$. But then $S'$ would satisfy condition $2$ (contradicting our assumption). So $S' \cdot b_k \neq S$, and therefore $|S| = |S' \cup (S' \cdot b_k)| \geq 1 + |S'| \geq 1+k$.
\end{proof}

\begin{proposition}[Escape] \label{Escape} For all elements $a,b_1,\ldots,b_{|X|}$ of $X$, there exists a permutation $\sigma \in \operatorname{Sym}(|X|)$ and a cut-off $k \in \{1,\ldots,|X|\}$ such that $a \cdot b_{\sigma(1)} \cdots b_{\sigma(k)} \in G \setminus X.$
\end{proposition}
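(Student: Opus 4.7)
The plan is to invoke Lemma \ref{GroPro} with the given elements $b_1,\ldots,b_{|X|}$ (which lie in $\mathbb{F}^\times \setminus \{1_F\}$, since $1_F \in X$ would immediately contradict arithmetic-freeness for $|X| \geq 1$), and then dispatch the two resulting alternatives separately. Throughout, I exploit the easy observation that the element $a \in X$ is recovered as $a \cdot 1_F$ with $1_F \in S$, so whenever a product $a \cdot s$ lies outside $X$ we automatically get $s \neq 1_F$, which forces $s$ to be a nonempty partial product $b_{\sigma(1)} \cdots b_{\sigma(k)}$ with $k \geq 1$, exactly as the statement demands.

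In the first case of Lemma \ref{GroPro}, where $|S| \geq |X|+1$, the translate $a \cdot S \subseteq \mathbb{F}^\times$ has the same cardinality, so at least one of its $|X|+1$ distinct elements must lie outside the $|X|$-element set $X$. This element is of the form $a \cdot s$ for some $s \in S \setminus \{1_F\}$ by the observation above, and unpacking the definition of $S$ yields the desired $\sigma$ and $k \in \{1,\ldots,|X|\}$.

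In the second case, $S$ contains a geometric progression $g, gc, gc^2, \ldots, gc^{|X|}$ with $c \in \{b_1,\ldots,b_{|X|}\} \subseteq X$. Here I split on whether $ag$ lies in $X$. If $ag \notin X$, then $g$ itself gives a nonempty partial product (using once more that $g = 1_F$ would force $ag = a \in X$), and we are done with $\lambda := g$. If instead $ag \in X$, I apply the arithmetic-freeness of $X$ to the pair $\lambda := ag \in X$, $\mu := c \in X$: at least one of $ag, ag \cdot c, \ldots, ag \cdot c^{|X|}$ must fall outside $X$, and since the zeroth term $ag$ is in $X$ the escape happens at some index $j \geq 1$. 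The element $gc^j$ lies in $S$ by construction, so again writing it as a partial product $b_{\tau(1)} \cdots b_{\tau(k)}$ extracts the required $\tau$ and $k \geq 1$.

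The only nontrivial point — really the reason the whole step works — is that in the progression sub-case one must translate by $a$ on the \emph{left}; this turns a geometric progression inside $S$ into an arithmetic progression starting at $ag \in X$ with common ratio $c \in X$, which is the precise form forbidden by the arithmetic-freeness hypothesis. Everything else is a careful bookkeeping of when the exponent $k$ is allowed to be $0$ versus when it must be at least $1$, and that is handled uniformly by the $a = a\cdot 1_F$ remark.
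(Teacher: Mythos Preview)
Your proof is correct and follows the same idea as the paper: apply Lemma~\ref{GroPro} to $b_1,\ldots,b_{|X|}$ and use the arithmetic-freeness of $X$ to handle the progression alternative. The paper packages this as a short proof by contradiction (assuming $a\cdot S\subseteq X$ forces $|S|\le |X|$, hence a progression in $S$, hence a forbidden progression $a b,\,a b c,\ldots,a b c^{|X|}$ in $X$), whereas you argue directly and split Case~2 into the subcases $ag\notin X$ and $ag\in X$; these are just two presentations of the same argument.
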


\begin{proof}
	We suppose that the conclusion is false, and we will derive a contradiction. Consider $S := S(b_1,\ldots,b_{|X|})$. Then $a \cdot S \subseteq X$ and $|S| = |a \cdot S| \leq |X|$. Lemma \ref{GroPro} now implies that $S$ contains a progression $b,b\cdot c, b \cdot c^2,\ldots,b \cdot c^{|X|}$ with $c \in X$. Then  $a \cdot S$, and therefore $X$, contains the progression $a \cdot b,(a \cdot b) \cdot c,(a \cdot b) \cdot c^2,\ldots,(a \cdot b) \cdot c^{|X|}$ with $c \in X$. We conclude that $X$ is not arithmetically-free. This is a contradiction.
\end{proof}

We will break down our proof of theorem \ref{TheoremLieAFold} into several steps (that are directly inspired by the work of Kreknin and Kostrikin (cf. E. Khukhro's book \cite{KhukhroNilpotentGroupsAutomorphisms})).

\begin{lemma} \label{PermContr}Consider a Lie ring $L$. For every permutation $\sigma \in \operatorname{Sym}(X)$ and for all elements $v,w_1,\ldots,w_x \in L$, we have $ [v,w_1,\ldots,w_x] \in [v,w_{\sigma(1)},\ldots,w_{\sigma(x)}] + \operatorname{id}_L([v,[L,L]]) .$
\end{lemma}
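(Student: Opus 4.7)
The plan is to reduce the problem to adjacent transpositions and then to apply the Jacobi identity in its ``derivation form.''  Since every permutation of $\{1,\dots,x\}$ is a product of adjacent transpositions, it suffices to show that swapping $w_i$ with $w_{i+1}$ changes the left-normed bracket $[v,w_1,\dots,w_x]$ only by an element of the ideal $I := \operatorname{id}_L([v,[L,L]])$.

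First, I would set $u := [v,w_1,\dots,w_{i-1}]$ and apply the identity
\[
[[a,b],c] \; = \; [[a,c],b] \;+\; [a,[b,c]],
\]
which follows immediately from the Jacobi identity.  Taking $(a,b,c):=(u,w_i,w_{i+1})$ yields
\[
[u,w_i,w_{i+1}] \;-\; [u,w_{i+1},w_i] \;=\; \bigl[u,[w_i,w_{i+1}]\bigr].
\]
Bi-linearity of the Lie bracket then propagates this equality through the remaining commutations with $w_{i+2},\dots,w_x$, so the total difference between the two left-normed brackets equals
\[
\bigl[[v,w_1,\dots,w_{i-1}],\,[w_i,w_{i+1}]\bigr]
\]
further commuted (left-normed) with $w_{i+2},\dots,w_x$.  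Because $I$ is an ideal, it will suffice to show that this single element lies in $I$.

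The key auxiliary claim is therefore: \emph{for every $c\in [L,L]$ and every $v,y_1,\dots,y_k\in L$, we have $[[v,y_1,\dots,y_k],c]\in I$.}  I would prove this by induction on $k$.  The base case $k=0$ reads $[v,c]\in[v,[L,L]]\subseteq I$.  For the inductive step, write $u':=[v,y_1,\dots,y_{k-1}]$ and apply the derivation identity again:
\[
[[u',y_k],c] \;=\; [[u',c],y_k] \;+\; [u',[y_k,c]].
\]
The first summand lies in $I$ by the inductive hypothesis (applied to $c\in[L,L]$), composed with the fact that $I$ is closed under Lie multiplication by arbitrary elements of $L$.  The second summand also lies in $I$ by the inductive hypothesis, this time applied with $c$ replaced by $[y_k,c]\in[L,L]$ (which is in $[L,L]$ because $[L,L]$ is an ideal).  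This closes the induction, and applying the claim with $c=[w_i,w_{i+1}]$ and $k=i-1$ completes the proof of the lemma.

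The only real subtlety is bookkeeping: one must verify that the inductive hypothesis can be re-invoked with a \emph{different} element of $[L,L]$ (namely $[y_k,c]$) in place of the original $c$, which is why the claim is formulated uniformly over all $c\in[L,L]$ rather than for a single fixed element.  Once this is set up, everything else is a straightforward unwinding of the Jacobi identity, and the full ideal $\operatorname{id}_L([v,[L,L]])$ (rather than the smaller submodule $[v,[L,L]]$) is exactly what is needed to absorb the additional factors that are produced along the way.
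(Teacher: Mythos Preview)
Your proof is correct and follows essentially the same approach as the paper: the paper's two-line argument (``a repeated application of the Jacobi identity shows that the difference lies in the $\Z$-span of $[v,[L,L]],\,[v,[L,L],L],\ldots$'') is precisely what you have unpacked in detail via the reduction to adjacent transpositions and the auxiliary induction on $k$. Your write-up makes explicit the one point the paper leaves to the reader, namely why the element $[[v,y_1,\dots,y_k],c]$ lands in $\operatorname{id}_L([v,[L,L]])$ rather than merely in $[[v,L,\dots,L],[L,L]]$.
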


\begin{proof}
	A repeated application of the Jacobi-identity shows that the difference vector $[v,w_1,\ldots,w_x] - [v,w_{\sigma(1)},\ldots,w_{\sigma(x)}]$ is in the $\Z$-linear span of $[v,[L,L]]$, $[v,[L,L],L]$,  $\ldots,[v,[L,L],\underbrace{L,\ldots,L}_\text{$x$ times}],$ and therefore in the ideal of $L$ that is generated by $[v,[L,L]]$.
\end{proof}

\begin{proposition} \label{PropGroup}
	Consider a Lie ring $L$ that is graded by $(\mathbb{F}^\times,\cdot)$ and supported by $X$. For every homogeneous ideal $I$ of $L$, we have $[I,\underbrace{L,\ldots,L}_\text{$x$ times}] \subseteq [I,[L,L]].$
\end{proposition}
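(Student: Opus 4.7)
The plan is to reduce to the homogeneous case, apply the Escape proposition to kill one particular ordering of the $w_i$'s, and then use Lemma \ref{PermContr} to pay the commutator-reordering ``tax'' inside $[I,[L,L]]$.

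First I would observe that both sides of the desired inclusion are homogeneous submodules: $I$ is assumed to be a homogeneous ideal, $L = \bigoplus_{\lambda \in X} L_\lambda$, the bracket respects the grading, and $[I,[L,L]]$ is itself an ideal (since it is the commutator of two ideals). By multilinearity of the Lie bracket, it therefore suffices to show that for every homogeneous $v \in I$ of weight $a \in X$ and every sequence of homogeneous $w_1,\ldots,w_x \in L$ of weights $b_1,\ldots,b_x \in X$, the left-normed commutator $[v,w_1,\ldots,w_x]$ lies in $[I,[L,L]]$.

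Next I would apply Proposition \ref{Escape} to the elements $a,b_1,\ldots,b_x$ of $X$ to obtain a permutation $\sigma \in \operatorname{Sym}(x)$ and a cut-off $k \in \{1,\ldots,x\}$ with $a \cdot b_{\sigma(1)} \cdots b_{\sigma(k)} \in \mathbb{F}^\times \setminus X$. The partial bracket $[v,w_{\sigma(1)},\ldots,w_{\sigma(k)}]$ is homogeneous of weight $a \cdot b_{\sigma(1)} \cdots b_{\sigma(k)}$, and since $L$ is supported on $X$ this partial bracket must vanish. A fortiori the full commutator $[v,w_{\sigma(1)},\ldots,w_{\sigma(x)}]$ is zero.

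Finally I would invoke Lemma \ref{PermContr} with this particular $\sigma$: it gives
\[ [v,w_1,\ldots,w_x] \in [v,w_{\sigma(1)},\ldots,w_{\sigma(x)}] + \operatorname{id}_L([v,[L,L]]) = \operatorname{id}_L([v,[L,L]]). \]
Because $v \in I$ we have $[v,[L,L]] \subseteq [I,[L,L]]$, and because $[I,[L,L]]$ is itself an ideal of $L$ it must contain the ideal generated by $[v,[L,L]]$; hence $[v,w_1,\ldots,w_x] \in [I,[L,L]]$, which is what we wanted.

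There is no real obstacle: the combinatorial heavy lifting is already done by Proposition \ref{Escape}, and Lemma \ref{PermContr} exactly bridges the gap between an arbitrary ordering of the $w_i$'s and the order furnished by Escape. The only mild subtlety is checking that $[I,[L,L]]$ is an ideal so that $\operatorname{id}_L([v,[L,L]])$ is absorbed into it; this is immediate from the Jacobi identity applied to the two ideals $I$ and $[L,L]$.
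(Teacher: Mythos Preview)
Your proof is correct and follows essentially the same route as the paper's: reduce to homogeneous brackets, use Proposition~\ref{Escape} to find a reordering whose partial product leaves $X$, observe that the corresponding reordered bracket vanishes by the grading, and absorb the reordering error via Lemma~\ref{PermContr} into $[I,[L,L]]$. The only cosmetic difference is that the paper explicitly treats the trivial case where some weight lies outside $X$, and it uses the ideal property of $[I,[L,L]]$ implicitly rather than spelling it out as you do.
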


\begin{proof}
	Let the gradings be given by $L = \bigoplus_{\lambda \in \mathbb{F}^\times} L_\lambda$ and $I = \bigoplus_{\lambda \in \mathbb{F}^\times} I_\lambda$. Since the Lie-bracket is bi-linear, it suffices to show that every left-normed bracket of the form $u := [v_a,w_{b_1} , \ldots , w_{b_x}],$ with $v_a \in I_a$ and $w_{b_1} \in L_{b_1} , \ldots , v_{b_x} \in L_{b_x}$, is contained in the ideal $[I,[L,L]]$. If any of the $a,b_1,\ldots,b_x$ are in $\mathbb{F}^\times \setminus X$, then we indeed have $u = 0 \in [I,[L,L]]$. So we may assume that $a,b_1,\ldots,b_x \in X$. According to proposition \ref{Escape}, there exists a permutation $\sigma \in \operatorname{Sym}(X)$ and a cut-off $k \in \{ 1 , \ldots , x \}$, such that $c := a \cdot b_{\sigma(1)} \cdots b_{\sigma(k)} \in \mathbb{F}^\times \setminus X$, and therefore $L_c = \{ 0_L \}$. We then apply lemma \ref{PermContr} to the bracket $u$ and $\sigma$ in order to obtain $u \in [[v_a,w_{\sigma(1)} , \ldots , w_{\sigma(k)}] , w_{\sigma(k+1)} , \ldots , w_{\sigma(x)}] + [I,[L,L]].$ The grading property implies that the sub-bracket $[v_a,w_{\sigma(1)} , \ldots , w_{\sigma(k)}]$ is contained in the homogeneous component $L_{c} = \{ 0_L \}$ of $L$. So $u \in [0,w_{\sigma(k+1),\ldots,w_{\sigma(x)}}] +  [I,[L,L]] = [I,[L,L]]$.
\end{proof}

\begin{proposition} \label{PropGroupMult}
	Consider a Lie ring $L$ that is graded by $(\mathbb{F}^\times,\cdot)$ and supported by $X$. For every homogeneous ideal $I$ of $L$ and $l \in \N$, we have $[I,\underbrace{L,\ldots,L}_\text{$l x$ times}] \subseteq [I,\underbrace{[L,L],\ldots,[L,L]}_\text{$l$ times}].$
\end{proposition}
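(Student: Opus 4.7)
The plan is to proceed by a straightforward induction on $l \in \mathbb{N}$, using Proposition \ref{PropGroup} as the base case $l = 1$ (noting that $I$ itself is a homogeneous ideal of $L$ by assumption).

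For the inductive step, suppose the claim is established for $l-1$, and set $J := [I,\underbrace{L,\ldots,L}_{(l-1)x \text{ times}}]$. The first step is to verify that $J$ is again a homogeneous ideal of $L$. Because $I = \bigoplus_{\lambda} I_\lambda$ is homogeneous and the grading satisfies $[L_\mu,L_\nu]\subseteq L_{\mu\nu}$, every left-normed bracket of homogeneous elements beginning with an element of $I$ is homogeneous; thus $J$ decomposes into homogeneous components. That $J$ is an ideal follows from the Jacobi identity in the standard way (or simply because $[I,L,\ldots,L,L]$ is, by construction, invariant under further bracketing with $L$).

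Once homogeneity of $J$ is established, the inductive step is essentially formal. Applying Proposition \ref{PropGroup} to the homogeneous ideal $J$ yields
\[
[I,\underbrace{L,\ldots,L}_{lx \text{ times}}] \;=\; [J,\underbrace{L,\ldots,L}_{x \text{ times}}] \;\subseteq\; [J,[L,L]].
\]
Substituting the induction hypothesis $J \subseteq [I,\underbrace{[L,L],\ldots,[L,L]}_{(l-1) \text{ times}}]$ into the right-hand side, one obtains
\[
[J,[L,L]] \;\subseteq\; \bigl[[I,\underbrace{[L,L],\ldots,[L,L]}_{(l-1) \text{ times}}],[L,L]\bigr] \;=\; [I,\underbrace{[L,L],\ldots,[L,L]}_{l \text{ times}}],
\]
which closes the induction.

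The only point requiring any care is the first step, confirming that $J$ really is a homogeneous ideal so that Proposition \ref{PropGroup} is applicable at each stage; everything else is bookkeeping. This is the main (and quite mild) obstacle in the argument.
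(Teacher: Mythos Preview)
Your proof is correct and follows essentially the same approach as the paper: induction on $l$, setting $J := [I,\underbrace{L,\ldots,L}_{(l-1)x}]$, applying Proposition~\ref{PropGroup} to the homogeneous ideal $J$, and then invoking the induction hypothesis. You even supply a little more detail than the paper does on why $J$ is homogeneous.
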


\begin{proof}
	We proceed by induction on $l \in \N$. The base case $l=1$ corresponds with the previous proposition. So we may assume that $l > 1$. Then $J := [I,\underbrace{L,\ldots,L}_\text{$(l-1) x$ times}]$ is a homogeneous ideal of $L$ and proposition \ref{PropGroup} gives us the inclusion $[J,\underbrace{L,\ldots,L}_\text{$x$ times}] \subseteq [J,[L,L]].$ The induction hypothesis for $l-1$ also gives the inclusion $J \subseteq [I,\underbrace{[L,L],\ldots,[L,L]}_\text{$(l-1)$ times}]$. These two combine to give $[I,\underbrace{L,\ldots,L}_\text{$l x$ times}] \subseteq [[I,\underbrace{[L,L],\ldots,[L,L]}_\text{$(l-1)$ times}],[L,L]] = [I,\underbrace{[L,L],\ldots,[L,L]}_\text{$l$ times}].$
\end{proof}

\begin{proposition} \label{NestedIdeals} Consider a Lie ring $L$ that is graded by $(\mathbb{F}^\times,\cdot)$ and supported by $X$. For each $l \in \N$, we have the inclusion of ideals:
	$ \Gamma_{2+l x}(L) \subseteq \Gamma_{l+1}(\Delta_{1}(L)) .$
\end{proposition}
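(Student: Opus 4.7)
The plan is to apply Proposition \ref{PropGroupMult} with the specific choice of ideal $I := [L,L] = \Delta_{1}(L)$. First I would check the hypothesis of that proposition, namely that $[L,L]$ is a homogeneous ideal. Since the grading of $L$ by $(\mathbb{F}^\times,\cdot)$ respects the bracket, we have the decomposition
\[
[L,L] = \bigoplus_{\lambda \in \mathbb{F}^\times} \Bigl( \sum_{\mu \cdot \nu = \lambda} [L_\mu, L_\nu] \Bigr),
\]
so $[L,L]$ is indeed homogeneous with respect to the grading.

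Next, I would feed $I := [L,L]$ into Proposition \ref{PropGroupMult}, which immediately yields the inclusion
\[
\bigl[[L,L], \underbrace{L, \ldots, L}_{l \cdot x \text{ times}}\bigr] \;\subseteq\; \bigl[[L,L], \underbrace{[L,L], \ldots, [L,L]}_{l \text{ times}}\bigr].
\]
All that then remains is to match the two sides with the terms of the lower central series appearing in the statement. On the left, a left-normed bracket with outer entry $[L,L]$ and $l \cdot x$ trailing copies of $L$ is the same as a left-normed bracket $[L,L,L,\ldots,L]$ of total length $2 + l \cdot x$, which is precisely $\Gamma_{2 + l \cdot x}(L)$. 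On the right, a left-normed bracket with outer entry $[L,L]$ and $l$ trailing copies of $[L,L]$ involves $l+1$ factors of $\Delta_{1}(L) = [L,L]$ arranged in a left-normed bracket, which is by definition $\Gamma_{l+1}(\Delta_{1}(L))$. Combining these two identifications gives exactly the desired inclusion.

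I do not expect any real obstacle: the entire argument is a one-line application of the preceding proposition, and the only subtle point is the indexing bookkeeping between the lower-central-series notation $\Gamma_{\bullet}$ and the left-normed brackets produced by Proposition \ref{PropGroupMult}. In particular, no new use of the arithmetical freeness of $X$ is needed beyond what is already encoded in Proposition \ref{PropGroupMult}.
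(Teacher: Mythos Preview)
Your proposal is correct and matches the paper's approach exactly: the paper's proof is the single line ``We need only specialize the previous proposition to the homogeneous ideal $I := [L,L]$ of $L$.'' Your additional remarks on homogeneity of $[L,L]$ and the indexing bookkeeping are accurate and simply make explicit what the paper leaves implicit.
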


\begin{proof}
	We need only specialize the previous proposition to the homogeneous ideal $I := [L,L]$ of $L$.
\end{proof}

\begin{proposition} Let $s \in \N$. For every $\mathbb{F}^\times$-graded Lie ring $L$ that is supported by $X$, we have the inclusion of ideals 
	$\Gamma_{1 + (x^s-1)/(x-1)}(L) \subseteq \Delta_s(L).$
\end{proposition}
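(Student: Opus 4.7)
The approach is a clean induction on $s$, with proposition \ref{NestedIdeals} supplying the one-step engine. For the base case $s=1$, the formula gives $n_1 := 1 + (x-1)/(x-1) = 2$, and $\Gamma_2(L) = [L,L] = \Delta_1(L)$ is immediate.

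For the inductive step, suppose $s \geq 2$ and that the statement holds for $s-1$ applied to every $\mathbb{F}^\times$-graded Lie ring supported by $X$. I would apply the inductive hypothesis to $M := \Delta_1(L) = [L,L]$, viewed as an $\mathbb{F}^\times$-graded Lie ring in its own right with grading inherited from $L$. Because $L_\mu = 0$ for $\mu \notin X$, we have $[L,L]_\mu = \sum_{\alpha\beta = \mu} [L_\alpha, L_\beta] \subseteq L_\mu = 0$, so $M$ is indeed supported by $X$. The inductive hypothesis then yields
\[
\Gamma_{n_{s-1}}(M) \subseteq \Delta_{s-1}(M),
\]
where $n_{s-1} = 1 + (x^{s-1}-1)/(x-1)$, and the terms on both sides are computed inside $M$. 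Since $\Delta_{s-1}(\Delta_1(L)) = \Delta_s(L)$, this gives the inclusion $\Gamma_{n_{s-1}}(\Delta_1(L)) \subseteq \Delta_s(L)$.

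Next, I would invoke proposition \ref{NestedIdeals} with the choice $l := n_{s-1} - 1 = (x^{s-1}-1)/(x-1) \in \N$. The proposition yields
\[
\Gamma_{2+lx}(L) \subseteq \Gamma_{l+1}(\Delta_1(L)) = \Gamma_{n_{s-1}}(\Delta_1(L)).
\]
A direct calculation confirms the exponents match:
\[
2 + lx = 2 + \frac{x(x^{s-1}-1)}{x-1} = \frac{x^s + x - 2}{x-1} = 1 + \frac{x^s - 1}{x-1} = n_s.
\]
Chaining the two inclusions gives $\Gamma_{n_s}(L) \subseteq \Gamma_{n_{s-1}}(\Delta_1(L)) \subseteq \Delta_s(L)$, closing the induction.

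The only genuine subtlety is verifying that $\Delta_1(L)$ remains supported by $X$ so that the induction hypothesis is applicable; once that is observed, the rest is bookkeeping matching $2 + lx$ to $n_s$. The deeper input — translating arithmetic-freeness of $X$ into the escape phenomenon and the telescoping inclusion $\Gamma_{2+lx}(L) \subseteq \Gamma_{l+1}(\Delta_1(L))$ — has already been absorbed into proposition \ref{NestedIdeals} via propositions \ref{PropGroup}--\ref{PropGroupMult}, so no further combinatorial work on $X$ is required here.
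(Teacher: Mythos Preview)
Your proof is correct and essentially identical to the paper's: both induct on $s$, apply the inductive hypothesis to $\Delta_1(L)$ (which is $\mathbb{F}^\times$-graded and supported by $X$), and invoke proposition \ref{NestedIdeals} with $l = (x^{s-1}-1)/(x-1)$ to link $\Gamma_{n_s}(L)$ to $\Gamma_{n_{s-1}}(\Delta_1(L))$. The only difference is the order in which you present the two inclusions, and your slightly more explicit justification that $\Delta_1(L)$ inherits support in $X$.
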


\begin{proof}
	We proceed by induction on $s$. If $s = 1$, then the statement is true by definition: $\Gamma_2(L) = [L,L] = \Delta_1(L) $. Now suppose that $s > 1$. We then define $l := (x^{s-1}-1)/(x-1)$ and we observe that $(x^s-1)/(x-1) = 1 + x l$. According to proposition \ref{NestedIdeals}, we then have $\Gamma_{2 + x l}(L) \subseteq \Gamma_{1+l}(\Delta_1(L))$. We note that the Lie ring $\Delta_1(L)$ is naturally graded by $(\mathbb{F}^\times,\cdot)$ and supported by $X$. So the induction hypothesis for $s-1$ tells us that $\Gamma_{l+1}(\Delta_1(L)) \subseteq \Delta_{s-1}(\Delta_1(L)) = \Delta_s(L)$. We therefore obtain $\Gamma_{2 + x l}(L) \subseteq \Gamma_{1+l}(\Delta_1(L)) \subseteq \Delta_s(L).$
\end{proof}

\begin{corollary} If a Lie ring $L$ is graded by $(\mathbb{F}^\times,\cdot)$ and supported by $X$, then it is nilpotent of class 
	$\operatorname{c}(L) \leq {|X|}^{2^{|X|}}.$
\end{corollary}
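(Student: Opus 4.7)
The plan is to combine the preceding proposition
\[
\Gamma_{1+(x^s-1)/(x-1)}(L) \subseteq \Delta_s(L),
\]
with $x := |X|$, together with an independent bound on the derived length of $L$. Since $(x^s-1)/(x-1) \leq x^s$ for $x \geq 2$, it suffices to exhibit an integer $s \leq 2^{|X|}$ with $\Delta_s(L) = 0_L$; substituting into the proposition will then yield $\operatorname{c}(L) \leq x^{2^x} = |X|^{2^{|X|}}$. The trivial cases $|X| \in \{0,1\}$ can be handled separately (as already remarked, they give class $0$ and $1$, both well below the claimed bound), so one may assume $x \geq 2$.

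The key missing ingredient is an arithmetically-free analog of the Kreknin--Kostrikin solvability theorem: every $\mathbb{F}^\times$-graded Lie ring with arithmetically-free support $X$ is solvable of derived length at most $2^{|X|}$. In the classical case of a regular automorphism of prime order $p$, the support is $X = \{\omega,\omega^2,\ldots,\omega^{p-1}\}$ with $\omega$ a primitive $p$-th root of unity, and this $X$ is indeed arithmetically-free in $(\overline{\Q}^\times,\cdot)$. Kreknin's original proof in that setting relies on nothing beyond this arithmetic-freeness, so the same inductive argument---fueled by the escape phenomenon already extracted in lemma \ref{GroPro} and proposition \ref{Escape}, and by the permutation-modulo-$[v,[L,L]]$ identity of lemma \ref{PermContr}---can be transferred to an arbitrary arithmetically-free $X$. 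Alternatively, one can cite the author's earlier paper on arithmetically-free subsets (\cite{MoensAF}) for this solvability bound directly.

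Once $\Delta_{2^{x}}(L) = 0_L$ is in hand, applying the preceding proposition with $s := 2^{x}$ gives
\[
\Gamma_{1+(x^{2^{x}}-1)/(x-1)}(L) \subseteq \Delta_{2^{x}}(L) = 0_L,
\]
so that $\operatorname{c}(L) \leq (x^{2^x}-1)/(x-1) \leq x^{2^x} = |X|^{2^{|X|}}$, as required.

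The main obstacle is the solvability step: packaging Kreknin's inductive argument in terms of arithmetic-freeness of the support rather than the original language of regular automorphisms of prime order. All of the technical infrastructure---bracket permutations modulo the ideal generated by $[v,[L,L]]$, the grading-based ``escape'' of left-normed brackets out of the support $X$, and the derived-vs.-lower-central comparison encoded in the preceding proposition---has been assembled in the present section, so this final step is essentially a matter of running the induction on $|X|$ carefully along the same lines as in \cite{KrekninAutomorphismFinitePeriod, KrekninKostrikinRegularAutomorphisms}.
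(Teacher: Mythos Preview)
Your proposal is correct and follows essentially the same route as the paper: handle $|X|\le 1$ trivially, invoke a solvability bound $\operatorname{dl}(L)\le 2^{|X|}$, and then feed $s=2^{|X|}$ into the proposition $\Gamma_{1+(x^s-1)/(x-1)}(L)\subseteq\Delta_s(L)$.

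The only point worth sharpening is the solvability step, which you flag as ``the main obstacle.'' In the paper this is not obtained by re-running Kreknin's induction with arithmetic-freeness, nor from \cite{MoensAF}; instead the paper observes that an arithmetically-free $X$ cannot contain $1_{\mathbb{F}^\times}$ (the constant progression $1,1,\ldots$ would otherwise lie in $X$), and then cites Shalev's generalization of Kreknin's theorem \cite{ShalevAutomorphismsFiniteRank}, which gives $\operatorname{dl}(L)\le 2^{|X|}$ for any $(\mathbb{F}^\times,\cdot)$-graded Lie ring whose support avoids the identity. So the solvability input is strictly weaker than arithmetic-freeness and is already available off the shelf; the escape/permutation machinery of this section is used only for the nilpotency refinement, not for solvability.
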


\begin{proof}
	We may suppose that $x \geq 2$. Since $X$ is an arithmetically-free subset of $\mathbb{F}^\times$, it does not contain $1$. So, according to Shalev's generalization of Kreknin's theorem in \cite{ShalevAutomorphismsFiniteRank}, the Lie ring $L$ is solvable of derived length $\operatorname{dl}(L) \leq 2^{x} =: s$. Proposition \ref{NestedIdeals} now gives $\Gamma_{{x}^{2^{x}}+1}(L)\subseteq \Gamma_{1 + (x^s-1)/(x-1)}(L) \subseteq \Delta_{s}(L) \subseteq \Delta_{\operatorname{dl}(L)}(L) = \{ 0_L\}.$
\end{proof}

This finishes the proof of theorem \ref{TheoremLieAFold}.

\subsection{Proof of theorem \ref{IntroTheoremLie}}

\begin{proof}
	We first use theorem \ref{TheoremEmbedding} to embed $\discr{r(t)} \cdot L$ into a Lie ring $K$ that is graded by $(\overline{\Q}^\times,\cdot)$ and supported by the root set $X$ of $r(t)$. According to corollary \ref{CorollaryGoodImpliesAF}, $X$ is an arithmetically-free subset of $(\overline{\Q}^\times,\cdot)$. So theorem \ref{TheoremLieAFold} implies that $\discr{r(t)} \cdot L$ is nilpotent of class $\operatorname{c}(\discr{r(t)} \cdot L) \leq d^{2^d}$. Since $(L,+)$ has no $\discr{r(t)}$-torsion, also $L$ is nilpotent of class at most $d^{2^d}$.
\end{proof}

\section{Structure theorems for groups} \label{SectionStructureForGroups}

\subsection{Proof of theorem \ref{MainTheoremA}} \label{SubsecMainThA}

\paragraph{Preliminaries.} We recall some basic terminology. Let $G$ be a group and let $\alpha$ be one of its automorphisms.  
A subgroup $H$ of $G$ is \emph{$\alpha$-invariant} if $\alpha(H) \subseteq H$. An $\alpha$-invariant \emph{section} of $G$ is a quotient $A/B$ of a $\alpha$-invariant subgroup $A$ of $G$ by a $\alpha$-invariant, normal subgroup $B$ of $A$. A characteristic section of $G$ is a quotient $A/B$ of a $G$-characteristic group $A$ by a $G$-characteristic subgroup $B$ of $A$. A section $A/B$ of $G$ is \emph{proper} if $|A/B| < |G|$. The following result is well-known.

\begin{lemma} \label{LemmaRegularPrelims} Let $G$ be a finite group and let $\alpha$ be a fix-point-free automorphism. $(i.)$ The map $\mapl{\tau}{G}{G}{x}{x^{-1} \cdot \alpha(x)}$ is a bijection. $(ii.)$ If $A/B$ is an $\alpha$-invariant section then the corresponding automorphism $\mapl{\overline{\alpha}}{A/B}{A/B}{a \cdot B}{\alpha(a) \cdot B}$ is also fix-point-free. $(iii.)$ If $p$ is a prime, then $G$ has a $p$-Sylow subgroup that is $\alpha$-invariant. $(iv.)$ If $G$ is solvable and $H$ is a Hall-subgroup of $G$, then some conjugate of $H$ is $\alpha$-invariant. \end{lemma}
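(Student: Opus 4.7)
The plan is to prove the four parts in order, since the later parts build directly on the earlier ones. The central tool throughout is the fact that a fix-point-free automorphism has only the trivial element as a fixed point, combined with the finiteness of $G$.

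For part $(i.)$, since $G$ is finite it suffices to prove injectivity of $\tau$. If $\tau(x) = \tau(y)$, then $x^{-1}\alpha(x) = y^{-1}\alpha(y)$, which rearranges to $yx^{-1} = \alpha(y)\alpha(x)^{-1} = \alpha(yx^{-1})$. So $yx^{-1}$ is a fixed point of $\alpha$ and must equal $1_G$, giving $x = y$. For part $(ii.)$, suppose $\overline{\alpha}(aB) = aB$ for some $a \in A$, i.e., $a^{-1}\alpha(a) \in B$. The restriction $\alpha|_B$ is a fix-point-free automorphism of the finite group $B$, so by $(i.)$ applied to $B$ there exists $b \in B$ with $b^{-1}\alpha(b) = a^{-1}\alpha(a)$. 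Then $ab^{-1}$ is a fixed point of $\alpha$, so $a = b \in B$ and $aB$ is the identity coset.

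For part $(iii.)$, I would use Sylow's theorem and the bijectivity of $\tau$. Pick any $p$-Sylow subgroup $P_0$ of $G$. Since $\alpha(P_0)$ is also a $p$-Sylow, Sylow's conjugacy theorem supplies some $g \in G$ with $\alpha(P_0) = g P_0 g^{-1}$. Using $(i.)$, choose $x \in G$ with $x^{-1}\alpha(x) = g^{-1}$, so that $\alpha(x) = xg^{-1}$. Then a direct computation shows
\[
\alpha(xP_0 x^{-1}) = \alpha(x)\alpha(P_0)\alpha(x)^{-1} = (xg^{-1})(gP_0 g^{-1})(gx^{-1}) = xP_0 x^{-1},
\]
so $P := xP_0 x^{-1}$ is the desired $\alpha$-invariant $p$-Sylow subgroup. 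Part $(iv.)$ is the same argument, except that the existence and conjugacy of Hall subgroups of $G$ are supplied by Philip Hall's theorem for finite solvable groups.

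No step here should present any real obstacle: $(i.)$ is a one-line pigeonhole argument, $(ii.)$ is a direct consequence of $(i.)$ applied to the $\alpha$-invariant subgroup $B$, and $(iii.)$ and $(iv.)$ follow from $(i.)$ together with classical conjugacy statements (Sylow, respectively Hall). The only minor care required is in $(ii.)$, where one must verify that $\alpha$ restricts to a genuine fix-point-free automorphism of $B$ (this is immediate since $B$ is $\alpha$-invariant, finite, and a subset of $G$ on which $\alpha$ already has no non-trivial fixed points), and in $(iii.)$/$(iv.)$, where one must carry out the short computation $\alpha(xP_0 x^{-1}) = xP_0 x^{-1}$ correctly, exploiting that the element $g$ introduced by conjugacy is exactly cancelled by the choice of $x$ via $\tau$.
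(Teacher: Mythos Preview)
Your proof is correct and is the standard argument for this well-known lemma; the paper itself does not supply a proof, simply stating that the result is well-known.
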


Let $m \in \Z$. A group $H$ is said to be an $m'$-group if $H$ has no $m$-torsion (i.e.: only the trivial element $h \in H$ satisfies $h^m = 1_H$).

\subsubsection{Solvable case}

\begin{theorem} \label{CorollaryDecompositionKN}
	Consider a finite, solvable group $G$, together with a fix-point-free automorphism $\map{\alpha}{G}{G}$, and an identity $r(t)$ of the automorphism. Then $G$ has a characteristic $\tcn{r(t)}$-subgroup $S$ with nilpotent $\tcn{r(t)}'$-quotient $G/S$.
\end{theorem}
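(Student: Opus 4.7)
The plan is to take $\pi$ to be the (finite) set of prime divisors of $\tcn{r(t)}$ and set $S := O_\pi(G)$, the largest normal $\pi$-subgroup of $G$; this $S$ is automatically characteristic, and since the order of every element of a $\pi$-group divides a natural power of $\tcn{r(t)}$, $S$ is a $\tcn{r(t)}$-group. (We may assume $\tcn{r(t)} \neq 0$; otherwise every positive integer divides a natural power of $0$ and $S := G$ trivially satisfies the conclusion.) By Lemma~\ref{LemmaRegularPrelims} the induced automorphism on $G/S$ is still fix-point-free and still has identity $r(t)$, and $O_\pi(G/S) = 1$. The problem therefore reduces to the implication: \emph{if $G$ is a finite solvable group with $O_\pi(G) = 1$ admitting a fix-point-free automorphism $\alpha$ with identity $r(t)$, then $G$ is nilpotent.} (A nilpotent group with $O_\pi = 1$ is automatically $\pi'$, since every Sylow $p$-subgroup of a nilpotent group is characteristic, so those with $p \in \pi$ lie in $O_\pi(G) = 1$.)

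I would prove this reduced implication by induction on $|G|$ via a minimal counterexample. Then the Fitting subgroup $F := F(G)$ is nontrivial, $\alpha$-invariant, nilpotent, and a $\pi'$-group (because $O_p(G) \subseteq O_\pi(G) = 1$ for every $p \in \pi$), so if $G = F$ we are done. Otherwise, pick a minimal $\alpha$-invariant normal subgroup $V \subseteq F$ of $G$; it is an elementary abelian $q$-group with $q \notin \pi$. It suffices to prove $V \subseteq Z(G)$: then $G/V$ is smaller, still satisfies the hypotheses, hence is nilpotent by induction, forcing $G$ nilpotent. Equivalently, every $\alpha$-invariant Sylow subgroup $P$ of $G$ (which exists by Lemma~\ref{LemmaRegularPrelims}) must centralize $V$. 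For $H := \langle P, V\rangle$, if $H \neq G$ the inductive hypothesis applies to $H$: in the resulting decomposition $H = S_H \cdot (H/S_H)$, the constraints $V \cap S_H = 1$ (since $V$ is $\pi'$ and $S_H$ is $\pi$) together with $H/S_H$ being $\pi'$ force $S_H$ to absorb the full $p$-Sylow $P$ of $H$, whence $P \trianglelefteq H$ and $[P,V] \subseteq P \cap V = 1$.

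What remains is the case $H = G$, i.e.\ $G = V \rtimes P$ itself. One checks $p \neq q$ (else $G$ is a $p$-group, hence nilpotent, contradicting $F \neq G$), and the task is to derive a contradiction from the identity $r(\alpha) = 1_G$ under the assumption $[P,V] \neq 1$. My strategy is to let $u$ be the order of $\alpha$ on $V$ and exploit the periodic decomposition $r(t) = r_{u,0}(t) + \cdots + r_{u,u-1}(t)$ from Definition~\ref{DefCongnr}: on the abelian module $V$, the relation $\alpha^u = \id_V$ collapses each $r_{u,j}(\alpha)$ to a scalar multiple of $\alpha^j$, and writing $\tcn{r(t)}$ as a $\Z[t]$-combination of the $r_{u,j}(t)$ produces an arithmetic constraint that, combined with the fix-point-free action of $\alpha$ on $V \rtimes P$, should force either $\tcn{r(t)} \cdot V = 0$ (contradicting $q \notin \pi$) or $[P,V] = 1$ (contradicting $P \not\trianglelefteq G$). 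This mirrors the Thompson--Higman treatment of the Frobenius conjecture, where the cyclotomic identity $\Phi_p(t)$ satisfies $\tcn{\Phi_p(t)} = 1$ and the arithmetic step is automatic; the main obstacle here will be to rerun a Thompson-style normal $p$-complement argument using only the weaker divisibility information encoded by $\tcn{r(t)}$.
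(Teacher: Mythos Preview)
Your reduction to a minimal counterexample of the shape $G = V \rtimes P$ with $V$ an elementary abelian $q$-group ($q \nmid \tcn{r(t)}$) and $P$ acting non-trivially is broadly in the same spirit as the paper's proof, though some of the intermediate steps are shaky: your claim that ``$H/S_H$ being $\pi'$ forces $S_H$ to absorb the full $p$-Sylow $P$'' is false when $p \notin \pi$ (there $S_H = 1$ and you must instead use that $H$ is nilpotent), and the case where $P$ is the $q$-Sylow is not covered at all (your argument then reduces to $H = P$ is nilpotent, which is vacuous).  These issues are reparable, and indeed the paper arrives at essentially the same endgame by a cleaner route that only manipulates \emph{characteristic} subgroups.

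The genuine gap is in your final step.  Taking $u$ to be the order of $\alpha|_V$ gives you nothing: the relation $\alpha^u = \id_V$ collapses each $r_{u,j}(\alpha)$ to $c_j \alpha^j$ with $c_j \in \Z$, but the resulting identity $\sum_j s_j(\alpha) c_j \alpha^j = \operatorname{RRes}_u(r(t)) \cdot \id_V$ is a tautology in $\Z[t]$ and carries no information about $P$.  There is no mechanism here linking the arithmetic of $r(t)$ to the non-triviality of $[P,V]$.  The paper supplies exactly this missing bridge: one extends scalars to $\overline{\F_q}$ and decomposes $V$ into $P$-character spaces $V_{\chi}$.  Because $\alpha$ normalises the $P$-action it \emph{permutes} these spaces, and because $\alpha$ is fix-point-free on $P$ it cannot centralise the $P$-action, so some $V_{\chi_m}$ has an $\alpha$-orbit of length $u \geq 2$.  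For $v \in V_{\chi_m}$ the summands $r_{u,j}(\alpha)(v)$ then land in \emph{distinct} character spaces, hence are linearly independent, and the single relation $r(\alpha)(v)=0$ forces every $r_{u,j}(\alpha)(v)=0$ separately.  Only now does the Bezout combination yield $\operatorname{RRes}_u(r(t)) \cdot v = 0$, and since $q \nmid \tcn{r(t)}$ this kills $v$ and contradicts $V_{\chi_m} \neq 0$.  In short, the correct $u$ is the orbit length on character spaces, not the order of $\alpha$ on $V$; the former is $\geq 2$ precisely because $[P,V] \neq 1$, which is the link your outline is missing.
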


We suppose that the statement is false and we will then deduce a contradiction. We may well suppose that $(G,\alpha,r(t))$ is a counter-example of minimal order $|G|$. We then see that $G$ is not nilpotent and not a $\tcn{r(t)}$-group. We also see that all proper, $\alpha$-invariant sections of $G$ satisfy the conclusion of the theorem. We will use these observations repeatedly but not always explicitly. \\

\textbf{Claim $1$:} \emph{If a characteristic subgroup $K$ of $G$ is a $\tcn{r(t)}$-group, then $K = \{1_G\}$.}

\begin{proof}  \let\qed\relax
	Suppose that the characteristic subgroup $K$ is non-trivial. Then the proper, $\alpha$-invariant section $G/K$ of $G$ has a characteristic, $\tcn{r(t)}$-subgroup $S/K$ with nilpotent $\tcn{r(t)}'$-quotient $(G/K)/(S/K) \cong G/S$. But then $S$ is a characteristic $\tcn{r(t)}$-subgroup of $G$ with nilpotent $\tcn{r(t)}'$-quotient, contradicting our initial assumption. 
\end{proof}

\textbf{Claim $2$:} \emph{If a characteristic subgroup $K$ of $G$ is proper, then $K$ is a nilpotent $\tcn{r(t)}'$-group.}

\begin{proof}  \let\qed\relax
	The proper section $K$ of $G$ is an extension of a characteristic $\tcn{r(t)}$-group $L$ by a nilpotent $\tcn{r(t)}'$-group $K/L$. This $L$ is also characteristic in $G$. Claim $1$ implies that $L = 1$. So $K \cong K/L$ is a nilpotent $\tcn{r(t)}'$-group.
\end{proof}

\textbf{Claim $3$:} \emph{If a characteristic subgroup $A$ of $G$ is non-trivial, then the quotient $G/A$ is a $\tcn{r(t)}$-group or a nilpotent $\tcn{r(t)}'$-group.}

\begin{proof} \let\qed\relax
	The proper section $G/A$ of $G$ has a characteristic $\tcn{r(t)}$-subgroup $K/A$ with nilpotent $\tcn{r(t)}'$-quotient $(G/A)/(K/A) \cong G/K$. By lifting, we obtain a characteristic subgroup $K$ of $G$. If $K = G$, then $G/A = K/A$ is a $\tcn{r(t)}$-group. So we may suppose that $K$ is a proper subgroup. Claim $2$ then implies that the group $K$, and therefore the section $K/A$, is a $\tcn{r(t)}'$-group. So $K/A$ is both a $\tcn{r(t)}$-group and a $\tcn{r(t)}'$-group, and therefore the trivial group. We conclude that $G/A = G/K$ is a nilpotent $\tcn{r(t)}'$-group. 
\end{proof}

Let $F$ be the (first upper) Fitting subgroup of $G$. \\

\textbf{Claim $4$:} \emph{The quotient $G/F$ is non-trivial and elementary-abelian. So there is a prime $p$ and a natural number $l$ such that $G/F \cong (\Z_p^l,+)$.}

\begin{proof} \let\qed\relax
	As a counter-example to the theorem, $G$ is not nilpotent. So $F$ is a proper subgroup of $G$ and $G/F$ is non-trivial. Now every proper, characteristic subgroup $S/F$ of $G/F$ lifts to a proper, non-trivial, characteristic subgroup $S$ of $G$. By claim $2$, such an $S$ is nilpotent, and therefore contained in $F$. So the solvable group $G/F$ is characteristically simple, and therefore elementary-abelian.
\end{proof}

\textbf{Claim $5$:} \emph{The group $F$ is a $q$-group, for some prime $q $ not dividing $\tcn{r(t)}$.}

\begin{proof} \let\qed\relax
	Suppose that the nilpotent group $F$ is not a $q$-group. Such a group $F$ then decomposes as the direct product of proper, non-trivial, characteristic subgroups $A$ and $B$: $F = A \times B$. According to claim $2$, the non-trivial section $F/A \cong B$ of $G/A$ is a $\tcn{r(t)}'$-group. So $G/A$ is not a $\tcn{r(t)}$-group. According to claim $3$, the group $G/A$ is then nilpotent. Similarly, we obtain that $G/B$ is nilpotent. So $G/(A \cap B)$ is nilpotent (a contradiction). 
\end{proof}

\textbf{Claim $6$:} \emph{$F$ is non-trivial and elementary-abelian. So there is a natural number $k$ such that $F \cong (\Z_q^k,+)$.}

\begin{proof} \let\qed\relax
	Since $G$ is solvable and non-trivial, so is $F$. Let $\Phi(F)$ be the Frattini subgroup of $F$. We now consider the characteristic series $1 \leq \Phi(F) \leq F < G$ of $G$. We suppose that $F$ is not elementary abelian and we will derive a contradiction. In this case, the inclusions of the series are all strict. Claim $2$ implies that $F$ is a $\tcn{r(t)}'$-group, so that $G/\Phi(F)$ is not a $\tcn{r(t)}$-group. Claim $3$ now implies that $G/\Phi(F)$ is nilpotent, so that also $G$ is nilpotent (a contradiction).
\end{proof}

Since $G$ is not nilpotent, $p \neq q$. \\

\textbf{Claim $7$:} \emph{$F$ has an $\alpha$-invariant complement $P$ in $G$.}

\begin{proof} \let\qed\relax
	Zassenhaus' theorem implies that $G \cong F \rtimes (G/F)$. By using lemma \ref{LemmaRegularPrelims}, we can find a complement $P$ of $F$ in $G$ that is $\alpha$-invariant.
\end{proof}

We now consider the $k$-dimensional vector space $V := \F^k$ over the algebraically-closed field $\F:= \overline{\F}_q$. \\

\textbf{Claim $8$:} \emph{The vector space $V$ admits a non-trivial, elementary-abelian $p$-group of automorphisms $R \leq \operatorname{GL}(V)$ and an automorphism $f \in \operatorname{N}_{\operatorname{GL}(V)}(R) \setminus \operatorname{C}_{\operatorname{GL}(V)}(R)$ such that $r(f) = \{ 0_V\}$.}

\begin{proof} \let\qed\relax
	Since $P$ is $\alpha$-invariant, the group $P \rtimes \langle \alpha \rangle$ is well-defined and it acts naturally on $F$ via conjugation within the larger group $(F \rtimes P) \rtimes \langle \alpha \rangle = F \rtimes (P \rtimes \langle \alpha \rangle )$. Let this action be given by the map $\map{\theta}{P \rtimes \langle \alpha \rangle}{\operatorname{Aut}(F)}$. Since the Fitting subgroup is self-centralizing, this action is faithful on $P$, so that $\theta(P)$ is a non-trivial $p$-subgroup of $\operatorname{Aut}(F)$. It is clear that $\theta(\alpha)$ normalizes $\theta(P)$. Since $\alpha$ is fix-point-free on $P$, $\theta(\alpha)$ does not centralize $\theta(P)$. Trivially, $r(t)$ is an identity of the automorphism $\theta(\alpha)$ of $F$. \\
	
	We now identify the Fitting subgroup $F$ of $G$ with the additive group of the vector space $\F_q^k$. We then define $\overline{R}$ as the subgroup of $\operatorname{GL}(\F_q^k)$ corresponding with $\theta(P)$ and $\overline{f}$ as the automorphism of $\operatorname{GL}(\F_q^k)$ corresponding with $\theta(\alpha)$. By extending the scalars from $\F_q$ to $\overline{\F}_q$, we naturally obtain the desired subgroup $R$ of $\operatorname{GL}(V)$ and the automorphism $f \in \operatorname{GL}(V)$.
\end{proof}

We had already observed that $p \neq q$. This implies that the operators of the abelian group $R$ can be diagonalized simultaneously. So let $\chi_1 , \ldots , \chi_l$ be the distinct characters of $R$ with coefficients in $\mathbb{F}^\times$ and let $V_{\chi_i} := \{ v \in V | \forall b \in R : b(v) = \chi_i(b) \cdot v \}$ be the (by definition non-trivial) character-space corresponding with such a character $\map{\chi_i}{R}{\mathbb{F}^\times}$. Then we have the decomposition $V = V_{\chi_1} \oplus \cdots \oplus V_{\chi_l}$ of $V$ into its $R$-character-spaces. Since $f$ normalises $R$, we can define, for each $n \in \Z$ and each character $\chi_i$, a new character $(f^n \ast \chi_i)$ by the rule: $\forall b \in R : (f \ast \chi_i)(b) := \chi_i(f^{-n} \circ b \circ f^n).$ So the group $\langle f \rangle$ acts as permutations on the set of character spaces according to the rule $$f^n(V_{\chi_i}) = V_{(f^n \ast \chi_i)}.$$

\textbf{Claim $9$:} \emph{The automorphism $f$ maps some character space, say $V_{\chi_m}$, onto a different character space.} 

\begin{proof} \let\qed\relax
	Since $f$ does not centralise $R$, there exists an element $b \in R$ such that $f^{-1 } \circ b \circ f \neq b$. These operators $f^{-1} \circ b \circ f$ and $b$ must differ in some element of some character-space, say $v \in V_{\chi_m}$. 
	Now, if $f(V_{\chi_m}) = V_{\chi_m}$, then we obtain the contradiction $(f^{-1} \circ b \circ f)(v) = (f^{-1} \circ b )(f(v)) = f^{-1}(\chi_m(b) \cdot f(v)) = \chi_m(b) \cdot f^{-1}(f(v)) = b(v).$ So we may indeed conclude that $f(V_{\chi_m}) \neq V_{\chi_m}$.	
\end{proof}

Let $u$ be the minimal natural number such that $f^u(V_{\chi_m}) = V_{\chi_m}$. By the previous remark, we necessarily have $u \geq 2$. \\

\textbf{Claim $10$:} \emph{We have $V_{\chi_m} = \{0_V\}$.}

\begin{proof} \let\qed\relax
	Let $v$ be any vector of $V_{\chi_m}$ and let us show that $v = 0_V$. We first consider the $u$-periodic decomposition $r(t) = \sum_j r_{u,j}(t)$ of $r(t)$ into partial sums. By evaluating this decomposition in $f$, we obtain the equality of $V$-endomorphisms: $r(f) = r_{u,0}(f) + r_{u,1}(f) + \cdots + r_{u,u-1}(f).$ By further evaluating this decomposition in the distinguished vector $v \in V_{\chi_m}$, we obtain the decomposition: $ (r(f))(v) = (r_{u,0}(f))(v) + \cdots + (r_{u,u-1}(f))(v) .$ By assumption, the map $r(f)$ vanishes on all of $V$, so that the left-hand side of this decomposition is the trivial vector, $0_V$. By construction, we also have the inclusions of vector spaces $r_{u,0}(f)(V_{\chi_m}) \subseteq V_{\chi_m},$ $ \ldots,$ $r_{u,u-1}(f)(V_{\chi_m}) \subseteq f^{u-1}(V_{\chi_m})$. Since the character-spaces $V_{\chi_m}$, $f(V_{\chi_m}),$ $\ldots , f^{u-1}(V_{\chi_m})$ are are distinct, they are linearly independent. So we may conclude that all of the terms in this decomposition vanish: \begin{equation}
	(r_{u,0}(f))(v) = \cdots  = (r_{u,u-1}(f))(v) = \{ 0_V \} . \label{EqPartSumVanish}
	\end{equation}	
	We had already observed that $q $ does not divide $\tcn{r(t)}$. By Bezout's theorem, there exist polynomials $s_{0}(t) , \ldots  , s_{u-1}(t) \in \Z[t]$ such that $s_0(t) \cdot r_{u,0}(t) + \cdots + s_{u-1}(t) \cdot r_{u,u-1}(t) \equiv 1 \operatorname{mod} q.$ By evaluating this decomposition in $f$, we obtain the decomposition of $V$-endomorphisms \begin{equation}
	s_0(f) \circ r_{u,0}(f) + \cdots + s_{u-1}(f) \circ r_{u,u-1}(f) = \id_V. \label{EqPartSumEqual}
	\end{equation} By further evaluating this expression in the distinguished vector $v \in V_{\chi_m}$, we obtain 
	\begin{eqnarray*}
		v &=& \id_V(v) \\
		&=^{\eqref{EqPartSumEqual}}& (s_0(f) \circ r_{u,0}(f))(v)  + \cdots +  (s_{u-1}(f) \circ r_{u,u-1}(f)) (v) \\
		&=& s_0(f) (r_{u,0}(f)(v))  + \cdots + s_{u-1}(f) (r_{u,u-1}(f) (v)) \\
		&=^{\eqref{EqPartSumVanish}}& s_0(f) (0)  + \cdots + s_{u-1}(f) (0) \\
		&=& 0_V.\qedhere
	\end{eqnarray*} 
\end{proof}

But the character spaces are non-trivial \emph{by definition}. This contradiction finishes the proof of theorem \ref{CorollaryDecompositionKN}.

\begin{corollary} \label{CorNoBThenNil}
	Consider a finite, solvable group $G$, together with a fix-point-free automorphism $\map{\alpha}{G}{G}$, and an identity $r(t)$ of the automorphism. If $G$ has no $\tcn{r(t)}$-torsion, then $G$ is nilpotent.
\end{corollary}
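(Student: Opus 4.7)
The plan is to derive this immediately from Theorem \ref{CorollaryDecompositionKN}, which has already done essentially all the work. That theorem provides a characteristic subgroup $S \trianglelefteq G$ such that $S$ is a $\tcn{r(t)}$-group and $G/S$ is a nilpotent $\tcn{r(t)}'$-group. So the only thing I need to verify is that, under the no-torsion hypothesis, the subgroup $S$ must be trivial.

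To that end, I would first recall the definition: a group $H$ is a $\tcn{r(t)}$-group if every $h \in H$ has order dividing some natural power of $\tcn{r(t)}$. In particular, if $S \neq \{1_G\}$ contains any non-trivial element $s$, then $s$ has finite order dividing $\tcn{r(t)}^n$ for some $n$. Picking a suitable power $s^{\tcn{r(t)}^{n-1}}$ (or, more carefully, any non-trivial element in the cyclic subgroup generated by $s$ whose order is exactly a divisor of $\tcn{r(t)}$) produces a non-trivial element $t \in G$ with $t^{\tcn{r(t)}} = 1_G$, i.e.\ witnesses $\tcn{r(t)}$-torsion in $G$. Since we assumed $G$ has no such torsion, we conclude $S = \{1_G\}$.

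Consequently $G \cong G/S$ is nilpotent, which is the desired conclusion. There is really no obstacle here; the whole content sits in Theorem \ref{CorollaryDecompositionKN}, and the only minor care needed is the elementary observation that a non-trivial $n$-group always contains an element of order dividing $n$ (so that the absence of $n$-torsion in an ambient group forces the $n$-subgroup to be trivial). One could alternatively phrase the argument in one line by saying: $S$ is a $\tcn{r(t)}$-group contained in the $\tcn{r(t)}'$-group $G$, hence $S = \{1_G\}$.
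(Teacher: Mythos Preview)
Your proposal is correct and is precisely the intended argument: the paper states this corollary immediately after Theorem~\ref{CorollaryDecompositionKN} without a separate proof, since it follows at once by observing that the characteristic $\tcn{r(t)}$-subgroup $S$ must be trivial when $G$ has no $\tcn{r(t)}$-torsion. Your one-line alternative phrasing (that $S$ is a $\tcn{r(t)}$-group inside a $\tcn{r(t)}'$-group, hence trivial) is exactly the right way to say it.
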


\subsubsection{General case}

\begin{proof}
	We suppose that the conclusion is false and we will deduce a contradiction (this is essentially Thompson's proof of the Frobenius conjecture \cite{ThompsonFixPointFreeAutomorphisms}, and we include it here for completeness). Let $(G,\alpha,r(t))$ be a counter-example of minimal order $|G|$. We note that every proper, $\alpha$-invariant section of $G$ is then nilpotent. If $|G|$ is the power of a single prime, then $G$ is nilpotent (a contradiction). So some odd prime $p$ divides $|G|$. Lemma \ref{LemmaRegularPrelims} gives us a $p$-Sylow subgroup satisfying $\alpha(P) = P$. We now distinguish between two cases. \\
	
	Suppose first that $P$ has a non-trivial, normal, $\alpha$-invariant subgroup $H$ with $\operatorname{N}_G(H) = G$. Then the section $G/H$ is proper and $\alpha$-invariant. So $G$ is the extension of the nilpotent group $H$ by the nilpotent group $G/H$. The group $G$ is therefore solvable. Corollary \ref{CorNoBThenNil} now implies that $G$ is nilpotent (a contradiction). Suppose next that every non-trivial, normal, $\alpha$-invariant subgroup $H$ of $P$ has a normalizer $\operatorname{N}_G(P)$ that is properly contained in $G$. As a proper, $\alpha$-invariant section of $G$, this $\operatorname{N}_G(H)$ is nilpotent. So theorem $1$ of Thompson \cite{ThompsonNormalComplements} gives us a normal complement $K$ to $P$ in $G$: $G \cong K \rtimes P$. As a proper, $\alpha$-invariant section of $G$, this $K$ is nilpotent. As an extension of a nilpotent group $K$ by a nilpotent group $P$, our group $G$ is solvable. Corollary \ref{CorNoBThenNil} now implies that $G$ is nilpotent (a contradiction).
\end{proof}

\subsection{Proof of theorem \ref{MainTheoremB}}

\begin{proof}
	Let $G$ be a nilpotent group having an endomorphism $\map{\gamma}{G}{G}$ with a good identity $r(t)$, say of degree $d$. Suppose that $G$ has no $(\discr{r(t)} \cdot \prodant{r(t)})$-torsion. We then consider the Lie ring $\operatorname{L}(G) := \bigoplus_{n \in \N} \Gamma_n^\ast(G) / \Gamma_{n+1}^\ast(G)$ that corresponds with the isolators of the lower central series $(\Gamma_n(G))_{n \in \N}$ of $G$. This Lie ring has a number of good properties. It is also nilpotent and its class coincides with that of $G$. Moreover, $(\operatorname{L}(G),+)$ has no $(\discr{r(t)} \cdot \prodant{r(t)})$-torsion. And the Lie ring naturally admits an automorphism $\map{\overline{\alpha}}{\operatorname{L}(G)}{\operatorname{L}(G)}$ of Lie rings that satisfies $r(\overline{\alpha}) = 0_{\operatorname{L}(G)}$. So we need only apply corollary \ref{IntroTheoremLie} to conclude that $\operatorname{c}(G) \leq d^{2^d}$. 
\end{proof}

\subsection{Proof of corollary \ref{MainCorollaryWithClassification}} \label{SectionCorolls} 

\begin{proof}
	Let $\map{\alpha}{G}{G}$ be the fix-point-free automorphism of the finite group $G$. Rowley's theorem \ref{TheoremA} implies that $G$ is solvable. Corollary \ref{CorNoBThenNil} implies that $G$ is of the form $B \rtimes N$, where $B$ is a characteristic $\tcn{r(t)}$-subgroup and $N$ is a nilpotent, $\tcn{r(t)}'$-subgroup. According to lemma \ref{LemmaRegularPrelims}, we may further assume that $N$ is $\alpha$-invariant. Corollary \ref{MainCorollaryNoClassification} implies that $N$ is of the form $C \times D$, where $C$ is a nilpotent $(\discr{r(t)} \cdot \prodant{r(t)})$-group and $D$ is a nilpotent group of class at most $d^{2^d}$.
\end{proof}

\subsection{Proof of corollary \ref{CorollIrreducibleConstants}} \label{MainCorollIrred}


\begin{definition}[$\operatorname{a},\operatorname{b},\operatorname{c}$] Let $r(t) \in \Z[t]$ be irreducible over $\Q$. Suppose first that $r(0) = 0$ or $r(1) = 0$. Then we define $\operatorname{a} := \operatorname{b} := \operatorname{c} := 1$. Suppose next that $r(0) \cdot r(1) \neq 0$. Then we define $\operatorname{a} := r(1)$. Let $u$ be the largest natural number such that $r(t) \in \Z[t^u]$. Let $s(t)$ be the unique $s(t) \in \Z[t]$ such that $r(t) = s(t^u)$. Then we define $\operatorname{b} := \tcn{s(t)}$ and $\operatorname{c} := \discr{s(t)} \cdot \prodant{s(t)}$.
\end{definition}

\begin{proof}(Corollary \ref{CorollIrreducibleConstants})
		We can now prove the theorem case-by-case.  Suppose first that $r(0) = 0$. Then $r(t) = 0$ and we have, for all $x \in G$, the equality $\alpha(x) = 1_G$. So $G$ is trivial.
		Suppose next that $r(1) = 0$. Then $r(t) = t-1$ and we have, for all $x \in G$, that $\alpha(x) = x$. Since $\alpha$ is fix-point-free, we conclude again that $G$ is the trivial group.
		Finally, suppose that $r(0) \cdot r(1) \neq 0$. Then $\operatorname{a}$ is clearly non-zero. Since $r(t)$ is irreducible, so is $s(t)$. Since $s(0) \cdot s(1) \cdot s(t)$ has no divisors of the form $h(t^{v+1})$ for any $v \in \N$, the polynomial is good. So proposition \label{PropositionCongNonZero} implies that $\operatorname{b} \neq 0$. Finally, since $s(t) \neq 0$, we see that $\operatorname{c} \neq 0$. Let $A$ be some $\operatorname{a}$-Hall subgroup $A$ of $G$. Lemma \ref{LemmaRegularPrelims} gives us some $a'$-Hall subgroup $H$ of $G$ that is $\alpha$-invariant. Then $G$ factors as $A \cdot H$. Now we consider the restriction of $\alpha$ to $H$, and apply corollary \ref{MainCorollaryWithClassification} to obtain the desired subgroups $B$, $C$, and $D$ of $H$.
\end{proof}

\section{Applications} \label{SectionExamples}

\subsection{Linear identities}

\begin{definition}[$n$-abelian groups and power endomorphisms]
	Let $n$ be a natural number. A group $G$ is said to be \emph{$n$-abelian} (or \emph{$n$-commutative}) if the $n$'th power map $\mapl{\gamma_n}{G}{G}{x}{x^n}$ is an endomorphism. An endomorphism $\map{\gamma}{G}{G}$ of the group $G$ is said to be a \emph{(universal) power endomorphism} if there is a natural number $n$ such that, for all $x \in G$, we have $\gamma(x) = x^n$.
\end{definition} 

These $n$-abelian groups have been studied by various authors, including Levi \cite{Levi}, Baer \cite{Baer}, Schenkman---Wade \cite{SchenkmanWade}, and J. Alperin \cite{AlperinPowerAutomorphism}. They were ultimately classified by Alperin.

\begin{theorem}[Alperin \cite{AlperinPowerAutomorphism}]
	Let $n > 1$ be a natural number. The finite $n$-abelian groups are the subdirect product of a finite abelian group, a finite group of exponent dividing $n$, and a finite group of exponent dividing $(n-1)$.
\end{theorem}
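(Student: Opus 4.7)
The easy direction is immediate. If $G$ embeds diagonally into $A \times B \times C$ with $A$ abelian, $\exp(B) \mid n$, and $\exp(C) \mid n-1$, then $(xy)^n = x^n y^n$ holds coordinate-wise: in $A$ by commutativity, in $B$ because both sides reduce to $1_B$, and in $C$ because the $n$-th power map coincides with the identity. So the plan reduces to the converse.

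The converse is handled by producing three normal subgroups $N_1, N_2, N_3 \trianglelefteq G$ with trivial intersection such that $G/N_1$ is abelian, $\exp(G/N_2) \mid n$, and $\exp(G/N_3) \mid n-1$. I would begin by extracting structural identities from $(xy)^n = x^n y^n$. Expanding $(xy)^n$ as $x \cdot (yx)^{n-1} \cdot y$ and cancelling gives $(yx)^{n-1} = x^{n-1} y^{n-1}$; comparing with $(yx)^n = y^n x^n$ then forces $[y^n, x^{n-1}] = 1_G$ for all $x, y \in G$. Hence $G^n := \gamma_n(G)$ and $G^{n-1} := \gamma_{n-1}(G)$ are both normal in $G$ (the former as the image of the endomorphism $\gamma_n$, the latter because $\gamma_{n-1}$ is an anti-endomorphism and powers are conjugation-stable) and they centralise one another. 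Moreover any element of $G/(G^n \cdot G^{n-1})$ has both $n$-th and $(n-1)$-st power trivial, hence itself equal to the identity by the Bezout identity $1 = n - (n-1)$; so $G = G^n \cdot G^{n-1}$, and consequently $G^n \cap G^{n-1} \subseteq C_G(G^n \cdot G^{n-1}) = Z(G)$.

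The natural choice is then $N_2 := G^n$, $N_3 := G^{n-1}$, and $N_1 := [G, G]$. The remaining task is to prove $[G, G] \cap G^n \cap G^{n-1} = \{1_G\}$. Since $G^n \cap G^{n-1} \subseteq Z(G)$, the question reduces to a torsion computation inside the finite abelian group $[G, G] \cap Z(G)$. My strategy would be to show that any $z \in [G, G] \cap G^n \cap G^{n-1}$ is simultaneously $n$-torsion and $(n-1)$-torsion: writing $z$ as a product of commutators and using the commutator identities $[x, y]^n = [x^n, y]$ (from $\gamma_n$ being a homomorphism) together with its counterpart derived from the anti-endomorphism $\gamma_{n-1}$, one exploits the centrality of $G^n \cap G^{n-1}$ to force $z^n = z^{n-1} = 1_G$, whence $z = 1_G$ by Bezout applied in the abelian group $[G, G] \cap Z(G)$.

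The main obstacle is this final torsion step, and specifically controlling the orders of central commutators. The delicate point is that while $[G^n, G^{n-1}] = 1_G$, one does not a priori have $G^n \subseteq Z(G)$ or $G^{n-1} \subseteq Z(G)$, so the torsion computation requires propagating the commutator identities through the lower central series of $G$. I would proceed by induction on $|G|$, noting that every quotient of $G$ by a characteristic subgroup is again $n$-abelian and inherits the same structural identities, and that the base case (where $G$ equals the product of its centre and a characteristically simple piece) is handled by direct inspection using the coprimality $\gcd(n, n-1) = 1$.
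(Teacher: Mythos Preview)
The paper does not contain a proof of this statement. Alperin's theorem is quoted as background in Section~7.1, with a citation to \cite{AlperinPowerAutomorphism}, and is used only to motivate the subsequent corollary on linear identities; no argument for it is given or sketched anywhere in the text. There is therefore nothing in the paper to compare your proposal against.

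That said, your outline follows the classical route. The identities you extract are correct: from $(xy)^n=x^ny^n$ one gets $(yx)^{n-1}=x^{n-1}y^{n-1}$, hence $[x^{n-1},y^n]=1_G$, so $G^n$ and $G^{n-1}$ are commuting normal subgroups with $G=G^nG^{n-1}$ and $G^n\cap G^{n-1}\subseteq Z(G)$. The choices $N_1=[G,G]$, $N_2=G^n$, $N_3=G^{n-1}$ are exactly the ones Alperin makes.

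The place where your sketch is genuinely incomplete is the claim that every $z\in [G,G]\cap G^n\cap G^{n-1}$ satisfies $z^n=z^{n-1}=1_G$. You invoke ``$[x,y]^n=[x^n,y]$'' but what $\gamma_n$ being an endomorphism actually gives is $[x,y]^n=[x^n,y^n]$, and the anti-endomorphism $\gamma_{n-1}$ gives a twisted version; neither yields the separate torsion conditions directly. Alperin's argument at this point instead shows that $[G,G]$ has exponent dividing $n(n-1)$ (via $[x,y]^{n(n-1)}=1_G$, obtained by iterating the commutator identities and using $[G^n,G^{n-1}]=1$), and then splits the finite abelian group $[G,G]\cap Z(G)$ into its $n$-part and $(n-1)$-part, checking that each lies in the appropriate $N_i$. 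Your induction-on-$|G|$ suggestion would work too, but the direct exponent bound on $[G,G]$ is cleaner and is what you should aim for to close the gap.
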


Now let $G$ be an $n$-abelian group, for some $n > 1$. Then $n^2-n \neq 0$ and $r_{n}(t) := -n + t$ is clearly a monic and monotone identity of the $n$'th power endomorphism $\gamma_n$, i.e.: $r_n(\gamma_n) = 1_G$. Moreover, this $\gamma_n$ is a fix-point-free automorphism if $G$ has no $(n^2 - n)$-torsion. More generally, we may consider endomorphisms with a linear identity \emph{that is not necessarily monic or monotone}. 

\begin{corollary}
	Let $G$ be a finite group with an endomorphism with a linear identity, say $-n + m \cdot t \in \Z[t]$. Suppose that $(m \cdot n \cdot (m-n)) \neq 0$.  Then either $G$ has $(m \cdot n \cdot (m-n))$-torsion, or $G$ is abelian.
\end{corollary}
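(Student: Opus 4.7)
The plan is to specialize the main corollary \ref{MainCorollaryWithClassification} to the degree-one polynomial $r(t) := -n + m \cdot t$. The hypothesis supplies an endomorphism $\map{\gamma}{G}{G}$ for which $r(t)$ is an identity; taking the natural decomposition $r(t) = (-n) + (m \cdot t)$ and using that $\gamma$ is a homomorphism, this amounts to the equation $\gamma(x)^m = x^n$ for every $x \in G$. First I would check that, under the torsion hypothesis, $\gamma$ is automatically a fixed-point-free automorphism of $G$. Indeed: if $\gamma(x) = 1_G$, then $x^n = 1_G$, and the lack of $n$-torsion forces $x = 1_G$, so $\gamma$ is injective and hence (since $G$ is finite) a group automorphism; if $\gamma(x) = x$, then $x^{m-n} = 1_G$, and the lack of $(m-n)$-torsion forces $x = 1_G$.

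Next I would verify that $r(t)$ is a good polynomial. Since $r(0) \cdot r(1) \cdot r(t) = (-n) \cdot (m - n) \cdot (-n + m \cdot t)$ has degree at most one, and every polynomial of the form $s(t^{u+1})$ with $u \geq 1$ and $s(t) \in \Z[t] \setminus \Z$ has degree at least $u + 1 \geq 2$, no such $s(t^{u+1})$ can divide $r(0) \cdot r(1) \cdot r(t)$. I would then compute the invariants directly from their definitions. For $\tcn{r(t)}$, only the index $u = 2$ is relevant since $\deg(r(t)) = 1$; writing $r_{2,0}(t) = -n$ and $r_{2,1}(t) = m \cdot t$, any constant in the ideal $(-n) \cdot \Z[t] + (m \cdot t) \cdot \Z[t]$ is a multiple of $n$ (evaluate at $t = 0$), and conversely $n$ itself lies in this ideal, so $\tcn{r(t)} = |n|$. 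Since $r(t)$ has the single simple root $\lambda_1 = n / m$, definition \ref{DefDiscProd} collapses to $\discr{r(t)} = m^3$ and $\prodant{r(t)} = m^2 \cdot r(\lambda_1^2) = m \cdot n \cdot (n - m)$. In particular, both $\tcn{r(t)}$ and $\discr{r(t)} \cdot \prodant{r(t)}$ divide a natural power of $m \cdot n \cdot (m - n)$.

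With these invariants in hand, I would invoke corollary \ref{MainCorollaryWithClassification} to obtain a decomposition $G \cong B \rtimes (C \times D)$, where $B$ is a $\tcn{r(t)}$-group, $C$ is a $(\discr{r(t)} \cdot \prodant{r(t)})$-group, and $D$ is nilpotent of class at most $d^{2^d} = 1^{2} = 1$, hence abelian. A non-trivial $k$-group always contains a non-trivial element of order a prime dividing $k$, which then witnesses $k$-torsion; combined with the divisibility observations from the previous paragraph, the no-$(m \cdot n \cdot (m - n))$-torsion hypothesis forces $B = C = \{ 1_G \}$, and therefore $G = D$ is abelian.

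The only mildly technical step will be the invariant bookkeeping in the second paragraph: the statement of the corollary is phrased in terms of the single product $m \cdot n \cdot (m - n)$, while the general structure theorem is phrased in terms of the formal invariants $\tcn{r(t)}$ and $\discr{r(t)} \cdot \prodant{r(t)}$. Once the explicit formulas identify these as divisors of a power of $m \cdot n \cdot (m - n)$, the rest is a straightforward specialization of the general structure theorem, with the trivial nilpotency bound $d^{2^d} = 1$ at $d = 1$ supplying the abelian conclusion for $D$.
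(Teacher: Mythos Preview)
Your proof is correct and follows essentially the same route as the paper: verify that $\gamma$ is a fix-point-free automorphism, check that the invariants $r(0),\, r(1),\, \tcn{r(t)},\, \discr{r(t)},\, \prodant{r(t)}$ all divide a power of $m \cdot n \cdot (m-n)$, and then invoke the main structure result with $d^{2^d}=1$. The only cosmetic difference is that the paper cites corollary~\ref{MainCorollaryNoClassification} (which avoids the classification of finite simple groups) rather than corollary~\ref{MainCorollaryWithClassification}; either suffices here.
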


\begin{proof}
	Set $r(t) := -n + m \cdot t$. It is not difficult to verify that $r(0) \cdot r(1) \cdot \tcn{r(t)} \cdot \discr{r(t)} \cdot \prodant{r(t)}$ divides a natural power of $m \cdot n \cdot (m - n)$. So the endomorphism is a fix-point-free automorphism of the group and we may apply corollary \ref{MainCorollaryNoClassification}.
\end{proof}

\subsection{Cyclotomic identities}

\begin{definition}[Splitting and cyclotomic automorphisms]
	Let $n > 1$ be a natural number. An automorphism $\map{\alpha}{G}{G}$ is said to be \emph{$n$-splitting} if the polynomial $\Psi_n(t) := 1 + t + \cdots + t^{n-1}$ is a monotone identity of $\alpha$, i.e.: $\Psi_n(\alpha) = 1_G$. We say that the automorphism is \emph{$n$-cyclotomic} if the $n$'th cyclotomic polynomial $\Phi_n(t)$ is a monotone identity of $\alpha$, i.e.: $\Phi_p(\alpha) = 1_G$. More generally, we say that $\alpha$ is \emph{splitting} (resp. \emph{cyclotomic}) if it is $m$-splitting (resp. $m$-cyclotomic) for some natural $m > 1$.
\end{definition}

It is well-known that $\Psi_n(t) = \Phi_n(t)$ if $n$ is a prime. So the automorphism $\alpha$ in theorem \ref{TheoremA} is an $|\alpha|$-splitting automorphism, while the automorphism $\alpha$ in theorems \ref{TheoremB} and \ref{TheoremC} is an $|\alpha|$-cyclotomic automorphism (cf. remark \ref{RmkFiniteOrderToCyclo}). We now recall three companions of theorems \ref{TheoremA}, \ref{TheoremB}, and \ref{TheoremC}.


\begin{theorem}[Ersoy \cite{ErsoySplittingAutomorphisms}] \label{TheoremD} Let $n \in \N$ be odd. If a finite group $G$ has an $n$-splitting automorphism, then $G$ is solvable. \end{theorem}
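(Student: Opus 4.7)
The plan is to argue by induction on $|G|$, reduce to the case where $G$ is non-abelian finite simple, and then invoke the classification of the finite simple groups (CFSG).

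\textbf{Step 1 (The order of $\alpha$).} First observe that $\alpha^n = \id_G$. Indeed, applying the identity $x \cdot \alpha(x) \cdots \alpha^{n-1}(x) = 1_G$ to $\alpha(x)$ in place of $x$ gives $\alpha(x) \cdot \alpha^2(x) \cdots \alpha^n(x) = 1_G$, and comparing the two relations yields $\alpha^n(x) = x$ for every $x \in G$. In particular $|\alpha|$ divides $n$ and hence is odd. As a bonus, every element of $\Fix(\alpha)$ has order dividing $n$, so $\Fix(\alpha)$ has odd exponent.

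\textbf{Step 2 (Reduction to a characteristically simple $G$).} Let $G$ be a minimal counterexample. If $1 \lneq N \lneq G$ were a proper $\alpha$-invariant normal subgroup, then the restriction $\alpha|_N$ would be $n$-splitting on $N$ and the induced automorphism on $G/N$ would be $n$-splitting on $G/N$; by minimality both $N$ and $G/N$ would be solvable, contradicting the choice of $G$. Hence $G$ has no proper non-trivial $\alpha$-invariant normal subgroup. Since the socle $\operatorname{Soc}(G)$ is characteristic, it equals $G$, so $G \cong S_1 \times \cdots \times S_r$ with the $S_i$ isomorphic to a fixed simple group $S$ and $\alpha$ permuting the factors transitively. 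If $S$ were abelian (thus $S \cong \Z_p$) then $G$ would be solvable, so $S$ is non-abelian simple.

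\textbf{Step 3 (Reduction to $r=1$).} Let $\sigma \in \operatorname{Sym}(r)$ denote the induced permutation of the factors; since $\alpha$ acts transitively on $\{S_1,\ldots,S_r\}$, $\sigma$ is an $r$-cycle, so $\beta := \alpha^r$ preserves each $S_i$ setwise. Writing an element $x \in S_1$ as a tuple with only the first coordinate non-trivial and exploiting that distinct factors $S_i$ commute elementwise, the identity $\Psi_n(\alpha)(x) = 1_G$ factors coordinate-wise into $\alpha^k\!\bigl(\Psi_{n/r}(\beta|_{S_1})(x)\bigr) = 1_{S_{\sigma^k(1)}}$ for each $k$. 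For $k = 0$ this means $\beta|_{S_1}$ is an $(n/r)$-splitting automorphism of $S_1$. Since $r \mid n$ with $n$ odd, $n/r$ is again odd; if $r > 1$, then $|S_1| < |G|$ and induction on $|G|$ forces $S_1$ to be solvable, contradicting $S$ non-abelian simple. Hence $r=1$ and $G = S$ is itself non-abelian finite simple.

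\textbf{Step 4 (CFSG elimination).} It remains to show that no non-abelian finite simple group $S$ admits an $n$-splitting automorphism of odd order for an odd $n$. This is the main obstacle. One invokes the CFSG and analyses the three families (alternating groups, groups of Lie type, sporadic groups) together with the explicit structure of $\operatorname{Out}(S)$. The odd-order assumption on $\alpha$ together with the $n$-splitting identity forces tight constraints on how $\alpha$ can interact with involutions and with $p$-local subgroups of $S$; by combining these with Glauberman-type $Z^*$-results and the Feit--Thompson odd order theorem (so that $|\Fix(\alpha)|$ being of odd exponent is informative), one shows that in each family of simple groups no odd-order automorphism can satisfy the splitting identity. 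This case analysis, parallel in spirit to the CFSG-dependent step in the proof of Rowley's Theorem~\ref{TheoremA}, is the most delicate part of the argument and is the reason the theorem as stated relies on the classification.
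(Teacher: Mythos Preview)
The paper does not prove Theorem~\ref{TheoremD}; it is quoted from Ersoy~\cite{ErsoySplittingAutomorphisms} and only accompanied by the remark that it ``is based on the classification of the finite simple groups.'' So there is no proof in the paper to compare against, and your proposal must be judged on its own.

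Your Steps~1--3 are correct and constitute the standard reduction. The deduction $\alpha^n=\id_G$ is right, the minimal-counterexample argument correctly forces $G$ to equal its socle with $\alpha$ permuting the simple factors transitively, and the coordinate computation showing that $\alpha^{r}|_{S_1}$ is $(n/r)$-splitting is clean (here one uses $r\mid|\alpha|\mid n$, hence $n/r$ is odd). One small gap in Step~2: you should explicitly rule out the mixed case (some factors abelian, some not) by observing that the product of the abelian factors is a proper characteristic subgroup; this is routine.

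The genuine gap is Step~4. Everything in Steps~1--3 is soft; the entire content of Ersoy's theorem lies in the assertion that no non-abelian finite simple group $S$ admits an odd-order automorphism $\alpha$ with $\Psi_n(\alpha)=1_S$ for $n$ odd. You do not prove this --- you only gesture at CFSG, the $Z^\ast$-theorem, and Feit--Thompson. Noting that $\Fix(\alpha)$ has odd exponent (hence is solvable by Feit--Thompson) is a reasonable observation, but it does not by itself eliminate anything: many simple groups have odd-order automorphisms whose fixed-point subgroup is solvable. The actual elimination in Ersoy's paper is a substantial case analysis through the families of simple groups, and none of it appears here. As written, Step~4 is a statement of what remains to be done rather than a proof.
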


Theorem \ref{TheoremD} is a (partial) generalization of theorem \ref{TheoremA} and it based on the classification of the finite simple groups. Such automorphisms naturally appear in the study of automorphisms with finite order and with finite Reidemeister-number \cite{JabaraReidemeister,BettioJabaraWehrfritz}. 

\begin{theorem}[Hughes---Thompson \cite{HughesThompson}; Kegel \cite{KegelSplit}] \label{TheoremE} Let $p$ be a prime number. If a finite group $G$ has a $p$-cyclotomic automorphism, then $G$ is nilpotent. \end{theorem}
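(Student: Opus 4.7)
The approach is to deduce the theorem from Theorem \ref{MainTheoremA} applied to $r(t) = \Phi_p(t)$, combined with a reduction to the fix-point-free case.

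First, I would verify that $\Phi_p(t)$ is a good polynomial. Since the prime $p$ is square-free, Proposition \ref{PropCongCycloComputed} gives $\tcn{\Phi_p(t)} = 1$, and Proposition \ref{ExampleCycloDiscrProd} shows that $\discr{\Phi_p(t)} \cdot \prodant{\Phi_p(t)}$ divides a natural power of $p$ and is therefore non-zero. Combined with $\Phi_p(1) = p \neq 0$, this confirms goodness via Proposition \ref{PropositionGoodPolynomialsVSInvariants}.

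Second, I would extract the consequence $\alpha^p = \operatorname{id}_G$. By Proposition \ref{PropIdentities} the identities of $\alpha$ form an ideal of $\Z[t]$, so $(t-1)\cdot \Phi_p(t) = t^p - 1$ is also an identity of $\alpha$, giving $\alpha^p = \operatorname{id}_G$. In particular, any $x \in \operatorname{Fix}(\alpha)$ satisfies $\Phi_p(\alpha)(x) = x^p = 1_G$, so $\operatorname{Fix}(\alpha)$ has exponent dividing $p$. Consequently, if $G$ has no $p$-torsion, then $\alpha$ is fix-point-free, and Theorem \ref{MainTheoremA} applies directly: since $\tcn{\Phi_p(t)} = 1$ and no non-trivial element of $G$ can satisfy $x^1 = 1_G$, the group $G$ is nilpotent.

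Third, to handle the general case where $\operatorname{Fix}(\alpha)$ may be a non-trivial $p$-subgroup, I would reduce to the fix-point-free case via a Sylow argument. For every prime $q \neq p$ dividing $|G|$, Lemma \ref{LemmaRegularPrelims} supplies an $\alpha$-invariant Sylow $q$-subgroup $Q$; since $Q$ has no $p$-torsion the restriction $\alpha|_Q$ is fix-point-free, so the preceding paragraph yields nilpotency of $Q$ (which is of course automatic as $Q$ is already a $q$-group). For the prime $p$ itself, Lemma \ref{LemmaRegularPrelims} yields an $\alpha$-invariant Sylow $p$-subgroup, which is tautologically nilpotent.

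The main obstacle is the passage from local nilpotency (of each $\alpha$-invariant Sylow subgroup) to nilpotency of $G$ itself. Here my plan is to mimic the minimal-counterexample analysis of Section \ref{SubsecMainThA}: assume that $G$ is a minimal counterexample, use that every proper $\alpha$-invariant section of $G$ is nilpotent by minimality, and then replay the claim-by-claim eigen-decomposition argument of the proof of Theorem \ref{CorollaryDecompositionKN} (with $r(t) = \Phi_p(t)$, so that $\tcn{\Phi_p(t)} = 1$ trivializes the $\tcn{r(t)}$-torsion hypothesis). This should force the Fitting subgroup $F$ of $G$ to be elementary-abelian, and then the character-space argument in that proof produces a contradiction. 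Finally, Thompson's normal-complement theorem together with theorem \ref{MainCorollaryWithClassification}, invoked as at the end of Section \ref{SubsecMainThA}, completes the reduction.
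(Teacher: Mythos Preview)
The paper does \emph{not} prove Theorem~\ref{TheoremE}; it is quoted as a classical result of Hughes--Thompson and Kegel and then \emph{used} as an input (see Proposition~\ref{PropFiniteCycloNilpotent}, case $(i.)$, where the prime case is handled by invoking Theorem~\ref{TheoremE} directly). So there is no ``paper's own proof'' to compare against; the relevant question is whether your derivation from the paper's machinery is valid.

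It is not, and the gap is exactly the one you flag but do not close. All of the structural results you invoke --- Theorem~\ref{MainTheoremA}, Theorem~\ref{CorollaryDecompositionKN}, Theorem~\ref{MainCorollaryWithClassification}, and Lemma~\ref{LemmaRegularPrelims} --- are stated and proved under the hypothesis that $\alpha$ is \emph{fix-point-free}. A $p$-cyclotomic automorphism need not be: as you observe, $\operatorname{Fix}(\alpha)$ is only known to have exponent dividing $p$, and may well be non-trivial. Concretely:
\begin{itemize}
\item Your appeal to Lemma~\ref{LemmaRegularPrelims} to obtain $\alpha$-invariant Sylow subgroups is illegitimate when $\alpha$ has fixed points; that lemma is stated only for fix-point-free automorphisms.
\item In the minimal-counterexample argument of Theorem~\ref{CorollaryDecompositionKN}, Claim~7 uses Lemma~\ref{LemmaRegularPrelims}(iv) to find an $\alpha$-invariant complement, and Claim~8 uses fix-point-freeness on $P$ to conclude that $\theta(\alpha)$ does not centralise $\theta(P)$. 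Both steps fail without the hypothesis.
\item Theorem~\ref{MainCorollaryWithClassification} opens by invoking Rowley's theorem, again a fix-point-free statement.
\end{itemize}
This is not a cosmetic gap. The passage from ``$\alpha$ is $p$-cyclotomic'' to ``$G$ is nilpotent'' when $G$ has $p$-torsion is precisely the hard content supplied by Hughes--Thompson (via Hall--Higman) and Kegel; the paper treats that content as a black box rather than reproducing it.
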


Theorem \ref{TheoremE} clearly generalizes theorem \ref{TheoremB}. The solvability of $G$ was proven by Hughes---Thompson using the fundamental results of Hall and Higman about minimal polynomials of operators on finite-dimensional vector spaces \cite{HallHigmanReduction}. The nilpotency of $G$ is due to Kegel. These $p$-split automorphisms naturally appear in the study of almost-fix-point-free automorphisms of prime order by Bettio, Endimioni, Jabara, Wehrfritz, Zappa, and others  \cite{EndimioniPolycyclicRegularAutoPrime,BettioJabaraWehrfritz}.

\begin{theorem}[E. Khukhro \cite{KhukhroSplitPrimeFiniteGroup}] For each natural number $d$ and each prime $p$, there exists a natural number $\operatorname{C}(d,p)$ with the following property. \label{TheoremF} If a finite $p$-group $G$ on $d$ generators has a $p$-cyclotomic automorphism, then $\operatorname{c}(G) \leq \operatorname{C}(d,p)$. \end{theorem}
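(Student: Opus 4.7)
The natural strategy, given the tools developed above, is to pass to the associated graded Lie ring and attempt to exploit the $p$-cyclotomic identity. One must anticipate that the main theorems of this paper cannot apply directly, so deeper machinery will be required.

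First I would consider the graded Lie ring $L(G) := \bigoplus_{i \geq 1} \Gamma_i(G)/\Gamma_{i+1}(G)$ associated with the lower central series of $G$. Since $G$ is a finite $p$-group, every lower central factor is an elementary-abelian $p$-group, so $L(G)$ carries the natural structure of a nilpotent Lie algebra over $\mathbb{F}_p$ whose nilpotency class coincides with that of $G$. Moreover, since $G$ is generated by $d$ elements, $L(G)$ is generated (as a Lie algebra over $\mathbb{F}_p$) by at most $d$ elements, namely the images in $\Gamma_1/\Gamma_2$ of the generators of $G$. The automorphism $\alpha$ preserves the lower central series and hence induces an automorphism $\overline{\alpha}$ of $L(G)$ satisfying $\Phi_p(\overline{\alpha}) = 0_{L(G)}$.

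Next I would exploit the Fermat-type factorization $\Phi_p(t) \equiv (t-1)^{p-1} \pmod{p}$. Since $L(G)$ is an $\mathbb{F}_p$-algebra, the identity $\Phi_p(\overline{\alpha}) = 0$ reduces to $(\overline{\alpha} - \id)^{p-1} = 0$; in other words, $\overline{\alpha}$ is unipotent of index at most $p-1$, and this is exactly the sort of hypothesis one hopes to exploit by Lie-theoretic methods.

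The main obstacle is that Theorem \ref{MainTheoremB} cannot be invoked at this stage: by Proposition \ref{ExampleCycloDiscrProd}, $\discr{\Phi_p(t)} \cdot \prodant{\Phi_p(t)}$ is a power of $p$, and $L(G)$ is $p$-torsion by construction, so the no-bad-torsion hypothesis fails outright. One must therefore turn to the deeper Lie-theoretic machinery introduced by Zelmanov in his solution of the restricted Burnside problem. Concretely, the unipotent condition $(\overline{\alpha} - \id)^{p-1} = 0$, combined with the finite generation of $L(G)$, forces the fixed-point algebra of $\overline{\alpha}$ to be trivial modulo a bounded ideal, and an elaborate linearization argument shows that $L(G)$ satisfies an Engel-type polynomial identity. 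Zelmanov's theorems then produce an explicit upper bound on $\mathrm{c}(L(G))$ depending only on $d$ and $p$. Transferring this bound back to $G$ via the equality $\mathrm{c}(G) = \mathrm{c}(L(G))$ yields the desired constant $\mathrm{C}(d,p)$.
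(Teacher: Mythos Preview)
The paper does not prove this theorem at all: it is quoted as a result of Khukhro \cite{KhukhroSplitPrimeFiniteGroup} and then used as a black box in Proposition~\ref{PropFiniteCycloNilpotent}(iii). So there is nothing to compare your argument to within the paper itself, and I can only comment on whether your sketch is a viable route to the theorem.

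It is not, and the gap is concrete. Your reduction to the associated graded Lie ring $L(G)$ over $\F_p$ with the induced automorphism $\overline{\alpha}$ satisfying $(\overline{\alpha}-\id)^{p-1}=0$ is correct, but this condition is far too weak to bound the class. Take $\alpha=\id_G$: then $(\overline{\alpha}-\id)^{p-1}=0$ holds trivially on \emph{any} Lie algebra over $\F_p$, and a $d$-generated Lie algebra over $\F_p$ can have arbitrarily large nilpotency class. So nothing you have written after ``unipotent of index at most $p-1$'' can be made to work from that hypothesis alone; there is no Engel-type identity to extract, and invoking Zel$'$manov's machinery is a non sequitur. The point is that the group identity $x\cdot\alpha(x)\cdots\alpha^{p-1}(x)=1_G$ carries strictly more information than the linear identity $\Phi_p(\overline{\alpha})=0$ on the associated graded object: in the special case $\alpha=\id$ the former says $G$ has exponent $p$ (the restricted Burnside problem), while the latter says nothing.

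Khukhro's actual argument expands the group word via commutator collection before passing to the Lie ring; this produces a system of multilinear Lie identities in $L(G)$ (involving $\overline{\alpha}$) that genuinely generalize the Engel-type identities one obtains for exponent $p$. The class bound then comes from Kostrikin's methods for the restricted Burnside problem in prime exponent, not from Zel$'$manov's later work. If you want to reconstruct a proof, the step you are missing is precisely this careful translation of the full non-linear group identity into multilinear Lie identities, rather than merely its abelian shadow $\Phi_p(\overline{\alpha})=0$.
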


In combination with theorem \ref{TheoremC}, this gives us a bound on the class of all finite, nilpotent groups with a $p$-split automorphism. One could hope to obtain an upper bound on $\operatorname{c}(G)$ that depends only on $p$ (as in theorem \ref{TheoremC}), but examples show that this is not possible. The theorem applies, in particular, to finite $p$-groups with a partition. Moreover, Kostrikin's positive solution \cite{KostrikinBurnsideProblem} of the restricted Burnside problem in exponent $p$ corresponds with the automorphism $x \mapsto x$. \\

Examples show that theorem \ref{TheoremD} cannot be extended to all natural numbers $n$. But we can generalize theorem \ref{TheoremE} of Hughes---Thompson and Kegel, theorem \ref{TheoremC} of Higman and Kreknin---Kostrikin, as well as theorem \ref{TheoremF} of Khukhro:.

\begin{theorem} \label{TheoremTHHUTHKEG}
	Compact groups with a cyclotomic automorphism are locally-nilpotent.
\end{theorem}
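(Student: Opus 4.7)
The plan is to deduce this from Corollary \ref{CorollIrreducibleConstants} (when the radical of $n$ has at least two distinct prime factors) and from the Hughes--Thompson--Kegel theorem \ref{TheoremE} (in the prime-radical case), combined with a projective-limit argument to pass from finite to compact groups. Throughout I write $\Phi_n(\alpha) = 1_G$ for the given monotone cyclotomic identity, let $m$ be the radical of $n$, and set $k := n/m$. By formula \eqref{FormulaRadical} of Lemma \ref{LemFormulaCyclo}, $\Phi_n(t) = \Phi_m(t^k)$, so $\beta := \alpha^k$ satisfies $\Phi_m(\beta) = 1_G$ as a monotone identity, with $m$ square-free.

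First I would handle the \emph{finite} case: every finite group $G$ with such a $\beta$ is nilpotent of class bounded by a constant $c_0$ depending only on $m$. If $m$ is prime, this follows from Theorem \ref{TheoremE} combined with Theorem \ref{MainTheoremB} for the class bound. If $m$ has at least two distinct prime factors, then $\Phi_m(1) = 1$, so any fix-point $x$ of $\beta$ satisfies $x = x^{\Phi_m(1)} = 1_G$; hence $\beta$ is fix-point-free. The polynomial $\Phi_m(t)$ is good (Proposition \ref{PropCongCycloComputed}) and irreducible over $\Q$, so Corollary \ref{CorollIrreducibleConstants} applies. Its invariants satisfy $\operatorname{a} = \Phi_m(1) = 1$, $\operatorname{b} = \tcn{\Phi_m(t)} = 1$, and $\operatorname{c}$ divides a power of $m$ (Proposition \ref{ExampleCycloDiscrProd}), yielding $G \cong C \times D$ with $C$ a nilpotent $m$-group and $D$ nilpotent of class $\leq \phi(m)^{2^{\phi(m)}}$; in particular, $G$ is nilpotent of class $\leq c_0$.

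Next I would pass to the compact case by localising. Given a finitely generated subgroup $H \leq G$, the monicity of $\Phi_n$ lets one rewrite $\alpha^d(h)$ as a word in $h, \alpha(h), \ldots, \alpha^{d-1}(h)$ and their inverses; hence $\tilde H := \langle \alpha^i(H) : 0 \leq i < d \rangle$ is finitely generated and $\alpha$-invariant. Its closure $K := \overline{\tilde H}$ is a compact $\alpha$-invariant subgroup of $G$ which admits arbitrarily small closed $\alpha$-invariant normal subgroups $N$ with $K/N$ a compact Lie group. For each such $N$, the real Lie algebra of $K/N$ inherits an endomorphism satisfying $\Phi_n$ and, being torsion-free, is nilpotent of class $\leq \phi(n)^{2^{\phi(n)}}$ by Theorem \ref{IntroTheoremLie}; since compact nilpotent real Lie algebras are abelian, $(K/N)^0$ is a torus. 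The component group $(K/N)/(K/N)^0$ is finite and inherits the cyclotomic identity, hence is nilpotent of class $\leq c_0$ by the finite case. Combining these, each $K/N$ is nilpotent of class bounded by some $c$ depending only on $n$; intersecting over $N$ gives $\Gamma_{c+1}(\tilde H) = \{1\}$, so $H$ is nilpotent of class $\leq c$.

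The main obstacle is the last assertion that each compact Lie quotient $K/N$ is \emph{itself} nilpotent of uniformly bounded class: while both the torus $(K/N)^0$ and the finite component group are individually nilpotent, the extension between them is not automatically so, and one must exploit the monotone cyclotomic identity on all of $K/N$ (not merely on its two pieces separately) to force the conjugation action of the component group on the torus to be trivial. This step is closely modelled on the Hughes--Thompson--Kegel argument behind Theorem \ref{TheoremE} and on the Lie-theoretic rigidity established in Theorem \ref{IntroTheoremLie}.
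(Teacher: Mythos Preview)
Your proposal takes a genuinely different route from the paper and contains two gaps, one of which you yourself flag.

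\textbf{First gap (unflagged).} Your claim that in the finite case the nilpotency class is bounded by a constant $c_0$ depending \emph{only} on $m$ is false when $m$ is a single prime $p$. Consider finite $p$-groups of exponent $p$ with the identity automorphism $\beta=\id$; then $\Phi_p(\beta)=1_G$ holds, but by the restricted Burnside problem (and already by Khukhro's Theorem~\ref{TheoremF}) the class can only be bounded in terms of $p$ \emph{and the number of generators}. The paper's Proposition~\ref{PropFiniteCycloNilpotent} is careful about this: its bound is $\max\{(p-1)^{2^{(p-1)}},\operatorname{C}(k,p)\}$, with $k$ the number of generators. In your later use, the component groups $(K/N)/(K/N)^0$ do have a uniform generator bound (they are quotients of the finitely generated $\widetilde H$), but you never invoke this, and your stated $c_0$ is simply wrong as written.

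\textbf{Second gap (flagged).} You correctly identify that having $(K/N)^0$ a torus and $(K/N)/(K/N)^0$ finite nilpotent does not by itself force $K/N$ to be nilpotent: a nontrivial conjugation action of the component group on the torus would obstruct this. Resolving this would essentially require rerunning the argument behind Theorem~\ref{CorollaryDecompositionKN} in a non-finite setting, and you give no indication of how to carry this out.

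\textbf{How the paper avoids both problems.} The paper's proof is much shorter: by Peter--Weyl and Mal$'$cev, a compact (Hausdorff) group is locally-(residually-finite). So for a finitely generated $H\leq G$ one forms $\widetilde H$ exactly as you do (Lemma~\ref{LemLocal}), observes that $\widetilde H$ is residually finite with the cyclotomic identity passing to each finite characteristic quotient, and then applies the finite case (Proposition~\ref{PropFiniteCycloNilpotent}) to those quotients. The resulting class bound depends on $n$ and on $\operatorname{d}(\widetilde H)\leq(2\varphi(n)-1)\operatorname{d}(H)$, which is uniform over the tower of finite quotients; intersecting gives nilpotency of $\widetilde H$ and hence of $H$. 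No Lie-group structure theory, no tori, and no extension problem arise.
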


\begin{theorem} \label{TheoremHKKK}
	Consider a locally-nilpotent group $G$ with an $n$-cyclotomic automorphism. Let $p$ be the largest prime divisor of $n$. Then every $k$-generated subgroup $N$ of $G$ is nilpotent of class $\operatorname{c}(N) \leq \max \lbrace (p-1)^{2^{(p-1)}} , \operatorname{C}((2p-3) \cdot k , p) \rbrace.$
\end{theorem}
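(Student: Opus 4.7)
The plan is to reduce the $n$-cyclotomic automorphism $\alpha$ to a $p$-cyclotomic one, build a finitely generated invariant subgroup $\hat N \supseteq N$, and then bound the nilpotency class of $\hat N$ via the associated graded Lie ring, combining theorem~\ref{IntroTheoremLie} on its $p$-torsion-free part with theorem~\ref{TheoremF} on its $p$-primary part.

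The identities of $\alpha$ form an ideal of $\Z[t]$ (proposition~\ref{PropIdentities}) containing $\Phi_n(t)$ and hence also containing $t^n-1 = \prod_{d \mid n}\Phi_d(t)$, so $\alpha^n = \operatorname{id}_G$ and $\beta := \alpha^{n/p}$ satisfies $\beta^p = \operatorname{id}_G$. Since each primitive $n$-th root of unity $\eta$ has $\eta^{n/p}$ of order exactly $p$, the divisibility $\Phi_n(t) \mid \Phi_p(t^{n/p})$ holds in $\Z[t]$, so $\Phi_p(t^{n/p})$ also lies in the identity ideal of $\alpha$. Rewritten in $\beta$, this gives $\Phi_p(\beta) = 1_G$ as a monotone identity of $\beta$, so $\beta$ is a $p$-cyclotomic automorphism of $G$. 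Writing $N = \langle x_1,\ldots,x_k \rangle$, I set $\hat N := \langle \beta^i(x_j) : 0 \leq i \leq p-2,\ 1 \leq j \leq k \rangle$. The monotone identity forces $\beta^{p-1}(x) \in \langle x,\beta(x),\ldots,\beta^{p-2}(x)\rangle$ for every $x$, which combined with $\beta^p = \operatorname{id}_G$ makes $\hat N$ a $\beta$-invariant subgroup containing $N$ generated by at most $(p-1)k$ elements. Since $G$ is locally nilpotent, $\hat N$ is nilpotent and $\beta|_{\hat N}$ is $p$-cyclotomic.

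To bound $c(\hat N)$, I pass to $L := L(\hat N) = \bigoplus_{i \geq 1}\Gamma_i(\hat N)/\Gamma_{i+1}(\hat N)$, the associated graded Lie ring, which shares the nilpotency class of $\hat N$ and inherits an automorphism $\bar\beta$ with $\Phi_p(\bar\beta) = 0_L$. Let $T$ be the $p$-primary torsion ideal of $L$ (an ideal, since multiplication by $p^k$ commutes with the bracket). The quotient $L/T$ has no $p$-torsion; since $\Phi_p(t)$ is a good polynomial and $\discr{\Phi_p(t)} \cdot \prodant{\Phi_p(t)}$ is a power of $p$ (proposition~\ref{ExampleCycloDiscrProd}), theorem~\ref{IntroTheoremLie} yields $c(L/T) \leq (p-1)^{2^{(p-1)}}$. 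On the other hand $T$ is a $p$-primary Lie ring carrying a $p$-cyclotomic automorphism; lifting $T$ via the Mal$'$cev correspondence produces a finite $p$-group of the same nilpotency class on at most $(2p-3)k$ generators with an induced $p$-cyclotomic automorphism, so theorem~\ref{TheoremF} gives $c(T) \leq \operatorname{C}((2p-3)k,p)$. Since $c(N) \leq c(\hat N) = c(L)$, the desired bound follows.

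The main obstacle is to deduce the stated max-type bound on $c(L)$ from the separate bounds on $c(T)$ and $c(L/T)$: the short exact sequence $0 \to T \to L \to L/T \to 0$ of nilpotent Lie rings only yields $c(L) \leq c(T) + c(L/T)$ in general. Overcoming this requires either producing a $\bar\beta$-invariant Lie-subring complement $L'$ to $T$ in $L$ that centralises $T$, so that $L \cong T \oplus L'$ splits as a direct product and the max replaces the sum, or decomposing $L$ via the eigenspace decomposition of $\bar\beta$ obtained by tensoring with $\Z[\zeta_p]$ in the style of theorem~\ref{TheoremEmbedding} and then applying a unified Lie-ring analogue of theorem~\ref{TheoremF}. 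The increase from $(p-1)k$ to $(2p-3)k$ generators in the statement reflects the slack introduced by this lifting step.
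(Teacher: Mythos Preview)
Your reduction from an $n$-cyclotomic to a $p$-cyclotomic automorphism and the construction of a $\beta$-invariant, finitely generated $\hat N \supseteq N$ are both fine and close to what the paper does. The genuine gap is everything after that.

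First, the sentence ``lifting $T$ via the Mal$'$cev correspondence produces a finite $p$-group of the same nilpotency class'' is not valid. The Mal$'$cev correspondence runs between torsion-free divisible nilpotent groups and nilpotent Lie algebras over $\Q$; it does not attach a finite $p$-group to an arbitrary $p$-primary Lie ring. There is a Lazard-type correspondence for $p$-groups, but it requires the class to be $<p$, which you cannot assume here. So you have no way to feed $T$ into Khukhro's theorem~\ref{TheoremF}, which is a statement about \emph{groups}, not Lie rings.

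Second, you correctly diagnose that the short exact sequence $0\to T\to L\to L/T\to 0$ only yields $c(L)\le c(T)+c(L/T)$, and neither of the fixes you sketch (a complementary centralising subring, or an eigenspace decomposition over $\Z[\zeta_p]$) is actually carried out. This is precisely the point where the proof, as written, stops being a proof.

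The paper avoids both problems by not staying at the level of the single Lie ring $L(\hat N)$. Instead it uses that a finitely generated nilpotent group is residually finite (Mal$'$cev), so $\hat N$ is approximated by finite quotients, each carrying an induced cyclotomic automorphism (lemma~\ref{LemLocal}). A finite nilpotent group is the \emph{direct product} of its Sylow subgroups, so its class is the maximum of the classes of the Sylows. For the $p$-Sylow one applies Khukhro's theorem~\ref{TheoremF} to the group itself (no Lie-ring lifting needed); for every other Sylow the group has no $p$-torsion, so theorem~\ref{IntroTheoremLie} with $r(t)=\Phi_p(t)$ gives the bound $(p-1)^{2^{p-1}}$. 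The $\max$ then comes for free from the direct-product decomposition at the level of finite groups, and the residual-finiteness step pulls the bound back to $\hat N$. This also explains the generator count $(2p-3)k$ without hand-waving: it is simply the $(2\deg\Phi_p-1)\cdot k$ of lemma~\ref{LemLocal}.
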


Here, we always assume that compact groups are Hausdorff. These theorems allow us to to recover (and extend) two theorems of Jabara about automorphisms with a finite Reidemeister-number.

\begin{corollary}[Jabara \cite{JabaraReidemeister}]
	\label{CorJabaraIntro} Consider a residually-finite group $G$ admitting an automorphism $\map{\alpha}{G}{G}$ of prime order $p$. If the Reidemeister-number of $\alpha$ is finite, then $G$ has an $\alpha$-invariant subgroup $N$ of finite index that is nilpotent of class $\operatorname{c}(N) \leq {(p-1)}^{2^{(p-1)}}. $ \end{corollary}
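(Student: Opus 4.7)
My plan is to reduce Corollary~\ref{CorJabaraIntro} to the main applications proven earlier in the paper---in particular Theorem~\ref{TheoremTHHUTHKEG} on compact groups with a cyclotomic automorphism, together with the classical bound of Theorem~\ref{TheoremC}---by passing to the profinite completion and extracting from the Reidemeister-number hypothesis a fix-point-free action on an open subgroup.

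First I would extend $\alpha$ to a continuous automorphism $\hat{\alpha}$ of the profinite completion $\hat{G}$; it remains of order $p$, and the standard fact that the finite $\alpha$-invariant quotients of $G$ coincide with those of $\hat G$ yields $R(\hat\alpha) = R(\alpha) < \infty$. The main step is then to show that the closed subgroup $F := \operatorname{Fix}(\hat\alpha)$ is finite. This proceeds through the finite $\hat\alpha$-invariant quotients $Q$: using the classical identification of $R(\bar\alpha_Q)$ with the number of $\bar\alpha_Q$-invariant conjugacy classes of $Q$, combined with the observation that every conjugacy class of $Q$ meeting $\operatorname{Fix}(\bar\alpha_Q)$ is automatically $\bar\alpha_Q$-invariant, a counting argument that crucially uses the prime-order hypothesis on $\bar\alpha_Q$ produces a uniform bound $|\operatorname{Fix}(\bar\alpha_Q)| \leq \psi(R(\alpha),p)$ independent of $Q$. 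Passing to the inverse limit then yields $|F| \leq \psi(R(\alpha),p) < \infty$.

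Once $F$ is finite, residual finiteness of the profinite group $\hat G$ supplies an open normal subgroup $V \trianglelefteq \hat G$ disjoint from $F \setminus \{1\}$; intersecting its $\hat\alpha$-conjugates $V, \hat\alpha(V), \ldots, \hat\alpha^{p-1}(V)$ produces an open $\hat\alpha$-invariant subgroup $U$ with $\operatorname{Fix}(\hat\alpha|_U) = F \cap U = \{1\}$. After passing to a still smaller open $\hat\alpha$-invariant subgroup if necessary, one arranges that $\bar\alpha$ is fix-point-free on every quotient of $U$ in a cofinal family of $\hat\alpha$-invariant open subgroups. Thompson's theorem then provides nilpotency of each such finite quotient, and Remark~\ref{RmkFiniteOrderToCyclo} gives $\Phi_p(\bar\alpha) = 1_{Q}$ on each, whence $\Phi_p(\hat\alpha|_U) = 1_U$ in the limit.

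With the $p$-cyclotomic identity in hand, Theorem~\ref{TheoremTHHUTHKEG} forces $U$ to be locally nilpotent. Every finitely generated $\hat\alpha|_U$-invariant subgroup $W \leq U$ is then a finitely generated nilpotent group admitting the fix-point-free automorphism $\hat\alpha|_W$ of prime order $p$, so Theorem~\ref{TheoremC} yields $\operatorname{c}(W) \leq (p-1)^{2^{(p-1)}}$; this bound is uniform in $W$, so $\operatorname{c}(U) \leq (p-1)^{2^{(p-1)}}$. Finally $N := U \cap G$ is an $\alpha$-invariant subgroup of $G$ of finite index with $\operatorname{c}(N) \leq \operatorname{c}(U) \leq (p-1)^{2^{(p-1)}}$, as required. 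The main obstacle throughout is the finiteness of $F$ in the first step: this is where the global hypothesis $R(\alpha) < \infty$ must be converted into uniform quantitative control of fixed points in finite quotients, and where the prime-order hypothesis on $\alpha$ is essential.
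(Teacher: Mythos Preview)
Your overall strategy is close in spirit to the paper's: both extract from $R(\alpha)<\infty$ a uniform bound on fixed points in finite quotients (your step~2 is essentially Jabara's Lemmas~1 and~4, which the paper also cites), then aim to establish the $p$-cyclotomic identity on a finite-index subgroup and invoke the paper's machinery on cyclotomic automorphisms. However, your step~4 contains a genuine gap. You assert that after shrinking $U$ one can arrange $\bar\alpha$ to be fix-point-free on a cofinal family of finite quotients of $U$; this is false in general. Take $U=\Z_2$ (additively) with $\hat\alpha(x)=-x$: this is a fix-point-free automorphism of order~$2$, yet on every quotient $\Z/2^n\Z$ the induced map fixes $\{0,2^{n-1}\}$, and passing to any open subgroup $2^k\Z_2\cong\Z_2$ does not help. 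So the chain ``fpf on $U$ $\Rightarrow$ fpf on cofinal quotients $\Rightarrow$ Thompson $\Rightarrow$ Remark~\ref{RmkFiniteOrderToCyclo} $\Rightarrow$ $\Phi_p(\hat\alpha|_U)=1_U$'' does not go through as written.

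The paper sidesteps this obstacle by obtaining the $p$-cyclotomic identity \emph{directly}: it cites Jabara's Lemma~5 to produce a finite-index $\langle\alpha\rangle$-invariant subgroup $M\leq G$ with $\Phi_p(\alpha_M)=1_M$, without first arranging fix-point-freeness on quotients. From that point the paper applies Proposition~\ref{ThApplCycloGeneral} (much as you apply Theorem~\ref{TheoremTHHUTHKEG}) to get local nilpotency, but it must still handle possible $p$-torsion in $M$: it shows that the $p$-Sylow $P$ of the torsion subgroup is finite (using that a locally nilpotent $p$-group with a fix-point-free $p$-cyclotomic automorphism is trivial), then passes to a finite-index subgroup with no $p$-torsion, where Theorem~\ref{MainTheoremB} yields the class bound $(p-1)^{2^{(p-1)}}$. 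If you either supply an independent argument for $\Phi_p(\hat\alpha|_U)=1_U$ or simply invoke Jabara's lemma at this point, your profinite-completion approach would become essentially equivalent to the paper's.
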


\begin{corollary}[Jabara \cite{JabaraReidemeister}]
	\label{CorJabaraBIntro}
	Consider a finitely-generated, solvable group $G$ with an automorphism $\map{\alpha}{G}{G}$ of prime order $p$. If the Reidemeister-number $n$ of $\alpha$ is finite, then $G$ has a finite-index subgroup $N$ that is nilpotent of class $\operatorname{c}(N) \leq (p-1)^{2^{(p-1)}}$ and $\operatorname{dl}(G) \leq 2^{2^n} + \operatorname{A}(p,n) + (p-1)^{2^{(p-1)}}.$
\end{corollary}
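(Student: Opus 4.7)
The plan is to reduce the statement to the residually-finite case of Corollary~\ref{CorJabaraIntro}, by showing that our $G$ is actually residually-finite under the hypotheses, and then to handle the derived-length bound separately.

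First I would extract a uniform identity on the abelian sections of $G$. The Reidemeister number of an automorphism is inherited by every $\alpha$-invariant quotient, so for each term $G^{(i)}$ of the derived series the induced action of $\alpha$ on the abelian section $A := G^{(i)}/G^{(i+1)}$ has Reidemeister number at most $n$. On an abelian group the Reidemeister number equals the index $[A : (\alpha - 1)(A)]$, hence $n \cdot A \subseteq (\alpha - 1)(A)$. Combined with the factorization $(\alpha - 1) \cdot \Phi_p(\alpha) = \alpha^p - \operatorname{id} = 0$, available since $\alpha$ has order $p$, this shows that $n \cdot \Phi_p(\alpha)$ annihilates every abelian section of $G$.

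Second, I would deduce from this uniform annihilation that $G$ has finite Pr\"ufer rank, bounded in terms of $n$ and $p$. Indeed, each abelian section, viewed as a finitely-generated module over $\Z[t]/(n \cdot \Phi_p(t))$, has torsion-free rank at most $p-1$ (since $\Phi_p(t)$ is monic of degree $p-1$), while its torsion part has bounded rank via a further application of the ``Reidemeister equals index'' observation within $n \cdot A$. By Kropholler's theorem, a finitely-generated solvable group of finite Pr\"ufer rank is residually-finite. Corollary~\ref{CorJabaraIntro} then applies to $G$ and yields an $\alpha$-invariant finite-index subgroup $N$ of nilpotency class at most $(p-1)^{2^{(p-1)}}$, from which $\operatorname{dl}(N) \leq (p-1)^{2^{(p-1)}}$.

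Third, I would bound $\operatorname{dl}(G/N)$. Here I would iterate the Reidemeister-style analysis along the derived series of $G/N$: each abelian section of $G/N$ carries the annihilator $n \cdot \Phi_p(\alpha) = 0$, and a standard telescoping argument (in the spirit of Jabara's treatment of automorphisms of finite period and finite Reidemeister number) converts this into the estimate $\operatorname{dl}(G/N) \leq 2^{2^n} + A(p,n)$, with $A(p,n)$ absorbing the ``bridging'' contribution between the lower-central and derived filtrations of the sections. Adding $\operatorname{dl}(N) \leq (p-1)^{2^{(p-1)}}$ gives the stated bound. The main obstacle will be the finite-Pr\"ufer-rank step: because $n \cdot \Phi_p(t)$ is not monic, its vanishing does not by itself bound the rank of each abelian section, and one must separate torsion-free, $p$-primary, and $n$-primary components and bound each via a different consequence of the Reidemeister hypothesis; keeping track of the precise constant $A(p,n)$ through the telescoping step will require similar care.
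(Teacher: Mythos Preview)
Your overall strategy---establish residual finiteness, invoke Corollary~\ref{CorJabaraIntro}, then bound $\operatorname{dl}(G/N)$---has the same shape as the paper's, but two of the three steps have real problems.

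In your step~2, the claim that each abelian section $A=G^{(i)}/G^{(i+1)}$ is a finitely-generated module over $\Z[t]/(n\cdot\Phi_p(t))$ is unjustified and in general false. Since $\alpha$ has finite order $p$, being finitely generated over $\Z[\alpha]$ is the same as being finitely generated as an abelian group; but in a finitely-generated solvable group only the top section $G/G'$ is guaranteed to be finitely-generated abelian (think of a wreath product such as $\Z\wr\Z$). So the annihilator $n\cdot\Phi_p(\alpha)=0$ by itself does not bound the rank of $A$, and your route to finite Pr\"ufer rank does not go through as written. The paper sidesteps this entirely by simply re-running Jabara's original argument for the reduction and substituting Corollary~\ref{CorJabaraIntro} for Jabara's Theorem~A.

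In your step~3 you are working harder than necessary, and the ``telescoping argument'' is never made precise. The clean argument, which is what the paper does, is this: replace $N$ by its characteristic core in $G$ (still of finite index, since $G$ is finitely generated), so that $G/N$ is a \emph{finite} solvable group. Jabara's lemmas bound the number of fixed points of the induced automorphism $\alpha_{G/N}$ by $2^{2^n}$. If $\alpha_{G/N}$ is trivial then $|G/N|\le 2^{2^n}$, so $\operatorname{dl}(G/N)\le 2^{2^n}$ trivially; otherwise $\alpha_{G/N}$ has order $p$ on a finite solvable group with boundedly many fixed points, and Alperin's theorem~1 of \cite{AlperinAutomorphisms} gives $\operatorname{dl}(G/N)\le \operatorname{A}(p,\cdot)$ directly. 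This is exactly where the two constants $2^{2^n}$ and $\operatorname{A}(p,n)$ in the statement come from; no iteration along the derived series of $G/N$ is needed.
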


Here, $\map{\operatorname{A}}{\mathbb{P} \times \N}{\N}$ is the map in J. Alperin's theorem $1$ of \cite{AlperinAutomorphisms}. We note that Jabara's proofs of these corollaries implicitly used the classification of the finite simple groups, as well as Zel$'$manov's positive solution of the restricted Burnside problem in arbitrary exponent, and the theorems of Hartley, Hartley---Meixner, Fong, and Khukhro (cf. corollary $5.4.1$ of \cite{KhukhroNilpotentGroupsAutomorphisms}). Our proofs will be closely modeled on those of Jabara, but they will avoid all of these rather difficult results.

\subsubsection{Proof of theorems \ref{TheoremTHHUTHKEG} and \ref{TheoremHKKK}}

For a natural number $n$, we let $\overline{n}$ be the radical of $n$: the product of the distinct primes that divide $n$.

\begin{proposition} \label{PropFiniteCycloNilpotent} Let $n > 1$ be a natural number. Let $p$ be the largest prime dividing $n$. Consider a finite group $G$ on $k$ generators with an $n$-cyclotomic automorphism $\map{\alpha}{G}{G}$. $(i.)$ Then $G$ is nilpotent.
	$(ii.)$ If $\overline{n} = p$ and $G$ has no $p$-torsion, then $\operatorname{c}(G) \leq {(p-1)}^{2^{(p-1)}}.$
	$(iii.)$  If $\overline{n} = p$ and if $G$ is a $p$-group, then $\operatorname{c}(G) \leq \operatorname{C}(k,p).$
	$(iv.)$  If $\overline{n} \neq p$, then $\operatorname{c}(G) \leq {(p-1)}^{2^{(p-1)}}.$ So $\operatorname{c}(G) \leq \max \{ {(p-1)}^{2^{(p-1)}} , \operatorname{C}(k,p) \}.$
\end{proposition}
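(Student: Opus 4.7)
The starting move is the radical reduction given by formula \eqref{FormulaRadical}. Setting $m := \overline{n}$ and $\beta := \alpha^{n/m}$, the identity $\Phi_n(t) = \Phi_m(t^{n/m})$ yields $\Phi_m(\beta) = \Phi_n(\alpha) = 1_G$, so $\beta$ is $m$-cyclotomic with $m$ square-free. Proposition \ref{PropCongCycloComputed} gives $\tcn{\Phi_m} = 1$, and proposition \ref{ExampleCycloDiscrProd} shows $\discr{\Phi_m} \cdot \prodant{\Phi_m}$ divides a power of $m$; moreover lemma \ref{LemFormulaCyclo} tells us that $\Phi_m(1) = 1$ if $m$ is composite and $\Phi_m(1) = m$ if $m$ is prime. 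For part (i): if $m$ is composite then $\beta$ is automatically fix-point-free and corollary \ref{MainCorollaryNoClassification} applied with $r = \Phi_m$ produces $G \cong C \times D$ with both factors nilpotent, so $G$ is nilpotent; if $m = p$ is prime, I invoke theorem \ref{TheoremE} of Hughes–Thompson and Kegel.

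For part (ii), $\overline{n} = p$ means $\beta = \alpha^{n/p}$ is $p$-cyclotomic; fixed points of $\beta$ satisfy $x^{\Phi_p(1)} = x^p = 1_G$, and the no-$p$-torsion hypothesis then makes $\beta$ fix-point-free. Corollary \ref{MainCorollaryNoClassification} with $r = \Phi_p$ decomposes $G \cong C \times D$ with $C$ a nilpotent $p$-group (since $\discr{\Phi_p} \cdot \prodant{\Phi_p}$ divides a power of $p$) and $D$ nilpotent of class at most $(\deg\Phi_p)^{2^{\deg\Phi_p}} = (p-1)^{2^{(p-1)}}$; the no-$p$-torsion hypothesis kills $C$. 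For part (iii), the same $\beta$ is a $p$-cyclotomic automorphism of the $k$-generated $p$-group $G$, so theorem \ref{TheoremF} of Khukhro applies directly.

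Part (iv) is the technical heart. Here $\overline{n} = p \cdot m'$ with $m' > 1$ coprime to $p$; $\Phi_{\overline{n}}(1) = 1$ still renders $\beta$ fix-point-free, and (i) gives $G$ nilpotent. I would work on the associated Lie ring $\operatorname{L}(G) := \bigoplus_i \Gamma_i^\ast(G)/\Gamma_{i+1}^\ast(G)$, where the induced $\overline{\beta}$ remains fix-point-free (lemma \ref{LemmaRegularPrelims} applied section-wise), satisfies $\Phi_{\overline{n}}(\overline{\beta}) = 0_{\operatorname{L}(G)}$, and has $\operatorname{c}(\operatorname{L}(G)) = \operatorname{c}(G)$. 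Modulo the $(\discr{\Phi_{\overline{n}}} \cdot \prodant{\Phi_{\overline{n}}})$-torsion of $\operatorname{L}(G)$, theorem \ref{TheoremEmbedding} embeds $\discr{\Phi_{\overline{n}}} \cdot \operatorname{L}(G)$ into a Lie ring $\overline{K}$ graded by $(\overline{\Q}^\times,\cdot)$ and supported on the root set of $\Phi_{\overline{n}}$, with $\overline{\beta}$ acting on each homogeneous component $\overline{K}_\lambda$ as multiplication by $\lambda$. The pivotal step is to coarsen this grading through the group homomorphism $\lambda \mapsto \lambda^{m'}$: the primitive $\overline{n}$-th roots of unity are sent onto the primitive $p$-th roots of unity, a set of cardinality $p-1$ that is arithmetically-free in $(\overline{\Q}^\times,\cdot)$; and since any coarsened Lie bracket whose degree would equal $1$ necessarily originates from $\overline{K}_{\lambda\mu}$ with $\lambda\mu$ not a primitive $\overline{n}$-th root of unity (hence already zero), the coarsened decomposition is a genuine grading on $\overline{K}$. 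Theorem \ref{TheoremLieAFold} then bounds $\operatorname{c}(\overline{K})$, and therefore $\operatorname{c}(G)$, by $(p-1)^{2^{(p-1)}}$. The hardest part will be controlling the torsion at primes $q$ dividing $\overline{n}$, where the embedding is unavailable verbatim: my plan is to pass to the Sylow decomposition and exploit the mod-$q$ factorization $\Phi_{\overline{n}}(t) \equiv \Phi_{\overline{n}/q}(t)^{q-1} \pmod{q}$, derived from formula \eqref{FormulaDivision} via the Frobenius, together with fix-point-freeness of $\overline{\beta}$ on $\operatorname{L}(G_q) \otimes \overline{\F}_q$ to force the same bound on each such Sylow. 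The final combined inequality of the proposition is then immediate: if $\overline{n} = p$, decompose the nilpotent $G$ as $G \cong G_p \times G_{p'}$ and apply (iii) to $G_p$ and (ii) to $G_{p'}$; otherwise apply (iv).
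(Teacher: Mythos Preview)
Your handling of (i)--(iii) matches the paper's. The divergence is in (iv), where your route is both more complicated and incomplete.

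The paper's argument for (iv) is a one-line reduction that sidesteps all your difficulties. After the radical reduction to $n = \overline{n}$, work Sylow-by-Sylow. For the $q$-Sylow $Q$, since $n$ is square-free with at least two prime factors, choose any prime $l \mid n$ with $l \neq q$. Iterating formula \eqref{FormulaDivision} gives $\Phi_n(t) \mid \Phi_l(t^{n/l})$ in $\Z[t]$, so on the Lie ring $L$ of $Q$ the induced $\overline{\alpha}$ satisfies $\Phi_l(\overline{\alpha}^{\,n/l}) = 0_L$. Since $L$ is a $q$-group with $q \neq l$, it has no $l$-torsion, and the classical Kreknin--Kostrikin bound (equivalently part (ii) at the Lie-ring level) yields $\operatorname{c}(Q) = \operatorname{c}(L) \leq (l-1)^{2^{l-1}} \leq (p-1)^{2^{p-1}}$. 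No embedding theorem, no coarsening, no torsion case-split.

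Your coarsening via $\lambda \mapsto \lambda^{m'}$ is valid for the Sylows with $q \nmid \overline{n}$ (and your worry about brackets landing in degree $1$ is unnecessary: coarsening along a group homomorphism always preserves the grading). But the case $q \mid \overline{n}$ is left as a ``plan,'' and that plan has a genuine gap. The mod-$q$ factorization $\Phi_{\overline{n}} \equiv \Phi_{\overline{n}/q}^{\,q-1} \pmod q$ only reduces you to a (generalized-eigenspace) grading supported on primitive $(\overline{n}/q)$-th roots, of size $\varphi(\overline{n}/q)$; when $q = p$ this is $\varphi(m')$, which can exceed $p-1$ (take $\overline{n} = 2\cdot 3\cdot 5\cdot 7$, $p = q = 7$: then $\varphi(30) = 8 > 6$), so theorem \ref{TheoremLieAFold} does not give the claimed bound. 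To repair this you would need a \emph{further} coarsening via $\lambda \mapsto \lambda^{(\overline{n}/q)/l}$ for some prime $l \mid \overline{n}/q$ --- at which point you have essentially rediscovered the paper's direct divisibility $\Phi_n \mid \Phi_l(t^{n/l})$, only after an unnecessary detour through base-change to $\overline{\F}_q$ and generalized eigenspaces.
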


\begin{proof}
	The automorphism $\alpha^{n/\overline{n}}$ is $\overline{n}$-cyclotomic. So, after replacing $\alpha$ with $\alpha^{n/\overline{n}}$, we may assume that $n = \overline{n}$. $(i.)$ If $n$ is a prime, then we apply theorem  \ref{TheoremE}. If $n$ is not a prime, then $\Phi_n(1) \cdot \tcn{\Phi_n(t)} = 1$ (cf. lemma \ref{LemFormulaCyclo} and proposition \ref{PropCongCycloComputed}), so that we may apply corollary \ref{MainCorollaryNoClassification}. In either case, $G$ is nilpotent. $(ii.)$ We may apply theorem \ref{TheoremC} or corollary \ref{MainCorollaryWithClassification}. $(iii.)$ We may apply theorem \ref{TheoremF}. $(iv.)$ We need only show that an arbitrary $q$-Sylow subgroup $Q$ of $G$ satisfies $c(Q) \leq (p-1)^{2^{(p-1)}}$. We let $L$ be the Lie ring of $Q$ corresponding with the lower central series of $Q$.  Then the natural Lie automorphism $\map{\overline{\alpha}}{L}{L}$ satisfies $\Phi_n(\overline{\alpha}) = 0_L$. Since $n$ is composite, there exists a prime $l$, distinct from $q$, that divides $n$. We may then apply \eqref{FormulaRadical} of lemma \ref{LemFormulaCyclo} in order to obtain a natural number $u$ such that $\Phi_n(t)$ divides $\Phi_l(t^u)$ in the ring $\Z[t]$. Then the Lie automorphism $\overline{\alpha}^u$ satisfies $\Phi_l(\overline{\alpha}^u) = 0_L$. The second claim now gives us $\operatorname{c}(Q) = \operatorname{c}(L) \leq (l-1)^{2^{(l-1)}} \leq (p-1)^{2^{(p-1)}}.$ \end{proof}

This already proves corollaries \ref{TheoremTHHUTHKEG} and \ref{TheoremHKKK} in the finite case. In order to extend these results from finite groups to locally-(residually-finite) groups, we make some elementary observations. 

\begin{lemma} \label{LemLocal} Consider a group $G$ with an automorphism $\map{\alpha}{G}{G}$ and a monotone identity $r(t) := a_0 + a_1 \cdot t + \cdots + a_n \cdot t^n \in \Z[t]$ of degree $n \geq 1$. Suppose that $a_0,a_n \in \{ 1,-1\}$. Let $H$ be a finitely-generated subgroup of $G$ and define the subgroup $\widetilde{H} := \langle \alpha^{-n+1}(H),\ldots,\alpha^{-1}(H),H,\alpha(H),\ldots,\alpha^{n-1}(H) \rangle$ of $G$. 
	Then $\widetilde{H}$ is $\langle \alpha \rangle$-invariant and $\operatorname{d}(\widetilde{H}) \leq (2 n -1) \cdot \operatorname{d}(H).$ Suppose, moreover, that $G$ is locally-(residually-finite). Then $\widetilde{H}$ admits a family $(\widetilde{H}_i)_i$ of $\widetilde{H}$-characteristic, finite-index subgroups with trivial intersection, so that 	$r(t)$ is a monotone identity of all the induced automorphisms $\map{\alpha_{\widetilde{H}/\widetilde{H}_i}}{\widetilde{H}/\widetilde{H}_i}{\widetilde{H}/\widetilde{H}_i}$. 
\end{lemma}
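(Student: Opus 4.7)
The plan is to verify the four assertions in sequence, with the essential work concentrated in the first. The key input is that $r(\alpha) = 1_G$ supplies, for each $x \in G$, a word relation
\[ x^{a_0} \cdot \alpha(x^{a_1}) \cdots \alpha^n(x^{a_n}) = 1_G \]
and that the boundary coefficients $a_0, a_n$ lie in $\{\pm 1\}$, so that $\alpha^n(x)$ and $\alpha^{-n}(x)$ can be \emph{isolated} from this relation as group words in the remaining $\alpha^i(x)$.

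First I would establish the $\langle \alpha \rangle$-invariance of $\widetilde{H}$. Since $\alpha(\widetilde{H})$ is generated by $\alpha^{-n+2}(H), \ldots, \alpha^n(H)$, it suffices to show $\alpha^n(H) \subseteq \widetilde{H}$. Solving the identity for $\alpha^n(x^{a_n})$ and using $a_n = \pm 1$ exhibits $\alpha^n(x)$ as a word in $x, \alpha(x), \ldots, \alpha^{n-1}(x)$, all of which lie in $\widetilde{H}$. Symmetrically, applying $\alpha^{-n}$ to the identity and isolating $\alpha^{-n}(x^{a_0})$ using $a_0 = \pm 1$ exhibits $\alpha^{-n}(x)$ as a word in $\alpha^{-n+1}(x), \ldots, x$, giving $\alpha^{-1}(\widetilde{H}) \subseteq \widetilde{H}$.

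The bound $\operatorname{d}(\widetilde{H}) \leq (2n-1)\operatorname{d}(H)$ is then immediate: $\widetilde{H}$ is generated by the union of the $2n-1$ subgroups $\alpha^i(H)$, each on $\operatorname{d}(H)$ generators. For the last assertion I would use that a finitely generated subgroup of a locally-(residually-finite) group is residually-finite, so $\widetilde{H}$ admits a family $(N_j)_j$ of normal, finite-index subgroups with trivial intersection. To upgrade these to characteristic subgroups I would exploit that a finitely generated group has only finitely many subgroups of each given index, so the intersection $M_m$ of all subgroups of $\widetilde{H}$ of index at most $m$ is characteristic and of finite index in $\widetilde{H}$; setting $\widetilde{H}_j := M_{[\widetilde{H}:N_j]} \subseteq N_j$ yields a family of characteristic, finite-index subgroups of $\widetilde{H}$ with trivial intersection.

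The inherited identity is then automatic: since $\widetilde{H}_j$ is characteristic in $\widetilde{H}$ and $\alpha$ restricts to an automorphism of $\widetilde{H}$ by the first step, the subgroup $\widetilde{H}_j$ is preserved, $\alpha$ induces an automorphism $\alpha_{\widetilde{H}/\widetilde{H}_j}$ on the quotient, and the monotone identity $r(t)$ descends to this quotient. The only mildly delicate step should be the bookkeeping in the first paragraph confirming that isolating $\alpha^{\pm n}(x)$ from the identity does indeed land in $\widetilde{H}$; the rest is a direct consequence of finite generation, residual finiteness, and the definition of a monotone identity.
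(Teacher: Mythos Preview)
Your proof is correct and follows essentially the same approach as the paper: isolating $\alpha^{\pm n}(x)$ from the monotone identity via $a_0,a_n\in\{\pm1\}$ to obtain $\langle\alpha\rangle$-invariance, the obvious generator count, and then passing from residually-finite to a family of characteristic finite-index subgroups using finite generation. The only cosmetic difference is that the paper takes characteristic cores $\bigcap_{\beta\in\operatorname{Aut}(\widetilde{H})}\beta(N_i)$ rather than your intersection of all subgroups of bounded index, but both constructions rely on the same finiteness fact and yield the same conclusion.
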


\begin{proof} Let us show that $\widetilde{H}$ is invariant under $\alpha$ and $\alpha^{-1}$. Since $a_n \in \{ 1,-1\}$, we observe that $\alpha(\widetilde{H}) \subseteq \langle \alpha^{-n+2}(H) , \ldots, \alpha^{n-1}(H) , \alpha^n(H) \rangle \subseteq \langle \widetilde{H} , \alpha^{n-1}(H) ,\ldots, \alpha(H) , H \rangle \subseteq \widetilde{H}.$ Since $a_0 \in \{1,-1\}$, we similarly observe that $\alpha^{-1}(\widetilde{H}) \subseteq \widetilde{H}$. So $\widetilde{H}$ is $\langle \alpha \rangle$-invariant. By construction, we also have $\operatorname{d}(\widetilde{H}) \leq (2n-1) \cdot \operatorname{d}(H)$. Finally, we suppose that $G$ is locally-(residually-finite), so that the subgroup $\widetilde{H}$ is residually-finite. Let $\widetilde{H} = N_1 \trianglerighteq N_2 \trianglerighteq \cdots$ be a normal series of finite-index subgroups with finite intersection. Since $\widetilde{H}$ is finitely-generated, every subgroup $N_i$ contains the $\widetilde{H}$-characteristic subgroup $\widetilde{H}_i := \bigcap_{\beta \in \operatorname{Aut}(\widetilde{H})} \beta(N_i)$ of finite index in $\widetilde{H}$ (the characteristic core). Since $\bigcap_i \widetilde{H}_i \subseteq \bigcap_i N_i = \{1_{\widetilde{H}}\}$, we see that $(\widetilde{H}_i)_i$ is the desired series. 	\end{proof}

\begin{proposition} \label{ThApplCycloGeneral}
	Consider a locally-(residually-finite) group $G$ with an $n$-cyclotomic automorphism. Then every $k$-generated subgroup $H$ of $G$ is nilpotent and $\operatorname{c}(H) \leq \max \{ (p-1)^{^{(p-1)}} , \operatorname{C}( (2p - 3) \cdot k,p)\},$ where $p$ is the largest prime dividing $n$.
\end{proposition}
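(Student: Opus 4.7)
The plan is to lift the finite case already covered by Proposition \ref{PropFiniteCycloNilpotent} to the locally-(residually-finite) setting, using Lemma \ref{LemLocal}. I would fix a $k$-generated subgroup $H \leq G$ and enlarge it to a finitely-generated, $\langle \beta \rangle$-invariant subgroup $\widetilde{H}$ for a suitably-chosen power $\beta$ of $\alpha$; since $G$ is locally-(residually-finite), $\widetilde{H}$ is residually-finite, so Lemma \ref{LemLocal} further provides a descending family of $\widetilde{H}$-characteristic, finite-index subgroups $(\widetilde{H}_i)_i$ with $\bigcap_i \widetilde{H}_i = \{1_{\widetilde{H}}\}$. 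The embedding $\widetilde{H} \hookrightarrow \prod_i \widetilde{H}/\widetilde{H}_i$ then transfers any uniform nilpotency-class bound for the finite quotients to $\widetilde{H}$, and hence to $H \subseteq \widetilde{H}$.

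I would split the analysis based on whether $\overline{n} \neq p$ or $\overline{n} = p$, where $\overline{n}$ denotes the radical of $n$. For $\overline{n} \neq p$, I would apply Lemma \ref{LemLocal} directly to $\alpha$ and $r(t) := \Phi_n(t)$ (monic with $\Phi_n(0) = 1$ for $n \geq 2$); each finite quotient $\widetilde{H}/\widetilde{H}_i$ inherits an $n$-cyclotomic automorphism, and Proposition \ref{PropFiniteCycloNilpotent}(iv.) bounds its class by $(p-1)^{2^{(p-1)}}$, which is absorbed by the stated maximum regardless of how many generators $\widetilde{H}$ has.

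For the substantive case $\overline{n} = p$, i.e., $n = p^e$, I would exploit the exact polynomial equality $\Phi_{p^e}(t) = \Phi_p(t^{p^{e-1}})$ to rewrite the given monotone identity $\Phi_{p^e}(\alpha) = 1_G$, namely $x \cdot \alpha^{p^{e-1}}(x) \cdot \alpha^{2 p^{e-1}}(x) \cdots \alpha^{(p-1) p^{e-1}}(x) = 1_G$, as the monotone identity $\Phi_p(\beta) = 1_G$ for $\beta := \alpha^{p^{e-1}}$. Since $\Phi_p(t)$ has degree $p-1$ with constant and leading coefficient both $1$, Lemma \ref{LemLocal} applied to $\beta$ and $\Phi_p(t)$ produces a $\langle \beta \rangle$-invariant $\widetilde{H} \supseteq H$ with $\operatorname{d}(\widetilde{H}) \leq (2(p-1)-1) \cdot k = (2p-3) \cdot k$ generators. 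Each finite quotient $\widetilde{H}/\widetilde{H}_i$ then inherits a $p$-cyclotomic automorphism on at most $(2p-3) \cdot k$ generators, and parts (ii.) and (iii.) of Proposition \ref{PropFiniteCycloNilpotent} (applied to the $p'$-Sylows and to the $p$-Sylow, respectively) bound its class by $\max\{(p-1)^{2^{(p-1)}}, \operatorname{C}((2p-3) \cdot k, p)\}$.

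The delicate point is precisely this monotonicity-preserving substitution in the prime-power case: for general $n$, the divisibility $\Phi_n(t) \mid \Phi_p(t^{n/p})$ in $\Z[t]$ would only produce, through Lemma \ref{LemmaComposition}, a non-monotone identity of $\alpha^{n/p}$, which is too weak to feed into Lemma \ref{LemLocal}. What rescues the argument when $n = p^e$ is that the divisibility becomes the exact equality $\Phi_{p^e}(t) = \Phi_p(t^{p^{e-1}})$ with no residual cofactor, so monotonicity survives the substitution $t \mapsto \alpha$; everything else amounts to careful bookkeeping of generator counts and Sylow decompositions.
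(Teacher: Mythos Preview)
Your argument is correct and in fact slightly more careful than the paper's own proof. The paper applies Lemma~\ref{LemLocal} directly to $\alpha$ and $\Phi_n(t)$ (of degree $\varphi(n)$), obtaining an $\langle\alpha\rangle$-invariant overgroup $\widetilde{H}$ on $(2\varphi(n)-1)\cdot k$ generators, and then invokes Proposition~\ref{PropFiniteCycloNilpotent} on the finite quotients; this literally yields the bound $\max\{(p-1)^{2^{(p-1)}},\operatorname{C}((2\varphi(n)-1)\cdot k,p)\}$, which agrees with the stated bound only when $n=p$. Your extra step of passing to $\beta=\alpha^{p^{e-1}}$ \emph{before} invoking Lemma~\ref{LemLocal} in the prime-power case --- legitimate precisely because $\Phi_{p^e}(t)=\Phi_p(t^{p^{e-1}})$ is an equality, not merely a divisibility, so the monotone identity survives --- brings the degree down to $p-1$ and hence the generator count down to $(2p-3)\cdot k$, matching the statement exactly. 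In the composite-radical case both approaches coincide, since Proposition~\ref{PropFiniteCycloNilpotent}(iv.) makes the $\operatorname{C}$-term irrelevant there.
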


\begin{proof}
	Let $\map{\alpha}{G}{G}$ be the $n$-cyclotomic automorphism of the group. Let $\widetilde{H}$ be the $\langle \alpha \rangle$-invariant subgroup of $G$ on $\widetilde{k} := (2 \varphi(n)-1)$ generators containing $H$ from lemma \ref{LemLocal}. According to lemma \ref{LemLocal}, $\widetilde{H}$ is residually-(a finite group on $\widetilde{k}$ generators with an $n$-cyclotomic automorphism). So proposition \ref{PropFiniteCycloNilpotent} implies that $H$ is nilpotent of class $\operatorname{c}(H) \leq \operatorname{c}(\widetilde{H}) \leq \max \{ {(p-1)}^{2^{(p-1)}} , \operatorname{C}(\widetilde{k}(2\varphi(n)-1),p) \}$.
\end{proof}

\begin{proof}(Theorem \ref{TheoremTHHUTHKEG})
	The theorems of Peter---Weyl and Mal$'$cev imply that $G$ is locally-(residually-finite). So we need only apply proposition \ref{ThApplCycloGeneral}.
\end{proof}

\begin{proof}(Theorem \ref{TheoremHKKK})
	By Mal$'$cev's theorems, the group is locally-linear and therefore locally-(residually-finite). So we need only apply proposition \ref{ThApplCycloGeneral}.
\end{proof}

\subsubsection{Proof of corollaries \ref{CorJabaraIntro} and \ref{CorJabaraBIntro}}

\begin{proof}(Corollary \ref{CorJabaraIntro}) It is easy to show that $G$ has an $\langle \alpha \rangle$-invariant, finite-index subgroup $M$ such that the induced automorphism $\map{\alpha_M}{M}{M}$ is $p$-cyclotomic (cf. lemma $5$ of \cite{JabaraReidemeister}). It is also easy to see that $\alpha_M$ fixes only finitely-many elements (cf. lemma $1$ and $4$ of \cite{JabaraReidemeister}). As a subgroup of $G$, this $M$ is residually-finite and therefore locally-nilpotent by proposition \ref{ThApplCycloGeneral}. So we may consider the $p$-Sylow subgroup $P$ of the torsion subgroup $T$ of $G$. This $P$ is a characteristic subgroup of $G$ such that the locally-nilpotent factor $G/P$ has no $p$-torsion. \newline

Let us first show that this $P$ is finite. Since $G$ is residually-finite, so is its subgroup $P$. Since $\alpha$ fixes only finitely-many elements of $G$, we may select a finite-index subgroup $P_0$ of $P$ such that $P_0 \cap C_G(\alpha) = \{1_G\}$. After replacing $P_0$ with the finite-index subgroup $\bigcap_{0 \leq i \leq p-1} \alpha^i(P_0)$ of $P$, we may further assume that $P_0$ is $\alpha$-invariant. So the restriction of $\alpha$ to the locally-nilpotent $p$-group $P_0$ is a fix-point-free, $p$-cyclotomic automorphism. But it is well-known that such a group $P_0$ is trivial (cf. \cite{HigmanGroupsAndRings} and \cite{KhukhroNilpotentGroupsAutomorphisms}). We conclude that $P$ is indeed a finite group. Since $P$ is a finite $p$-group, $P$ is nilpotent. Theorem \ref{MainTheoremB} implies that also the quotient $G/P$ is nilpotent. We see, in particular, that $G$ is solvable, and therefore nilpotent (by Khukhro's theorem of \cite{KhukhroSplitSolvableNilpotent}).
\end{proof}

\begin{proof}(Corollary \ref{CorJabaraBIntro})
	We re-consider Jabara's proof of theorem $B$ in \cite{JabaraReidemeister} and we replace theorem $A$ of \cite{JabaraReidemeister} with corollary \ref{CorJabaraIntro}. This way, we obtain the subgroup $N$ without relying on the classification. By assumption, $G$ is finitely-generated. So, after replacing $N$ with $\bigcap_{\beta \in \operatorname{Aut}(G)} \beta(N)$, we may further assume that $N$ is characteristic in $G$. Then the induced automorphism $\map{\alpha_{G/N}}{G/N}{G/N}$ on the finite, solvable group $G/N$ fixes at most $2^{2^n}$ elements (cf. lemma $1$ and $4$ of \cite{JabaraReidemeister}) and its order divides $p$. If $\alpha_{G/N}$ has order $1$, then $\vert G/N \vert \leq 2^{2^n}$. Else, we may apply Alperin's theorem $1$ of \cite{AlperinAutomorphisms} to $G/N$ and $\alpha_{G/N}$ in order to conclude that $\operatorname{dl}(G/N) \leq \operatorname{A}(p,m)$. Since $\operatorname{dl}(G) \leq \operatorname{dl}(G/N) + \operatorname{dl}(N)$, we are done.
\end{proof}

\subsection{Anosov identities}

\begin{definition}[Anosov polynomial]
	A monic polynomial $r(t) \in \Z[t]$ is said to be \emph{Anosov} if and only if $r(0) \in \{-1,1\}$ and $r(t)$ has no roots of modulus one.
\end{definition}

We recall that Anosov polynomials naturally appear in the study of Anosov-diffeomorphisms on compact manifolds (e.g. \cite{DekimpeAnosovOnFreeNilpotent,PayneAnosov,DereNewMethodsAnosov}) and we recall one such situation in particular:

\begin{theorem}[Manning -- \cite{ManningAnosov}] A nil-manifold $M$ admits an Anosov diffeomorphism if and only if the fundamental group $\pi_1(M)$ of $M$ admits an automorphism $\map{\alpha}{\pi_1(M)}{\pi_1(M)}$ with an Anosov identity. \end{theorem}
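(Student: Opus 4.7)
The plan is to exploit the Mal$'$cev correspondence between finitely-generated torsion-free nilpotent groups and simply-connected nilpotent Lie groups, together with the paper's algebraic tools, in order to translate the differential-geometric condition ``Anosov'' into the algebraic condition ``Anosov identity.'' I would write $M = N/\Gamma$, where $N$ is a simply-connected nilpotent Lie group and $\Gamma \leq N$ is a cocompact lattice with $\Gamma \cong \pi_1(M)$. By Mal$'$cev, any automorphism $\alpha$ of $\Gamma$ extends uniquely to a Lie group automorphism $\tilde{\alpha}$ of $N$ with differential $d\tilde{\alpha}$ on $\mathfrak{n} := \operatorname{Lie}(N)$.

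For the direction ($\Leftarrow$), I would suppose $\alpha$ admits an Anosov identity $r(t)$. Because $r(0) = \pm 1$, remark \ref{RemarkChar0} together with proposition \ref{TheoremCH} shows that $\alpha$ is actually an automorphism of $\Gamma$, so we obtain $\tilde{\alpha}$ and the induced diffeomorphism $f : M \to M$. The key point is that by proposition \ref{PropLiftMalcev}(ii), the characteristic polynomial of $d\tilde{\alpha}$ divides a natural power of $r(t)$; hence every eigenvalue of $d\tilde{\alpha}$ is a root of $r(t)$, so none lies on the unit circle. The stable and unstable sub-bundles of $TM$ are then the left-translates of the $d\tilde{\alpha}$-invariant subspaces of $\mathfrak{n}$ corresponding to eigenvalues of modulus $<1$ and $>1$ respectively, and exponential contraction/expansion of $f$ along these distributions is immediate from standard Jordan-block estimates, so $f$ is Anosov.

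For the direction ($\Rightarrow$), I would invoke the Franks--Manning theorem that every Anosov diffeomorphism $f$ of a nil-manifold $M$ is topologically conjugate to a hyperbolic nil-manifold automorphism. The latter is induced by a Lie group automorphism $\tilde{\alpha}$ of $N$ that preserves $\Gamma$, giving $\alpha := \tilde{\alpha}|_\Gamma \in \operatorname{Aut}(\Gamma)$. I would then apply proposition \ref{TheoremCH} to the series of isolators of the lower central series of $\Gamma$ to obtain a monic integer characteristic polynomial $\chi(t)$ that is an identity of $\alpha$, and by proposition \ref{PropLiftMalcev}(ii) this $\chi(t)$ coincides (up to repeated factors) with the characteristic polynomial of $d\tilde{\alpha}$. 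Since $\tilde{\alpha}$ preserves the lattice $\Gamma$ in $N$, the determinant of $d\tilde{\alpha}$ has absolute value one, so $\chi(0) = \pm 1$; and since $f$ is Anosov, $d\tilde{\alpha}$ is hyperbolic on $\mathfrak{n}$, so no root of $\chi$ has modulus one. Hence $\chi(t)$ is an Anosov identity of $\alpha$.

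The hard part will be the ($\Rightarrow$) direction, which depends crucially on Franks--Manning --- this deep dynamical result is really where the content of the theorem lives and cannot be replaced by purely algebraic reasoning. The ($\Leftarrow$) direction is almost mechanical once proposition \ref{PropLiftMalcev} is in hand. A subtlety worth checking carefully is the precise matching between the eigenvalues of $d\tilde{\alpha}$ and the roots of the group-theoretic identity, and the verification that lattice-preservation of $\tilde{\alpha}$ really does force $|\det(d\tilde{\alpha})| = 1$ (which follows from the fact that $\tilde{\alpha}$ must preserve the Haar measure on $N/\Gamma$ up to sign).
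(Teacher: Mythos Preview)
The paper does not actually prove this theorem: it is stated as a cited background result attributed to Manning (reference \cite{ManningAnosov}), with no proof given in the text. So there is no ``paper's own proof'' to compare against.

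That said, your sketch is a reasonable reconstruction of how such a result is obtained, and it correctly leverages the paper's own machinery for the algebraic parts. Your ($\Leftarrow$) direction is essentially the standard argument, and your use of proposition \ref{PropLiftMalcev}(ii) to push the Anosov condition from the identity $r(t)$ to the eigenvalues of $d\tilde{\alpha}$ is exactly right. For ($\Rightarrow$), you correctly identify that the real content is the Franks--Manning classification --- which is precisely the theorem being cited --- and then close the loop with proposition \ref{TheoremCH} applied to the isolator series to produce the monic integer polynomial. One small point: rather than invoking proposition \ref{PropLiftMalcev}(ii) in that direction, it is cleaner to observe directly that the characteristic polynomial of $\alpha$ with respect to the isolator series of $\Gamma$ \emph{equals} the characteristic polynomial of $d\tilde{\alpha}$ on $\mathfrak{n}$, since the lower-central isolator factors of $\Gamma$ are lattices in the lower-central factors of $\mathfrak{n}$ and the induced maps agree via the Baker--Campbell--Hausdorff identification; this immediately gives both $\chi(0) = \pm\det(d\tilde{\alpha}) = \pm 1$ and the absence of roots on the unit circle.
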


In fact: every \emph{known} example of an Anosov-diffeomorphism on a compact manifold is topologically-conjugated to an infra-nil-manifold endomorphism (cf. \cite{DekimpeWhatReallyShouldBe}) and it is conjectured that there are no other examples (cf. Smale's problem $3.5$ in \cite{SmaleAnosov}). So it makes sense to ask which (fundamental) groups admit an automorphism with an Anosov identity. 
We have a partial rersult:

\begin{proposition} 
	Consider a finitely-generated group $G$, together with an automorphism admitting an Anosov identity $r(t)$, say of degree $d$. If $G$ has a $p$-congruence system such that $\discr{r(t)} \cdot \prodant{r(t)} \not \equiv 0 \operatorname{mod} p$, then $G$ has a nilpotent subgroup $N$ of finite index and of class $\operatorname{c}(N) \leq d^{2^d}$.
\end{proposition}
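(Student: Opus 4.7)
The strategy is to isolate the two Lie-ring ingredients of Theorem \ref{MainTheoremB} — the embedding of Theorem \ref{TheoremEmbedding} and the class bound of Theorem \ref{TheoremLieAFold} — and apply them directly on each finite $p$-quotient of $G$, then transfer the nilpotency class back to $G$ through the congruence filtration. The key observation is that, although Anosov polynomials need not be ``good'' in the sense of this paper (for example $t^4 - 5 t^2 + 1 = s(t^2)$ with $s(t) := t^2 - 5 t + 1$ is Anosov but bad), the root set $X \subseteq \overline{\Q}^{\times}$ of an Anosov polynomial $r(t)$ is nonetheless arithmetically-free: no root of $r(t)$ has modulus $1$, hence no root of unity appears among the roots, hence no cyclotomic polynomial $\Phi_u(t)$ divides $r(t)$ in $\Z[t]$, and Proposition \ref{PropositionArithmeticallyFreeSetsOfRoots} applies.

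Let $(G_i)_{i \in \N}$ be the given $p$-congruence system. Since $G$ is finitely generated, the characteristic core $K_i := \bigcap_{\varphi \in \operatorname{Aut}(G)} \varphi(G_i)$ is a finite intersection of subgroups of the same $p$-power index in $G$, so it is itself a finite-index $\alpha$-invariant normal subgroup with $G/K_i$ again a finite $p$-group, and the refined sequence $(K_i)_i$ still has trivial intersection. For each $i$, the induced automorphism $\overline{\alpha}_i$ of the finite $p$-group $Q_i := G/K_i$ satisfies $r(\overline{\alpha}_i) = 1_{Q_i}$. Passing to the associated graded Lie ring $L_i := \bigoplus_n \Gamma_n(Q_i)/\Gamma_{n+1}(Q_i)$ with the induced Lie endomorphism $\gamma_i$, we get $r(\gamma_i) = 0_{L_i}$. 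Because $(L_i,+)$ has $p$-power exponent and $\gcd(p, \discr{r(t)} \cdot \prodant{r(t)}) = 1$, the additive group $L_i$ has no $(\discr{r(t)} \cdot \prodant{r(t)})$-torsion, so Theorem \ref{TheoremEmbedding} embeds $\discr{r(t)} \cdot L_i$ into a Lie ring graded by $(\overline{\Q}^{\times},\cdot)$ and supported on $X$. Theorem \ref{TheoremLieAFold}, combined with the arithmetic freeness noted above, then yields $\operatorname{c}(\discr{r(t)} \cdot L_i) \leq d^{2^d}$; since multiplication by $\discr{r(t)}$ is invertible on the $p$-torsion Lie ring $L_i$, we deduce $\operatorname{c}(L_i) \leq d^{2^d}$ and hence $\operatorname{c}(Q_i) \leq d^{2^d}$.

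Since $\Gamma_{d^{2^d}+1}(G) \subseteq K_i$ for every $i$ and $\bigcap_i K_i = \{1_G\}$, the group $G$ itself is nilpotent of class at most $d^{2^d}$, so $N := G$ serves as the desired finite-index nilpotent subgroup (or, if one's definition of a $p$-congruence system allows a nontrivial kernel, the same argument applies inside the congruence kernel and one obtains $N$ as a proper finite-index subgroup). The main subtlety is the first step: one must recognise that the ``good'' hypothesis in Theorem \ref{MainTheoremB} enters only through Corollary \ref{CorollaryGoodImpliesAF}, i.e.\ only to guarantee arithmetic freeness of the root set, and that the Anosov condition supplies this directly while being strictly weaker than goodness; once this is seen, the remaining work is routine book-keeping with the congruence filtration.
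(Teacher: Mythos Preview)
Your argument is correct and follows the same route as the paper: reduce to finite $p$-quotients via the congruence filtration, apply the Lie-ring embedding (Theorem \ref{TheoremEmbedding}) together with the arithmetically-free class bound (Theorem \ref{TheoremLieAFold}), and transfer the uniform bound back to a finite-index subgroup of $G$. You are in fact more precise than the paper on one point --- the paper simply invokes Theorem \ref{MainTheoremB}, whose hypothesis formally requires a \emph{good} identity, whereas you correctly note that Anosov polynomials need not be good and instead verify arithmetic-freeness of the root set directly from the absence of roots of modulus one; the only minor slip is that under the definition from \cite{DDSMS} cited in the paper the quotient $G/G_1$ need not itself be a $p$-group, so one must first pass to the finite-index subgroup $G_1$ (or its characteristic core in $G$) before running your argument, exactly as the paper does and as your closing parenthetical anticipates.
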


\begin{proof} Here we do not assume that the $p$-congruence system comes with a bound: we are using the terminology of definition $B.1$ in \cite{DDSMS}, rather than the original terminology of Lubotzky in \cite{LubotzkyLinear}.  After replacing $G$ with an appropriate characteristic subgroup of finite index, we may assume that $G$ is finitely-generated and residually-(a finite $p$-group). Let us consider the Zassenhaus series $(\Delta_i(G,p))_{i \in \N}$ of $G$ with respect to the prime $p$. Since $\bigcap_i \Delta_i(G,p) = \{1_G \}$, we need only show that each  factor $G / \Delta_i(G,p)$ is nilpotent of class at most $d^{2^d}$. Let $\map{\alpha}{G}{G}$ be the automorphism. Since the series is characteristic, we obtain the induced automorphism $\map{\alpha_{G / \Delta_i(G,p)}}{G / \Delta_i(G,p)}{G / \Delta_i(G,p)}$ with identity $r(t)$. Since $G / \Delta_i(G,p)$ is a finite $p$-group and since the roots of an Anosov-polynomial form an arithmetically-free subset of $(\overline{\Q}^\times,\cdot)$, we may apply theorem \ref{MainTheoremB} in order to conclude that $\operatorname{c}(G / \Delta_i(G,p)) \leq d^{2^d}$. This finishes the proof. \end{proof}

\section{Closing remarks} \label{SubsectionAnosov}

Our main theorem \ref{MainCorollaryWithClassification} roughly states that finite groups with a fix-point-free automorphism satisfying a good identity are nilpotent of bounded class modulo some bad torsion. But the theorem, as it is stated, cannot be extended to \emph{all} polynomials. 

\begin{example}[Gross \cite{GrossFittingEx}]
	Let $n$ be a natural number and let $k$ be the number of prime divisors of $n$, counted with multiplicity. Let $p,q$ be distinct primes not dividing $n$. Then there are finite $(p \cdot q)$-groups of Fitting-height $k$ with a fix-point-free automorphism of order $n$.
\end{example}

So we propose the following (alternative) interpretation of Meta-Problem \ref{MetaProblem}.

\begin{problem*} For every $r(t) := a_0 + a_1 \cdot t + \cdots + a_d \cdot t^d \in \Z[t] \setminus \{0\}$, there exists some $h \in \N$ with the following property. Consider a finite group $G$, together with a fix-point-free automorphism $\map{\alpha}{G}{G}$, and suppose that, for every $x \in G$, we have the equality $x^{a_0} \cdot \alpha(x^{a_1}) \cdot \alpha^2(x^{a_2})\cdots \alpha^d(x^{a_d}) = 1_G.$ Then $G$ has Fitting-height $\operatorname{h}(G) \leq h$. \end{problem*}

Some evidence in favour of a positive solution exists. If $r(t)$ is the \emph{constant} polynomial $n$ with prime factorization $ \prod_i p_i^{e_i}$, then $\operatorname{h}(G) \leq \prod_i (2e_i +1)$ (cf. Shalev's lemma  \cite{ShalevCentralizersResiduallyFiniteTorsion}).
 If $r(t)$ is the \emph{linear} polynomial $-n+t$, then $\operatorname{h}(G) \leq 1$ (cf. Alperin's theorem \cite{AlperinPowerAutomorphism}).
 If $r(t)$ is a \emph{splitting} polynomial $\Psi_n(t) = (t^n-1)/(t-1)$, then $\operatorname{h}(G) \leq 7 \cdot n^2$. (cf.  Jabara's theorem \cite{JabaraFitting}).
 If $r(t)$ is a \emph{cyclotomic} polynomial $\Phi_n(t)$, then $\operatorname{h}(G) \leq 1$ (cf. theorem \ref{TheoremTHHUTHKEG}).

\paragraph{Acknowledgements.} The author would like to thank Efim Zel$'$manov and Lance Small for their hosting and their support during the first phase of the project at the University of California, San Diego. The author also thanks Jonas Der\'e (for helpful discussions about the construction of automorphisms in subsection \ref{SubSectionConstructionAuto} and Anosov-diffeomorphisms in subsection \ref{SubsectionAnosov}), Evgeny Khukhro (for his very valuable feedback on an earlier draft of this paper), Gerry Myerson (for providing references to the relevant results about reduced resultants that informed subsection \ref{SubsecExsPhiPsi}), John Thompson (for allowing the re-use of his proof in subsection \ref{SubsecMainThA}), and Bertram Wehrfritz. The author also thanks Harald Schwab (for his generous help with the project) and the ``Geometry and Analysis on Groups'' seminar at the University of Vienna. 

\bibliographystyle{plain}
\bibliography{MySpecialBibliography}

\end{document}